\crefname{equation}{Eq.}{Eqs.}
\newtheorem{theorem}{Theorem}[section]
\newtheorem{definition}[theorem]{Definition}
\newtheorem{proposition}[theorem]{Proposition}
\newtheorem{corollary}[theorem]{Corollary}
\newtheorem{lemma}[theorem]{Lemma}
\newtheorem{remark}[theorem]{Remark}
\newcommand{\rank}{\operatorname{rank}}
\renewcommand{\d}{{\rm d}}
\newcommand{\pp}{{\mathbb P}}
\newcommand{\ee}{{\mathbb E}}
\newcommand{\rr}{{\mathbb R}}
\newcommand{\nn}{{\mathbb N}}
\newcommand{\cc}{{\mathbb C}}
\newcommand{\qq}{{\mathbb Q}}
\renewcommand{\P}{{\mathrm P}}
\newcommand{\mat}{\mathrm M_d(\cc)}
\newcommand{\matd}{M_d(\cc)}
\newcommand{\supp}{\operatorname{supp}}
\newcommand{\sta}{{\P(\cc^d)}}
\newcommand{\staalg}{\mathcal{B}}
\newcommand{\out}{\Omega}
\newcommand{\outalg}{\mathcal{O}}
\DeclareMathOperator{\linspan}{linspan}
\DeclareMathOperator{\spn}{span}
\renewcommand{\Re}{\operatorname{Re}}
\renewcommand{\Im}{\operatorname{Im}}
\newcommand{\specialcell}[1]{\ifmeasuring@#1\else\omit$\displaystyle#1$\ignorespaces\fi}
\title{Analyticity of the pressure function for products of matrices}
\begin{document}
\author{Arnaud Hautecœur}
\email{arnaud.hautecoeur@math.univ-toulouse.fr}
\address{Institut de Mathématiques, UMR5219, Université de Toulouse, CNRS, UPS, F-31062 Toulouse Cedex 9, France}

\maketitle

\begin{abstract}
  The pressure function is a fundamental object in various areas of mathematics. Its regularity is studied to derive insights into phase transitions in certain physical systems or to determine the Hausdorff dimension of self-affine sets. In this paper, we prove the analyticity of the pressure function for products of non-invertible matrices satisfying an irreducibility and a contractivity assumptions. Additionally, we establish a variational principle for the pressure function, thereby generalizing previous results.
\end{abstract}

\tableofcontents
\newpage

\section{Introduction}
Let $d \in \nn$, we consider the complex vector space $\cc^d$ equipped with the euclidean norm $\| \cdot \|$ and $\mat$ the vector space of the complex matrices endowed with the euclidian norm $\|.\|$. Let $0<s_{-}<s_+$ and $\mu$ be a measure on $\mat$ such that for every $s_{-}\leq s \leq s_+$:
\begin{equation}
    \int_{\mat} \|v\|^s d \mu(v) < \infty.
\end{equation}
Let us denote by $I_\mu$ the interval $(s_{-},s_+)$. Let us define the \textit{pressure function} $\mathbf{P}: I_\mu \to \rr$ of $\mu$:
\begin{equation}\label{eq:defPression}
     \mathbf{P}(s)=\underset{n \to \infty}{\lim} \frac{1}{n} \log \left( \int_{\mat^n} \|v_n...v_1\|^s d \mu^{\otimes n}(v_1,...,v_n) \right).
\end{equation}
By submultiplicativity of the supremum norm and the integrability of $\mu$, this quantity is well defined by Fekete Lemma.

This quantity naturally appears in several fields under different names. The pressure is a key object in multifractal formalism. It enables the study of the multifractal structure of self-similar measures generated by iterated function systems (IFS) without overlap \cite{kaenmaki2017natural,falconer1988hausdorff,Fe03,Fe09}. In statistical mechanics, it can be seen as the \textit{free energy} of some classical spin chains. The analyticity of the pressure reflects the absence of phase transitions in classical spin chains. In this context, the computation of the pressure (or \textit{generalized Lyapunov exponent}) has also been investigated \cite{Va10}. It is deeply linked with the large-deviation regime of the norm of the random product of matrices $\|W_n\|$.

The regularity of the pressure function has been studied under several assumptions. Feng and Käenmäki studied the regularity of the pressure for a finite set of irreducible matrices $\mathbf{A}=(A_1,...,A_m)$ indexed by a finite alphabet $\mathcal{A}=\{1,...,m\}$. Let $(\Sigma, \theta)$ be the one-sided full shift over $\mathcal{A}$ \cite{Fe11}. In this case, the pressure is defined by
\begin{equation}
    \mathbf{P}(s) =\underset{n \to \infty}{\lim} \frac{1}{n} \log \left( \sum_{i_1,...i_n \in \mathcal{A}^n}   \|A_{i_n}...A_{i_1}\|^s  \right).
\end{equation}
They proved that the pressure function is differentiable for $s>0$ and satisfies the following variational principle:
\begin{equation}\label{eq:varprincipleFeng}
    \mathbf{P}(s)=\sup \{ s \mathcal{M}_*(\nu) + h(\nu) \; | \; \nu \in \mathcal{P}_\theta \},
\end{equation}
where $\mathcal{P}_\theta$ denotes the space of $\theta$-invariant probability measures on $\Sigma$, $h(\nu)$ denotes the the measure-theoretic entropy of $\nu$ with respect to $\theta$ and $\mathcal{M}_*(\nu)$ is the Lyapunov exponent of $\nu$ with respect to $\theta$ \cite[Equation 1.2]{Fe11}. Measures satisfying Equation \eqref{eq:varprincipleFeng} are called \textit{equilibrium states}. Ergodic properties of these equilibrium states have been extensively studied (See for example \cite{Fe11,morris2018ergodic}).

An alternative approach to studying the regularity of $\mathbf{P}$ is by employing spectral methods. This method appears naturally in the field of random walks on groups. For more information, one can check for instance \cite{benoist2016random, furstenberg1960products, xiao2021limit}. Under a strong irreducibility and a contractivity assumption, if $\supp \mu \subset GL_d(\cc)$, Y. Guivarc'h and E. Le Page proved that the pressure function defined in Equation \eqref{eq:defPression} is real-analytic on $I_\mu$ \cite{GLP15, GLP04}. For this purpose, they proved that $k(s)=\exp {P(s)}$ is an isolated eigenvalue of the following operator $\Gamma_s$:
$$\Gamma_s f(\hat{x}) = \int_{\mat} f( v \cdot \hat{x})\|vx\|^s d \mu(v), $$
where $\hat x$ is an element of the complex projective space $\sta$ and $f: \sta \to \cc$ is any bounded measurable function. This operator appears naturally to study random walks on linear groups. It has been investigated in the literature (see e.g. \cite{BL85,GLP15,xiao2023moderate}). Under Guivarc'h and Le Page assumptions, the operators $\Gamma_s$ are quasi-compact on a Banach space $(\mathcal{B}, \|.\|_{\mathcal{B}})$ and therefore admit a spectral gap. Moreover, they proved that the map $s \mapsto \Gamma_s$ is analytic on $I_\mu$ \cite{GLP04,GLP15}. Kato's perturbation theory \cite{kato2013perturbation} guarantees the analyticity of $\mathbf{P}$ on $I_\mu$.

In this paper, we prove the same result for non-invertible matrices verifying a weaker irreducibility assumption. These more general assumptions require a new proof of the quasi-compactness of the operators $\Gamma_s$. As in \cite{GLP15}, for every $s \in I_\mu$, we construct a Markov Operator $Q_s$ depending on $\Gamma_s$ by Doob's relativisation procedure. The quasi-compactness of the operators $Q_s$ implies the quasi-compactness of the operators $\Gamma_s$. The proof is based on Ionescu-Tulcea-Marinescu theorem \cite{ITM50,HeHe01}. Our main innovation is the method of proof of a Doeblin-Fortet inequality for $Q_s$ required in Ionescu-Tulcea-Marinescu theorem. More specifially we prove the existence of $n_0 \in \nn$ such that for any $f \in \staalg$:
\begin{equation}
    \|Q_s^{n_0} f \|_{\mathcal{B}} \leq R |f| + r^{n_0} \|f\|_{\mathcal B},
\end{equation}
where $R \geq 0$, $|.|$ is a semi-norm on $\mathcal B$ and $0 \leq r<\rho(Q_s)$ where $\rho(Q_s)$ denotes the spectral radius of $Q_s$ on $(\staalg, \|.\|_{\staalg})$. In these settings, $\rho(Q_s)=1$. Inspired by the strategy developed in \cite{BHP24}, we prove that the contractivity assumption implies the existence of a function $h : \nn \to \rr_+$ and a constant $C \geq 0$ such that: 
\begin{equation}
    \|Q_s^{n} f \|_{\mathcal{B}} \leq R |f| + C h(n) \|f\|_{\mathcal B},
\end{equation}
and 
\begin{equation}
    \underset{n \to \infty}{\lim} h(n)=0.
\end{equation}

Another goal of this paper is to extend the variational principle mentioned in Equation \eqref{eq:varprincipleFeng} to arbitrary Polish space alphabets.  As mentioned before, ergodic properties of the equilibrium states are objects of interest. In 2021, M. Piraino constructed an equilibrium state by using a spectral approach \cite{piraino2020weak}. It allowed him to derive interesting ergodic and statistical properties on the equilibrium states by applying Guivarc'h and Le Page results. Moreover, we establish a connection between this spectral approach and the thermodynamic formalism. 

Let $\mathcal{A}$ an arbitrary Polish space and $\Sigma=\mathcal A ^{\nn}$ endowed with the left-shift $\theta$. In this paper, we construct a probability measure $\qq^s$ on $\Sigma$ which is an equilibrium state for a sub-additive variational principle. We prove that $\qq^s$ is ergodic and has exponential decay of correlations for Hölder continuous functions.

The paper is organized as follows.
The first part of the paper is devoted to the proof of the analyticity of the pressure function. 
In Section \ref{sec:Results}, we discuss the assumptions and state the main results of the paper. 
In Section \ref{sec:Notations}, we define properly the operators $\Gamma_s$ and introduce the Banach space of $\alpha$-Hölder functions $(C^\alpha(\sta,\cc),\|.\|_\alpha)$ for an $\alpha$ well chosen.
In Section \ref{Sec:Construction} we construct Markov Operators $Q_s$ by using Doob's relative procedure. The construction is closely aligned with that described in \cite{GLP15}, albeit with necessary adaptations.
Section \ref{SecQuasiCompact} is devoted to the proof of quasi-compactness for the operators $\Gamma_s$ and $Q_s$. In this section, we construct explicitely the probability measure $\qq^s$ and the function $h$ mentioned before. 
In Section \ref{Sec:Analyticks}, we finally prove the analyticity of the pressure function on $I_\mu$. It requires us to prove that the map $s \mapsto \Gamma_s$ is analytic on $I_\mu$. The result then follows from Kato's perturbation theory.

The second part of the paper is devoted the thermodynamic formalism.
We prove that the probability measure $\qq^s$ has an exponential decay of correlation in Section \ref{sec:Decay} and satisfies a sub-additive variational principle in Section \ref{Sec:VariationalPrinciple}. 

Section \ref{sec:Examples} is devoted to examples. We show that the irreducibility is crucial to obtain the analyticity of the pressure function. Moreover, the Keep-Switch example shows that one can not expect the analyticity in $0$.

\section{Main Results}\label{sec:Results}

In this section we state the main results of this article and under which assumptions they hold.

\subsection{Assumptions}

The first assumption is an irreducibility one.

\medskip
\noindent{\bf (Irr)} There does not exist non-trivial subspaces $F \subset \cc^d$ such that, for every $v \in \supp \mu$, $v F \subset F$.
\medskip

This assumption is weaker than the usual strong irreducibility assumption used in the context of product of random matrices (see \cite{BL85,GLP15}). We recall that $\supp \mu$ is strongly irreducible if there does not exist a non-trivial finite union of proper subspaces invariant by $\supp \mu$.

The second one is a contractivity assumption. We define the set $T_0=\{I_d\}$ and for $n \in \nn$, 
\begin{equation}
    T_n = \{ v_n...v_1 \, | \, v_i \in \supp \mu \}.
\end{equation}
$T_n$ is the set of products of at most $n$ matrices in $\supp \mu$ and the identity. The set $T = \cup_n T_n$ contains all the finite products of the matrices of $\supp \mu$ and the identity. It is the closed semigroup generated by $\supp \mu$.

\medskip
\noindent{\bf (Cont)}  There exists a sequence $(v_k) \in T$ such that $\underset{k \to \infty}{\lim} \frac{v_k}{\|v_k\|} = v_\infty$ where $v_\infty$ is a rank-one matrix.
\medskip
In \cite{GLP04}, it is called proximality.

\subsection{The analyticity of the function $s \mapsto \mathbf{P}(s)$}

The main result of this paper is:

\begin{theorem}\label{Th:Analyticks}
Let $0<s_{-}<s_+$ and $\mu$ be a measure on $\mat$ such that:
\begin{enumerate}
    \item $\mu$ verifies the assumptions {\bf (Cont)} and {\bf (Irr)},
    \item for every $s_{-}\leq s \leq s_+$:
\begin{equation*}
    \int_{\mat} \|v\|^s d \mu(v) < \infty.
\end{equation*}
\end{enumerate}

Then, the pressure function $\mathbf{P}: I_\mu \to \rr$ of $\mu$:
\begin{equation*}
     \mathbf{P}(s)=\underset{n \to \infty}{\lim} \frac{1}{n} \log \left( \int_{\mat^n} \|v_n...v_1\|^s d \mu^{\otimes n}(v_1,...,v_n) \right)
\end{equation*}
is analytic on $(s_-,s_+)$.
\end{theorem}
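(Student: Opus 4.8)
The plan is to realize $\exp(\mathbf{P}(s))$ as a simple isolated eigenvalue of the transfer operator $\Gamma_s$ acting on the Banach space $C^\alpha(\sta,\cc)$ of $\alpha$-Hölder functions on complex projective space, and then to invoke analytic perturbation theory. First I would recall the operator
\[
  \Gamma_s f(\hat{x}) = \int_{\mat} f(v\cdot\hat{x})\,\|vx\|^s\,d\mu(v),
\]
noting that under \textbf{(Cont)} the quantity $\|vx\|$ never vanishes on the relevant part of $\sta$ (or rather, the singular directions must be handled by the relativisation), so this is well defined; and observe that $\|\Gamma_s^n \mathbf 1\|_\infty$ is comparable to $\int_{\mat^n}\|v_n\cdots v_1\|^s\,d\mu^{\otimes n}$ up to bounded multiplicative constants coming from the distortion between $\|v_n\cdots v_1\|$ and $\|v_n\cdots v_1 x\|$ for a good direction $x$. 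Hence $\rho(\Gamma_s) = e^{\mathbf{P}(s)}$, the spectral radius. The core analytic input — which I would take from the earlier sections as stated in the excerpt — is: (i) for each $s\in I_\mu$ the operator $\Gamma_s$ is quasi-compact on $C^\alpha$, proved via the Doob-relativised Markov operator $Q_s$ and the Ionescu–Tulcea–Marinescu theorem, using the Doeblin–Fortet inequality $\|Q_s^n f\|_{\mathcal B}\le R|f| + Ch(n)\|f\|_{\mathcal B}$ with $h(n)\to 0$; (ii) the peripheral spectrum of $\Gamma_s$ reduces to the single simple eigenvalue $e^{\mathbf{P}(s)}$ — this uses \textbf{(Irr)} together with \textbf{(Cont)} to rule out multiple peripheral eigenvalues, exactly as the strict positivity/aperiodicity of $Q_s$ (whose unique stationary measure is related to $\qq^s$) gives a genuine spectral gap.

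\textbf{From the spectral gap to analyticity.} Once $\Gamma_s$ has a spectral gap with $e^{\mathbf{P}(s)}$ an isolated simple eigenvalue, the classical argument is: show the map $s\mapsto \Gamma_s \in \mathcal L(C^\alpha)$ is analytic in the sense of Banach-space-valued analytic maps (i.e. locally given by a norm-convergent power series in $s$). The natural way is to extend $s$ to a complex variable in a neighbourhood of $I_\mu$ and write $\Gamma_s f(\hat x) = \int f(v\cdot\hat x) e^{s\log\|vx\|}\,d\mu(v)$; expanding $e^{s\log\|vx\|}$ as a power series in $(s-s_0)$ and controlling the tail using the integrability hypothesis $\int \|v\|^{s}\,d\mu < \infty$ for $s$ in a (slightly enlarged, by choosing $s_-,s_+$ with a little room, or by the fact that the hypothesis is an open condition) real interval — together with a bound showing each coefficient is a bounded operator on $C^\alpha$, with operator norms summable — gives the analyticity of $s\mapsto\Gamma_s$. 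Then Kato's analytic perturbation theory (Kato, \emph{Perturbation Theory for Linear Operators}) applies: an isolated simple eigenvalue of an analytic family of bounded operators depends analytically on the parameter, so $s\mapsto e^{\mathbf{P}(s)}$ is analytic, hence (since it is positive) $s\mapsto \mathbf{P}(s)$ is analytic on $I_\mu$.

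\textbf{The main obstacle.} The substantive difficulty — already flagged in the introduction as the paper's main innovation — is establishing the quasi-compactness of $\Gamma_s$ (equivalently of $Q_s$) under the weak irreducibility \textbf{(Irr)} rather than strong irreducibility, since non-invertible matrices can collapse directions and the usual contraction-on-projective-space arguments break down. This is the Doeblin–Fortet / Ionescu–Tulcea–Marinescu step, and in the present proof it is imported as an already-proved result. Given that input, the remaining points requiring care are: (a) verifying that the peripheral spectrum is exactly $\{e^{\mathbf P(s)}\}$ with geometric and algebraic multiplicity one — this is where \textbf{(Irr)} is used to preclude invariant subspaces that would produce several dominant eigenvalues, and \textbf{(Cont)} rules out a cyclic (rotation) structure; and (b) the analyticity of $s\mapsto\Gamma_s$ in operator norm on $C^\alpha$, which is a routine but slightly delicate estimate: one must check that differentiating under the integral in $s$ produces kernels $f\mapsto \int f(v\cdot\hat x)(\log\|vx\|)^k\|vx\|^{s}\,d\mu(v)$ that are bounded on $C^\alpha$ with summable norms, which follows from $|\log\|vx\||^k\|vx\|^s \le C_\varepsilon \|v\|^{s+\varepsilon} + C_\varepsilon \|v\|^{s-\varepsilon}$ and the integrability assumption after shrinking to $s$ strictly inside $I_\mu$. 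Once these are in hand, Kato's theorem finishes the proof with no further work.
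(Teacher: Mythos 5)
Your proof follows essentially the same route as the paper: realize $k(s)=e^{\mathbf{P}(s)}$ as a simple isolated eigenvalue of $\Gamma_s$ on $C^\alpha(\sta,\cc)$ via the Doob-relativised Markov operator $Q_s$ and the Ionescu--Tulcea--Marinescu theorem, establish analyticity of $s\mapsto\Gamma_s$ in operator norm by a power-series expansion of $e^{s\log\|vx\|}$ controlled by the moment hypothesis, and finish with Kato's analytic perturbation theory together with the positivity of $k(s)$. One point worth correcting: you assert that \textbf{(Cont)} rules out a cyclic rotation structure so that the peripheral spectrum reduces to the single eigenvalue $k(s)$, but the paper's Theorem~\ref{QuasiCompactGammaZ} explicitly permits the peripheral spectrum of $Q_s$ to be a finite cyclic group $\{e^{2\pi i l/m}:l=0,\dots,m-1\}$ with $m\geq 1$ (this possible periodicity is also handled head-on in the decay-of-correlations section); what Kato's theorem actually requires, and what the paper proves, is only that $k(s)$ itself is a simple eigenvalue isolated from the rest of the spectrum, which does not need $m=1$. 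Likewise, \textbf{(Cont)} does not prevent $\|vx\|=0$ for non-invertible matrices; the operator is made well defined by the convention $f(v\cdot\hat x)\|vx\|^s=0$ when $vx=0$, not by proximality.
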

If $\supp \mu \subset GL_d(\cc)$ satisfies the strong irreducibility and contractivity assumptions, the analyticity of $k$ was established by Guivarch and Le Page in \cite{GLP15}.

The key innovation of this paper lies in eliminating the need for the invertibility assumption. By introducing a novel proof technique, we demonstrate how this assumption can be successfully relaxed.

\section{Notations and Sketch of proof} \label{sec:Notations}
Let us now describe the strategy of proof. For $s \in I_\mu$ fixed, $k(s)$ is in fact the spectral radius and an eigenvalue of the operator $\Gamma_s$. In order to study the function $k$, we study the spectral properties of the operators $\Gamma_s$. We introduce properly these operators.
\subsection{Notations}

We consider the projective space $\sta$ equipped with its Borel $\sigma$-algebra $\staalg$. 
For a nonzero vector $x\in\mathbb C^d$, we denote $\hat x$ the corresponding equivalence class of $x$ in $\sta$. For $\hat x \in \sta$, $x$ denotes a representative of norm $1$.
For a linear map  $v\in \mat$ we denote by $v\cdot \hat{x} $ the element of the projective space represented by $v\,x$ whenever $v\,x\neq 0$. We equip the projective space $\sta$ with the metric:
\begin{equation}\label{def:DistanceProj}
    d(\hat{x},\hat{y}) = \sqrt{1-|\langle x,y \rangle |^2},
\end{equation}
where $x,y$ are unit representative vectors. For any continuous function $f: \sta \to \cc$, $v \in \mat$ and $x \in \cc$, we fix $f(v \cdot \hat x)\|vx\|=0$ whenever $vx=0$.
If $M \subset \sta$, we denote by $\P(M)$ the projective span of $M$. It is the smallest projective subspace containing $M$.

We consider the space of infinite sequences $\out:=\mathrm{M}_k(\cc)^{\mathbb N}$, write $\omega = (v_1,v_2, \dots)$ for any such infinite sequence, and denote by $\pi_n$ the canonical projection on the  first $n$ components, $\pi_n(\omega)=(v_1,\ldots,v_n)$. Let $\mathcal M$ be the Borel $\sigma$-algebra on $\mat$. For $n\in\nn$, let $\outalg_n$ be the  $\sigma$-algebra on $\Omega$ generated by the $n$-cylinder sets, i.e.\ $\outalg_n = \pi_n^{-1}(\mathcal M^{\otimes n})$. We equip the space $\Omega$ with the smallest $\sigma$-algebra $\outalg$ containing $\outalg_n$ for all $n\in \nn$. We identify $O_n\in \mathcal M^{\otimes n}$ with $\pi_n^{-1}(O_n)$, a function $f$ on $\mathcal M^{\otimes n}$  with $f \circ \pi_n$ and a measure $\mu^{\otimes n}$ with the measure $\mu^{\otimes n} \circ \pi_n$.
For $i \in \nn$, we define the random variable $V_i : \Omega \mapsto \mat$ as follows: for every $\omega=(v_1,v_2,...) \in \out$,
\begin{equation}
    V_i(\omega)=v_i.
\end{equation}
For $n \in \nn$, we define the random variable $W_n : \Omega \mapsto \mat$ as follows:
\begin{equation}
    W_n=V_n...V_1.
\end{equation}
Let $\mu$ be a measure on $\mat$ and let $s_{-} \in \rr_+^*$ and $s_+ \in \rr_+^* \cup \{ + \infty \}$ such that $s_{-}<s_+$. We denote by $I_\mu$ the interval $(s_{-},s_+)$ and we assume that for every $s \in [s_{-},s_+]$:
\begin{equation}
\label{eq:HypConvergence}
\int_{\mat} \|v\|^s\,\d\mu(v)<\infty.
\end{equation}
We fix $\alpha := \min \{ \frac{s_{-}}{3}, 1 \}$. The choice of this value will be explained later. We denote the space of $\alpha$-Hölder continuous functions on $\sta$ by $C^{\alpha}(\sta,\cc)$. For $f \in C^\alpha(\sta, \cc)$ we note,
$$\| f \|_{\infty} = \sup \{ |f(\hat x)| : \hat x \in \sta \}, \; \mbox{and } m_\alpha(f) = \sup \left \{ \frac{|f(\hat x)-f(\hat y)|}{\d( \hat x,\hat y)^{\alpha}} : \hat x, \hat y \in \sta, \hat x \neq  \hat y \right \}.  $$
The space $C^\alpha(\sta, \mathbb{C})$, equipped with the norm $\| \cdot \|_\alpha = \| \cdot \|_{\infty} + m_\alpha(\cdot)$, forms a Banach space. We denote the spectral radius of an endomorphism $L$ on $C^\alpha(\sta,\mathbb{C})$ as $\rho_\alpha(L)$. It is defined by:
\begin{equation*}
    \rho_\alpha(L) = \underset{n \to \infty}{\lim} \|L^n\|_\alpha^{1/n}.
\end{equation*}

For $s \in I_\mu$, we define the operator $\Gamma_s$ acting on the space $C^\alpha(\sta, \cc)$ : for $f \in C^\alpha(\sta,\cc)$ and $\hat{x} \in \sta$
$$\Gamma_s f(\hat{x}) = \int_{\mat} f( v \cdot \hat{x})\|vx\|^s d \mu(v). $$
The space $\sta$ is compact, therefore $f$ is bounded on $\sta$, it follows that $|f( v \cdot \hat{x})|\|vx\|^s$ is uniformly bounded by $\|f\|_\infty \|v\|^s$ ensuring this integral is well defined. 
If $z=s+it$ where $s \in I_\mu$, we define the operator $\Gamma_z$ as follows: for $f \in C^\alpha(\sta,\cc)$ and $\hat{x} \in \sta$
$$\Gamma_z f(\hat{x}) = \int_{\mat} f( v \cdot \hat{x}) e^{z \log \|vx\|} d \mu(v). $$
We denote by $\mathcal{P}(\sta)$ the set of probability measures on $\sta$. If $\sigma \in \mathcal{P}(\sta)$, for every $s \in I_\mu$, $\sigma \Gamma_s$ denotes the measure on $\sta$ such that for every continuous function $\varphi: \sta \to \cc$,
$$ \sigma \Gamma_s(\varphi)=\int_{\sta} \Gamma_s \varphi (\hat x) d \sigma(\hat x). $$
If $f : \sta \to \cc$ is a continuous function and $\nu \in \mathcal{P}(\sta) $, $f \nu$ denotes the measure defined by:
$$ f \nu(\varphi)=\int_{\sta} \varphi(\hat x) f(\hat x) d\nu( \hat x),$$
for any bounded measurable function $\varphi : \sta \to \cc$.
We define the function $k: I_\mu \to \rr_+$ as follows:
\begin{equation}
\label{Definitionks}
k(s)= \underset{n \to \infty}{\lim} \left [ \int_{\mat^n} \|v_n ... v_1\|^s d \mu^{\otimes n}(v_1,...,v_n) \right]^{\frac{1}{n}}
\end{equation}
Since the sequence $\left(\int_{\mat^n} \|v_n ... v_1\|^s d \mu^{\otimes n}(v_1,...,v_n)\right)$ is submultiplicative, this limit exists by Fekete lemma.

\subsection{Sketch of proof of the analyticity}
The method to prove the analyticity of the function $k$ relies on the quasi-compactness of the operators $\Gamma_s$ and on the analyticity of the map $z \mapsto \Gamma_z$. Indeed, for every $s \in I_\mu$, $k(s)$ is an isolated and simple eigenvalue of $\Gamma_s$ by the spectral gap property of $\Gamma_s$, and by using Theorem VII-1.8 \cite{kato2013perturbation}, the analyticity of $k$ follows. 

We do not directly prove the quasi-compactness of the operators $\Gamma_s$ but we construct Markov operators $Q_s$ by Doob's relativisation procedure. Then, we show that $Q_s$ admits a unique invariant probability measure and a spectral gap. In order to construct these operators, we need this theorem.

\begin{theorem}\label{Constru}
Assume {\bf (Cont)} and {\bf (Irr)} hold. For every $s \in I_\mu$, there exist a unique function $e_s :\sta \to \rr_+$ and a unique probability measure $\sigma$ on $\sta$ such that:
\begin{enumerate}
    \item $\sigma \Gamma_s  = k(s) \sigma$,
    \item $\Gamma_s e_s = k(s) e_s$,
    \item $\sigma(e_s)=1$.
\end{enumerate}
Moreoever, $e_s$ is strictly positive and $\bar{s}$-Hölder where $\bar{s}=\min \{1,\frac{s}{2}\}$.
\end{theorem}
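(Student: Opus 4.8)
The plan is to obtain the eigendata $(k(s),e_s,\sigma)$ of $\Gamma_s$ by a fixed-point argument on the space of probability measures, then bootstrap regularity and strict positivity of the eigenfunction, and finally establish uniqueness from the irreducibility and contractivity assumptions.

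First I would produce the eigenmeasure $\sigma$. Consider the continuous affine-type map on the compact convex set $\mathcal P(\sta)$ (compact for the weak-$*$ topology since $\sta$ is compact) given by $\sigma \mapsto \frac{\sigma\Gamma_s}{\sigma\Gamma_s(\mathbbm 1)}$; note $\sigma\Gamma_s(\mathbbm 1)=\int \|vx\|^s\,d\mu(v)\,d\sigma(\hat x)>0$ is bounded above and below by positive constants using \eqref{eq:HypConvergence} and the fact that $\mu$ is not the zero measure (which is forced by \textbf{(Irr)}). This map is continuous from $\mathcal P(\sta)$ to itself, so the Schauder–Tychonoff fixed point theorem yields $\sigma$ with $\sigma\Gamma_s = \lambda\sigma$ for $\lambda = \sigma\Gamma_s(\mathbbm 1)>0$. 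Iterating, $\sigma\Gamma_s^n(\mathbbm 1)=\lambda^n$; on the other hand $\sigma\Gamma_s^n(\mathbbm 1)=\int_{\mat^n}\|v_n\cdots v_1 x\|^s\,d\mu^{\otimes n}\,d\sigma(\hat x)$, which by comparison with the operator norm and a lower bound for $\|v_n\cdots v_1 x\|$ along a well-chosen direction (using \textbf{(Cont)} to locate a direction that is not too contracted) is comparable, up to subexponential factors, to $\int \|v_n\cdots v_1\|^s\,d\mu^{\otimes n}$; taking $n$-th roots and $n\to\infty$ gives $\lambda = k(s)$.

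Next I would construct $e_s$. The natural candidate is a limit of Cesàro averages $\frac1N\sum_{n=1}^N k(s)^{-n}\Gamma_s^n\mathbbm 1$, or equivalently a fixed point of the normalized operator $f\mapsto k(s)^{-1}\Gamma_s f$. To make this converge in $C^\alpha$ one needs an a priori Hölder bound: this is where the choice $\alpha=\min\{s_-/3,1\}$ enters. The key estimate is a Doeblin–Fortet / Lasota–Yorke inequality for the normalized operator on $C^\alpha(\sta,\cc)$, $m_\alpha(k(s)^{-n}\Gamma_s^n f)\le C\,r^n m_\alpha(f)+C\|f\|_\infty$ with $r<1$, obtained from the fact that projective action contracts the metric $d$ while the Hölder exponent $\alpha\le s/3$ keeps the weight $\|vx\|^s$ under control when comparing $\|vx\|^s$ at nearby points $\hat x,\hat y$ (the elementary bound $|\,\|vx\|^s-\|vy\|^s\,|\lesssim \|v\|^s d(\hat x,\hat y)$ together with $d(v\cdot\hat x, v\cdot\hat y)\le \|v\|^2\|v^{-1}\text{-type factor}\| d(\hat x,\hat y)$ replaced, in the non-invertible case, by the cruder $d(v\cdot\hat x,v\cdot\hat y)\le \frac{\|v\|^2}{\|vx\|\|vy\|}d(\hat x,\hat y)$, is what forces $\alpha$ small enough that the singular weight is absorbed). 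Granting this, the averaged sequence is bounded in $C^\alpha$, hence relatively compact in $C^{\alpha'}$ for $\alpha'<\alpha$, and any limit point $e_s$ is a nonnegative, nonzero, $\bar s$-Hölder fixed point: $\Gamma_s e_s=k(s)e_s$. Strict positivity follows from \textbf{(Irr)}: if $Z=\{e_s=0\}$ were nonempty and proper, then $\Gamma_s e_s(\hat x)=0$ forces $e_s(v\cdot\hat x)=0$ for $\mu$-a.e.\ $v$ and every $\hat x\in Z$, so the projective span $\P(Z)$ would be $\supp\mu$-invariant, contradicting \textbf{(Irr)} (after checking $\P(Z)$ is a proper nontrivial subspace, using that $e_s\not\equiv 0$ and that by continuity $Z$ is closed); hence $Z=\emptyset$. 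Normalizing so that $\sigma(e_s)=1$ (possible since $\sigma(e_s)>0$ by strict positivity) gives conditions (1)–(3).

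Finally, uniqueness. For the measure: if $\sigma'$ is another eigenmeasure with eigenvalue $k(s)$ (the eigenvalue is forced to be $k(s)$ by the same $n$-th root computation), then pairing $\sigma'$ against $e_s$ gives $k(s)\sigma'(e_s)=\sigma'(\Gamma_s e_s)=(\sigma'\Gamma_s)(e_s)=k(s)\sigma'(e_s)$, which is not yet enough; instead one runs the standard argument: the normalized operator $\tilde\Gamma f=k(s)^{-1}e_s^{-1}\Gamma_s(e_s f)$ is a Markov operator (this is exactly Doob's transform alluded to before Theorem \ref{Constru}), and the contractivity/irreducibility assumptions make its dual action on $\mathcal P(\sta)$ have a unique fixed point by a contraction argument on the projective metric — any two eigenmeasures of $\Gamma_s$ correspond to fixed points of this dual Markov operator and hence coincide. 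Uniqueness of $e_s$ given $\sigma$ follows the same way: two normalized eigenfunctions $e_s,e_s'$ give $e_s'/e_s$ a bounded $\tilde\Gamma$-harmonic function, which must be constant by the unique ergodicity just established, and the constant is $1$ by $\sigma(e_s)=\sigma(e_s')=1$. The main obstacle is the Doeblin–Fortet estimate in the non-invertible setting: without $v\in GL_d(\cc)$ the projective contraction factor $\frac{\|v\|^2}{\|vx\|\|vy\|}$ and the weight $\|vx\|^s$ both blow up exactly where $vx\to 0$, and reconciling them is precisely why the exponent must be taken as small as $s_-/3$; carrying this through uniformly in $s$ over compact subintervals of $I_\mu$ is the technical heart of the argument.
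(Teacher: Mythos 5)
Your high-level plan (Schauder--Tychonoff for the eigenmeasure, Ces\`aro averages plus Ascoli for the eigenfunction, and a Doob transform to a Markov operator $Q_s$ for uniqueness) is the same as the paper's, but three of the steps as written either have a gap or take a route the assumptions do not obviously support.

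\textbf{Strict positivity.} Your argument is wrong as stated. You claim that if $Z=\{e_s=0\}$ is nonempty then $\P(Z)$ is a \emph{proper} nontrivial $\supp\mu$-invariant subspace ``using that $e_s\not\equiv 0$ and that $Z$ is closed,'' contradicting \textbf{(Irr)}. But $e_s\not\equiv 0$ only gives $Z\neq\sta$, and a closed proper subset of $\sta$ can still span all of $\cc^d$, so $\P(Z)$ need not be proper. The paper in fact concludes the \emph{opposite}: by \textbf{(Irr)} the span of $Z$ is forced to be all of $\cc^d$, so $Z$ is not contained in a hyperplane; one then picks a probability measure $\nu$ supported on $Z$, applies the hyperplane lower bound (Lemma~\ref{IneqSigma}) to get $\nu(f_n)\geq c_s(\nu)>0$ uniformly, and deduces $\nu(e_s)>0$, contradicting $e_s\equiv 0$ on $\supp\nu$. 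So the contradiction does not come from \textbf{(Irr)} directly; it comes from Lemma~\ref{IneqSigma} after \textbf{(Irr)} has been used to show $Z$ is not in a hyperplane.

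\textbf{Identifying $\lambda=k(s)$.} You propose to compare $\sigma\Gamma_s^n(\mathbbm 1)$ with $\int\|W_n\|^s\,d\mu^{\otimes n}$ by finding, via \textbf{(Cont)}, a direction not too contracted. This is not well posed: the ``good'' direction depends on the matrix, and you are integrating over $\mu^{\otimes n}$. The paper avoids this by first using \textbf{(Irr)} to show $\supp\sigma$ is not contained in a hyperplane, and then invoking Lemma~\ref{IneqSigma}: $\int_\sta\|Wx\|^s\,d\sigma(\hat x)\geq c_s(\sigma)\|W\|^s$ uniformly in $W$, which immediately pinches $\lambda^n$ between $c_s(\sigma)\int\|W_n\|^sd\mu^{\otimes n}$ and $\int\|W_n\|^sd\mu^{\otimes n}$. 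No use of \textbf{(Cont)} is needed here.

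\textbf{Uniqueness.} You invoke ``a contraction argument on the projective metric'' for the dual of the Doob-transformed operator. Under only \textbf{(Irr)} and \textbf{(Cont)}, and especially for non-invertible matrices, there is no reason the projective action is a strict contraction, and the paper does not prove one. Instead it applies Raugi's lemma (Lemma~4.24 of \cite{GLP04}): equicontinuity of $(Q_s^n\varphi)$ (which needs only the \emph{boundedness} of the quantity $h_\alpha(n)$, not the full Doeblin--Fortet inequality with $h_\alpha(n)\to 0$) together with the fact that $Q_s$-invariant continuous functions are constant (Lemma~\ref{lem:Constant}, where \textbf{(Cont)} actually enters, via the rank-one limit). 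Your remark that the Doeblin--Fortet estimate is the technical heart of \emph{this} theorem therefore overstates what is needed here --- that estimate belongs to the quasi-compactness argument in Section~\ref{SecQuasiCompact}, not to Theorem~\ref{Constru}.
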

The proof of this theorem is the goal of Section \ref{Sec:Construction}. The next step is the quasi-compactness of the operators $Q_s$. This property implies a spectral gap for the operators and ensures that $k(s)$ is an isolated eigenvalue of $\Gamma_s$.
The proof of the following theorem is the object of Section \ref{sec:ProofQuasiCompact}.

\begin{theorem}\label{QuasiCompactGammaZ}
Assume {\bf (Cont)} and {\bf (Irr)} hold. For every $s \in I_\mu$, the operator $\Gamma_s$ is a quasi-compact operator on $(C^\alpha(\sta,\cc)$. There exist two subspaces $\mathcal{F}_s$ and $\mathcal{G}_s$ of $C^\alpha(\sta,\cc)$ such that 
    \begin{enumerate}
        \item \label{Compact1} $C^\alpha(\sta,\cc) = \mathcal{F}_s \oplus \mathcal{G}_s $,
        \item \label{Compact1bis} $\Gamma_s \mathcal F_s\subset \mathcal F_s$ and $\Gamma_s\mathcal G_s\subset \mathcal G_s$,
        \item \label{Compact3}$\rho_\alpha(\Gamma_s|_{\mathcal{G}_s}) < \rho_\alpha(\Gamma_s)=k(s) $,
        \item \label{Compact2}$\mathcal{F}_s$ is finite dimensional and the spectrum of $\Gamma_s|_{\mathcal{F}_s}$ denoted by $\mathrm{Spec}(\Gamma_s|_{\mathcal{F}_s})$ is a finite subgroup of $U(1)=\{z\in\cc,\vert z\vert=1\}$ given by:
        $$\mathrm{Spec}(\Gamma_s|_{\mathcal{F}_s})=\{ e^{i\frac{l}{m}2\pi} \; | \; l=0,...,m-1 \},$$
        where $m \geq 1$. In addition, all these eigenvalues are simple.
    \end{enumerate}
\end{theorem}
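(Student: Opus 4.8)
The plan is to deduce the quasi-compactness of $\Gamma_s$ from that of the Markov operators $Q_s$, which are obtained from $\Gamma_s$ by Doob's $h$-transform with $h=e_s$ and the normalization $k(s)$: concretely, for $f\in C^\alpha(\sta,\cc)$,
$$Q_s f(\hat x)=\frac{1}{k(s)\,e_s(\hat x)}\,\Gamma_s(e_s f)(\hat x)=\frac{1}{k(s)\,e_s(\hat x)}\int_{\mat} e_s(v\cdot\hat x)\,f(v\cdot\hat x)\,\|vx\|^s\,d\mu(v).$$
By Theorem~\ref{Constru}, $e_s$ is strictly positive and $\bar s$-Hölder on the compact space $\sta$, hence bounded above and below and $\alpha$-Hölder (since $\alpha\le\bar s$); therefore the multiplication operator $M_{e_s}\colon f\mapsto e_s f$ is a bounded invertible operator on $C^\alpha(\sta,\cc)$ with bounded inverse $M_{1/e_s}$, and we have the intertwining identity $\Gamma_s=k(s)\,M_{e_s}\circ Q_s\circ M_{e_s}^{-1}$. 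Consequently $\Gamma_s$ and $k(s)Q_s$ are conjugate on $C^\alpha(\sta,\cc)$, so they share the same spectrum, the same spectral decomposition (transported by $M_{e_s}$), and quasi-compactness of one is equivalent to quasi-compactness of the other.

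The core analytic input is the Ionescu–Tulcea–Marinescu theorem applied to $Q_s$ on the pair of spaces $(C^\alpha(\sta,\cc),\|\cdot\|_\alpha)$ and $(C(\sta,\cc),\|\cdot\|_\infty)$ (or $L^1$ of the invariant measure). First I would record that $Q_s$ is a Markov operator: $Q_s\mathbb 1=\mathbb 1$ by the eigenfunction equation $\Gamma_s e_s=k(s)e_s$, $Q_s$ is positivity-preserving, and it is a contraction on $(C(\sta,\cc),\|\cdot\|_\infty)$, so $\rho_\alpha(Q_s)=1$ and the essential spectral radius will control the gap. Next I would establish a Doeblin–Fortet (Lasota–Yorke) inequality: there exist $n_0\in\nn$, $R\ge 0$ and $0\le r<1$ such that for all $f\in C^\alpha(\sta,\cc)$,
$$m_\alpha(Q_s^{n_0}f)\le R\,\|f\|_\infty + r^{n_0}\,m_\alpha(f),$$
which, combined with $\|Q_s^{n_0}f\|_\infty\le\|f\|_\infty$, yields $\|Q_s^{n_0}f\|_\alpha\le R'\,\|f\|_\infty + r^{n_0}\,\|f\|_\alpha$. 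As flagged in the introduction, the route to this is to first prove the weaker bound $\|Q_s^n f\|_\alpha\le R|f|+Ch(n)\|f\|_\alpha$ with $h(n)\to 0$ — using the contractivity assumption {\bf (Cont)} to guarantee that long products contract the projective metric strongly enough on average — and then pick $n_0$ large enough that $Ch(n_0)<1$. Together with the fact that the closed unit ball of $(C^\alpha(\sta,\cc),\|\cdot\|_\alpha)$ is relatively compact in $(C(\sta,\cc),\|\cdot\|_\infty)$ (Arzelà–Ascoli), ITM gives: $Q_s$ is quasi-compact on $C^\alpha(\sta,\cc)$, its peripheral spectrum consists of finitely many simple eigenvalues lying on $U(1)$, the rest of the spectrum is contained in a disk of radius $<1$, and the peripheral eigenspaces are finite-dimensional.

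From the ITM output I would extract the stated structure. Irreducibility {\bf (Irr)} (via Theorem~\ref{Constru}, giving uniqueness of the invariant measure $\sigma_s=e_s\,\sigma$ for $Q_s$) forces $1$ to be a simple eigenvalue of $Q_s$, and the peripheral spectrum to be a cyclic group $\{e^{2\pi i l/m}:l=0,\dots,m-1\}$ for some $m\ge 1$ — this is the standard consequence of the fact that a positive operator's peripheral spectrum is invariant under multiplication by its elements, and each peripheral eigenvalue is simple because the corresponding eigenfunction has constant modulus (by the Markov/contraction property) hence is determined up to a scalar. Let $\mathcal F_s'$ be the (finite-dimensional) sum of the peripheral generalized eigenspaces of $Q_s$ and $\mathcal G_s'$ the complementary closed $Q_s$-invariant subspace on which $\rho_\alpha(Q_s|_{\mathcal G_s'})<1$; these exist by quasi-compactness and the Riesz decomposition. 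Transporting through the conjugacy, set $\mathcal F_s=M_{e_s}\mathcal F_s'$ and $\mathcal G_s=M_{e_s}\mathcal G_s'$; then $C^\alpha(\sta,\cc)=\mathcal F_s\oplus\mathcal G_s$ with both subspaces $\Gamma_s$-invariant, $\dim\mathcal F_s<\infty$, $\mathrm{Spec}(\Gamma_s|_{\mathcal F_s})=k(s)\cdot\mathrm{Spec}(Q_s|_{\mathcal F_s'})=\{k(s)e^{2\pi i l/m}:l=0,\dots,m-1\}$ — wait, the statement says the spectrum of $\Gamma_s|_{\mathcal F_s}$ is in $U(1)$, so in fact one should normalize and read it as $\mathrm{Spec}(\Gamma_s|_{\mathcal F_s})/k(s)$ being the cyclic group, consistent with item~\eqref{Compact3} asserting $\rho_\alpha(\Gamma_s)=k(s)$ and $\rho_\alpha(\Gamma_s|_{\mathcal G_s})=k(s)\rho_\alpha(Q_s|_{\mathcal G_s'})<k(s)$ — and all peripheral eigenvalues simple, giving items~\eqref{Compact1}–\eqref{Compact2}. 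Finally, $\rho_\alpha(\Gamma_s)=k(s)$ follows because $k(s)$ is a genuine eigenvalue (eigenfunction $e_s$) and nothing in the spectrum exceeds it in modulus.

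The main obstacle is the Doeblin–Fortet inequality for $Q_s$ under the weak irreducibility assumption {\bf (Irr)} rather than strong irreducibility, and without invertibility: one must control the Hölder seminorm of $Q_s^n f$, which involves estimating both the contraction rate of $v\cdot$ in the projective metric along typical long products (using {\bf (Cont)} to produce near-rank-one matrices, hence strong contraction) and the Hölder regularity of the weight $\hat x\mapsto e_s(v\cdot\hat x)\|vx\|^s$ and of $e_s$ itself near the "bad set" where $vx$ is small or zero. This is precisely the step the introduction advertises as the paper's main innovation, via the auxiliary bound with $h(n)\to 0$; everything downstream is a standard application of ITM and the Riesz–Doob conjugacy.
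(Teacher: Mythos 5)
Your proposal follows the paper's route almost exactly: you build $Q_s$ as the Doob $h$-transform of $\Gamma_s$ with $h=e_s$, derive its quasi-compactness from Ionescu--Tulcea--Marinescu via the auxiliary bound $h_\alpha(n)\to 0$ (which is indeed the paper's advertised innovation, proved through the martingale argument for $a_2(\widetilde W_n)\to 0$ under $\qq^s$), extract the cyclic peripheral structure and simplicity from uniqueness of the invariant measure, and transport everything back to $\Gamma_s$ through the conjugacy $\Gamma_s=k(s)\,M_{e_s}\,Q_s\,M_{e_s}^{-1}$. You also correctly flag the normalization quirk in the theorem statement --- $\mathrm{Spec}(\Gamma_s|_{\mathcal F_s})$ is $k(s)\cdot\{e^{2\pi i l/m}\}$, so the cyclic group sits in $U(1)$ only after dividing by $k(s)$ --- which matches how the paper actually uses the result (as a decomposition of $Q_s$, not $\Gamma_s$, in Eq.~\eqref{eq:decompoquasicompact}).
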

Finally the proof of the analyticity of the function $k$ is detailed in Section \ref{Sec:Analyticks} and requires the analyticity of the map $s \mapsto \Gamma_s$.

\section{Construction of Markov Operators $Q_s$}\label{Sec:Construction}
The goal of this section is to prove Theorem \ref{Constru}.
The proof of the theorem is done in two steps. Firstly, we show the existence of a probability measure $\sigma$ and a strictly positive function $e_s$ such that $\sigma \Gamma_s  = k(s) \sigma$ and $\Gamma_s e_s = k(s) e_s$. By Doob's relativisation procedure, we properly define operators $Q_s$ as follows: for every $f \in C^\alpha(\sta,\cc)$ and $\hat{x} \in \sta$:
\begin{equation}\label{DefQs}
    Q_s f(\hat{x}) = \frac{1}{k(s)}\int_{\mat} f(v \cdot \hat{x}) e_s(v \cdot \hat{x}) \|vx\|^s d \mu(v).
\end{equation}
Secondly, we show that this Markov operator $Q_s$ has a unique invariant probability measure $\eta^s$. It ensures that $\Gamma_s$ admits a unique probability measure $\sigma$ such that $\sigma \Gamma_s = k(s) \sigma$.

\subsection{$k(s)$ is an eigenvalue of $\Gamma_s$.}
The goal of this section is to prove the following proposition which allows us to construct the Markov operators $Q_s$ \eqref{DefQs}.

\begin{proposition}\label{Prop:ExistenceProbaFonction}
For every $s \in I_\mu$, there exists a probability measure $\sigma$ and positive $\Bar{s}$-Hölder function $e_s$ such that $\sigma \Gamma_s = k(s) \sigma$ and $\Gamma_s e_s = k(s) e_s$ where $\Bar{s}=\min \{1,\frac{s}{2}\}.$   
\end{proposition}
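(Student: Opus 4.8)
The plan is to obtain the eigenmeasure $\sigma$ first, by a fixed-point argument, and then extract the eigenfunction $e_s$ as a limit of normalized iterates of $\Gamma_s$ acting on the constant function $\mathbf 1$.

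For the measure: consider the map $\Phi_s$ on the space $\mathcal P(\sta)$ of probability measures on $\sta$ defined by
$$\Phi_s(\nu) = \frac{\nu\Gamma_s}{\nu\Gamma_s(\mathbf 1)}.$$
Since $\sta$ is compact and $\nu \mapsto \nu\Gamma_s$ is weak-$*$ continuous (the integrability assumption \eqref{eq:HypConvergence} makes $\Gamma_s$ map $C^0(\sta,\cc)$ to $C^0(\sta,\cc)$, using that $f(v\cdot\hat x)\|vx\|^s$ is dominated by $\|f\|_\infty\|v\|^s$ and the dominated convergence theorem), and since $\nu\Gamma_s(\mathbf 1) = \int\|vx\|^s\,d\mu(v)\,d\nu(\hat x)$ is bounded below by a positive constant (if $\mu$ is not the zero measure; the degenerate case is trivial), $\Phi_s$ is a continuous self-map of the convex compact set $\mathcal P(\sta)$ in the weak-$*$ topology. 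By the Schauder–Tychonoff fixed point theorem there is $\sigma$ with $\Phi_s(\sigma)=\sigma$, i.e.\ $\sigma\Gamma_s = \lambda\sigma$ with $\lambda = \sigma\Gamma_s(\mathbf 1)>0$. The identification $\lambda = k(s)$ comes from iterating: $\sigma\Gamma_s^n(\mathbf 1) = \lambda^n$, while on the other hand $\sigma\Gamma_s^n(\mathbf 1) = \int_{\mat^n}\int_\sta \|v_n\cdots v_1 x\|^s\,d\sigma(\hat x)\,d\mu^{\otimes n}$, which by the submultiplicativity/supermultiplicativity-type comparison between $\|Wx\|$ and $\|W\|$ (uniformly over $\hat x$, up to polynomial-in-$n$... actually up to multiplicative constants after using \textbf{(Irr)} to guarantee $\sup_{\hat x}\|Wx\|\asymp\|W\|$ on average — this is where irreducibility enters) has $n$-th root converging to $k(s)$. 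I would state the comparison $\int\|W_n\|^s\,d\mu^{\otimes n} \asymp \int\int\|W_n x\|^s d\sigma d\mu^{\otimes n}$ carefully; a cleaner route is to use that $\|Wx\| \le \|W\|$ always gives $\lambda \le k(s)$, and for the reverse inequality use \textbf{(Irr)} to find finitely many $\hat x_1,\dots,\hat x_N$ such that $\max_j \|W\hat x_j\| \ge c\|W\|$ for all $W$, hence some fixed $\hat x_j$ carries a definite proportion.

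For the eigenfunction: set $e_s^{(n)} = k(s)^{-n}\,\Gamma_s^n \mathbf 1$. Each is a continuous strictly positive function (strict positivity because $\|vx\|^s>0$ for all $\hat x$ outside the kernel directions, and \textbf{(Irr)} prevents $\Gamma_s^n\mathbf 1$ from vanishing once $n$ is large enough — no proper subspace is invariant, so for any $\hat x$ some product $v_n\cdots v_1 x \ne 0$). The key estimate is a uniform Hölder bound: I would show $m_{\bar s}(e_s^{(n)})$ is bounded uniformly in $n$, where $\bar s = \min\{1,s/2\}$, using the distance estimate $d(v\cdot\hat x, v\cdot\hat y) \le \|v\|^2\|vx\|^{-1}\|vy\|^{-1} d(\hat x,\hat y)$ (a standard bound for the projective action, coming from \eqref{def:DistanceProj}) together with the inequality $|\,\|vx\|^s - \|vy\|^s\,| \lesssim \|v\|^s\, d(\hat x,\hat y)^{\bar s}$ for $s/2\le 1$, and the fact that $\sigma$ charges enough of $\sta$ that the normalizing denominators don't degenerate. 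Combined with the obvious uniform $L^\infty$ bound $\|e_s^{(n)}\|_\infty \le (\text{ratio of }\sup\text{ to }\sigma\text{-average of }\Gamma_s\mathbf 1)$ which is again controlled via \textbf{(Irr)}, Arzelà–Ascoli gives a subsequential uniform limit $e_s$; passing to the limit in $\Gamma_s e_s^{(n)} = k(s)\,e_s^{(n+1)}$ yields $\Gamma_s e_s = k(s)\,e_s$, and $e_s$ inherits $\bar s$-Hölderness. Strict positivity of the limit: since $\sigma(e_s^{(n)})=1$ for all $n$ (as $\sigma\Gamma_s = k(s)\sigma$), $\sigma(e_s)=1$, so $e_s\not\equiv 0$, and then $\Gamma_s^N e_s = k(s)^N e_s$ with the support-spreading from \textbf{(Irr)} forces $e_s>0$ everywhere. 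Finally rescale so $\sigma(e_s)=1$ (already automatic).

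\textbf{The main obstacle} I anticipate is establishing the uniform bounds — both the $L^\infty$ bound on $e_s^{(n)}$ and the uniform Hölder seminorm bound — precisely at the points where the contractivity direction $v_\infty$ (rank one) could make $\|vx\|$ very small for $\hat x$ near $\ker v_\infty$. One must argue that the normalization by $k(s)^n$, together with averaging against $\mu^{\otimes n}$ and the irreducibility-driven lower bounds on $\Gamma_s^n\mathbf 1$, tames these near-degeneracies; this is exactly the spot where \textbf{(Irr)} does real work (a proper-subspace obstruction would be fatal) and where the choice $\bar s = \min\{1,s/2\}$ (and ultimately $\alpha = \min\{s_-/3,1\}$) is dictated by the exponents appearing in the projective distance contraction estimate. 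I would also need to check the degenerate edge cases (e.g.\ $\mu$ supported on nilpotent matrices is excluded by \textbf{(Irr)}, and $k(s)>0$ always holds under \textbf{(Irr)}) so that all denominators are legitimately positive.
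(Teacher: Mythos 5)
Your overall strategy---Schauder--Tychonoff on $\mathcal P(\sta)$ for the eigenmeasure, then a compactness argument on normalized iterates for the eigenfunction---is exactly the route the paper takes, and the ingredients you name (the normalized map $\nu\mapsto\nu\Gamma_s/\nu\Gamma_s(\mathbf 1)$, the projective-distance contraction estimate, irreducibility preventing degenerate subspaces, Arzelà--Ascoli) are the right ones. But there are two places where what you wrote does not quite close.

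First and most importantly, the limit passage for the eigenfunction has a genuine gap. If $e_s^{(n)}=k(s)^{-n}\Gamma_s^n\mathbf 1$ and $(e_s^{(n_l)})_l$ is a subsequence converging uniformly to some $e_s$, the identity $\Gamma_s e_s^{(n_l)}=k(s)\,e_s^{(n_l+1)}$ only lets you conclude $\Gamma_s e_s = k(s)\,e_s'$ where $e_s'$ is a limit of $e_s^{(n_l+1)}$, which need not coincide with $e_s$. Nothing forces the two subsequential limits to agree. The paper avoids this by applying Arzelà--Ascoli to the Ces\`aro averages $f_n=\frac1n\sum_{m=1}^n k(s)^{-m}\Gamma_s^m\mathbf 1$: then $\Gamma_s f_{n}-k(s)f_{n}=\frac{1}{n}\bigl[k(s)^{-n}\Gamma_s^{n+1}\mathbf 1-\Gamma_s\mathbf 1\bigr]$, whose sup-norm is $O(1/n)$, so any subsequential limit $f$ of $(f_n)$ is automatically an exact eigenfunction. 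You need this (or some equivalent averaging) in your argument; without it you do not get a fixed point.

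Second, your route to identifying the fixed-point eigenvalue $\lambda$ with $k(s)$---``find finitely many $\hat x_1,\dots,\hat x_N$ with $\max_j\|W\hat x_j\|\ge c\|W\|$''---is shaky: you have no control over whether $\sigma$ assigns mass near those particular $\hat x_j$. The clean and correct step is what the paper does: from the fixed-point equation deduce that for $\sigma$-a.e.\ $\hat x$ and $\mu$-a.e.\ $v$, either $vx=0$ or $v\cdot\hat x\in\supp\sigma$, so $\linspan\{x:\hat x\in\supp\sigma\}$ is $\supp\mu$-invariant, hence equals $\cc^d$ by \textbf{(Irr)}. Then Lemma~\ref{IneqSigma} gives $\int_\sta\|vx\|^s\,d\sigma(\hat x)\ge c_s(\sigma)\|v\|^s$ for all $v$, and the sandwich $c_s(\sigma)\int\|W_n\|^s\,d\mu^{\otimes n}\le\lambda^n\le\int\|W_n\|^s\,d\mu^{\otimes n}$ forces $\lambda=k(s)$. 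The same estimate then gives your uniform $L^\infty$ and H\"older bounds ($\|e_s^{(n)}\|_\infty\le 1/c_s(\sigma)$, and the seminorm bound via the paper's Lemma A.2 of \cite{BHP24}), and it is also exactly what powers the strict-positivity argument: if $M_0=\{f=0\}\ne\emptyset$, the eigenequation plus \textbf{(Irr)} forces $M_0$ not to lie in a hyperplane, and then $\nu(f_n)\ge c_s(\nu)>0$ for any probability $\nu$ carried by $M_0$, contradicting $\nu(f)=0$.
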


We first recall a lemma from \cite{GLP15}.

\begin{lemma}[Lemma 2.7 \cite{GLP15}]\label{IneqSigma}
Let $\sigma$ be a probability measure on $\sta$ such that $\supp \sigma$ is not included in a hyperplane. Then there exists $c_s(\sigma)>0$ such that for every $v \in \mat$,

\begin{equation}\label{Eq.Sigma}
    \int_{\sta} \|vx\|^s \d \sigma(\hat{x}) \geq c_s(\sigma) \|v\|^s.
\end{equation}
\end{lemma}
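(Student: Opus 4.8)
\textbf{Proof plan for Lemma \ref{IneqSigma}.}

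The plan is to exploit homogeneity to reduce to matrices of norm one, and then use a compactness argument together with the non-degeneracy hypothesis on $\supp\sigma$ to get a uniform lower bound. First I would observe that both sides of \eqref{Eq.Sigma} are homogeneous of degree $s$ in $v$, so it suffices to prove the inequality for all $v\in\mat$ with $\|v\|=1$, i.e.\ to show
\begin{equation*}
    c_s(\sigma):=\inf_{\|v\|=1}\int_{\sta}\|vx\|^s\,\d\sigma(\hat x)>0.
\end{equation*}
The integrand $\hat x\mapsto\|vx\|^s$ is well defined on $\sta$ (it does not depend on the choice of unit representative since $\|v(\lambda x)\|^s=|\lambda|^s\|vx\|^s$ and we take $|\lambda|=1$), continuous, and non-negative, so each integral is a well-defined non-negative number; the point is strict positivity, uniformly in $v$.

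Next I would argue that for each fixed $v$ with $\|v\|=1$ the integral is strictly positive. Indeed, if $\int_{\sta}\|vx\|^s\,\d\sigma(\hat x)=0$, then $\|vx\|=0$ for $\sigma$-a.e.\ $\hat x$, hence $\supp\sigma\subset\P(\ker v)$. Since $\|v\|=1$, $v\neq 0$, so $\ker v$ is a proper subspace of $\cc^d$, i.e.\ contained in a hyperplane; this contradicts the hypothesis that $\supp\sigma$ is not included in a hyperplane. Therefore $\Phi(v):=\int_{\sta}\|vx\|^s\,\d\sigma(\hat x)>0$ for every $v$ in the unit sphere of $\mat$.

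Finally I would upgrade this pointwise positivity to a uniform bound by compactness. The unit sphere $S=\{v\in\mat:\|v\|=1\}$ is compact. I would check that $\Phi$ is continuous on $S$: if $v_n\to v$ in $\mat$, then $\|v_nx\|^s\to\|vx\|^s$ pointwise on $\sta$ and these functions are uniformly bounded (by $(\sup_n\|v_n\|)^s<\infty$), so by dominated convergence $\Phi(v_n)\to\Phi(v)$. A continuous strictly positive function on a compact set attains a strictly positive minimum, which we call $c_s(\sigma)>0$. Then for an arbitrary nonzero $v\in\mat$, writing $v=\|v\|\,(v/\|v\|)$ and using homogeneity gives $\int_{\sta}\|vx\|^s\,\d\sigma(\hat x)=\|v\|^s\,\Phi(v/\|v\|)\geq c_s(\sigma)\|v\|^s$, and the case $v=0$ is trivial. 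The only mild subtlety — and the ``main obstacle'' in an otherwise soft argument — is the bookkeeping around the convention $f(v\cdot\hat x)\|vx\|=0$ when $vx=0$ and the fact that $\hat x\mapsto\|vx\|^s$ descends to a continuous function on the projective space; once that is in place the compactness argument is routine.
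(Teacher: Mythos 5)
Your proof is correct, and it is the standard argument: the paper itself does not prove this lemma but cites it as \cite[Lemma~2.7]{GLP15}, where the proof is given by essentially the same compactness-plus-homogeneity reasoning you describe (reduce to $\|v\|=1$, observe that $\int\|vx\|^s\,\d\sigma=0$ would force $\supp\sigma\subset\P(\ker v)$, contradicting the non-degeneracy hypothesis, and then take the minimum of the continuous function $v\mapsto\int\|vx\|^s\,\d\sigma$ over the compact unit sphere of $\mat$). One small remark: the ``subtlety'' you flag at the end about the convention $f(v\cdot\hat x)\|vx\|=0$ is actually irrelevant here, since the integrand $\hat x\mapsto\|vx\|^s$ involves no function $f$ and is already a continuous, well-defined function on $\sta$ regardless of whether $vx=0$.
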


We now prove the existence of a measure $\sigma$ satisfying $\sigma \Gamma_s = k(s) \sigma$.

\begin{lemma}\label{Lemma:ExistenceSigma}
Assume {\bf (Irr)} holds. For every $s \in I_\mu$, there exists a probability measure $\sigma \in \mathcal{P}(\sta)$ such that $\sigma \Gamma_s = k(s) \sigma$. Moreover, for any $\sigma$ verifying the previous equality, $\supp \sigma$ is not included in a hyperplane and:
\begin{align*}
    \int_{\mat^n} \|W_n\|^s d \mu^{\otimes n} \geq k(s)^n \geq c_s(\sigma) \int_{\mat^n} \|W_n\|^s d \mu^{\otimes n},
\end{align*}
where $c_s(\sigma)$ is defined in Lemma \ref{IneqSigma}.
\end{lemma}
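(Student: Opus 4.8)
The plan is to produce $\sigma$ as a fixed point of a normalized version of $\Gamma_s$ acting on the (weak-$*$ compact, convex) space $\mathcal{P}(\sta)$, and then to leverage assumption \textbf{(Irr)} to show the support cannot lie in a hyperplane, from which the sandwich inequality drops out of Lemma 1.5 together with submultiplicativity.

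First I would set up the fixed-point argument. For $\sigma \in \mathcal{P}(\sta)$, the measure $\sigma\Gamma_s$ is a nonnegative measure on $\sta$ with total mass $\sigma\Gamma_s(\mathbf{1}) = \int_{\sta}\int_{\mat} \|vx\|^s\,\d\mu(v)\,\d\sigma(\hat x)$, which is finite and \emph{strictly positive} (the integrand is positive $\mu$-a.e. once we discard the measure-zero possibility $v = 0$; and if $\mu$ were supported on nilpotent-like configurations making all these vanish, \textbf{(Irr)} would be violated). Hence the map
\[
\Phi(\sigma) = \frac{\sigma\Gamma_s}{\sigma\Gamma_s(\mathbf 1)}
\]
is a well-defined continuous self-map of $\mathcal{P}(\sta)$ for the weak-$*$ topology — continuity because $\hat x \mapsto \Gamma_s\varphi(\hat x)$ is continuous and bounded for every $\varphi \in C(\sta,\cc)$, a point one should check using dominated convergence and the continuity of $v\cdot\hat x$ off the vanishing locus. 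Since $\sta$ is compact, $\mathcal{P}(\sta)$ is a compact convex subset of the dual of $C(\sta,\cc)$, so the Schauder–Tychonoff fixed-point theorem yields $\sigma$ with $\Phi(\sigma) = \sigma$, i.e.\ $\sigma\Gamma_s = \lambda\sigma$ with $\lambda = \sigma\Gamma_s(\mathbf 1) > 0$. Iterating, $\sigma\Gamma_s^n = \lambda^n\sigma$, so $\lambda^n = \sigma\Gamma_s^n(\mathbf 1) = \int_{\sta}\int_{\mat^n}\|W_n x\|^s\,\d\mu^{\otimes n}\,\d\sigma(\hat x)$; bounding $\|W_n x\| \le \|W_n\|$ on one side and applying Lemma 1.5 on the other (once we know $\supp\sigma$ avoids hyperplanes) gives $c_s(\sigma)\int\|W_n\|^s\,\d\mu^{\otimes n} \le \lambda^n \le \int\|W_n\|^s\,\d\mu^{\otimes n}$, whence taking $n$-th roots and $n\to\infty$ forces $\lambda = k(s)$ and simultaneously proves the displayed sandwich.

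Next, the support claim. Suppose toward a contradiction that $\supp\sigma \subset \P(H)$ for a hyperplane $H = \ker\xi$, and let $F$ be minimal among proper subspaces with $\supp\sigma \subset \P(F)$ — such a minimal $F$ exists by finite dimensionality. The invariance relation $\sigma\Gamma_s = k(s)\sigma$ means that for any $\varphi$ vanishing on $\P(F)$ we have $\int\int \varphi(v\cdot\hat x)\|vx\|^s\,\d\mu(v)\,\d\sigma(\hat x) = 0$; since $\varphi \ge 0$ can be chosen and $\|vx\|^s > 0$ for $\hat x \in \P(F)$ with $vx \ne 0$, this forces $v\cdot\hat x \in \P(F)$ for $\mu$-a.a.\ $v$ and $\sigma$-a.a.\ $\hat x$, i.e.\ $v(F) \subset F$ (using $\supp\sigma$ spans $\P(F)$ by minimality — here one must rule out $vx = 0$ contributing, i.e.\ show $v^{-1}(0)\cap F$ is too small to matter, which follows again from minimality of $F$ unless $v|_F \equiv 0$, and the latter also gives $v(F)\subset F$ trivially). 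This holds for all $v \in \supp\mu$, contradicting \textbf{(Irr)}. Therefore $\supp\sigma$ is not contained in any hyperplane, which is exactly the hypothesis needed to invoke Lemma 1.5.

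The main obstacle I anticipate is the careful bookkeeping around the vanishing locus $\{vx = 0\}$: every step that ``tests against $\varphi$'' or ``applies Lemma 1.5'' implicitly needs that the set where $\|vx\|^s = 0$ contributes nothing, and the convention $f(v\cdot\hat x)\|vx\|^s = 0$ when $vx = 0$ must be handled so that it does not secretly create an invariant subspace escaping \textbf{(Irr)}. The cleanest route is to argue with the \emph{minimal} invariant-support subspace $F$ and observe that $\{x \in F : vx = 0\}$ is a proper subspace of $F$ unless $v|_F = 0$, hence $\sigma$-negligible by minimality; this is the delicate point and deserves an explicit lemma-style paragraph rather than a parenthetical.
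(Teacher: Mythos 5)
Your plan matches the paper's proof: normalize $\Gamma_s$ to a self-map $\nu \mapsto \nu\Gamma_s/\nu\Gamma_s(\mathbf 1)$ of $\mathcal P(\sta)$, apply Schauder--Tychonoff, rule out hyperplane support via \textbf{(Irr)}, and pin down the eigenvalue as $k(s)$ by sandwiching $\sigma\Gamma_s^n(\mathbf 1)$ between $c_s(\sigma)\int\|W_n\|^s\,\d\mu^{\otimes n}$ and $\int\|W_n\|^s\,\d\mu^{\otimes n}$. Two small points worth noting: the integrand $\|vx\|^s$ need \emph{not} be positive $\mu$-a.e.\ for a fixed $\hat x$ (non-invertibility permits $x\in\ker v$ on a $\mu$-positive set), so positivity of $\nu\Gamma_s(\mathbf 1)$ rests genuinely on \textbf{(Irr)} rather than on discarding $\{v=0\}$; and your worry about the vanishing locus in the support step evaporates if, as in the paper, you show directly that $H=\linspan\{x : \hat x\in\supp\sigma\}$ is $\supp\mu$-invariant, since $vx=0\in H$ is harmless and the minimal-$F$ detour is not needed.
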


\begin{proof}[Proof of Lemma \ref{Lemma:ExistenceSigma}.]
Let $s \in I_\mu$, we construct the map $\widetilde{\Gamma}_s$ acting on the space of probability measures on $\sta$ defined as follows: for $\nu \in \mathcal{P}(\sta)$ $\widetilde{\Gamma}_s\sigma=\frac{\nu\Gamma_s}{\nu \Gamma_s(1)}$, where $1$ denotes the constant function equal to $1$ on $\sta$.
We first prove that this operator is well defined by proving that for every $\nu \in \mathcal{P}(\sta)$, $\nu\Gamma_s(1)>0$. Indeed, if it was not the case for a probability measure $\nu$ on $\sta$, we would have:
$$\nu \Gamma_s(1)=\int_{\sta} \int_{\mat} \|vx\|^s d \mu(v) d \nu(\hat x)=0.$$
Then, for $\nu$ almost every $\hat x \in \sta$:
$$ \int_{\mat} \|vx\|^s d \mu(v)=0.$$
Then for $\mu$-almost every $v$, $vx=0$. It implies that $H= \linspan \{ x \, | \, \hat x \in \supp \sigma \}$ is invariant by $\supp \mu$. Since, $\supp \nu \neq \emptyset$, {\bf (Irr)} implies that $H=\cc^d$.
Moreover, $H \subset \cap_{v \in \supp \mu} \ker v$, it implies that for every $v \in \supp \mu$, $\ker v = \cc^d$ implying that $v=0$. We obtain a contradiction and then for every probability measure $\nu$ on $\sta$, $\nu \Gamma_s(1)>0$.
Now, we prove the existence of $\sigma$. Since $\int_{\mat} \|v\|^s d \mu(v) < \infty$, the operator $\widetilde{\Gamma}_s$ is continuous in the weak topology. The space $\mathcal{P}(\sta)$ is a compact convex space, by Schauder-Tychonof fixed point theorem there exists a probability measure $\sigma$ on $\sta$ such that $\widetilde{\Gamma}_s \sigma=\sigma$. It follows that $\sigma \Gamma_s = (\sigma \Gamma_s (1))\sigma$. For any continuous function $\phi$ on $\sta$, we obtain that:
\begin{equation*}
    k \int_{\sta} \phi(\hat{x}) d \sigma(\hat{x}) = \int_{\sta} \int_{\mat} \phi(v \cdot \hat{x}) \|vx\|^s d \mu(v) d \sigma (\hat{x}),
\end{equation*}
where $k=\sigma \Gamma_s (1)$. This equation implies that for every $\hat{x} \in \supp \sigma$, 
\begin{equation}\label{eq:Alternative}
    \mbox{either } vx=0 \, \mbox{or } v \cdot \hat{x} \in \supp \sigma \quad \mu \mbox{-a.e.}
\end{equation}
Let us recall that
$$H= \linspan \{ x \, | \, \hat x \in \supp \sigma \}.$$
Equation \eqref{eq:Alternative} implies that $H$ is a nonempty $\supp \mu -$invariant subspace of $\cc^d$. By Assumption {\bf (Irr)}, $H=\cc^d$. It follows that the projective space generated by $\supp \sigma$ is $\sta$, and we obtain that $\supp \sigma$ is not included in a hyperplane. 

Now let us prove that $k=k(s)$, by Lemma \ref{IneqSigma}, there exists $c_s(\sigma)>0$ such that for any $v \in \mat$:
\begin{equation*}
        \int_{\mat} \|vx\|^s \d \sigma(\hat{x}) \geq c_s(\sigma) \|v\|^s.
\end{equation*}
Since $\sigma \Gamma_s^n=k^n \sigma$, it follows that $$k^n=\int_{\mat^n} \int_{\sta} \|v_n...v_1x\|^s d\mu^{\otimes n}(v_1,...,v_n) d\sigma(\hat x) $$ and:
\begin{equation*}
    c_s(\sigma) \int_{\mat^n}\|W_n\|^s d\mu^{\otimes n}  \leq k^n \leq \int_{\mat^n}\|W_n\|^s d\mu^{\otimes n}.
\end{equation*}
It follows that $k=\underset{n \to \infty}{\lim}\left (\int \|W_n\|^s d\mu^{\otimes n}\right )^{1/n}=k(s)$.
\end{proof}

\begin{remark}\label{Remark:Positivityk}
Let us remark that this lemma also shows that for every $s \in I_\mu$, $k(s)>0$. It is crucial to derive the analyticity of $\mathbf{P} := \log k$.
\end{remark}

\begin{proposition}\label{Prop:Existencee_s}
Assume {\bf (Irr)} holds. For every $s \in I_\mu$, there exists a strictly positive $e_s: \sta \to \rr_+$ such that $\Gamma_s e_s = k(s) e_s$. Moreover this function is $\bar{s}$-Hölder where $\bar{s}=\min \{1,\frac{s}{2}\}$.
\end{proposition}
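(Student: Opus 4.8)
The plan is to construct $e_s$ as a fixed point of $\Gamma_s$ (up to scaling) acting on a suitable cone of Hölder functions, mirroring the construction of $\sigma$ in Lemma \ref{Lemma:ExistenceSigma} but on the dual side. First I would fix the probability measure $\sigma$ provided by Lemma \ref{Lemma:ExistenceSigma}, which satisfies $\sigma\Gamma_s = k(s)\sigma$ and whose support is not contained in a hyperplane. The natural candidate is
\begin{equation*}
    e_s(\hat x) = \lim_{n\to\infty} \frac{1}{k(s)^n}\,\Gamma_s^n \one(\hat x),
\end{equation*}
or, to ensure convergence, a Cesàro average $\frac{1}{N}\sum_{n=0}^{N-1} k(s)^{-n}\Gamma_s^n \one$; applying a compactness argument (Arzelà–Ascoli on an equicontinuous family in $C^{\bar s}(\sta,\cc)$, or Schauder–Tychonoff on an appropriate convex compact subset of $C^{\bar s}$) yields a nonzero nonnegative fixed point $e_s$ with $\Gamma_s e_s = k(s) e_s$ after normalizing so that $\sigma(e_s)=1$. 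The key quantitative input is a uniform Hölder estimate: one must show that $k(s)^{-1}\Gamma_s$ maps a ball of $C^{\bar s}(\sta,\cc)$ of nonnegative functions into a set with uniformly controlled $m_{\bar s}$-seminorm, so that the iterates (or their averages) stay in a fixed compact set.

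The Hölder regularity with exponent $\bar s = \min\{1, s/2\}$ is the technical heart of the argument. For $v \in \mat$ and unit vectors $x,y$, one estimates $d(v\cdot\hat x, v\cdot\hat y)$ in terms of $d(\hat x,\hat y)$, $\|vx\|$, $\|vy\|$ and $\|v\|$; the standard bound is of the form $d(v\cdot\hat x, v\cdot\hat y) \le \frac{\|v\|}{\max(\|vx\|,\|vy\|)}\, d(\hat x, \hat y)$, together with $\bigl|\|vx\|^s - \|vy\|^s\bigr| \le C_s \|v\|^s d(\hat x,\hat y)^{\min\{1,s\}}$ (using $s$-Hölder continuity of $t\mapsto t^s$ for $s \le 1$ and the mean value theorem otherwise, combined with $\bigl|\,\|vx\| - \|vy\|\,\bigr| \le \|v\|\, d(\hat x,\hat y)$). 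Splitting $f(v\cdot\hat x)\|vx\|^s - f(v\cdot\hat y)\|vy\|^s$ into the two corresponding differences, the first term contributes $m_{\bar s}(f)\,\|v\|^{s}\,\bigl(\tfrac{\|v\|}{\max(\|vx\|,\|vy\|)}\bigr)^{\bar s} d(\hat x,\hat y)^{\bar s}$ and the second contributes $\|f\|_\infty \|v\|^s d(\hat x,\hat y)^{\bar s}$; after dividing by $d(\hat x,\hat y)^{\bar s}$ and integrating against $\mu$, one needs $\int \|v\|^{s+\bar s}\,\|vx\|^{-\bar s}\,\max(\|vx\|,\|vy\|)^{\cdots}\,d\mu(v) < \infty$. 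This is where the exponent $\bar s \le s/2$ is used: the integrand is dominated by $\|v\|^{s+\bar s}\|vx\|^{-\bar s}$, and since $e_s$ is bounded below by a positive constant (from strict positivity, established next) one controls the negative powers — more precisely, applying the estimate to $e_s$ itself and using $\Gamma_s e_s = k(s) e_s$ bootstraps the regularity, the condition $s + \bar s \le s + s/2 \le 3\alpha \cdot \tfrac{3}{2} \cdots$ matching the integrability hypothesis $\int \|v\|^{s'} d\mu < \infty$ for $s' \in [s_-, s_+]$ via $\alpha = \min\{s_-/3, 1\}$.

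Strict positivity of $e_s$ follows from {\bf (Irr)}: the set $\{\hat x : e_s(\hat x) = 0\}$ is closed, and the eigenrelation $k(s) e_s(\hat x) = \int f(v\cdot\hat x) e_s(v\cdot\hat x)\|vx\|^s d\mu(v)$ shows that if $e_s(\hat x_0) = 0$ then for $\mu$-a.e. $v$ either $vx_0 = 0$ or $e_s(v\cdot\hat x_0) = 0$; arguing as in the proof of Lemma \ref{Lemma:ExistenceSigma}, the linear span of such $x_0$ would be a nontrivial $\supp\mu$-invariant subspace, forcing it to be all of $\cc^d$ and hence $e_s \equiv 0$, contradicting $\sigma(e_s) = 1$. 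The main obstacle is organizing the Hölder estimate so that it is genuinely uniform over the iterates (or Cesàro averages) — one cannot simply cite compactness without first proving that $k(s)^{-1}\Gamma_s$ does not blow up the Hölder seminorm, and this requires the interplay between the a priori lower bound on $e_s$ (to control $\|vx\|^{-\bar s}$ on the support of $\sigma$) and the moment hypothesis; a clean way around the circularity is to first prove a Doeblin–Fortet-type inequality $m_{\bar s}(k(s)^{-1}\Gamma_s f) \le \lambda\, m_{\bar s}(f) + C\|f\|_\infty$ with $\lambda < 1$ for the normalized operator, which both gives the needed compactness for the fixed-point construction and is reused later in Section \ref{SecQuasiCompact}.
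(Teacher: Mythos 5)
Your overall architecture matches the paper's: take the probability measure $\sigma$ from Lemma~\ref{Lemma:ExistenceSigma}, look at the normalized iterates $k(s)^{-n}\Gamma_s^n\one$, pass to Ces\`aro averages, extract a convergent subsequence by Arzel\`a--Ascoli, and verify the eigenrelation by a telescoping identity. The paper also uses Ces\`aro averages (not the raw limit), so that part of your hedge is correctly resolved.

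Two points, however, diverge substantively and one of them is a genuine gap. First, the H\"older estimate. The paper does not re-derive it: it applies Lemma~A.2 of \cite{BHP24} directly to $\Gamma_s^n$ and the constant function $\one$ to get
$|\Gamma_s^n\one(\hat x)-\Gamma_s^n\one(\hat y)|\le C\int\|W_n\|^s\,\d\mu^{\otimes n}\,\dist(\hat x,\hat y)^{\bar s}$, and then the uniformity in $n$ comes entirely from the bound $k(s)^{-n}\int\|W_n\|^s\,\d\mu^{\otimes n}\le c_s(\sigma)^{-1}$ of Lemma~\ref{Lemma:ExistenceSigma}. Your sketch, by contrast, invokes an a~priori lower bound on $e_s$ ``to control $\|vx\|^{-\bar s}$'' before $e_s$ has been constructed, and you correctly flag this circularity but then propose to route around it through a Doeblin--Fortet contraction $m_{\bar s}(k(s)^{-1}\Gamma_s f)\le\lambda\, m_{\bar s}(f)+C\|f\|_\infty$ with $\lambda<1$. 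That is stronger than necessary and not what the paper proves at this stage: for Arzel\`a--Ascoli one only needs a uniform bound on $m_{\bar s}(k(s)^{-n}\Gamma_s^n\one)$, not contraction, and the contraction (in the form of the $h_\alpha(n)\to 0$ statement) is only established much later, in Section~\ref{SecQuasiCompact}, \emph{after} $e_s$ and $Q_s$ exist.

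Second, and this is the real gap: your strict positivity argument concludes from ``$H_0=\linspan\{z:\hat z\in M_0\}=\cc^d$'' that ``$e_s\equiv 0$''. That implication is false — a finite set of points can span $\cc^d$ while the function is positive off that set. The paper's actual argument is more careful: from $H_0=\cc^d$ it only deduces that $M_0$ is not contained in a projective hyperplane, which is exactly the hypothesis of Lemma~\ref{IneqSigma}. Taking any probability measure $\nu$ supported on $M_0$, Lemma~\ref{IneqSigma} gives $c_s(\nu)>0$ with $\int\|vx\|^s\,\d\nu(\hat x)\ge c_s(\nu)\|v\|^s$, hence $\nu\bigl(k(s)^{-m}\Gamma_s^m\one\bigr)\ge c_s(\nu)$ for every $m$, hence $\nu(f_n)\ge c_s(\nu)$ for every $n$, and by dominated convergence $\nu(f)\ge c_s(\nu)>0$ — contradicting $\nu(f)=0$, which holds because $\nu$ is supported on $\{f=0\}$. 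You would need to supply this step; without it the positivity claim does not close.
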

\begin{proof}
Let $s \in I_\mu$. By Lemma \ref{Lemma:ExistenceSigma}, there exists a probability measure $\sigma$ such that $\sigma \Gamma_s=k(s) \sigma$. We consider the sequence $(\Gamma_s^n 1)$. Let $\hat{x}, \hat{y} \in \sta$, by Lemma A.2 \cite{BHP24}, there exists $C \geq 0$ such that:
\begin{equation}\label{eq:Gammas}
    | \Gamma_s^n (1) (\hat{x}) - \Gamma_s^n (1) (\hat{y}) | \leq C \int_{\mat^n} \|W_n\|^s d \mu^{\otimes n} d(\hat{x},\hat{y})^{\Bar{s}},
\end{equation}
where $\Bar{s}=\min \{1,\frac{s}{2}\}$. Lemma \ref{Lemma:ExistenceSigma} implies that
\begin{equation*}
    \frac{1}{k(s)^n} \int_{\mat^n} \|W_n\|^s d \mu^{\otimes n} \leq \frac{1}{c_s(\sigma)}.
\end{equation*}
Combining this equation with Equation \eqref{eq:Gammas} implies that:
\begin{equation*}
    \frac{1}{k(s)^n} \left| \Gamma_s^n (1) (\hat{x}) - \Gamma_s^n (1) (\hat{y}) \right| \leq \frac{C}{c_s(\sigma)} d(\hat{x},\hat{y})^{\Bar{s}}.
\end{equation*}
It follows that the family $\left(\frac{\Gamma_s^n (1)}{k(s)^n}\right)$ is equicontinuous and bounded. Let $f_n : \sta \to \rr_+$ be the function defined as follows: for $\hat{x} \in \sta$:
\begin{equation*}
    f_n(\hat{x}) = \frac{1}{n} \sum_{m=1}^n \frac{\Gamma_s^m 1 (\hat{x})}{k(s)^m}.
\end{equation*}
By Ascoli Theorem, there exists a subsequence $(f_{n_l})_l$ converging towards a non-negative function $f : \sta \to \rr_+$ such that for every $\hat{x}, \hat{y} \in \sta$:
\begin{equation}
    | f(\hat{x})- f(\hat{y}) | \leq C d(\hat{x},\hat{y})^{\Bar{s}}.
\end{equation}
Moreover, for every $l \in \nn$ and $\hat{x} \in \sta$:
\begin{equation*}
    \Gamma_s f_{n_l}(\hat{x})= \frac{1}{{n_l}}  \sum_{m=1}^{n_l} \frac{\Gamma_s^{m+1} (1) (\hat{x})}{k(s)^m} = k(s) f_{n_l}(\hat{x}) + \frac{1}{n_l} \left [\frac{\Gamma_s^{n_l+1}1(\hat{x})}{k(s)^{n_l}} - \Gamma_s (1) (\hat{x})  \right].
\end{equation*}
By taking the limit when $l$ goes to infinity, it follows that $\Gamma_s f = k(s) f$.
Now, by contradiction, let assume that the function $f$ is not strictly positive and let 
$$M_0= \{ \hat{z} \in \sta \, | \, f(\hat{z})=0 \}$$
and
$$ H_0 = \linspan \{  z \, | \, \hat{z} \in M_0 \}.$$
By assumption, $M_0$ is not empty and there exists $\hat{x}$ such that $f(\hat{x})=0$.
The equation $\Gamma_s f(\hat{x}) = k(s) f(\hat{x})$ and the positivity of $f$ implies that for $\mu$-almost every $v \in \mat$, either $\|vx\|=0$ or $f(v \cdot \hat{x})=0$. 
Indeed, 
$$ 0=\Gamma_s f (\hat x) = \int_{\mat} f(v \cdot \hat x) \|vx\|^s d \mu(v).$$
It implies that $\mu$-almost everywhere, $f(v \cdot \hat x) \|vx\|^s=0$.
It follows that $H_0$ is a non-empty $\supp \mu$-invariant subspace of $\cc^d$. The assumption {\bf (Irr)} implies that $H_0 = \cc^d$, it follows that $\mathrm P(M_0)= \mathrm P(\cc^d)$ and $M_0$ is not included in a hyperplane. Let $\nu$ be a probability measure on $\sta$ supported on $M_0$. Lemma \ref{IneqSigma} implies the existence of $c_s(\nu)>0$ such that for every $v \in \mat$:
\begin{equation*}
    \int_{\sta} \|vx\|^s d \nu(\hat{x}) \geq c_s(\nu) \|v\|^s.
\end{equation*}
Then, for every $m \in \nn$:
\begin{equation*}
    \nu \left (\frac{\Gamma_s^m 1}{k(s)^m}\right) \geq c_s(\nu) \frac{\int_{\mat^m} \|W_m\|^s d \mu^{\otimes m}}{k(s)^m} \geq c_s(\nu).
\end{equation*}
It implies that for every $n \in \nn$:
\begin{equation*}
    \nu(f_n) \geq c_s(\nu).
\end{equation*}
By Lebesgue dominated convergence theorem, it follows that
\begin{equation*}
    \nu(f) \geq c_s(\nu)>0.
\end{equation*}
However, the support of $\nu$ is $M_0$, it implies that $\nu(f)=0$. We obtain a contradiction and it implies that the function $f$ is strictly positive.
\end{proof}
These last results allow us to construct the operators $Q_s$. Indeed, we can define the following functions $q^s_n$: 
\begin{equation}\label{eq:Defqs}
    q^s_n(\hat{x},A)=\frac{e_s(A \cdot \hat{x})}{e_s(\hat{x})} \frac{\|Ax\|^s}{k^n(s)},
\end{equation}
for every $n \in \nn^*$, $A \in \mat$ and $\hat{x} \in \sta$.
By construction, for every $n \in \nn^*$ and $\hat{x} \in \sta$, $\int q^s_n(\hat{x},W_n) d \mu^{\otimes n}=1$ and for any $A,B \in \mat$, by a direct computation one can show that:
$$ q^s_{n+1}( \hat x, AB) = q^s_n(B \cdot \hat x, A) \times q^s_1 ( \hat x, A).$$
It follows that the family $(q_n^s(\hat{x}, \cdot) d\mu^{\otimes n})$ is a consistent family of probability measures. Then, for every $\hat{x} \in \sta$, by Kolmogorov extension theorem, there exists a unique probability measure $\qq^s_{\hat{x}}$ on $\matd^{\nn}$ with marginals $q_n^s(\hat{x}, \cdot) \mu^{\otimes n}$. 
We can now define the Markov operators $Q_s$: for every $f \in C^\alpha(\sta,\cc)$ and $\hat{x} \in \sta$,
\begin{equation}\label{Def:Qs}
    Q_s f(\hat{x}) =  \int_{\matd} f(v \cdot \hat{x}) q_1(\hat{x}, v) d \mu(v).
\end{equation}
The iterates of $Q_s$ are given by:
$$Q_s^n f(\hat{x}) =  \int_{\matd^n} f(W_n \cdot \hat{x}) q^s_n(\hat{x}, W_n) d \mu^{\otimes n}(v_1,...,v_n).$$ 

\subsection{Uniqueness of the invariant measure of the operators $Q_s$.}\label{Unique}
The goal of this subsection is to prove that the operators $Q_s$ admit a unique invariant probability measure on $\sta$. 

\begin{proposition}\label{Prop:UniciteMesureInvariante}
Assume {\bf (Cont)} and {\bf (Irr)} hold. Then, for every $s \in I_\mu$, the operator $Q_s$ admits a unique invariant probability measure $\eta^s$ on $\sta$.
\end{proposition}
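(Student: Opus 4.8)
The plan is to first establish existence of an invariant probability measure $\eta^s$ for $Q_s$ by a standard Krylov--Bogolyubov argument, and then to prove uniqueness by exploiting the contractivity assumption {\bf (Cont)} together with the irreducibility {\bf (Irr)}. For existence, note that $Q_s$ maps $C^\alpha(\sta,\cc)$ into itself and, since $\sta$ is compact, $Q_s$ is a Feller Markov operator; hence the Cesàro averages $\frac1n\sum_{m=0}^{n-1}\delta_{\hat x}Q_s^m$ have a weak-$*$ limit point, which is $Q_s$-invariant. Alternatively one can directly apply Schauder--Tychonoff to the continuous affine map $\nu\mapsto \nu Q_s$ on the compact convex set $\mathcal P(\sta)$, exactly as in the proof of Lemma \ref{Lemma:ExistenceSigma}.

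The heart of the matter is uniqueness, and I expect this to be the main obstacle. The strategy is to show that from $\qq^s_{\hat x}$-almost every sequence $\omega=(v_1,v_2,\dots)$ the random products $W_n(\omega)=V_n\cdots V_1$, suitably normalized, converge to a rank-one matrix, so that $W_n\cdot\hat x$ converges to a point in $\sta$ that does \emph{not} depend on the starting point $\hat x$ (only on $\omega$). Concretely: {\bf (Cont)} provides a sequence $(u_k)\subset T$ with $u_k/\|u_k\|\to u_\infty$ of rank one; since $u_\infty$ has rank one, $u_\infty = (\text{column})\cdot(\text{row})$, and for any $\hat x$ not annihilated by the row functional, $u_k\cdot\hat x$ converges to the fixed direction $\widehat{\text{column}}$. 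Because each $u_k$ is a finite product of matrices in $\supp\mu$ and $q^s_n(\hat x,\cdot)$ has a density bounded below on cylinders built from such products (here {\bf (Irr)} is used, via Lemma \ref{IneqSigma} and the strict positivity of $e_s$ from Proposition \ref{Prop:Existencee_s}, to ensure these cylinders carry positive $\qq^s_{\hat x}$-mass), a Borel--Cantelli / coupling argument shows that $\qq^s_{\hat x}$-a.s. the forward orbit $W_n\cdot\hat x$ enters every neighborhood of a rank-one attractor infinitely often, and a contraction estimate (the derivative of $v\cdot(\cdot)$ near a rank-one matrix contracts the metric $d$) then forces $d(W_n\cdot\hat x, W_n\cdot\hat y)\to 0$ for all $\hat x,\hat y$.

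Granting this synchronization, uniqueness follows: if $\eta_1^s,\eta_2^s$ are both $Q_s$-invariant, then for any $f\in C^\alpha(\sta,\cc)$ and the coupled chain started from $(\hat x,\hat y)$,
\begin{equation*}
\Big| Q_s^n f(\hat x) - Q_s^n f(\hat y) \Big| \le m_\alpha(f)\, \eech^{s}_{\hat x,\hat y}\big[ d(W_n\cdot\hat x, W_n\cdot\hat y)^\alpha \big] \xrightarrow[n\to\infty]{} 0,
\end{equation*}
where the coupling is the one induced by using the same $\omega$ for both coordinates (legitimate because $q^s_n(\hat x,\cdot)\mu^{\otimes n}$ and $q^s_n(\hat y,\cdot)\mu^{\otimes n}$ are mutually absolutely continuous with respect to $\mu^{\otimes n}$). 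Integrating against $\eta_1^s\otimes\eta_2^s$ and using invariance of both measures gives $\eta_1^s(f)=\eta_2^s(f)$ for all $f$ in a dense subspace of $C(\sta)$, hence $\eta_1^s=\eta_2^s$. The delicate points to get right are: (i) the almost-sure convergence $d(W_n\cdot\hat x, W_n\cdot\hat y)\to 0$ uniformly enough to pass to the limit, which is where one must combine {\bf (Cont)} with the positivity of the densities $q^s_n$; and (ii) checking that the exceptional set of $\hat x$ (those killed by the limiting rank-one functional) is $\eta^s$-null, which again uses {\bf (Irr)} since the support of any invariant measure cannot lie in a hyperplane, by the same argument as in Lemma \ref{Lemma:ExistenceSigma}.
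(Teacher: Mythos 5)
Your overall plan is reasonable, and the existence part (Krylov--Bogolyubov or Schauder--Tychonoff) is fine. However, the uniqueness argument has a genuine gap, and it also goes down a considerably heavier road than the paper's.

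The key flaw is the coupling inequality. You write
\[
\bigl| Q_s^n f(\hat x) - Q_s^n f(\hat y) \bigr| \le m_\alpha(f)\, \eech^{s}_{\hat x,\hat y}\bigl[ d(W_n\cdot\hat x, W_n\cdot\hat y)^\alpha \bigr],
\]
with ``the coupling induced by using the same $\omega$ for both coordinates.'' But $q^s_n(\hat x,\cdot)\mu^{\otimes n}$ and $q^s_n(\hat y,\cdot)\mu^{\otimes n}$ are \emph{different} measures, and the diagonal map $\omega\mapsto(\omega,\omega)$ does not have the correct marginals --- so this is not a valid coupling, and mutual absolute continuity does not rescue it. The correct decomposition is
\[
Q_s^n f(\hat x) - Q_s^n f(\hat y) = \int \bigl[f(W_n\cdot\hat x) - f(W_n\cdot\hat y)\bigr] q_n^s(\hat x,W_n)\, d\mu^{\otimes n} + \int f(W_n\cdot\hat y)\,\bigl[q_n^s(\hat x,W_n)-q_n^s(\hat y,W_n)\bigr]\, d\mu^{\otimes n},
\]
and the second term, controlled by $\|f\|_\infty$ times a total-variation--type distance between the $n$-step densities, is simply not addressed in your argument. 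This is not a cosmetic issue: without controlling that term, even perfect synchronization $d(W_n\cdot\hat x,W_n\cdot\hat y)\to 0$ does not give $|Q_s^nf(\hat x)-Q_s^nf(\hat y)|\to 0$. (It is precisely the splitting into a $\|\varphi\|_\infty$ term and a $m_\alpha(\varphi)$ term that the paper carries out in Lemma~\ref{lemma:phiqsnbound}.)

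Beyond this, the synchronization claim itself --- that $\qq^s_{\hat x}$-a.s.\ the normalized products converge to rank-one matrices, and the consequent contraction $d(W_n\cdot\hat x, W_n\cdot\hat y)\to 0$ --- is essentially the content of the paper's Proposition~\ref{Prop:ConvergenceWn}, proved only later via a nontrivial bounded-martingale argument on the Radon--Nikodym derivatives $P_n(\hat x)=\frac{d\qq^s_{\hat x}}{d\qq^s}\big|_{\mathcal O_n}$. You gesture at it via ``Borel--Cantelli/coupling,'' but this is a substantial result, not a routine step, and using it here would create a circularity risk since the paper constructs $\qq^s$ out of the very $\eta^s$ you are trying to make unique. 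Also note that ``density bounded below on cylinders'' is not available: since the matrices are non-invertible, $\|vx\|^s$ can vanish, so $q^s_n(\hat x,\cdot)$ is not bounded away from $0$.

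The paper avoids all this by invoking a general criterion (Lemma~\ref{lemma:Raugi}): a Feller operator on a compact space with equicontinuous iterates has a unique invariant probability measure as soon as the only invariant continuous functions are constants. Equicontinuity is Lemma~\ref{Lemma:EstimationNormeHölder} (which precisely handles the density contribution via the $\|\varphi\|_\infty$ term), and constancy of invariant functions is Lemma~\ref{lem:Constant}, a short maximum-principle argument: the max set $M_+$ generates an invariant subspace, hence all of $\cc^d$ by {\bf (Irr)}, and {\bf (Cont)} produces a rank-one limit whose image direction lies in both $M_+$ and $M_-$. No almost-sure convergence is needed at this stage. I'd suggest you switch to that route, or, if you want to keep a coupling flavor, first prove the total-variation control on the densities that your argument tacitly assumes.
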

The proof of Proposition \ref{Prop:UniciteMesureInvariante} is based on the following lemma.
\begin{lemma}[Lemma 4.24 \cite{GLP04}]\label{lemma:Raugi}
Let $X$ a a compact metric space, $Q$ a Markov operator preserving the space of continuous functions $C(X)$. Let assume that for every $\varphi \in C(X)$, the sequence $(Q^n \varphi)$ is equicontinuous and that the only $Q$-invariant continuous functions are constant. Then $Q$ has a unique $Q$-invariant probability measure.
\end{lemma}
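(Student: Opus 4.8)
The plan is to establish existence and uniqueness separately, with uniqueness being the substantial part. For existence, I would run the standard Krylov--Bogolyubov / fixed point argument: the adjoint $Q^*$ acts on $C(X)^*$ and, since $Q$ is positive with $Q\mathbf{1}=\mathbf{1}$, it maps the set of probability measures on $X$ into itself; this set is convex and weak-$*$ compact because $X$ is compact, and $Q^*$ is affine and weak-$*$ continuous, so the Markov--Kakutani (or Schauder--Tychonoff) fixed point theorem produces a $Q$-invariant probability measure $\nu_0$. This already uses only compactness of $X$ and the Markov property, not the equicontinuity or triviality hypotheses.

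For uniqueness, fix $\varphi \in C(X)$ and consider the Cesàro averages
\begin{equation*}
  A_n\varphi := \frac{1}{n}\sum_{k=0}^{n-1} Q^k\varphi .
\end{equation*}
Since $Q$ is a Markov operator we have $\|Q^k\varphi\|_\infty \le \|\varphi\|_\infty$ for all $k$, so $(A_n\varphi)_n$ is uniformly bounded; and since $(Q^n\varphi)_n$ is equicontinuous by hypothesis, the averages $(A_n\varphi)_n$ inherit the same modulus of continuity and are equicontinuous as well. By the Arzelà--Ascoli theorem on the compact metric space $X$, there is a subsequence $(A_{n_j}\varphi)_j$ converging uniformly to some $\psi \in C(X)$. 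I then claim $\psi$ is $Q$-invariant: the telescoping identity
\begin{equation*}
  QA_n\varphi - A_n\varphi = \frac{1}{n}\bigl(Q^n\varphi - \varphi\bigr)
\end{equation*}
together with $\|Q^n\varphi-\varphi\|_\infty \le 2\|\varphi\|_\infty$ gives $\|QA_n\varphi - A_n\varphi\|_\infty \to 0$, and since $Q$ has operator norm at most $1$ on $(C(X),\|\cdot\|_\infty)$, hence is continuous there, letting $j\to\infty$ yields $Q\psi = \psi$. By the hypothesis that the only $Q$-invariant continuous functions are constant, $\psi$ is a constant function, say $\psi \equiv c$.

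To conclude, let $\nu$ be any $Q$-invariant probability measure. Then $\nu(Q^k\varphi)=\nu(\varphi)$ for every $k$, hence $\nu(A_n\varphi)=\nu(\varphi)$ for every $n$; passing to the limit along $(n_j)$ and using uniform convergence $A_{n_j}\varphi \to c$, we obtain $\nu(\varphi)=c$. In particular the value $\nu(\varphi)$ does not depend on $\nu$: it equals the constant $c$, which in turn equals $\nu_0(\varphi)$ by the same computation applied to the measure $\nu_0$ constructed above. Since $\varphi\in C(X)$ was arbitrary, any two $Q$-invariant probability measures coincide on $C(X)$ and hence are equal by the Riesz representation theorem; combined with existence, $Q$ admits exactly one invariant probability measure. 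The argument is soft functional analysis with no quantitative estimate, so there is no real ``hard part''; the only points that require care are that equicontinuity is genuinely transmitted to the Cesàro averages so that Arzelà--Ascoli applies, that $Q$-invariance survives the uniform limit — which is precisely where boundedness of $Q$ enters — and that the limiting constant $c$, a priori attached to a particular subsequence, is pinned down unambiguously via the identity $c=\nu_0(\varphi)$.
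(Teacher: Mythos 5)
Your argument is correct and complete. Note that the paper itself does not prove this lemma --- it is imported verbatim as Lemma~4.24 of \cite{GLP04} --- so there is no in-paper proof to compare against; what you have written is the standard Krylov--Bogolyubov existence argument combined with the classical Ces\`aro-average uniqueness argument, which is exactly the intended content of the cited result. All the delicate points are handled properly: equicontinuity does pass to the averages $A_n\varphi$ (they share the common modulus of continuity of the family $(Q^n\varphi)_n$), the telescoping identity $QA_n\varphi-A_n\varphi=\tfrac1n(Q^n\varphi-\varphi)$ together with the contractivity of $Q$ on $(C(X),\|\cdot\|_\infty)$ shows that any uniform subsequential limit is $Q$-invariant hence constant, and the identity $c=\nu(\varphi)$ for every invariant $\nu$ both pins down the constant independently of the subsequence and forces all invariant measures to agree on $C(X)$, hence to coincide by Riesz representation. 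No gaps.
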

To prove Proposition \ref{Prop:UniciteMesureInvariante}, we then show that for any continuous function $\varphi$ on $\sta$, the sequence $(Q_s^n \varphi)$ is equicontinuous and that all the $Q_s$-invariant continuous functions are constant. It implies the uniqueness of the invariant measure of $Q_s$. For the first step, it suffices to show the equicontinuity for Hölder functions and we obtain the result on continuous functions by density. Before we prove this result, let us recall some results on the exterior products. 
For $x_1, x_2$ in $\cc^d$ we denote by $x_1\wedge x_2$ the alternating bilinear form $(y_1, y_2) \mapsto \det\big(\langle x_i, y_j\rangle \big)_{i,j=1}^2$. Then, the set of all $x_1\wedge x_2 $ is a generating family for the set $\wedge^2\cc^d$ of alternating bilinear forms on $\cc^d$, and we can define a Hermitian inner product by 
\[\langle x_1\wedge x_2, y_1\wedge y_2\rangle = \det\big(\langle x_i, y_j\rangle \big)_{i,j=1}^2, \]
and denote by $\|x_1\wedge x_2\|$ the associated norm. For any $\hat x, \hat y \in \sta$, a direct computation leads to:
\begin{equation}
    d(\hat x, \hat y)=\|x \wedge y\|.
\end{equation}
For a linear map $A$ on $\mathbb C^d$, we write $\wedge^2 A$ for the linear map on $\wedge^2\cc^d$ defined by
\begin{equation}\label{eq_defwedgepA}
\wedge^2 A \,(x_1\wedge x_2)=Ax_1\wedge Ax_2.
\end{equation}
Its operator norm will be denoted as $\|\wedge^2 A\|$.
For $s \in I_\mu$, we define the function $h_\alpha : \nn^* \to \rr_+$ such that for every $n \in \nn^*$:
\begin{equation}\label{def:halphan}
    h_\alpha(n)= \frac{1}{k(s)^n} \int_{\mat^n} \| \wedge^2 v_n...v_1 \|^\alpha \|v_n...v_1\|^{s-2\alpha} d \mu^{\otimes n}(v_1,...v_n).
\end{equation}
By Lemma \ref{Lemma:ExistenceSigma}, the sequence $(h_\alpha(n))$ is uniformly bounded by $\frac{1}{c_s(\sigma)}$. In order to prove the equicontinuity, it suffices to show the following lemma.

\begin{lemma}\label{Lemma:EstimationNormeHölder}
For every $s \in I_\mu$, there exist $A_s\geq 0$ and $B_s \geq 0$ such that for every $\varphi \in C^\alpha(\sta,\cc)$,
\begin{equation}
    \|Q_s^n \varphi\|_\alpha \leq A_s \|\varphi\|_\infty + B_s h_\alpha(n) \|\varphi\|_\alpha.
\end{equation}
\end{lemma}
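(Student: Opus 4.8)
The plan is to control $\|Q_s^n\varphi\|_\alpha = \|Q_s^n\varphi\|_\infty + m_\alpha(Q_s^n\varphi)$ separately. For the supremum part, since $Q_s$ is a Markov operator, $Q_s^n$ preserves the constant function $1$, hence $\|Q_s^n\varphi\|_\infty \le \|\varphi\|_\infty$; so the sup part contributes $A_s\|\varphi\|_\infty$ with $A_s=1$ (or $A_s=0$ if we absorb it into the inequality with $A_s=1$). The real work is in bounding $m_\alpha(Q_s^n\varphi)$. The strategy is to write, for $\hat x\neq\hat y$,
\[
Q_s^n\varphi(\hat x)-Q_s^n\varphi(\hat y) = \int_{\matd^n}\Big(\varphi(W_n\cdot\hat x)q_n^s(\hat x,W_n) - \varphi(W_n\cdot\hat y)q_n^s(\hat y,W_n)\Big)d\mu^{\otimes n},
\]
and split the integrand into two pieces: one where we vary $\varphi$ (giving a term in $m_\alpha(\varphi)$ times the projective contraction $d(W_n\cdot\hat x, W_n\cdot\hat y)$ weighted by $q_n^s(\hat x,W_n)$), and one where we vary the density $q_n^s$ (giving a term in $\|\varphi\|_\infty$ times $|q_n^s(\hat x,W_n)-q_n^s(\hat y,W_n)|$).

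For the first piece, I would use the standard estimate on the projective metric under a linear map: $d(A\cdot\hat x, A\cdot\hat y)\le \dfrac{\|\wedge^2 A\|}{\|Ax\|\,\|Ay\|}\,d(\hat x,\hat y)$. Recalling $q_n^s(\hat x,W_n)=\dfrac{e_s(W_n\cdot\hat x)}{e_s(\hat x)}\dfrac{\|W_nx\|^s}{k(s)^n}$ and that $e_s$ is bounded above and below (strictly positive, continuous on the compact $\sta$, by Theorem~\ref{Constru}), the contribution is bounded by a constant times
\[
\frac{1}{k(s)^n}\int_{\matd^n} \|W_nx\|^s \frac{\|\wedge^2 W_n\|^\alpha}{\|W_nx\|^\alpha\|W_ny\|^\alpha}\,d(\hat x,\hat y)^\alpha\, d\mu^{\otimes n},
\]
after raising the metric contraction to the power $\alpha$ and using $d(A\cdot\hat x,A\cdot\hat y)\le d(\hat x,\hat y)$ for the remaining factor if needed; bounding $\|W_nx\|^{s-\alpha}\le\|W_n\|^{s-\alpha}$ and $\|W_ny\|^{-\alpha}$ appropriately (here the choice $\alpha\le s/3$, i.e. $s-2\alpha\ge\alpha>0$, is what makes the exponents work and keeps $\|W_nx\|^{s-\alpha}\|W_ny\|^{-\alpha}\le\|W_n\|^{s-2\alpha}$), this is $\le B_s\, h_\alpha(n)\,m_\alpha(\varphi)\,d(\hat x,\hat y)^\alpha$ with $h_\alpha$ exactly as defined in \eqref{def:halphan}. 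For the second piece, I need a Hölder estimate on $\hat x\mapsto q_n^s(\hat x,W_n)$: the factor $\|W_nx\|^s/(k(s)^n e_s(\hat x))$ is $\bar s$-Hölder (hence $\alpha$-Hölder since $\alpha\le s/3\le\bar s$) by the argument already used for Equation~\eqref{eq:Gammas} together with the Hölder regularity and lower bound of $e_s$, and $e_s(W_n\cdot\hat x)$ is $\bar s$-Hölder composed with the contraction $\hat x\mapsto W_n\cdot\hat x$; differentiating the product and tracking the $\|\wedge^2 W_n\|$ and $\|W_n\|$ factors that appear from the metric distortion, one again lands on a bound of the form $B_s' h_\alpha(n)\|\varphi\|_\infty\, d(\hat x,\hat y)^\alpha$, possibly plus a lower-order term absorbed into $A_s\|\varphi\|_\infty$.

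The main obstacle is the bookkeeping in the second piece: one must show that every term produced by expanding $|q_n^s(\hat x,W_n)-q_n^s(\hat y,W_n)|$ — in particular the one coming from the variation of $\|W_nx\|^s$ in $\hat x$ — is dominated, after integration against $\mu^{\otimes n}$ and division by $k(s)^n$, by $h_\alpha(n)$ and not by some larger quantity. The key inequality is the pointwise estimate $\big|\,\|W_nx\|^s-\|W_ny\|^s\,\big|\le C\|W_n\|^{s}\,d(\hat x,\hat y)^{\bar s}$ refined to exhibit a $\|\wedge^2 W_n\|$ factor, which is precisely Lemma A.2 of \cite{BHP24}; combined with $\|\wedge^2 W_n\|^\alpha\|W_n\|^{s-2\alpha}\le$ the integrand of $h_\alpha(n)$, everything closes. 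I would therefore first record the two metric-distortion lemmas ($d(A\cdot\hat x,A\cdot\hat y)$ bound and the $\|W_nx\|^s$ Hölder bound with the $\wedge^2$ refinement), then carry out the two-term split, and finally collect constants into $A_s$ and $B_s$.
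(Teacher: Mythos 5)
Your overall plan is essentially the route the paper takes: bound $\|Q_s^n\varphi\|_\infty$ by $\|\varphi\|_\infty$ via the Markov property, then control $m_\alpha(Q_s^n\varphi)$ by inserting cross terms into $\varphi(W_n\cdot\hat x)q_n^s(\hat x,W_n)-\varphi(W_n\cdot\hat y)q_n^s(\hat y,W_n)$ and integrating. The paper's grouping is slightly different (it first isolates the variation of $1/e_s$, then applies a prepared lemma to the single Hölder function $f=e_s\varphi$, whereas you split off $\varphi$ and handle the density $q_n^s$ separately); the same ingredients appear either way. Two points in your write-up, however, need correction.

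First, the inequality $\|W_nx\|^{s-\alpha}\|W_ny\|^{-\alpha}\le\|W_n\|^{s-2\alpha}$ that closes your first piece is false as stated: the left side blows up if $\|W_ny\|$ is small. It holds once you work, for each fixed $W_n$, with the point of the pair whose image is smaller — WLOG $\|W_nx\|\le\|W_ny\|$, whence
\begin{equation*}
\|W_nx\|^{s-\alpha}\|W_ny\|^{-\alpha}\le\|W_nx\|^{s-2\alpha}\le\|W_n\|^{s-2\alpha},
\end{equation*}
using $s-2\alpha\ge 0$. This forces you to pick the cross term so that the density $q_n^s$ is evaluated at the variable with the smaller image, and the choice depends on $W_n$; this is exactly the WLOG built into the paper's Lemma~\ref{lem:vxf(vx) holder} and it is not optional.

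Second, your final paragraph misidentifies the source of the $\|\wedge^2 W_n\|$ factor. The bound $\bigl|\|W_nx\|^s-\|W_ny\|^s\bigr|\le C\|W_n\|^s\,d(\hat x,\hat y)^{\bar s}$ from Lemma~A.2 of \cite{BHP24} has no $\|\wedge^2 W_n\|$ refinement and none is needed: after integration and division by $k(s)^n$ it is bounded by $1/c_s(\sigma)$ (Lemma~\ref{Lemma:ExistenceSigma}) and contributes to $A_s\|\varphi\|_\infty$, not to $B_sh_\alpha(n)$. The $\|\wedge^2 W_n\|^\alpha$ factor comes exclusively from the projective contraction $d(W_n\cdot\hat x,W_n\cdot\hat y)\le\frac{\|\wedge^2 W_n\|}{\|W_nx\|\,\|W_ny\|}d(\hat x,\hat y)$ applied when you vary the $\alpha$-Hölder functions $\varphi$ and $e_s$ through their arguments $W_n\cdot(\cdot)$. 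If you keep straight which variations produce $\|W_n\|^s$ (and hence go to $A_s$) and which produce $\|\wedge^2 W_n\|^\alpha\|W_n\|^{s-2\alpha}$ (and hence go to $B_sh_\alpha(n)$), the bookkeeping does close as you intend.
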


\begin{remark}
Let us point out that if the sequence $(h_\alpha(n))$ converges towards $0$, we obtain a Doeblin-Fortet inequality unlocking the quasi-compactness of the operators $Q_s$. It is the object of Section \ref{sec:halpha}.
\end{remark}

Now, let us start the proof of Lemma \ref{Lemma:EstimationNormeHölder}. We first revisit Lemma 5.2 of \cite{BHP24}.

\begin{lemma}
    \label{lem:vxf(vx) holder}
    Let $z\in \cc$ be such that $\Re(z)>0$ and let $f\in C^\alpha(\sta)$. Then for any matrix $A\in \mat$, the function $f_A:\hat x\mapsto e^{z\log \|Ax\|}f(A\cdot \hat x)$ and $f_A(\hat x)=0$ whenever $Ax=0$, is $\alpha$-Hölder continuous with
    $$m_\alpha(f_A)\leq 2^{\alpha^*}\frac{|z/2|}{\alpha^*}\left [\|A\|^{\Re(z)} \|f\|_\infty +  \|A\|^{\Re(z)-2\alpha}\|\wedge^2 A\|^\alpha m_\alpha(f)\right],$$
    with $\alpha^*=\min\{1,\Re(z/2)\}$.
\end{lemma}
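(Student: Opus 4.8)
The plan is to estimate $|f_A(\hat x) - f_A(\hat y)|$ for two distinct points $\hat x,\hat y\in\sta$ with unit representatives $x,y$, and to bound it by a constant times $d(\hat x,\hat y)^\alpha$. First I would dispose of the degenerate cases: if $Ax=Ay=0$ both terms vanish and there is nothing to prove; if exactly one of them vanishes, say $Ay=0$, one still needs a bound, but note that $\|Ax\| = \|Ax\| - \|Ay\| \le \|A(x-y)\|$ could be large — actually the cleaner route is to treat everything through the two-variable identity below, which degenerates gracefully, so I would avoid a separate case analysis and instead write $f_A$ as a genuine difference that is controlled uniformly. The core decomposition is the triangle-inequality split
\begin{equation*}
    |f_A(\hat x) - f_A(\hat y)| \le \bigl| e^{z\log\|Ax\|} - e^{z\log\|Ay\|}\bigr|\,|f(A\cdot\hat x)| + \|Ay\|^{\Re(z)}\,\bigl|f(A\cdot\hat x) - f(A\cdot\hat y)\bigr|,
\end{equation*}
so that the first term is handled by the Hölder regularity of $t\mapsto e^{zt}$ composed with the behaviour of $\log\|A\cdot\|$, and the second by $m_\alpha(f)$ together with an estimate of $d(A\cdot\hat x, A\cdot\hat y)$.

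For the first term I would use that for any $a,b>0$, $|e^{za}-e^{zb}| = |z|\,\bigl|\int_b^a e^{zt}\,dt\bigr| \le |z|\,|a-b|\max\{e^{\Re(z)a},e^{\Re(z)b}\}$, and more to the point a Hölder-type bound $|e^{za}-e^{zb}| \le \tfrac{|z|}{\alpha^*}|a-b|^{\alpha^*}\max\{e^{\Re(z)a},e^{\Re(z)b}\}^{1-\text{(something)}}$; concretely, writing $a=\log\|Ax\|$, $b=\log\|Ay\|$, one gets $|a-b|\le \bigl|\log\|Ax\| - \log\|Ay\|\bigr|$, and the key geometric fact is that $\bigl|\,\|Ax\| - \|Ay\|\,\bigr| \le \|A(x-y)\|$ while also a refined bound on $\log$-differences along the projective metric holds. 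The cleanest path here, and the one I expect the paper takes, is the elementary inequality: for unit vectors $x,y$ and any $A$,
\begin{equation*}
    \bigl|\log\|Ax\| - \log\|Ay\|\bigr| \le \frac{\|A\|}{\max\{\|Ax\|,\|Ay\|\}}\, \|x\wedge y\| \quad\text{(up to constants)},
\end{equation*}
which comes from $\|Ax\|^2 - \|Ay\|^2$ controlled via $\|\wedge^2 A\|$ and $\|x\wedge y\|=d(\hat x,\hat y)$. Raising to the power $\alpha^*$ and interpolating produces the factor $\|A\|^{\Re(z)-2\alpha}\|\wedge^2 A\|^\alpha$ paired with $\|f\|_\infty$, with the numerical constant $2^{\alpha^*}|z/2|/\alpha^*$ emerging from combining the exponent $\alpha^*=\min\{1,\Re(z/2)\}$ and the mean-value bound on $e^{z\,\cdot}$.

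For the second term I would invoke the standard contraction-type estimate for the projective action: $d(A\cdot\hat x, A\cdot\hat y) = \dfrac{\|\wedge^2 A\,(x\wedge y)\|}{\|Ax\|\,\|Ay\|} \le \dfrac{\|\wedge^2 A\|}{\|Ax\|\,\|Ay\|}\,d(\hat x,\hat y)$, so that $|f(A\cdot\hat x)-f(A\cdot\hat y)| \le m_\alpha(f)\,\|\wedge^2 A\|^\alpha \|Ax\|^{-\alpha}\|Ay\|^{-\alpha} d(\hat x,\hat y)^\alpha$; multiplying by $\|Ay\|^{\Re(z)}$ and using $\|Ax\|,\|Ay\|\le\|A\|$ together with $\|Ay\|^{\Re(z)-\alpha}\le\|A\|^{\Re(z)-\alpha}$ — here is where $\Re(z)>0$ and the choice $\alpha\le s_-/3$ (hence $2\alpha\le\Re(z)$ on the relevant range, keeping exponents of $\|A\|$ of the stated sign) are used — yields exactly $\|A\|^{\Re(z)-2\alpha}\|\wedge^2 A\|^\alpha m_\alpha(f)$. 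The main obstacle, and the place requiring genuine care rather than bookkeeping, is the first term: one must extract an $\alpha$-Hölder (not merely Lipschitz) modulus in $d(\hat x,\hat y)$ for the map $\hat x\mapsto e^{z\log\|Ax\|}$ while simultaneously keeping the $\|A\|$-homogeneity degree exactly $\Re(z)-2\alpha$ in front of the $\|\wedge^2 A\|^\alpha$ factor and not losing a power of $\|A\|/\|Ax\|$ that cannot be absorbed — this forces the interpolation between the trivial bound $|e^{za}-e^{zb}|\le 2\max\{e^{\Re z a},e^{\Re z b}\}$ and the Lipschitz bound, giving the exponent $\alpha^* = \min\{1,\Re(z/2)\}$ rather than $\alpha$ itself, and is the reason the statement is phrased with $\alpha^*$ in the constant.
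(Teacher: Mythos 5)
Your overall split coincides with the paper's:
\begin{equation*}
|f_A(\hat x)-f_A(\hat y)|\leq |\|Ax\|^z-\|Ay\|^z|\,|f(A\cdot\hat x)|+\|Ay\|^{\Re(z)}|f(A\cdot\hat x)-f(A\cdot\hat y)|,
\end{equation*}
and your treatment of the second term is essentially the paper's: the identity $d(A\cdot\hat x,A\cdot\hat y)=\|\wedge^2 A\,(x\wedge y)\|/(\|Ax\|\|Ay\|)$ gives the factor $\|\wedge^2 A\|^\alpha m_\alpha(f)$. However, there are two genuine gaps.

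First, and most importantly, you have misattributed the $\|\wedge^2 A\|$ factor. You claim that the first term (the one controlled by $\|f\|_\infty$) also yields $\|A\|^{\Re(z)-2\alpha}\|\wedge^2 A\|^\alpha$. It does not: read the statement again — the first summand in the bound is simply $\|A\|^{\Re(z)}\|f\|_\infty$, with no exterior-power factor. The paper obtains this by writing $\|Ax\|^z=g(\|Ax\|^2)$ with $g(t)=t^{z/2}$, invoking the $\alpha^*$-Hölder modulus of $g$ on $K=[0,\|A\|^2]$ (Lemmas A.1--A.2 of \cite{BHP24}), and controlling $|\|Ax\|^2-\|Ay\|^2|$ by $2\|A\|^2 d(\hat x,\hat y)$, which is an elementary inner-product estimate for unit $x,y$ and involves no $\wedge^2 A$. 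Moreover, your proposed route through $|\log\|Ax\|-\log\|Ay\||$ cannot work as written: when $Ay=0$ the log is $-\infty$ while the right-hand side stays finite, so the inequality you want does not hold. The paper's choice of $t\mapsto t^{z/2}$ extends continuously to $t=0$ and sidesteps this degeneracy entirely — that is the reason for working with powers rather than logs here.

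Second, you never fix the orientation $\|Ax\|\geq\|Ay\|$, and you need it. In the second term you produce $\|Ay\|^{\Re(z)}\,\|Ax\|^{-\alpha}\|Ay\|^{-\alpha}$, and this is only bounded by $\|A\|^{\Re(z)-2\alpha}$ if $\|Ax\|\geq\|Ay\|$, for then $\|Ax\|^{-\alpha}\leq\|Ay\|^{-\alpha}$ and the product becomes $\|Ay\|^{\Re(z)-2\alpha}\leq\|A\|^{\Re(z)-2\alpha}$, the last step using $\Re(z)-2\alpha\geq 0$. Without that WLOG, $\|Ax\|^{-\alpha}$ is unbounded and your estimate fails. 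The paper fixes this ordering at the very start of the proof; you should do the same, and note that this choice is also what guarantees the factor $\|Ay\|^{\Re(z)}$ (the smaller of the two) appears in the second term of the split.
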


\begin{proof}
Let $\hat x,\hat y\in \sta$ be distinct. We use the shorthand $t^z$ for $e^{z\log t}$. Without loss of generality we can assume $\|Ax\|\geq \|Ay\|$.
    \begin{align}\label{eq:split f_A}
        |f_A(\hat x)-f_A(\hat y)|\leq \left|(\|Ax\|^z-\|Ay\|^z)f(A\cdot \hat x)\right|+\|Ay\|^{\Re(z)}\left|f(A\cdot \hat y)-f(A\cdot \hat x)\right|
    \end{align}
    where $0\times f=0$ even when the argument of $f$ may be undefined.

    We apply Lemmas A.1 and A.2 from \cite{BHP24} with $z$ set to $z/2$ and $K=[0,\|A\|^2]$ in Lemma A.2 \cite{BHP24},
    $$|\|Ax\|^z-\|Ay\|^z|\leq 2^{\alpha^*}\frac{|z/2|}{\alpha^*}\|A\|^{\Re(z)}d(\hat x,\hat y)^{\alpha^*}.$$
    Since $\alpha\leq \alpha^*$ and $d(\hat x,\hat y)\leq 1$, 
    \begin{align}
        \label{eq:bound (Axz-Ayz)f(Ax)}
        \left|(\|Ax\|^z-\|Ay\|^z)f(A\cdot \hat x)\right|\leq 2^{\alpha^*}\frac{|z/2|}{\alpha^*}\|A\|^{\Re(z)} \|f\|_\infty d(\hat x,\hat y)^\alpha.
    \end{align}

    For the second term on the right hand side of Equation~\eqref{eq:split f_A}, we can assume $\|Ay\|>0$ and therefore $\|Ax\|>0$. It follows from $d(A\cdot \hat x,A\cdot \hat y)=\frac{\|\wedge^2 A\ x\wedge y\|}{\|Ax\|\|Ay\|}$ that,
    $$\|Ay\|^{\Re(z)}\left|f(A\cdot \hat y)-f(A\cdot \hat x)\right|\leq \|Ay\|^{\Re(z)}m_\alpha(f) \frac{\|\wedge^2 A\|^\alpha}{\|Ax\|^\alpha\|Ay\|^\alpha}\|x\wedge y\|^\alpha$$
    Since $\|Ax\|\geq \|Ay\|$, $t\mapsto t^\alpha$ is non decreasing and $\|x\wedge y\|=d(\hat x,\hat y)$,
    $$\|Ay\|^{\Re(z)}\left|f(A\cdot \hat y)-f(A\cdot \hat x)\right|\leq \|Ay\|^{\Re(z)-2\alpha}\|\wedge^2 A\|^\alpha m_\alpha(f)d(\hat x,\hat y)^\alpha.$$
    By definition of $\alpha$, $\Re(z)-2\alpha\geq 0$, thus $t\mapsto t^{\Re(z)-2\alpha}$ is non decreasing. Hence,
    \begin{align}
        \label{eq:bound Ayz(f(Ay)-f(Ax))}
        \|Ay\|^{\Re(z)}\left|f(A\cdot \hat y)-f(A\cdot \hat x)\right|\leq \|A\|^{\Re(z)-2\alpha}\|\wedge^2 A\|^\alpha m_\alpha(f) d(\hat x,\hat y)^\alpha.
    \end{align}
    It follows from Equations~\eqref{eq:bound (Axz-Ayz)f(Ax)} and \eqref{eq:bound Ayz(f(Ay)-f(Ax))} that:
    $$m_\alpha(f_A)\leq 2^{\alpha^*}\frac{|z/2|}{\alpha^*}\left [\|A\|^{\Re(z)} \|f\|_\infty +  \|A\|^{\Re(z)-2\alpha}\|\wedge^2 A\|^\alpha m_\alpha(f)\right].$$
\end{proof}

\begin{lemma}\label{lemma:phiqsnbound}
Let $s \in I_\mu$. For every $\varphi \in C^\alpha(\sta,\cc)$, $n \in \nn$ and $A \in \mat$, the function $\hat{x} \mapsto \varphi(A \cdot \hat{x})q_n^s(\hat{x},A)$ where $\varphi(A \cdot \hat{x})q_n^s(\hat{x},A)=0$ whenever $Ax=0$, is $\alpha$-Hölder continuous. Moreover, there exist $a_s \geq 0$ and $b_s \geq 0$ only depending on $e_s$ and $\alpha$ such that:
    $$m_\alpha(\hat x \mapsto \varphi(A \cdot \hat x) q_n^s(\hat x,A))\leq \frac{1}{k(s)^n} \left [a_s \|\varphi\|_\infty \|A\|^s + b_s \|\varphi\|_\alpha \|\wedge^2 A\|^\alpha \|A\|^{s-2\alpha} \right ].$$
\end{lemma}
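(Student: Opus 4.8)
The plan is to deduce this directly from Lemma~\ref{lem:vxf(vx) holder} applied with $z=s$, after absorbing the factor $e_s$ coming from the Doob transform. First I would record the elementary facts about $e_s$ that will be used: by Proposition~\ref{Prop:Existencee_s}, $e_s$ is $\bar s$-Hölder and strictly positive on the compact space $\sta$, and since $\alpha=\min\{s_-/3,1\}\le\min\{s/2,1\}=\bar s$ for every $s\in I_\mu$, the function $e_s$ is in particular $\alpha$-Hölder; strict positivity on a compact gives $c:=\min_{\sta}e_s>0$, so $1/e_s$ is bounded by $1/c$ and $\alpha$-Hölder with $m_\alpha(1/e_s)\le m_\alpha(e_s)/c^2$. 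Writing $g:=\varphi\,e_s$, the product rule $m_\alpha(f_1f_2)\le\|f_1\|_\infty m_\alpha(f_2)+\|f_2\|_\infty m_\alpha(f_1)$ yields $g\in C^\alpha(\sta,\cc)$ with $\|g\|_\infty\le\|e_s\|_\infty\|\varphi\|_\infty$ and $m_\alpha(g)\le\|e_s\|_\alpha\|\varphi\|_\alpha$.

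Next, from the definition \eqref{eq:Defqs} of $q_n^s$ one has
$$\varphi(A\cdot\hat x)\,q_n^s(\hat x,A)=\frac{1}{k(s)^n}\,\frac{1}{e_s(\hat x)}\,g_A(\hat x),\qquad g_A(\hat x):=e^{s\log\|Ax\|}\,g(A\cdot\hat x),$$
with the convention $g_A(\hat x)=0$ when $Ax=0$. Applying Lemma~\ref{lem:vxf(vx) holder} with $z=s$ (so $\Re(z)=s>0$, $\alpha^*=\bar s$, and $\Re(z)-2\alpha\ge0$) and $f=g$ shows that $g_A\in C^\alpha(\sta)$ with
$$m_\alpha(g_A)\le 2^{\bar s}\,\frac{s/2}{\bar s}\left[\|A\|^s\|g\|_\infty+\|A\|^{s-2\alpha}\|\wedge^2 A\|^\alpha m_\alpha(g)\right],$$
and trivially $\|g_A\|_\infty\le\|A\|^s\|g\|_\infty$.

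Finally I would apply the product rule once more to $\tfrac{1}{e_s}g_A$:
$$m_\alpha\!\Big(\tfrac{1}{e_s}g_A\Big)\le\tfrac{1}{c}\,m_\alpha(g_A)+\|g_A\|_\infty\,m_\alpha\!\Big(\tfrac{1}{e_s}\Big),$$
substitute the two displays above together with $\|g\|_\infty\le\|e_s\|_\infty\|\varphi\|_\infty$ and $m_\alpha(g)\le\|e_s\|_\alpha\|\varphi\|_\alpha$, and group the resulting terms: everything multiplying $\|A\|^s\|\varphi\|_\infty$ into a constant $a_s$, and the single term multiplying $\|\wedge^2 A\|^\alpha\|A\|^{s-2\alpha}\|\varphi\|_\alpha$ into a constant $b_s$; both depend only on $\alpha$ and on $e_s$ (through $\|e_s\|_\alpha$ and $c=\min_{\sta}e_s$). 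Dividing by $k(s)^n$ gives the claim, and in particular $\hat x\mapsto\varphi(A\cdot\hat x)q_n^s(\hat x,A)$ is $\alpha$-Hölder. The argument is essentially an assembly of Lemma~\ref{lem:vxf(vx) holder} and the Hölder product rule; the only point requiring a little care is the bookkeeping around $1/e_s$, which is why strict positivity of $e_s$ on the compact $\sta$ from Proposition~\ref{Prop:Existencee_s} is needed.
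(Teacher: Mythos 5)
Your proof is correct and follows essentially the same route as the paper: both write $\varphi(A\cdot\hat x)\,q_n^s(\hat x,A)=\tfrac{1}{k(s)^n}\tfrac{1}{e_s(\hat x)}g_A(\hat x)$ with $g_A(\hat x)=e_s(A\cdot\hat x)\varphi(A\cdot\hat x)\|Ax\|^s$, control $m_\alpha(g_A)$ via Lemma~\ref{lem:vxf(vx) holder} applied to $f=e_s\varphi$, and then absorb the prefactor $1/e_s$ using its H\"older regularity and strict positivity on the compact $\sta$. The paper carries out the H\"older product rule by hand (splitting the increment into a $1/e_s$-difference term and a $g_A$-difference term) while you invoke the product rule as a formula, but the decomposition, the use of Proposition~\ref{Prop:Existencee_s}, and the resulting constants $a_s,b_s$ are essentially identical.
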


\begin{proof}
Let $s \in I_\mu$. Let $\varphi \in C^\alpha(\sta,\cc)$, $\hat{x},\hat{y} \in \sta$, and $A \in \mat$
\begin{align*}
    k(s)^n [\varphi(A \cdot \hat{x}) q_n^s(\hat{x},A)&-\varphi(A \cdot \hat{y}) q_n^s(\hat{y},A)]\\  =& \left [ \frac{1}{e_s(\hat{x})}- \frac{1}{e_s(\hat{y})} \right] e_s(A \cdot  \hat{x})\varphi(A \cdot \hat{x}) \|Ax\|^s \\
    &+ \frac{1}{e_s(\hat{y})}[e_s(A \cdot  \hat{x})\varphi(A \cdot \hat{x}) \|Ax\|^s - e_s(A \cdot  \hat{y})\varphi(A \cdot \hat{y}) \|Ay\|^s]
\end{align*}
Since $e_s$ is $\bar{s}$-Hölder and strictly positive, the function $\frac{1}{e_s}$ is also $\bar{s}$-Hölder and the first term can be estimated as follows:
\begin{equation}
    \left | \left [ \frac{1}{e_s(\hat{x})}- \frac{1}{e_s(\hat{y})} \right] e_s(A \cdot  \hat{x})\varphi(A \cdot \hat{x}) \|Ax\|^s\right | \leq  m_s(\tfrac{1}{e_s}) \|e_s\|_\infty\|\varphi\|_\infty\|A\|^s d(\hat{x},\hat{y})^{\bar{s}}.
\end{equation}
For the second term, it suffices to notice that $e_s$ is also $\alpha$-Hölder continuous because $\alpha \leq \bar{s}$. Applying Lemma \ref{lem:vxf(vx) holder} to $\bar{s}$, $\alpha$ and $f=e_s \varphi$ gives the following bound: 
\begin{align*} 
         &\left| \frac{1}{e_s(\hat{y})}[e_s(A \cdot  \hat{x})\varphi(A \cdot \hat{x}) \|Ax\|^s - e_s(A \cdot  \hat{y})\varphi(A \cdot \hat{y}) \|Ay\|^s] \right| \\
        \leq & \, c 2^{\alpha^*} \frac{s}{2 \alpha^*} \|\wedge^2 A\|^\alpha \|A\|^{s-2\alpha} \|\varphi\|_\alpha \|e_s\|_\alpha d(\hat{x},\hat{y})^\alpha \\
        &+ c 2^{\alpha^*} \frac{s}{2 \alpha^*} \|A\|^s \|\varphi\|_\infty \|e_s\|_\infty d(\hat{x},\hat{y})^\alpha
\end{align*}
where $c = \underset{\hat{z} \in \sta}{\sup}\frac{1}{e_s(\hat{z})}$. The lemma holds with $a_s=m_s(\frac{1}{e_s}) \|e_s\|_\infty +  c 2^{\alpha^*} \frac{s}{2 \alpha^*} \|e_s\|_\infty $ and $b_s= c 2^{\alpha^*} \frac{s}{2 \alpha^*}\|e_s\|_\alpha$.
\end{proof}
We can now prove Lemma \ref{Lemma:EstimationNormeHölder}.

\begin{proof}[Proof of Lemma \ref{Lemma:EstimationNormeHölder}]
Let $s \in I_\mu$. Let $\varphi \in C^\alpha(\sta,\cc)$, $\hat{x},\hat{y} \in \sta$. We have:
\begin{align*}
    |Q_s^n \varphi(\hat{x}) - Q_s^n \varphi(\hat{y})| \leq \int_{\mat^n} |\varphi(W_n \cdot \hat{x})q_n^s(\hat{x},W_n)-\varphi(W_n \cdot \hat{y})q_n^s(\hat{y},W_n)| d\mu^{\otimes n}.
\end{align*}
Lemma \ref{lemma:phiqsnbound} implies that there exists $a_s \geq 0$ and $b_s \geq 0$ such that:
\begin{equation}
    m_\alpha(Q_s^n \varphi) \leq a_s \|\varphi\|_\infty \frac{1}{k(s)^n} \int_{\mat^n} \|W_n\|^s d\mu^{\otimes n} + b_s \|\varphi\|_\alpha \frac{1}{k(s)^n} \int_{\mat^n} \|\wedge^2 W_n\|^\alpha \|W_n\|^{s-2\alpha} d \mu^{\otimes n}.
\end{equation}
Lemma \ref{Lemma:ExistenceSigma} gives the following bound:
\begin{equation}
    m_\alpha(Q_s^n \varphi) \leq a_s \frac{1}{c_s(\sigma)}\|\varphi\|_\infty  + b_s h_\alpha(n) \|\varphi\|_\alpha.
\end{equation}
$Q_s$ is a Markov operator, therefore $\|Q_s^n \varphi\| \leq \|\varphi\|_\infty$. Fixing $A_s=\frac{a_s}{c_s(\sigma)}+1$ and $B_s=b_s$, we obtain the lemma.
\end{proof}

\begin{lemma}\label{lem:Constant}
Assume {\bf (Irr)} and {\bf (Cont)} hold. Let $s \in I_\mu$, all the $Q_s$-invariant continuous functions are constant.
\end{lemma}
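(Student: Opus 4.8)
The plan is to take a $Q_s$-invariant continuous function $\varphi$, assume without loss of generality (by considering real and imaginary parts, and then $\varphi - \min\varphi$) that $\varphi \geq 0$, and exploit the contractivity assumption {\bf (Cont)} to show $\varphi$ must be constant on the image of a rank-one limit matrix, then propagate this via invariance. First I would record that invariance $Q_s\varphi = \varphi$ iterated gives, for every $n$ and every $\hat x$,
\begin{equation*}
    \varphi(\hat x) = \int_{\matd^n} \varphi(W_n \cdot \hat x)\, q_n^s(\hat x, W_n)\, d\mu^{\otimes n},
\end{equation*}
and since $q_n^s(\hat x,\cdot)\,\mu^{\otimes n}$ is the marginal of the probability measure $\qq^s_{\hat x}$, this says $\varphi(\hat x) = \eech^s_{\hat x}[\varphi(W_n \cdot \hat x)]$ under $\qq^s_{\hat x}$. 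The key point is that $\varphi$ attains its maximum $M$ at some $\hat x_0 \in \sta$; by the mean-value identity and continuity, $\varphi(W_n \cdot \hat x_0) = M$ for $\qq^s_{\hat x_0}$-almost every $\omega$ and every $n$ — i.e. the set $\{\varphi = M\}$ is "invariant" along $\qq^s_{\hat x_0}$-typical trajectories.

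Next I would bring in {\bf (Cont)}: there is a sequence $(v_k) \in T$ with $v_k/\|v_k\| \to v_\infty$, $v_\infty$ rank one, so $v_\infty = y_\infty \otimes \psi$ for some unit $\hat y_\infty \in \sta$ and nonzero functional $\psi$. The idea is that for any $\hat x$ with $\psi(x) \neq 0$, $v_k \cdot \hat x \to \hat y_\infty$. Since each $v_k$ is a finite product of matrices in $\supp\mu$ (up to identity factors), such products have positive "weight" under the measures $q^s_n(\hat x,\cdot)\,\mu^{\otimes n}$ in the sense that the corresponding cylinder is charged — more precisely, because $\supp\mu$ generates the semigroup containing $v_k$ and $e_s, \|\cdot\|^s, k(s)$ are all positive, one can find, for $\qq^s_{\hat x_0}$-typical $\omega$, times $n$ where $W_n \cdot \hat x_0$ is as close as we like to $\hat y_\infty$. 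Combined with the previous paragraph and continuity of $\varphi$, this forces $\varphi(\hat y_\infty) = M$. The same argument applied to the minimum gives $\varphi(\hat y_\infty)$ equals the minimum, so $\varphi$ is constant — provided I can reach $\hat y_\infty$ from a neighborhood of both the argmax and argmin.

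To make the "reaching" rigorous I would argue as in \cite{GLP15, GLP04}: using {\bf (Irr)} the support of the unique invariant measure $\eta^s$ (from Proposition \ref{Prop:UniciteMesureInvariante}, though to avoid circularity I would instead use any $Q_s$-stationary measure, or argue directly) together with the semigroup $T$ acting on it has an attracting fixed point structure coming from the rank-one element $v_\infty$; concretely, for every $\hat x$ in the relevant support, the orbit closure under $T$ contains $\hat y_\infty$, because applying $v_k$ (which is realized as a finite $\mu$-word and hence appears with full conditional support) sends $\hat x$ near $\hat y_\infty$ whenever $\psi(x)\neq 0$, and the locus $\psi(x)=0$ is a hyperplane which, by {\bf (Irr)} and Lemma \ref{Lemma:ExistenceSigma}, cannot contain the whole support. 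Then a continuous invariant $\varphi$, being constant on a $\qq^s_{\hat x_0}$-a.e. trajectory through the argmax and having $\hat y_\infty$ in every such trajectory's closure, satisfies $\varphi \equiv M = \varphi(\hat y_\infty)$ on the support; a final density/invariance argument (any $\hat x$ maps under $Q_s^n$ into an average over points accumulating on the support) extends this to all of $\sta$. The main obstacle I expect is precisely this last propagation step — transferring "constant on the support of the stationary measure / on typical trajectories" to "constant everywhere on $\sta$" — which requires carefully using that the transition kernels $q_n^s(\hat x,\cdot)\mu^{\otimes n}$ charge words realizing the contracting sequence $(v_k)$ uniformly enough in $\hat x$; the positivity of $e_s$ and the irreducibility, via Lemma \ref{IneqSigma}, are the tools that should close this gap.
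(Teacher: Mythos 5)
Your overall idea is the right one: use \textbf{(Cont)} to produce a rank-one limit $v_\infty$ and show that $\varphi$ takes both its maximum and minimum value at the image point of $v_\infty$. But the route you sketch has exactly the gap you flag at the end, and the paper avoids it with a much more direct argument. You try to show that $\qq^s_{\hat x_0}$-typical trajectories of $W_n\cdot\hat x_0$ approach $\hat y_\infty$, which requires quantifying that words realizing the contracting sequence $(v_k)$ are ``charged enough''; this is a genuine difficulty (the $v_k$ are words of arbitrary length, and cylinder events need not have positive mass for a general $\mu$), and you never close it.

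The paper's proof sidesteps the trajectory/measure argument entirely. Set $M_+=\argmax\varphi$ and $M_-=\argmin\varphi$. From $Q_s\varphi=\varphi$ evaluated at $\hat x\in M_+$, the integrand $[\varphi(\hat x)-\varphi(v\cdot\hat x)]q_1^s(\hat x,v)$ is nonnegative and integrates to $0$, so for $\mu$-a.e.\ (hence, by continuity, for every) $v\in\supp\mu$, either $vx=0$ or $v\cdot\hat x\in M_+$. It follows that
\[
C_+=\linspan\{\lambda x : \lambda\in\cc,\ \hat x\in M_+\}
\]
is a nonzero $\supp\mu$-invariant subspace, so by \textbf{(Irr)} it equals $\cc^d$. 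In particular there exists $\hat x_+\in M_+$ with $v_\infty x_+\neq 0$. Then $v_k\cdot\hat x_+\to v_\infty\cdot\hat x_+$, the sequence $(\varphi(v_k\cdot\hat x_+))$ is constantly equal to $\max\varphi$ (since $M_+$ is forward-invariant under $T$), and by continuity $v_\infty\cdot\hat x_+\in M_+$. Since $v_\infty$ has rank one, $v_\infty\cdot\hat x_+$ is \emph{the} point $\hat z\in\mathrm P(\Im v_\infty)$, independent of $\hat x_+$. The same argument applied to $M_-$ gives $\hat z\in M_-$, so $M_+\cap M_-\neq\emptyset$ and $\varphi$ is constant; complex-valued $\varphi$ is handled by splitting into real and imaginary parts. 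The key ingredient you are missing is this deterministic ``span of the argmax set is invariant'' step, which converts a pointwise consequence of $Q_s\varphi=\varphi$ directly into a linear-algebraic fact to which \textbf{(Irr)} applies, making the measure-theoretic propagation you worried about unnecessary.
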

Before proving this lemma, let us state a standard lemma. 
\begin{lemma}\label{lemma:vndot}
Let $\hat{x} \in \sta$ and $(v_n) \in \mat^{\nn}$ converging to $v_\infty$ when $n$ goes to infinity such that $\|v_\infty x\|>0$, then the sequence $(v_n \cdot \hat{x})$ converges to $v_\infty \cdot \hat{x}$.
\end{lemma}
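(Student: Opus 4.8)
The plan is to verify the convergence directly from the explicit formula for the projective metric $d$ given in \eqref{def:DistanceProj}. First I would observe that the hypothesis $\|v_\infty x\|>0$ together with $v_n\to v_\infty$ guarantees that $\|v_n x\|>0$ for all $n$ large enough; discarding finitely many terms we may assume $v_n x\neq 0$ for every $n$, so that $v_n\cdot\hat x$ is well defined. Choosing the unit representative $x$ of $\hat x$, a representative of $v_n\cdot\hat x$ is $v_nx/\|v_nx\|$ and a representative of $v_\infty\cdot\hat x$ is $v_\infty x/\|v_\infty x\|$.

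Next I would plug these representatives into the metric. By \eqref{def:DistanceProj},
\begin{equation*}
    d(v_n\cdot\hat x, v_\infty\cdot\hat x)^2 = 1 - \frac{|\langle v_nx, v_\infty x\rangle|^2}{\|v_nx\|^2\,\|v_\infty x\|^2}.
\end{equation*}
The map $w\mapsto \|w\|$ is continuous on $\cc^d$ and the map $(w,w')\mapsto\langle w,w'\rangle$ is continuous on $\cc^d\times\cc^d$, while $v\mapsto vx$ is continuous (indeed linear) in $v$. Since $v_n\to v_\infty$, we get $v_nx\to v_\infty x$, hence $\|v_nx\|\to\|v_\infty x\|>0$ and $\langle v_nx, v_\infty x\rangle\to\|v_\infty x\|^2$. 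Therefore the right-hand side above tends to $1-\|v_\infty x\|^4/\|v_\infty x\|^4=0$, so $d(v_n\cdot\hat x, v_\infty\cdot\hat x)\to 0$, which is the claim.

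There is no real obstacle here: the only point requiring a word of care is the well-definedness of $v_n\cdot\hat x$ for large $n$, which is why the hypothesis $\|v_\infty x\|>0$ is imposed, and the fact that the denominator $\|v_nx\|^2\|v_\infty x\|^2$ stays bounded away from $0$ so that the quotient passes to the limit. Both are immediate from $v_n x\to v_\infty x\neq 0$.
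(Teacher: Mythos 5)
The paper explicitly omits the proof of this lemma ("We do not detail the proof but this lemma is useful for the following proofs"), so there is no paper argument to compare against. Your proof is correct and is the expected direct verification: you use the hypothesis $\|v_\infty x\|>0$ to ensure $v_n\cdot\hat x$ is eventually well defined, substitute the unit representatives into the explicit metric \eqref{def:DistanceProj}, and pass to the limit using continuity of $v\mapsto vx$, of the norm, and of the inner product, together with the fact that $\|v_nx\|$ stays bounded away from $0$. This cleanly supplies the routine argument the paper chose to leave out.
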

We do not detail the proof but this lemma is useful for the following proofs.
\begin{proof}[Proof of Lemma \ref{lem:Constant}]
Let $\varphi$ be a real-valued continuous function on $\sta$ satisfying the equation $Q_s \varphi = \varphi$. Since $\sta$ is a compact space and $\varphi$ a continuous function on $\sta$, the two following sets are well defined and not empty:
$$ M_+ = \left \{ \hat{x} \in \sta \, | \varphi(\hat{x}) = \underset{\hat{y}\in \sta}{\sup} \varphi(\hat{y})  \right \},$$
$$ M_- = \left \{ \hat{x} \in \sta \, | \varphi(\hat{x}) = \underset{\hat{y}\in \sta}{\inf} \varphi(\hat{y}) \right \}.$$
In order to show that $\varphi$ is constant, it suffices to prove that $M_+ \cap M_- \neq \emptyset$. Let $\hat{x} \in M_+$, by assumption we have $\varphi(\hat{x})=\int_{\mat} \varphi(v \cdot \hat{x}) q_s(\hat{x},v) d \mu(v)$. And it follows that:
\begin{equation*}
    0=\int_{\mat} [\varphi(\hat{x})-\varphi(v \cdot \hat{x})] q_s(\hat{x},v) d \mu(v).
\end{equation*}
Since $\hat{x} \in M_+$, for every $v \in \mat$ such that $v \cdot \hat{x}$ is well defined, $\varphi(\hat{x})-\varphi(v \cdot \hat{x}) \geq 0$ and by definition 
$q_s(\hat{x},v) \geq 0$ for every $v \in \mat$. It follows that for $\mu$-almost every $v$, $$[\varphi(\hat{x})-\varphi(v \cdot \hat{x})] q_s(\hat{x},v)=0.$$ 
If $\|vx\|=0$, we give the value $0$ to the quantity $[\varphi(\hat{x})-\varphi(v \cdot \hat{x})] q_s(\hat{x},v)$ and if $\|vx\| \neq 0$, then since $q_s(\hat{x},v)>0$ it implies that $\varphi(\hat{x})=\varphi(v \cdot \hat{x})$ and then $v \cdot \hat{x} \in M_+$. We now define the set 
$$C_+ = \linspan \left \{ \lambda x \, | \lambda \in \cc, \, \hat{x} \in M_+ \right \}.$$ For every $\hat{x} \in M_+$ and for $\mu$-almost every $v \in \mat$, $vx=0$ or $vx=\lambda y$ where $\hat{y} \in M_+$. It implies that $C_+$ is a $\supp \mu-$ invariant subspace not equal to $0$ since $M_+$ is not empty. The assumption {\bf (Irr)} implies that $C_+=\cc^d$. Now by {\bf (Cont)}, there exists a sequence $(v_n) \in T$ such that $\left (\frac{v_n}{\|v_n\|}\right)$ converges towards a rank-one matrix $v_\infty$. Since $C_+=\cc^d$, there exists $\hat{x}_+ \in M_+$ such that $v_\infty x_+ \neq 0$. Lemma \ref{lemma:vndot} implies that the sequence $(v_n \cdot \hat{x}+)$ converges towards $v_\infty \cdot \hat{x}_+$. By continuity of $\varphi$, the sequence $(\varphi(v_n \cdot \hat{x}_+))$ converges towards $\varphi(v_\infty \cdot \hat{x}_+)$. The invariance of $C_+$ by $\supp \mu$ implies that for every $n$, $v_n \cdot \hat{x}_+ \in M_+$, the sequence $(\varphi(v_n \cdot \hat{x}_+))$ is constant. We deduce that 
\begin{equation}\label{eq:appartientM}
    v_\infty \cdot \hat{x}_+ \in M_+.
\end{equation}
We denote by $z$ a unit vector of $\Im v_\infty$. Since $v_\infty$ is a rank one matrix, Equation \eqref{eq:appartientM} shows that $\hat{z} \in M_+$. 
By repeating and adapting the proof for $M_-$, we can show that $\hat{z} \in M_-$ proving that $M_+ \cap M_-$ is not empty and then $\varphi$ is constant. If $\varphi$ is a continuous complex-valued function invariant by $Q_s$, we write $\varphi=\Re (\varphi)+i \Im (\varphi)$ with $\Re (\varphi)$ and $\Im (\varphi)$ two continuous real-valued functions . Since $Q_s$ is a linear operator, 
\begin{equation}\label{eqcomplex}
    \Re (\varphi)+i \Im (\varphi)=\varphi=Q_s \varphi = Q_s \Re (\varphi)+ i Q_s \Im (\varphi).
\end{equation}
Now, let us remark that the set of real-valued functions is invariant by $Q_s$. By identifying the real and the imaginary part in Equation \eqref{eqcomplex} we obtain that $Q_s \Re (\varphi)=\Re (\varphi)$ and $Q_s \Im (\varphi)=\Im (\varphi)$. Since $Re (\varphi)$ and $\Im (\varphi)$ are two $Q_s$ real-valued continuous functions invariant by $Q_s$, it follows that $Re (\varphi)$ and $\Im (\varphi)$ are constant and we obtain that $\varphi$ is constant.

\end{proof}
We are now able to prove Proposition \ref{Prop:UniciteMesureInvariante}. 

\begin{proof}[Proof of Proposition \ref{Prop:UniciteMesureInvariante}]
Let $s \in I_\mu$.
We apply Lemma \ref{lemma:Raugi} for $X=\sta$ and $Q=Q_s$. Lemma \ref{Lemma:EstimationNormeHölder} ensures that for every $\varphi \in C(\sta)$, the sequence $(Q^n \varphi)$ is equicontinuous and Lemma \ref{lem:Constant} ensures that the only $Q_s$-invariant continuous functions are constant. Then, for every $s$, the operator $Q_s$ has a unique invariant probability measure.
\end{proof}

\begin{corollary}\label{Cor:Sigmaetes}
Assume {\bf (Irr)} and {\bf (Cont)} hold.
Let $s \in I_\mu$. There exist a unique probability measure $\sigma$ such that $\sigma \Gamma_s = k(s) \Gamma_s$ and a unique strictly positive continuous function $e_s : \sta \to \rr_+^*$ such that $\Gamma_s e_s = k(s) e_s$ and $\sigma(e_s)=1$. Moreover, $e_s$ is $\Bar{s}$-Hölder with $\Bar{s}= \min \{1,\frac{s}{2}\}$ and we have the relation $\eta^s=e_s \sigma$. So that, the support of $\eta^s$ is not included in a hyperplane.
\end{corollary}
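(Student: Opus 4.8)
The plan is to obtain the corollary by assembling Lemma~\ref{Lemma:ExistenceSigma}, Proposition~\ref{Prop:Existencee_s} and Proposition~\ref{Prop:UniciteMesureInvariante}, the glue being the Doob identity
\[
Q_s f = \frac{1}{k(s)\,e_s}\,\Gamma_s(e_s f),
\]
valid for every continuous $f$ with the convention $(e_s f)(v\cdot\hat x)\|vx\|^s = 0$ whenever $vx=0$. First I would fix, for $s\in I_\mu$, a probability measure $\sigma$ with $\sigma\Gamma_s=k(s)\sigma$ given by Lemma~\ref{Lemma:ExistenceSigma} and a strictly positive $\bar s$-Hölder function $e_s$ with $\Gamma_s e_s=k(s)e_s$ given by Proposition~\ref{Prop:Existencee_s}. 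Since $\sta$ is compact and $e_s$ is continuous and positive, $\sigma(e_s)>0$, so after replacing $e_s$ by $e_s/\sigma(e_s)$ (which preserves both the eigen-equation and the $\bar s$-Hölder regularity) we may assume $\sigma(e_s)=1$. This yields a pair satisfying all three listed properties, with $\bar s$-Hölder regularity of $e_s$ inherited from Proposition~\ref{Prop:Existencee_s}.

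Next I would prove the relation $\eta^s=e_s\sigma$. As $\sigma(e_s)=1$, the measure $e_s\sigma$ is a probability measure, and using $Q_s\varphi\cdot e_s=\frac{1}{k(s)}\Gamma_s(e_s\varphi)$ one gets, for every continuous $\varphi$ on $\sta$,
\[
(e_s\sigma)Q_s(\varphi)=\int_{\sta} e_s(\hat x)\,Q_s\varphi(\hat x)\,d\sigma(\hat x)=\frac{1}{k(s)}\int_{\sta}\Gamma_s(e_s\varphi)(\hat x)\,d\sigma(\hat x)=\frac{1}{k(s)}(\sigma\Gamma_s)(e_s\varphi)=\sigma(e_s\varphi)=(e_s\sigma)(\varphi),
\]
so $e_s\sigma$ is $Q_s$-invariant. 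By the uniqueness statement of Proposition~\ref{Prop:UniciteMesureInvariante}, $e_s\sigma=\eta^s$. Since $e_s$ is continuous and strictly positive, $\supp\eta^s=\supp\sigma$, which is not contained in a hyperplane by Lemma~\ref{Lemma:ExistenceSigma}.

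It remains to prove the two uniqueness claims. For $\sigma$: if $\sigma'$ is any probability measure with $\sigma'\Gamma_s=k(s)\sigma'$, then $c:=\sigma'(e_s)>0$ and the computation above shows $\tfrac{1}{c}e_s\sigma'$ is $Q_s$-invariant, hence equals $\eta^s=e_s\sigma$; thus $e_s\,d\sigma'=c\,e_s\,d\sigma$, and dividing by the positive continuous function $e_s$ (bounded below on the compact space $\sta$) gives $\sigma'=c\sigma$, so $c=1$ and $\sigma'=\sigma$. For $e_s$: if $e'$ is strictly positive and continuous with $\Gamma_s e'=k(s)e'$ and $\sigma(e')=1$, set $g:=e'/e_s$, a strictly positive continuous function; the Doob identity gives $Q_s g=\frac{1}{k(s)e_s}\Gamma_s(e_s g)=\frac{1}{k(s)e_s}\Gamma_s(e')=\frac{e'}{e_s}=g$, so $g$ is a $Q_s$-invariant continuous function, hence constant by Lemma~\ref{lem:Constant}; evaluating $\sigma$ on $e'=g\,e_s$ forces $g\equiv 1$, i.e. $e'=e_s$.

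The only delicate point I anticipate is the bookkeeping of the convention $f(v\cdot\hat x)\|vx\|^s=0$ for $vx=0$ when passing between $Q_s$ and $\Gamma_s$, together with applying the normalizations consistently; once the Doob identity and the constancy Lemma~\ref{lem:Constant} are in hand, each remaining step is a one-line computation, so no serious obstacle is expected.
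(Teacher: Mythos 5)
Your proof is correct and follows essentially the same route as the paper: existence and normalization from Lemma~\ref{Lemma:ExistenceSigma} and Proposition~\ref{Prop:Existencee_s}, the identity $\eta^s=e_s\sigma$ from the $Q_s$-invariance of $e_s\sigma$ and Proposition~\ref{Prop:UniciteMesureInvariante}, and uniqueness of $e_s$ via Lemma~\ref{lem:Constant}. The only difference is that you spell out the uniqueness of $\sigma$ in full (by showing any eigen-measure $\sigma'$ yields another $Q_s$-invariant probability $\tfrac{1}{c}e_s\sigma'$), a step the paper states in one line; this is a welcome clarification rather than a change of method.
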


\begin{proof}
The uniqueness of the invariant probability measure of $Q_s$ implies the uniqueness of the probability measure $\sigma$ such that $\sigma \Gamma_s = k(s) \Gamma_s$. Proposition \ref{Prop:Existencee_s} ensures the existence of a strictly positive continuous function $e_s : \sta \to \rr_+^*$ such that $\Gamma_s e_s = k(s) e_s$, by dividing by $\sigma(e_s)$, we may assume that $\sigma(e_s)=1$.
Now, let $\varphi : \sta \to \cc$ be a continuous function such that $\Gamma_s \varphi = k(s) \varphi$ and $\sigma(\varphi)=1$. By definition of $e_s$, we obtain:
\begin{equation}
    Q_s\left(\frac{\varphi}{e_s}\right)=\frac{\varphi}{e_s}.
\end{equation}
Since $\varphi$ is continuous and $e_s$ is continuous and strictly positive, it follows that $\frac{\varphi}{e_s}$ is continuous. Then, Lemma \ref{lem:Constant} implies that $\frac{\varphi}{e_s}$ is constant. It implies the existence of $\lambda \in \cc$ such that $\varphi=\lambda e_s$. Then, $1=\sigma(\varphi)=\sigma(\lambda e_s)=\lambda \sigma(e_s)=\lambda$. It follows that $\varphi=e_s$. The properties of $e_s$ have already been proved in Proposition \ref{Prop:Existencee_s}. Moreover we have the relation $\eta^s=e_s \sigma$. Indeed, for any continuous function $\varphi : \cc^d \to \cc$,
\begin{align*}
    (e_s \sigma)Q_s(\varphi) &= \int_{\sta} Q_s \varphi(\hat x) e_s(\hat x) d \sigma (\hat x) \\
    &= \int_{\sta} \frac{1}{k(s) e_s (\hat x)} \Gamma_s(\varphi e_s)(\hat x) e_s(\hat x) d \sigma (\hat x) \\
    &= \frac{1}{k(s)} \sigma \Gamma_s(\varphi e_s) \hspace*{4.2cm} \text{by definition of } \sigma \Gamma_s,\\
    &= \frac{1}{k(s)} k(s) \sigma(\varphi e_s) \hspace*{4cm}  \text{because } \sigma \Gamma_s = k(s) \sigma, \\
    &=e_s\sigma(\varphi)  \\
\end{align*}
Since we fix $\sigma(e_s)=1$, the measure $e_s \sigma$ is a probability measure on $\sta$. Then, by uniqueness of the invariant measure of $Q_s$, $e_s\sigma=\eta^s$.

\end{proof}

We can finally prove Theorem \ref{Constru}.

\begin{proof}
Let $s \in I_\mu$. Corollary \ref{Cor:Sigmaetes} ensures the existence and the uniquess of a probability measure $\sigma$ on $\sta$ such that $\sigma \Gamma_s = k(s) \Gamma_s$ and a unique continuous function $e_s : \sta \to \cc$ such that $\Gamma_s e_s = k(s) e_s$ and $\sigma(e_s)=1$. Moreover, $e_s$ is strictly positive and $\Bar{s}$-Hölder with $\Bar{s}= \min \{1,\frac{s}{2}\}$.
\end{proof}

\section{Quasi-compactness of the operators $Q_s$}\label{SecQuasiCompact}
In this section, we are interested in the spectral gap property of the operators $Q_s$. One way to establish the spectral gap property for an operator is to demonstrate its quasi-compactness. Therefore, we prove in this section the quasi-compactness of the operators $Q_s$.
We first recall the definition of the quasi-compactness.
\begin{definition}\label{DefQCompact}
Let $( \staalg , \| \cdot \|) $ a Banach space and  $Q$ a bounded operator on $\staalg$. The operator $Q$ is quasi-compact if there exists a decomposition of $\staalg$ into disjoint closed $Q$-invariant subspaces,
$$ \staalg = \mathcal{F} \oplus \mathcal{H}, $$
where $\mathcal F$ is a finite dimensional space, all the eigenvalues of $Q|_{\mathcal{F}}$ have modulus
$\rho(Q)$ and $\rho(Q|_{\mathcal{H}}) < \rho(Q)$ where $\rho$ denotes the spectral radius function.
\end{definition}
A way to prove that an operator is quasi-compact on a Banach space is Ionescu-Tulcea Marinescu Theorem \cite{ITM50}. We give here the version of \cite{HeHe01}.

\begin{theorem}[Theorem~2.5 in \cite{HeHe01}]\label{ITM1}
Let $( \staalg , \| \cdot \|) $ a Banach space and  $Q$ a bounded operator on $\staalg$.
Let $| \cdot |$ a continuous semi-norm on $\staalg$ and $Q$ a bounded operator on $( \staalg , \| \cdot \|) $ such that:
\begin{enumerate}
    \item \label{Hyp1} $Q( \{ f : f \in \staalg, \| f \| \leq 1 \} )$ is relatively compact in $(\staalg, | \cdot |)$.
    \item \label{Hyp2} There exists a constant $M$ such that $\text{for all } f \in \staalg,$
    $$ |Qf| \leq M |f|. $$
    \item \label{Hyp3} There exist $n \in \nn$ and real numbers $r,R$ such that $r < \rho(Q) \text{ and for all } f \in \staalg$,
    $$  \| Q^n f \| \leq R |f| + r^n \| f \|. $$
\end{enumerate}
Then $Q$ is quasi-compact.
\end{theorem}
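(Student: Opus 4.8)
The plan is to derive quasi-compactness from a bound on the \emph{essential spectral radius} of $Q$, in the spirit of Nussbaum's theorem and the Kuratowski measure of non-compactness. First I would normalize. Hypothesis~\ref{Hyp3} forces $\rho(Q)>r\ge 0$, hence $\rho(Q)>0$, so one may replace $Q$ by $Q/\rho(Q)$: this preserves hypotheses~\ref{Hyp1}--\ref{Hyp3} (with $r$ replaced by $r/\rho(Q)<1$) and does not affect whether $Q$ is quasi-compact. From now on $\rho(Q)=1$ and $r<1$, with $n_0$ denoting the integer appearing in hypothesis~\ref{Hyp3}.

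The heart of the argument is to show that the Kuratowski measure of non-compactness $\gamma(Q^n):=\gamma\big(Q^n(\{f\in\staalg:\|f\|\le 1\})\big)$, taken for the norm $\|\cdot\|$, decays geometrically at rate $r$. Iterating hypothesis~\ref{Hyp3} and using hypothesis~\ref{Hyp2} gives, for every $k\ge 1$, a constant $R_k\ge 0$ with $\|Q^{kn_0}f\|\le R_k\,|f|+r^{kn_0}\|f\|$ for all $f\in\staalg$. Fix $\varepsilon>0$. Hypotheses~\ref{Hyp1} and~\ref{Hyp2} make $Q^{n_0}(\{\|f\|\le 1\})=Q^{n_0-1}\big(Q(\{\|f\|\le 1\})\big)$ totally bounded for $|\cdot|$, since $Q^{n_0-1}$ is $|\cdot|$-Lipschitz by hypothesis~\ref{Hyp2}; so it is contained in the union of finitely many, say $N$, sets of $|\cdot|$-diameter $<\varepsilon$, with centres of $\|\cdot\|$-norm at most $\|Q\|^{n_0}$. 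Applying $Q^{(k-1)n_0}$ and the inequality above, every point of $Q^{kn_0}(\{\|f\|\le 1\})$ is within $\|\cdot\|$-distance $R_{k-1}\varepsilon+2\|Q\|^{n_0}r^{(k-1)n_0}$ of one of $N$ fixed points. Letting $\varepsilon\to 0$ yields $\gamma(Q^{kn_0})\le 4\|Q\|^{n_0}r^{(k-1)n_0}$, and since $\gamma$ is submultiplicative on bounded operators, $\lim_{n}\gamma(Q^n)^{1/n}\le r$.

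By Nussbaum's formula this limit equals the essential spectral radius $r_{\mathrm{ess}}(Q)$, i.e.\ the radius of the part of $\mathrm{Spec}(Q)$ left after removing the isolated eigenvalues of finite algebraic multiplicity. Thus $r_{\mathrm{ess}}(Q)\le r<1=\rho(Q)$, so $\sigma_1:=\mathrm{Spec}(Q)\cap\{z:|z|=1\}$ is finite and consists of eigenvalues of finite algebraic multiplicity, every spectral value of modulus $>r$ is isolated, and hence $\sigma_2:=\mathrm{Spec}(Q)\setminus\sigma_1$ satisfies $\sup\{|z|:z\in\sigma_2\}<1$. Since $\sigma_1$ is clopen in $\mathrm{Spec}(Q)$, its Riesz idempotent $\Pi$ is bounded and commutes with $Q$; setting $\mathcal F:=\Pi\staalg$ and $\mathcal H:=(I-\Pi)\staalg$ gives a decomposition $\staalg=\mathcal F\oplus\mathcal H$ into closed $Q$-invariant subspaces with $\mathcal F$ finite-dimensional (as $\mathrm{Spec}(Q|_{\mathcal F})=\sigma_1$ is a finite set of finite-multiplicity eigenvalues), all eigenvalues of $Q|_{\mathcal F}$ of modulus $1=\rho(Q)$, and $\rho(Q|_{\mathcal H})=\sup\{|z|:z\in\sigma_2\}<\rho(Q)$ -- exactly the conclusion of Definition~\ref{DefQCompact}.

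I expect the only real obstacle to be the step that passes from the decay of $\gamma(Q^n)$ to control of the spectrum, namely Nussbaum's identity $r_{\mathrm{ess}}(Q)=\lim_n\gamma(Q^n)^{1/n}$; the covering estimate and the holomorphic functional calculus are routine. If one insists on a self-contained proof, one reproves this directly: for $|z|>r$ pick $n$ with $\gamma(Q^n)<|z|^n$; then $\ker(z^nI-Q^n)$ is finite-dimensional (any closed subspace on which $Q^n$ acts as $z^n\,\mathrm{Id}$ has measure of non-compactness of its unit ball at most $\gamma(Q^n)/|z|^n<1$, which in a Banach space forces finite dimension), a dual argument bounds the cokernel, so $z^nI-Q^n$ is Fredholm of index $0$ (the index being $0$ for $|z|$ large and constant along the connected Fredholm family $z\mapsto z^nI-Q^n$), whence $zI-Q$ is Fredholm of index $0$; analytic Fredholm theory applied to $z\mapsto zI-Q$ then shows that the values of $z$ with $|z|>r$ at which $zI-Q$ is non-invertible form a discrete set, meeting each $\{|z|\ge\rho'\}$ with $\rho'>r$ in finitely many eigenvalues of finite algebraic multiplicity -- which is all the argument above uses.
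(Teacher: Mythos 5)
Your argument is correct and is essentially the standard proof of this theorem, which the paper itself does not reprove but only cites (it is Hennion's form of Ionescu-Tulcea--Marinescu, and the original proof in Hennion's 1993 PAMS paper and in \cite{HeHe01} is exactly the measure-of-noncompactness/Nussbaum route you describe). Two remarks for completeness: the iterated inequality $\|Q^{kn_0}f\|\le R_k|f|+r^{kn_0}\|f\|$ indeed follows by induction from items (2) and (3) with $R_k=R\sum_{j=0}^{k-1}M^{(k-1-j)n_0}r^{jn_0}$; and in the spectral step one should also note that $\sigma_1=\mathrm{Spec}(Q)\cap\{|z|=1\}$ is nonempty (otherwise $\rho(Q)<1$), which is needed for $\mathcal F$ to be nontrivial. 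The covering estimate, the passage from $\limsup_n\gamma(Q^n)^{1/n}\le r$ to $r_{\mathrm{ess}}(Q)\le r$ via Nussbaum, and the Riesz decomposition are all sound, and the self-contained Fredholm variant you sketch in the last paragraph is the right fallback if one wishes to avoid citing Nussbaum.
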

We now state the main result of this section.
\begin{theorem}\label{QuasiCompacts}
Assume {\bf (Cont)} and {\bf (Irr)} hold. Then, for every $s \in I_\mu$, the operator $Q_s$ is quasi-compact on $(C^\alpha(\sta,\cc),|.|_\alpha)$.
\end{theorem}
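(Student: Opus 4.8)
The plan is to apply the Ionescu-Tulcea--Marinescu theorem (Theorem \ref{ITM1}) to $Q=Q_s$ on the Banach space $(C^\alpha(\sta,\cc),\|\cdot\|_\alpha)$, taking as continuous semi-norm $|f|=\|f\|_\infty$. Recall that $Q_s$ is a Markov operator, so $\|Q_s^nf\|_\infty\le\|f\|_\infty$; in particular $Q_s$ is bounded on $(C^\alpha,\|\cdot\|_\alpha)$ (this is also a consequence of Lemma \ref{Lemma:EstimationNormeHölder}), and its spectral radius on $(C^\alpha,\|\cdot\|_\alpha)$ equals $1$, since $Q_s\mathbf 1=\mathbf 1$ and $\|Q_s^n\|_\infty\le1$. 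Hypothesis \eqref{Hyp2} is then immediate with $M=1$, and Hypothesis \eqref{Hyp1} is the statement that $Q_s$ maps the unit ball of $(C^\alpha,\|\cdot\|_\alpha)$ to a relatively $\|\cdot\|_\infty$-compact set: by Lemma \ref{Lemma:EstimationNormeHölder}, $Q_s$ maps the $\|\cdot\|_\alpha$-unit ball into a uniformly bounded, uniformly $\alpha$-Hölder (hence equicontinuous) family of functions on the compact space $\sta$, so relative compactness in $(C(\sta),\|\cdot\|_\infty)$ follows from the Arzel\`a--Ascoli theorem.

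The real content is Hypothesis \eqref{Hyp3}: I must produce $n_0\in\nn$ and reals $r<\rho(Q_s)=1$, $R\ge0$ with
\begin{equation*}
    \|Q_s^{n_0}f\|_\alpha\le R\|f\|_\infty+r^{n_0}\|f\|_\alpha\qquad\text{for all }f\in C^\alpha(\sta,\cc).
\end{equation*}
Lemma \ref{Lemma:EstimationNormeHölder} already gives, for every $n$,
\begin{equation*}
    \|Q_s^nf\|_\alpha=\|Q_s^nf\|_\infty+m_\alpha(Q_s^nf)\le(1+A_s)\|f\|_\infty+B_s\,h_\alpha(n)\,\|f\|_\alpha,
\end{equation*}
so it suffices to choose $n_0$ with $B_s\,h_\alpha(n_0)<1$ and then set $r=(B_s h_\alpha(n_0))^{1/n_0}<1$ and $R=1+A_s$. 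Thus the whole theorem reduces to showing
\begin{equation}\label{eq:halphatozero}
    \lim_{n\to\infty}h_\alpha(n)=0,\qquad h_\alpha(n)=\frac{1}{k(s)^n}\int_{\mat^n}\|\wedge^2 v_n\cdots v_1\|^\alpha\,\|v_n\cdots v_1\|^{s-2\alpha}\,d\mu^{\otimes n}.
\end{equation}
This is precisely the Doeblin--Fortet step flagged in the introduction and in the remark after Lemma \ref{Lemma:EstimationNormeHölder}, and it is where the contractivity assumption {\bf (Cont)} enters in an essential way. I expect this to be the main obstacle, and I would isolate it as a separate proposition (carried out in Section \ref{sec:halpha}, as the text announces).

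For \eqref{eq:halphatozero} the strategy, following the approach of \cite{BHP24}, is as follows. Write $\Lambda_s(W)=\|\wedge^2 W\|^\alpha\|W\|^{s-2\alpha}/\|W\|^s\cdot\|W\|^s=\big(\|\wedge^2 W\|/\|W\|^2\big)^\alpha\|W\|^s$, so that $h_\alpha(n)=k(s)^{-n}\int\phi(W_n)\,\|W_n\|^s\,d\mu^{\otimes n}$ with $\phi(W)=(\|\wedge^2 W\|/\|W\|^2)^\alpha\in[0,1]$; note $\phi(W)$ is small exactly when $W$ is close (in direction) to a rank-one matrix. By Lemma \ref{Lemma:ExistenceSigma}, the measures $d\mathbb P_n:=k(s)^{-n}\|W_n\|^s\,d\mu^{\otimes n}$ have total mass in $[1,1/c_s(\sigma)]$, so they are "almost" probability measures; after normalizing, $h_\alpha(n)$ is comparable to an average of $\phi(W_n)$ against a tilted measure. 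Using the cocycle/submultiplicativity structure $\|\wedge^2 W_{n}\|\le\|\wedge^2 (V_n\cdots V_{k+1})\|\,\|W_k\|^2$ together with $\|W_n\|\le\|V_n\cdots V_{k+1}\|\,\|W_k\|$, one gets a sub/super-multiplicative control that lets one propagate a single "good block" — a block of matrices whose product is within $\varepsilon$ of a rank-one matrix, which exists and has positive $\mu^{\otimes m}$-measure by {\bf (Cont)} — through the product, forcing $\phi(W_n)$ to be $\le\varepsilon^\alpha$ with overwhelming weight as $n\to\infty$. Concretely, one shows there is $m$ and an event $E\subset\mat^m$ with $\mathbb P_m$-mass bounded below such that on $E$, $\phi(W_m)\le\varepsilon^\alpha$; then splitting $[1,n]$ into blocks of length $m$ and using independence plus the multiplicative bounds gives $h_\alpha(n)\le C\big((1-\delta)+\delta\varepsilon^\alpha\big)^{\lfloor n/m\rfloor}$-type decay for suitable $\delta>0$, whence \eqref{eq:halphatozero}. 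Feeding any $n_0$ with $B_s h_\alpha(n_0)<1$ into Theorem \ref{ITM1} completes the proof that $Q_s$ is quasi-compact on $(C^\alpha(\sta,\cc),\|\cdot\|_\alpha)$.
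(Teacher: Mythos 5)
Your reduction of the theorem to the verification of the three ITM hypotheses, with $|\cdot|=\|\cdot\|_\infty$, and the identification of Hypothesis (3) with the statement $h_\alpha(n)\to 0$, is exactly the paper's structure (Section~\ref{sec:ProofQuasiCompact}). Hypotheses (1) and (2) are handled the same way, and your choice $r=(B_s h_\alpha(n_0))^{1/n_0}$ is a fine substitute for the paper's use of Fekete's lemma to get $h_\alpha(n)\le\lambda^n$ for $n\ge n_0$. The disagreement is entirely in the strategy you sketch for $h_\alpha(n)\to 0$, and there the proposal has a genuine gap.

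Your ``good block'' argument has two problems. First, the tilted measures $d\mathbb P_n=k(s)^{-n}\|W_n\|^s\,d\mu^{\otimes n}$ are \emph{not} product measures: the weight $\|W_n\|^s=\|v_n\cdots v_1\|^s$ couples all coordinates, so the claim that the occurrence of a contracting block in disjoint windows can be treated ``by independence'' does not hold, and the bound $h_\alpha(n)\le C\big((1-\delta)+\delta\varepsilon^\alpha\big)^{\lfloor n/m\rfloor}$ does not follow. Second, the propagation step itself fails for non-invertible matrices: if $W_n=A\,B\,C$ with $B$ close in direction to rank one, the inequalities $\|\wedge^2 W_n\|\le\|\wedge^2 A\|\,\|\wedge^2 B\|\,\|\wedge^2 C\|$ and $\|W_n\|\le\|A\|\,\|B\|\,\|C\|$ only give an \emph{upper} bound on both numerator and denominator of $\phi(W_n)=(\|\wedge^2 W_n\|/\|W_n\|^2)^\alpha$; without a lower bound on $\|W_n\|$ in terms of $\|A\|\|B\|\|C\|$ (which can fail badly when $A$ nearly annihilates the image of $BC$), the ratio is not controlled. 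In the invertible, strongly irreducible setting such lower bounds are available; removing invertibility is precisely the difficulty the paper is designed to overcome.

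What the paper actually does (Sections~\ref{Sec:Martingale} and~\ref{sec:halpha}) is to replace the block-counting idea by an a.s.\ argument under the tilted law: it introduces the probability measure $\qq^s=\int_{\sta}\qq^s_{\hat x}\,d\eta^s(\hat x)$, shows that $h_\alpha(n)\le K_s\,\ee_{\qq^s}\big[\big(\|\wedge^2 W_n\|/\|W_n\|^2\big)^\alpha\big]$, and then proves $a_2(W_n/\|W_n\|)\to 0$ $\qq^s$-a.s.\ (Propositions~\ref{Prop:ConvergenceWn} and~\ref{Prop:Conva1a2}) via a martingale argument on the Radon--Nikodym derivatives $P_n(\hat x)=d\qq^s_{\hat x}/d\qq^s|_{\mathcal O_n}$; dominated convergence then gives $h_\alpha(n)\to 0$, and submultiplicativity upgrades this to geometric decay. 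This martingale step is the paper's stated main innovation. To make your proposal work you would need to either adapt that argument or supply a renewal-type replacement for ``independence'' together with a lower bound on $\|W_n\|$ valid under \textbf{(Irr)} and \textbf{(Cont)}; as written, the sketch does not close.
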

To prove Theorem \ref{QuasiCompacts}, we verify the three assumptions of Theorem \ref{ITM1}. The key item to prove is Item $(3)$. In our context, we need to prove that there exist:
$0<r<\rho_\alpha(Q_s)$, $R \geq 0$, $n_0 \in \nn$ such that for every $f \in C^\alpha(\sta,\cc)$,
\begin{equation}
    \|Q_s^{n_0} \varphi\|_\alpha \leq R \|\varphi\|_\infty + r^{n_0}\|\varphi\|_\alpha.
\end{equation}
In order to prove this inequality, we use Lemma \ref{Lemma:EstimationNormeHölder}. For every $s \in I_\mu$, there exist $A_s$ and $B_s$ such that for every $\varphi \in C^\alpha(\sta,\cc)$: 
\begin{equation*}
    \|Q_s^n \varphi\|_\alpha \leq A_s \|\varphi\|_\infty + B_s h_\alpha(n) \|\varphi\|_\alpha,
\end{equation*}
We recall that $h_\alpha: \nn \to \rr_+$ is defined as follows:
\begin{equation*}
    h_\alpha(n)= \frac{1}{k(s)^n} \int_{\mat^n} \| \wedge^2 v_n...v_1 \|^\alpha \|v_n...v_1\|^{s-2\alpha} d \mu^{\otimes n}(v_1,...v_n).
\end{equation*}
We prove in Section \ref{sec:halpha} that there exists $n_0 \in \nn^*$ and $0<\lambda<1$ such that for every $n \geq n_0$:
\begin{equation}\label{eq:halphalambda}
    h_\alpha(n) \leq \lambda^n.
\end{equation}
In order to prove this result, we introduce the probability measure $\qq^s$ on $\out$ defined by
\begin{equation}\label{eq:definitionQQs}
    \qq^s=\int_{\sta} \qq^s_{\hat{x}} d \eta^s(\hat{x}).
\end{equation}
To obtain Equation \eqref{eq:halphalambda}, we first prove that:
\begin{equation}
    h_\alpha(n) \leq K_s \ee_{\qq^s}\left[\left \| \frac{\wedge^2W_n}{\|W_n\|^2} \right\|^\alpha\right],
\end{equation}
where $K_s \geq 0$ is a constant.  We define the process $(\widetilde{W}_n)$ such that for every $n \in \nn$, $ \Tilde{W_n}=\frac{W_n}{\|W_n\|}$. From Lemma 5.3 \cite{BL85}, we have
\begin{equation*}
    \|\wedge^2\widetilde{W}_n\|=a_1(\widetilde{W}_n)a_2(\widetilde{W}_n),
\end{equation*}
where $a_1(\widetilde{W}_n),a_2(\widetilde{W}_n)$ denotes the two largest singular values of $\widetilde{W}_n$ with multiplicity.
In Section \ref{Sec:Martingale}, we prove that the process $(a_2(\widetilde{W}_n))$ converges $\qq^s$-almost surely towards $0$ ensuring the convergence of $(h_\alpha(n))$ towards $0$. Then, by submultiplicativity of $(h_\alpha(n))$, we obtain Equation \eqref{eq:halphalambda}. Finally, by combining the results of Sections \ref{Sec:Martingale} and \ref{sec:halpha}, we prove the quasi-compactness of the operators $Q_s$ in Section \ref{sec:ProofQuasiCompact}.

\subsection{The $\qq^s$-almost sure convergence of $a_2(\widetilde{W}_n)$ towards 0}\label{Sec:Martingale}
The goal of this subsection is to prove the following proposition.
\begin{proposition}\label{Prop:Conva1a2}
The sequence $(a_2(\widetilde{W}_n))$ converges $\qq^s-$ almost surely towards $0$.
\end{proposition}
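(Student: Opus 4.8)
The plan is to realize $\log a_2(\widetilde W_n)$ (equivalently $\log\|\wedge^2\widetilde W_n\| - \log a_1(\widetilde W_n)$, but it is cleaner to work with the ratio) as a process controlled by a martingale under $\qq^s$, and to exploit the contractivity assumption {\bf (Cont)} to show this process drifts to $-\infty$. First I would set $r_n = a_2(\widetilde W_n)/a_1(\widetilde W_n) = \|\wedge^2 W_n\|/\|W_n\|^2$, which is a submultiplicative, $[0,1]$-valued process (submultiplicativity of $\|\wedge^2\cdot\|$ divided by submultiplicativity going the wrong way for $\|\cdot\|^2$ — so one should instead note $\|\wedge^2 W_{n+m}\|\le \|\wedge^2 V_{n+m}\cdots V_{n+1}\|\,\|\wedge^2 W_n\|$ and $\|W_{n+m}\|\le\cdots$; the correct monotone quantity to track is the $\qq^s$-expectation of $r_n^\alpha$, which is (up to the constant $K_s$) exactly $h_\alpha(n)$ shown earlier to be bounded). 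Since $(h_\alpha(n))$ is bounded and, by the Markov structure, submultiplicative up to constants, $\lim_n h_\alpha(n)$ exists; the crux is to show this limit is $0$, and this follows once we know $r_n\to 0$ $\qq^s$-a.s., by dominated convergence (the $r_n^\alpha$ are bounded by $1$).

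The main step is therefore the $\qq^s$-a.s. convergence $a_2(\widetilde W_n)\to 0$ itself. Here I would use that under $\qq^s$ the ``driving'' sequence $(V_i)$ has, by construction of $\qq^s_{\hat x}$ via the densities $q_n^s(\hat x,\cdot)\mu^{\otimes n}$, the property that with positive $\qq^s$-probability any finite product pattern that appears in $T$ (the closed semigroup generated by $\supp\mu$) is approximated along the trajectory — more precisely, the tail of $(V_i)$ visits every neighborhood of $\supp\mu$ infinitely often, so in particular the near-rank-one products guaranteed by {\bf (Cont)} occur infinitely often along subsequences in the sense that one can splice in a long block $v_{k}$ with $v_k/\|v_k\|$ close to a rank-one $v_\infty$. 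Applying such a block makes $r_{n+\ell}$ small: if $B$ is a matrix with $\|\wedge^2 B\|/\|B\|^2\le\varepsilon$ then $\|\wedge^2 (BW_n)\|/\|BW_n\|^2 = \|\wedge^2 B\|\,\|\wedge^2 W_n\|/\|BW_n\|^2$, and because $B$ is near rank one the denominator $\|BW_n\|^2$ is comparable to $\|\wedge^2 B\|^{-1}$ times a quantity bounded below (using that $\widetilde W_n$ stays in the compact projective space and the relevant direction is not annihilated — this non-degeneracy is where {\bf (Irr)} enters, exactly as in the proof of Lemma \ref{lem:Constant}), so the ratio is pushed below a fixed multiple of $\varepsilon$. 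Since $r_n$ is submultiplicative-like and $[0,1]$-valued, once it dips below a threshold it cannot come back up past it along that trajectory by more than a bounded factor, and iterating along the infinitely many spliced blocks forces $r_n\to 0$.

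To make the ``positive probability of a good block'' argument rigorous I would combine a Borel–Cantelli / conditional Borel–Cantelli argument with the Markov property of $Q_s$: one shows that conditionally on $\outalg_n$, the $\qq^s$-probability that the next block $V_{n+1},\dots,V_{n+\ell}$ lies in a prescribed open neighborhood (whose existence is given by {\bf (Cont)}: take a truncation of the sequence $(v_k)$) is bounded below by a positive constant depending only on $\ell$ and the neighborhood, because the density $q_\ell^s(\hat x,\cdot)$ is bounded below on that neighborhood uniformly in $\hat x$ (using strict positivity and continuity — really $\bar s$-Hölderness — of $e_s$, compactness of $\sta$, and the fact that $k(s)>0$ from Remark \ref{Remark:Positivityk}). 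The second Borel–Cantelli lemma for events with uniformly positive conditional probability then yields infinitely many good blocks $\qq^s$-a.s. The main obstacle, I expect, is the bookkeeping in the geometric estimate: controlling $\|\wedge^2(BW_n)\|/\|BW_n\|^2$ when $B$ is only \emph{approximately} rank one requires quantifying that the image direction $\widetilde W_n\cdot\hat x$ (or rather the top singular direction of $W_n$) is not too close to $\ker v_\infty$, uniformly along the trajectory; this uniformity must be extracted from {\bf (Irr)} and the invariance of $\eta^s$, and threading it through the perturbation $B\approx\|B\|v_\infty$ is the delicate part. Everything else — submultiplicativity, dominated convergence to conclude $h_\alpha(n)\to0$, and the passage from a.s. convergence to the exponential bound \eqref{eq:halphalambda} via submultiplicativity and Fekete — is routine.
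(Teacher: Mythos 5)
Your proposal takes a genuinely different route from the paper. The paper reduces Proposition \ref{Prop:Conva1a2} to Proposition \ref{Prop:ConvergenceWn} (every subsequential limit of $\widetilde W_n$ has rank one, $\qq^s$-a.s.) and concludes by continuity of the singular values. The engine behind Proposition \ref{Prop:ConvergenceWn} is not a Borel--Cantelli argument but a martingale one: the Radon--Nikodym densities $P_n(\hat x)$ of $\qq^s_{\hat x}$ with respect to $\qq^s$ restricted to $\outalg_n$ form a bounded $\qq^s$-martingale for each $\hat x$, so the conditional increments $\ee_{\qq^s}[|P_{n+p}(\hat x)-P_n(\hat x)| \mid \outalg_n]$ tend to zero; passing to a limit along any subsequence where $\widetilde W_{n_l}$ converges yields the pointwise identity \eqref{eq:AsAsp}, which, combined with {\bf (Cont)} and {\bf (Irr)}, forces $\ker(v_\infty w W_\infty)=\ker(W_\infty)$ and hence $\rank(W_\infty)=1$. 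That argument handles subsequential limits directly and requires no pathwise quantitative control along the trajectory.

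The gap in your approach is precisely the step you label as ``delicate'': the claim that once $r_n=\|\wedge^2 W_n\|/\|W_n\|^2$ drops below a threshold it ``cannot come back up past it by more than a bounded factor.'' For $m>n$, write $W_m=UW_n$ with $U=V_m\cdots V_{n+1}$. The only pathwise inequality available is
\[
r_m \;\le\; \frac{\|\wedge^2 U\|\,\|\wedge^2 W_n\|}{\|UW_n\|^2}\;\le\;\frac{\|U\|^2\|W_n\|^2}{\|UW_n\|^2}\,r_n,
\]
and the factor $\|U\|^2\|W_n\|^2/\|UW_n\|^2\ge 1$ is unbounded in general; it is large exactly when $U$ nearly annihilates the image of $W_n$. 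Under $\qq^s$ one knows only qualitatively that $W_n\ne 0$ a.s., and {\bf (Irr)} together with invariance of $\eta^s$ produce \emph{averaged} lower bounds (as in Lemma \ref{IneqSigma}), not uniform-in-$n$ pathwise ones, so the ``extraction of uniformity'' you invoke is not available. Without it, infinitely many good blocks from your conditional Borel--Cantelli argument yield $\liminf_n r_n=0$ but not $\lim_n r_n=0$. The martingale argument of the paper sidesteps this obstacle entirely, which is why it is the natural tool here.
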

The proof is based on the study of the Radon-Nikodym derivative of $\qq_{\hat x}^s$ with respect to $\qq^s$ for any $\hat{x} \in \sta$.
In order to study it, we define the function $J_s : \mat \to \rr_+$ as follows: for every $A \in \mat$:
\begin{equation}\label{eq:DefIsA}
    J_s(A)=\int_{\sta} \frac{e_s(A \cdot \hat{y})}{e_s(\hat{y})}\|Ay\|^s d \eta^s(\hat{y}),
\end{equation}
where $\eta^s$ is the invariant probability measure of $Q^s$.
Since $e_s$ is bounded and strictly positive, $J_s$ is well defined and there exists $C_s \geq 0$ such that $J_s(A) \leq C_s \|A\|^s$ for every $A \in \mat$.
We first give some properties of this function and then study the martingale induced by the Radon-Nikodym derivative.
\subsubsection{Properties of the function $J_s$}
In this section, we give some properties of the function $J_s$.  
\begin{lemma}\label{lemma:ProprietesIs}
For every $s \in I_\mu$,
\begin{enumerate}
    \item The function $J_s: \mat \to \rr_+$ is continuous.
    \item There exists $K_s > 0$ such that for every $A \neq 0$, $J_s(A) \geq K_s \|A\|^s$.
    \item For every $A \neq 0$, $\frac{1}{\|A\|^s}J_s(A)=J_s(\frac{A}{\|A\|})$.
\end{enumerate}
\end{lemma}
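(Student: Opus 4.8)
The plan is to treat the three claims in order, using as the key input the facts established earlier: $e_s$ is continuous, strictly positive, and $\bar s$-Hölder on the compact space $\sta$ (Proposition \ref{Prop:Existencee_s} and Corollary \ref{Cor:Sigmaetes}), $\eta^s$ is the $Q_s$-invariant probability measure with support not contained in a hyperplane (Corollary \ref{Cor:Sigmaetes}), and Lemma \ref{IneqSigma} (Lemma 2.7 of \cite{GLP15}), which gives a lower bound $\int_{\sta}\|Ay\|^s\,d\nu(\hat y)\geq c_s(\nu)\|A\|^s$ for any probability measure $\nu$ whose support avoids all hyperplanes.

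First I would prove (3), since it is the easiest: for $A\neq 0$ and $\lambda>0$ one has $\lambda A\cdot\hat y=A\cdot\hat y$ as elements of $\sta$ and $\|(\lambda A)y\|^s=\lambda^s\|Ay\|^s$, so plugging $\lambda=1/\|A\|$ into the definition \eqref{eq:DefIsA} gives $J_s(A/\|A\|)=\|A\|^{-s}J_s(A)$ directly. Next, for (2), I would use the strict positivity of $e_s$ on the compact $\sta$ to set $m:=\inf_{\hat z}e_s(\hat z)>0$ and $M:=\sup_{\hat z}e_s(\hat z)<\infty$, so that $e_s(A\cdot\hat y)/e_s(\hat y)\geq m/M>0$ for every $\hat y$ with $Ay\neq 0$ (and the integrand is $0$ otherwise); hence $J_s(A)\geq (m/M)\int_{\sta}\|Ay\|^s\,d\eta^s(\hat y)$. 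Since $\supp\eta^s$ is not contained in a hyperplane, Lemma \ref{IneqSigma} applied to $\nu=\eta^s$ yields $\int_{\sta}\|Ay\|^s\,d\eta^s(\hat y)\geq c_s(\eta^s)\|A\|^s$, and (2) follows with $K_s=(m/M)c_s(\eta^s)>0$.

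The main work is (1), continuity of $J_s$. Let $A_n\to A$ in $\mat$. The integrand $g_n(\hat y):=\tfrac{e_s(A_n\cdot\hat y)}{e_s(\hat y)}\|A_ny\|^s$ (with value $0$ when $A_ny=0$) is dominated uniformly by $(M/m)\sup_n\|A_n\|^s<\infty$, so by dominated convergence it suffices to show $g_n(\hat y)\to g(\hat y)$ for $\eta^s$-almost every $\hat y$. For $\hat y$ with $Ay\neq 0$: $\|A_ny\|^s\to\|Ay\|^s$ by continuity of the norm, and $A_n\cdot\hat y\to A\cdot\hat y$ in $\sta$ by Lemma \ref{lemma:vndot}, whence $e_s(A_n\cdot\hat y)\to e_s(A\cdot\hat y)$ by continuity of $e_s$; thus $g_n(\hat y)\to g(\hat y)$. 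The only delicate point is the set where $Ay=0$: there $g(\hat y)=0$, and I would bound $g_n(\hat y)\leq (M/m)\|A_ny\|^s$; since $\|A_ny\|\leq\|A_n-A\|\,\|y\|+\|Ay\|=\|A_n-A\|\to 0$, we get $g_n(\hat y)\to 0=g(\hat y)$ on this set as well. Hence $g_n\to g$ pointwise everywhere, and dominated convergence gives $J_s(A_n)\to J_s(A)$. The one subtlety to handle carefully is that $A\mapsto A\cdot\hat y$ is only continuous where $Ay\neq 0$, which is exactly why the $Ay=0$ set must be treated separately by the crude norm bound rather than by passing to the limit inside $e_s$; everything else is routine.
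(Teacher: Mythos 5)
Your proof is correct and follows essentially the same route as the paper: dominated convergence for continuity, the pinching $m/M \le e_s(A\cdot\hat y)/e_s(\hat y) \le M/m$ together with Lemma \ref{IneqSigma} applied to $\eta^s$ for the lower bound, and degree-$s$ homogeneity of the integrand for item (3). In fact your treatment of continuity is slightly more careful than the paper's: the paper invokes continuity of $e_s$ to get pointwise convergence of the integrand without comment, whereas you correctly isolate the set $\{Ay=0\}$ (where $B\mapsto B\cdot\hat y$ is not continuous) and handle it via the crude bound $g_n(\hat y)\le (M/m)\|A_n-A\|^s\to 0$.
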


\begin{proof}
To prove the continuity, we prove the sequential continuity.
Let $(B_n) \in \mat^{\nn}$ a sequence converging to $B_\infty$.
Firstly, for every $\hat{y} \in \sta$, by continuity of the map $e_s$ the sequence $(e_s(B_n \cdot \hat{y}) \|B_n y\|^s)$ converges towards $e_s(B_\infty \cdot \hat{y}) \|B_\infty y\|^s$.
Secondly, since $e_s$ is bounded and $(B_n)$ converges, the sequence $(e_s(B_n \cdot \hat{y}) \|B_n y\|^s)$ is bounded indepedently of $n$ and $y$. By Lebesgue dominated convergence, we obtain the first item. 
In order to prove the second item, we apply Lemma \ref{IneqSigma} because $\eta^s$ is not supported on a hyperplane (Corollary \ref{Cor:Sigmaetes}). There exists $c_s>0$ depending on $\eta^s$ such that for every $A \in \mat$,
$$ c_s\|A\|^s \leq \int_{\sta} \|Ax\|^s d \eta^s(\hat{x}). $$
Now, let us remark that since $e_s$ is a strictly positive bounded function, we have that for every $\hat x \in \sta$
$$ \|Ax\|^s = \frac{e_s(A \cdot \hat x)}{e_s(\hat x)} \|Ax\|^s \times \frac{e_s(\hat x)}{e_s(A \cdot \hat x)} \leq C_s \frac{e_s(A \cdot \hat x)}{e_s(\hat x)} \|Ax\|^s, $$
where $C_s=\underset{\hat x, \hat y \in \sta}{\sup}\frac{e_s(\hat x)}{e_s(\hat y)}>0$. By letting $K_s = \frac{c_s}{C_s}$, we obtain Item $2$.
The third item comes from immediate computation.

\end{proof}
We deduce the following corollary.
\begin{corollary}\label{Cor:Convergence}
Let $A \in \mat$, $(B_n) \in \mat^{\nn}$ a sequence converging to $B_\infty$, $\hat{x} \in \sta$ and $s \in I_\mu$. We assume that $\supp \sigma$ is not included in a hyperplane, then:
\begin{equation}\label{eq:ConvergenceContinuité}
    \frac{\|A B_n x \|^s e_s(B_n \cdot \hat{x})}{\int_{\sta} \|B_n y \| \frac{e_s(B_n \cdot \hat{y})}{e_s(\hat{y})} d \sigma(\hat{y})} \underset{n \to \infty}{\longrightarrow} \frac{\|A B_\infty x \|^s e_s(B_\infty \cdot \hat{x})}{\int_{\sta} \|B_\infty y \| \frac{e_s(B_\infty \cdot \hat{y})}{e_s(\hat{y})} d \sigma(\hat{y})} 
\end{equation}
\end{corollary}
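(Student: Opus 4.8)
The plan is to show that the numerator and the denominator of the left‑hand side each converge, with the limit of the denominator strictly positive, and then to conclude by the algebra of limits. Throughout, recall from Theorem~\ref{Constru} that $e_s$ is continuous and strictly positive on the compact space $\sta$, so $0 < \inf e_s \le \sup e_s < \infty$, and keep the standing convention that any product of the form $\|Bz\|^s e_s(B\cdot\hat z)$ is declared to be $0$ when $Bz=0$ (so that both the numerator and the denominator integrand are genuinely defined for all matrices).

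\emph{The denominator.} The integrand $\hat y \mapsto \|B_n y\|^s\,\tfrac{e_s(B_n\cdot\hat y)}{e_s(\hat y)}$ is bounded, uniformly in $n$ and $\hat y$, by $\big(\sup_n\|B_n\|\big)^s\,\tfrac{\sup e_s}{\inf e_s}$, and it converges pointwise in $\hat y$ to $\|B_\infty y\|^s\,\tfrac{e_s(B_\infty\cdot\hat y)}{e_s(\hat y)}$: where $B_\infty y\neq 0$ this is Lemma~\ref{lemma:vndot} together with continuity of $e_s$ and of $t\mapsto t^s$, and where $B_\infty y = 0$ both sides vanish while $\|B_n y\|^s\to 0$. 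By dominated convergence the denominator converges to $\int_{\sta}\|B_\infty y\|^s\,\tfrac{e_s(B_\infty\cdot\hat y)}{e_s(\hat y)}\,d\sigma(\hat y)$ — this is exactly the continuity statement of Lemma~\ref{lemma:ProprietesIs}(1). For strict positivity of this limit one must assume $B_\infty\neq 0$ (this is implicit in, and necessary for, the statement: if $B_\infty=0$ then both numerator and denominator vanish and the right‑hand side is a $0/0$ expression); granting it, Lemma~\ref{IneqSigma} applies to $\sigma$ precisely because $\supp\sigma$ is not contained in a hyperplane, giving $\int_{\sta}\|B_\infty y\|^s\,d\sigma(\hat y)\ge c_s(\sigma)\|B_\infty\|^s$, and bounding $\tfrac{e_s(B_\infty\cdot\hat y)}{e_s(\hat y)}\ge\tfrac{\inf e_s}{\sup e_s}$ shows the limiting denominator is at least $\tfrac{\inf e_s}{\sup e_s}c_s(\sigma)\|B_\infty\|^s>0$.

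\emph{The numerator.} Split according to whether $B_\infty x = 0$. If $B_\infty x\neq 0$, then $B_n x\neq 0$ for large $n$, Lemma~\ref{lemma:vndot} gives $B_n\cdot\hat x\to B_\infty\cdot\hat x$, and continuity of $e_s$, of the norm and of $t\mapsto t^s$ yield $\|AB_n x\|^s e_s(B_n\cdot\hat x)\to\|AB_\infty x\|^s e_s(B_\infty\cdot\hat x)$. If $B_\infty x = 0$, then $AB_\infty x = 0$, so the right‑hand numerator is $0$ by the convention, and $0\le\|AB_n x\|^s e_s(B_n\cdot\hat x)\le\|A\|^s\,\|B_n x\|^s\,\sup e_s\to\|A\|^s\,\|B_\infty x\|^s\,\sup e_s = 0$.

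Combining the two paragraphs, the left‑hand side is a quotient of two convergent sequences whose limiting denominator is nonzero, hence it converges to the quotient of the limits, which is the right‑hand side. I expect the only genuine subtlety to be the discontinuity of $v\mapsto v\cdot\hat x$ (and hence of $v\mapsto e_s(v\cdot\hat x)$) across the set $\{vx = 0\}$; in the numerator this is absorbed by the crude estimate $\|AB_n x\|\le\|A\|\,\|B_n x\|\to 0$, and in the denominator by the dominated convergence theorem, so past the case split the verification is routine bookkeeping.
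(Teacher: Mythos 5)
Your proof is correct, and since the paper states this corollary without argument (as a direct consequence of Lemma~\ref{lemma:ProprietesIs} and Lemma~\ref{lemma:vndot}), your dominated-convergence verification is exactly the reasoning the paper intends. Three small remarks. First, you are right that $B_\infty\neq 0$ is a tacit hypothesis: it is needed both for the positivity of the limiting denominator and for the right-hand side to make sense, and in the only place the corollary is invoked (the proof of Proposition~\ref{Prop:ConvergenceWn}) the limit $W_\infty(\omega)$ of $\widetilde W_{n_l}(\omega)$ has norm $1$, so the hypothesis is satisfied there. Second, you silently read the paper's denominator $\int_{\sta}\|B_n y\|\,\tfrac{e_s(B_n\cdot\hat y)}{e_s(\hat y)}\,d\sigma(\hat y)$ with the exponent $s$ on $\|B_n y\|$, which is surely a typo in the paper; your correction is the right one, as is clear from how the corollary is used with $J_s$ downstream. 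Third, your claim that the denominator convergence ``is exactly the continuity statement of Lemma~\ref{lemma:ProprietesIs}(1)'' is only morally true: that lemma is stated for $J_s$, which integrates against $d\eta^s$ rather than against $\tfrac{1}{e_s}\,d\sigma$, but since $\eta^s=e_s\sigma$ the two expressions differ only by the bounded factor $\tfrac{1}{e_s(\hat y)^2}$ and the identical dominated-convergence argument applies, so nothing substantive is affected.
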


\subsubsection{Radon-Nykodim derivative}

\begin{lemma}
For every $\hat{x} \in \sta$, the probability measure $\qq^s_{\hat{x}}$ is absolutely continuous with respect to $\qq^s$.  
\end{lemma}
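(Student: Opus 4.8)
The goal is to show that for every $\hat x \in \sta$, the measure $\qq^s_{\hat x}$ is absolutely continuous with respect to $\qq^s = \int_\sta \qq^s_{\hat y}\, d\eta^s(\hat y)$. The plan is to exhibit, on each cylinder $\sigma$-algebra $\outalg_n$, an explicit Radon--Nikodym derivative and then pass to the limit using a martingale argument. Recall that $\qq^s_{\hat x}$ restricted to $\outalg_n$ has density $q^s_n(\hat x, W_n)$ with respect to $\mu^{\otimes n}$, and that $\qq^s$ restricted to $\outalg_n$ has density $\hat y \mapsto \int_\sta q^s_n(\hat y, W_n)\, d\eta^s(\hat y)$ with respect to $\mu^{\otimes n}$. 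Hence on $\outalg_n$ the candidate density is
\begin{equation*}
    D_n(\omega) = \frac{q^s_n(\hat x, W_n(\omega))}{\int_\sta q^s_n(\hat y, W_n(\omega))\, d\eta^s(\hat y)}
    = \frac{\dfrac{e_s(W_n\cdot\hat x)}{e_s(\hat x)}\|W_n x\|^s}{\displaystyle\int_\sta \frac{e_s(W_n\cdot\hat y)}{e_s(\hat y)}\|W_n y\|^s\, d\eta^s(\hat y)}
    = \frac{e_s(W_n\cdot\hat x)\|W_n x\|^s}{e_s(\hat x)\, J_s(W_n)},
\end{equation*}
using the notation of Equation~\eqref{eq:DefIsA} (the factors $k(s)^{-n}$ cancel).

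First I would check that $D_n$ is well defined and finite: by Item~2 of Lemma~\ref{lemma:ProprietesIs}, $J_s(W_n) \geq K_s\|W_n\|^s$, and since $e_s$ is bounded and $\|W_n x\| \leq \|W_n\|$, we get $D_n \leq \|e_s\|_\infty/(e_s(\hat x) K_s)$ whenever $W_n \neq 0$, and on $\{W_n = 0\}$ both numerator and the $\hat x$-integrand vanish so we set $D_n = 0$ there (the set $\{W_n = 0\}$ is $\qq^s$-negligible as well, since $\qq^s(\{W_n=0\})$ involves $\int \|W_n y\|^s$ which would force $J_s(W_n)=0$). Next I would verify the defining property: for any $\outalg_n$-measurable bounded $\phi$,
\begin{equation*}
    \int_\out \phi\, D_n\, d\qq^s = \int_{\matd^n} \phi(W_n) \frac{q^s_n(\hat x, W_n)}{\int_\sta q^s_n(\hat y, W_n)d\eta^s(\hat y)}\left(\int_\sta q^s_n(\hat y, W_n)\,d\eta^s(\hat y)\right) d\mu^{\otimes n} = \int_\out \phi\, d\qq^s_{\hat x}.
\end{equation*}
This shows $D_n = d(\qq^s_{\hat x}|_{\outalg_n})/d(\qq^s|_{\outalg_n})$. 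In particular $(D_n)_{n}$ is a nonnegative martingale on $(\out, \outalg, \qq^s)$ adapted to $(\outalg_n)$, and it is uniformly bounded by the constant above, hence uniformly integrable.

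By the martingale convergence theorem, $D_n \to D_\infty$ both $\qq^s$-almost surely and in $L^1(\qq^s)$, for some $\outalg$-measurable $D_\infty \geq 0$ with $\ee_{\qq^s}[D_\infty] = 1$. Then for any $n$ and any $B \in \outalg_n$, $\qq^s_{\hat x}(B) = \int_B D_n\, d\qq^s = \int_B D_\infty\, d\qq^s$ by the martingale property (and uniform integrability); since $\bigcup_n \outalg_n$ is an algebra generating $\outalg$, a monotone class / $\pi$-$\lambda$ argument extends this identity to all $B \in \outalg$, giving $\qq^s_{\hat x} = D_\infty\, \qq^s$, which proves absolute continuity with $d\qq^s_{\hat x}/d\qq^s = D_\infty$. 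The only mildly delicate points are the measurability in $\hat y$ of the integrand defining $J_s(W_n)$ and the bookkeeping on $\{W_n = 0\}$; the continuity of $J_s$ (Item~1 of Lemma~\ref{lemma:ProprietesIs}) and Corollary~\ref{Cor:Convergence} handle the former, and the uniform bound from Item~2 is what makes the martingale argument go through painlessly. I expect no substantial obstacle here; this lemma is essentially a setup step for the finer analysis of $D_\infty$ and the convergence of $a_2(\widetilde W_n)$ that follows.
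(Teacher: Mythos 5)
Your proof is correct and rests on the same essential estimate as the paper's: a uniform (in $n$) bound on the candidate density $D_n = q_n^s(\hat x, W_n)/\int q_n^s(\hat y, W_n)\,d\eta^s(\hat y)$, obtained from the boundedness of $e_s$ and the lower bound $J_s(A)\geq K_s\|A\|^s$ (equivalently, Lemma~\ref{IneqSigma} applied to $\eta^s$). The difference is one of route: the paper simply integrates this pointwise bound against any nonnegative cylinder function $\varphi$ to get $\qq^s_{\hat x}(\varphi)\leq \tfrac{C^2}{c_s}\qq^s(\varphi)$, and then lets a monotone class argument extend the domination $\qq^s_{\hat x}\leq \tfrac{C^2}{c_s}\qq^s$ from cylinders to all of $\outalg$ — absolute continuity is then immediate, with no martingale machinery. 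You instead identify $D_n$ as the restricted Radon--Nikodym derivative, invoke the martingale convergence theorem to produce $D_\infty$, and then extend $\qq^s_{\hat x}=D_\infty\qq^s$ from cylinders to $\outalg$. Both are valid; yours is slightly heavier but has the advantage of producing the limiting density $D_\infty$ explicitly, which is in fact exactly what the paper needs next (it proves the same martingale convergence separately as Lemma~\ref{Martingale}, under the name $P_n(\hat x)$). So in effect you have merged the lemma with the one that follows it. One small remark: your claim that $\{W_n=0\}$ is $\qq^s$-negligible deserves the same two-word justification the paper gives for the analogous facts, namely that $q_n^s(\hat y,0)=0$ for all $\hat y$, so $\qq^s(\{W_n=0\})=0$ directly; the aside about forcing $J_s(W_n)=0$ is not quite the cleanest way to say it.
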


\begin{proof}
Let $\hat x \in \sta$ and $n \in \nn^*$. We recall that $C= \underset{\hat z , \hat y \in \sta}{\sup}\frac{e_s(\hat z)}{e_s(\hat y)}< \infty$.
For every $A \in \mat$,
\begin{align*}
    q_n^s(\hat x,A) &= \frac{1}{k(s)^n} \frac{e_s(A \cdot \hat x)}{e_s(\hat x)}\|Ax\|^s \\
    &\leq \frac{C}{k(s)^n} \|A\|^s.
\end{align*}
By Corollary \ref{Cor:Sigmaetes}, the probability measure $\eta^s$ is not included in a hyperplane . Then, Lemma \ref{IneqSigma} implies the existence of $c_s>0$ such that:
$$ \|A\|^s \leq \frac{1}{c_s} \int_{\sta} \|Ay\|^s d \eta^s(\hat y).$$
It follows that:
$$ q_n^s(\hat x,A) \leq \frac{C^2}{c_s} \int_{\sta} q_n^s(\hat y,A) d\eta^s(\hat y).$$
Let $\varphi : \Omega \mapsto \rr_+$ depending only on the $n$ first coordinates.
\begin{align*}
    \qq_{\hat x}^s(\varphi) &= \int_{\mat} q_n^s(\hat x,W_n) d \mu^{\otimes n} \\
    &\leq \frac{C^2}{c_s} \int_{\mat} \int_{\sta} q_n^s(\hat y,W_n) d\eta^s(\hat y) d \mu^{\otimes n} \\
    &= \frac{C^2}{c_s} \qq^s(\varphi).
\end{align*}
Since $n$ and $\varphi$ were chosen arbitrarily, the conclusion necessarily follows.
\end{proof}
For every $\hat x$, we can then define the process $(P_n(\hat{x}))$:
\begin{equation}
    P_n(\hat{x})=\frac{d \qq^s_{\hat{x}}}{d \qq^s} \left |_{\mathcal{O}_n} \right. .
\end{equation}
We can check by computation that for every $n \in \nn$ and $\hat{x} \in \sta$:
\begin{equation}
    P_n(\hat{x}) = \frac{q^s_n(\hat{x},W_n)}{q_n^s(W_n)},
\end{equation}
where $q_n^s(W_n)= \int_{\sta} q^s_n(\hat{y},W_n) d \eta^s(\hat{y})$. By using the function $J_s$ defined in Equation \eqref{eq:DefIsA}, the process $P_n(\hat{x})$ can be written as follows:
\begin{equation*}
    P_n(\hat{x}) = \frac{e_s(W_n \cdot \hat{x}) \|W_n x\|^s}{e_s(\hat{x}) J_s(W_n)}.
\end{equation*}
By the properties of Radon-Nykodim derivatives, we obtain the following result.
\begin{lemma}\label{Martingale}
For every $\hat{x} \in \sta$, the process $(P_n(\hat{x}))$ is a bounded $(\mathcal{O}_n)$-martingale with respect to $\qq^s$ and then converges almost surely and in $L^1$ towards $\frac{d \qq^s_{\hat x}}{d \qq^s}$.
\end{lemma}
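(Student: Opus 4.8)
The plan is to recognize $(P_n(\hat x))_n$ as the Doob martingale attached to the single Radon--Nikodym density $g := \tfrac{d\qq^s_{\hat x}}{d\qq^s}$ on $(\out,\outalg,\qq^s)$, which is well defined and lies in $L^1(\qq^s)$ by the absolute continuity established just above, and then to combine Lévy's upward convergence theorem with a uniform bound. The key identification is that $P_n(\hat x)=\ee_{\qq^s}[g\mid \mathcal O_n]$ holds $\qq^s$-almost surely. Indeed, by construction $P_n(\hat x)$ is the density of the restriction $\qq^s_{\hat x}|_{\mathcal O_n}$ with respect to $\qq^s|_{\mathcal O_n}$, hence is $\mathcal O_n$-measurable and satisfies $\int_A P_n(\hat x)\,d\qq^s=\qq^s_{\hat x}(A)=\int_A g\,d\qq^s$ for every $A\in\mathcal O_n$; since $\ee_{\qq^s}[g\mid\mathcal O_n]$ is characterized by exactly these two properties, the two coincide. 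In particular $(P_n(\hat x))_n$ is an $(\mathcal O_n)$-martingale under $\qq^s$.

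Next I would establish the uniform bound. Starting from the explicit formula $P_n(\hat x)=\dfrac{e_s(W_n\cdot\hat x)\,\|W_nx\|^s}{e_s(\hat x)\,J_s(W_n)}$ (with the convention $P_n(\hat x)=0$ on $\{W_nx=0\}$, which is compatible with $W_n=0$), I would use $\|W_nx\|^s\le\|W_n\|^s$, the bounds $e_s(W_n\cdot\hat x)\le\|e_s\|_\infty$ and $e_s(\hat x)\ge\inf_{\sta}e_s>0$ coming from the strict positivity and continuity of $e_s$ on the compact space $\sta$ (Corollary~\ref{Cor:Sigmaetes}), and the lower bound $J_s(W_n)\ge K_s\|W_n\|^s$ from Lemma~\ref{lemma:ProprietesIs}(2). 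Together these give $P_n(\hat x)\le \dfrac{\|e_s\|_\infty}{K_s\,\inf_{\sta}e_s}$ for every $n$, so the martingale is uniformly bounded, and in particular bounded in $L^1$.

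Finally, since $\outalg=\sigma\big(\bigcup_n\mathcal O_n\big)$, Doob's martingale convergence theorem gives $\qq^s$-almost sure convergence of $P_n(\hat x)$, and the uniform bound upgrades this to convergence in $L^1$ (via dominated convergence, equivalently uniform integrability). By Lévy's upward theorem the limit equals $\ee_{\qq^s}[g\mid\outalg]=g$, because $g$ is already $\outalg$-measurable; that is, $P_n(\hat x)\to\tfrac{d\qq^s_{\hat x}}{d\qq^s}$ $\qq^s$-a.s. and in $L^1$, which is the claim. The argument is a routine application of martingale theory; the only point requiring a little care is the identification of $P_n(\hat x)$ with the conditional expectation of the global density, together with the bookkeeping of the estimates on $e_s$ and $J_s$ already at our disposal.
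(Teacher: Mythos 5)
Your proof is correct and follows essentially the same route as the paper's: recognize $(P_n(\hat x))_n$ as the Doob martingale of the density $d\qq^s_{\hat x}/d\qq^s$, bound it uniformly via $\|W_n x\|^s\le\|W_n\|^s$, the two-sided bounds on $e_s$, and the lower bound $J_s(W_n)\ge K_s\|W_n\|^s$ from Lemma~\ref{lemma:ProprietesIs}, then invoke martingale convergence. The only difference is cosmetic: you spell out the conditional-expectation identification, while the paper leaves it implicit and focuses on the $L^\infty$ bound.
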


\begin{proof}
In order to apply the martingale convergence theorem, we only need to prove that for every $\hat x \in \sta$, the process $(P_n(\hat x))$ is bounded.
Let $\hat x \in \sta$. For every $n \in \nn^*$, $\qq^s$-almost surely, $W_n \neq 0$. Then Lemma \ref{lemma:ProprietesIs} implies the existence of $C_s>0$ such that $\frac{\|W_n\|^s}{J_s(W_n)} \leq C_s$. Since $\|W_n x\|^s \leq \|W_n\|^s$ and $e_s$ is bounded, it follows that $P_n(\hat{x})$ is bounded in $L^{\infty}(\qq^s)$. It follow that the martingale $P_n(\hat{x})$ is uniformly integrable and by martingale convergence theorem, we obtain the lemma.
\end{proof}

The following proposition is central to the proof of quasi-compactness and leverages the martingale property of the processes $(P_n(\hat{x}))$.

\begin{proposition}\label{Prop:ConvergenceWn}
Assume {\bf (Cont)} and {\bf (Irr)} hold. Then, for $\qq^s$- almost every $\omega \in \Omega$ and for every subsequence $(n_l(\omega))_l$ such that $\frac{W_{n_l}(\omega)}{\|W_{n_l}(\omega)\|}$ converges, the limit $W_\infty(\omega)$ has rank one.
\end{proposition}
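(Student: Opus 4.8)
The plan is to show that along any subsequence realizing a limit $W_\infty$, the rank is one by ruling out ranks $\geq 2$ using the martingale $(P_n(\hat x))$ together with the contractivity assumption \textbf{(Cont)}. First I would record the almost sure consequences of Lemma~\ref{Martingale}: for $\qq^s$-almost every $\omega$ there exists a random variable $P_\infty(\hat x)(\omega)$ such that $P_n(\hat x)(\omega)\to P_\infty(\hat x)(\omega)$ for every $\hat x$ in a countable dense subset $D$ of $\sta$ (a single full-measure event works simultaneously for all $\hat x\in D$), and moreover $\ee_{\qq^s}[P_\infty(\hat x)]=1$, in particular $P_\infty(\hat x)$ is not $\qq^s$-a.s. zero. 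Writing out $P_n(\hat x)=\frac{e_s(W_n\cdot\hat x)\|W_nx\|^s}{e_s(\hat x)J_s(W_n)}$ and using that $e_s$ is bounded above and below and that $J_s(W_n)=\|W_n\|^sJ_s(\widetilde W_n)$ with $J_s$ bounded below by $K_s>0$ on the unit sphere (Lemma~\ref{lemma:ProprietesIs}), I get two-sided comparisons: $P_n(\hat x)$ is comparable, uniformly in $n$, to $\frac{\|W_nx\|^s}{\|W_n\|^s}=\|\widetilde W_nx\|^s$.

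Next I would argue by contradiction. Fix $\omega$ in the good event and a subsequence $(n_l)$ with $\widetilde W_{n_l}(\omega)\to W_\infty(\omega)$, and suppose $\rank W_\infty\geq 2$. Then $\ker W_\infty$ is a proper subspace, so there is $\hat x\in D$ with $W_\infty x\neq 0$; by Lemma~\ref{lemma:vndot}, $\widetilde W_{n_l}\cdot\hat x\to W_\infty\cdot\hat x$ and $\|\widetilde W_{n_l}x\|^s\to\|W_\infty x\|^s>0$, hence by continuity of $e_s$ and $J_s$ (Lemma~\ref{lemma:ProprietesIs}) we get $P_{n_l}(\hat x)\to \frac{e_s(W_\infty\cdot\hat x)\|W_\infty x\|^s}{e_s(\hat x)J_s(W_\infty)}>0$; consistency with the martingale limit forces $P_\infty(\hat x)(\omega)>0$ on an event of positive $\qq^s$-measure. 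The contradiction must come from the other direction: I need to exhibit, on a positive-measure event, some direction $\hat x\in D$ for which $P_{n_l}(\hat x)\to 0$ while $P_\infty(\hat x)>0$ — more precisely I want to use \textbf{(Cont)} to show that if $W_\infty$ had rank $\geq 2$ then the Radon--Nikodym derivatives $\frac{d\qq^s_{\hat x}}{d\qq^s}$ for two well-chosen directions would have to coincide $\qq^s$-a.s., which combined with the fact that $q^s_n(\hat x,\cdot)$ determines $\hat x$ (through $e_s(W_n\cdot\hat x)\|W_nx\|^s$) would be impossible once we push $W_n$ close to a rank-one matrix drawn from the semigroup $T$ via \textbf{(Cont)}.

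Concretely, the mechanism I expect to use: by \textbf{(Cont)} there is a sequence $(u_k)\in T$ with $u_k/\|u_k\|\to u_\infty$ of rank one; appending such a block to $W_{n_l}$ (which is legitimate on a positive-measure cylinder, since each $u_k$ is a finite product of matrices in $\supp\mu$) collapses the image to a line, so the limiting direction $W_\infty\cdot\hat x$ becomes independent of $\hat x$ for all $\hat x$ not in the (rank $\geq 2$, hence proper) kernel. Feeding this into the ratio $P_n(\hat x)/P_n(\hat y)=\frac{e_s(W_n\cdot\hat x)\|W_nx\|^s e_s(\hat y)}{e_s(W_n\cdot\hat y)\|W_ny\|^s e_s(\hat x)}$ shows the limit along the enlarged subsequence depends only on $\|W_\infty x\|^s/\|W_\infty y\|^s$ and the fixed constants; but if $\rank W_\infty\geq 2$ one can choose $\hat x,\hat y\in D$ with $\|W_\infty x\|\neq\|W_\infty y\|$ in a way incompatible with the a.s. equality of $P_\infty$ ratios that the martingale convergence forces across all elements of $D$ simultaneously. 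The \textbf{main obstacle} is making this last comparison rigorous: one must be careful that the "good" full-measure event is chosen \emph{before} the subsequence $(n_l)$, that the cylinder on which we append the contracting block has positive $\qq^s$-measure with a quantitative lower bound independent of $l$, and that the limiting identities for $P_\infty$ on $D$ genuinely contradict $\rank W_\infty\geq 2$ rather than merely being consistent with it; handling the degenerate directions (those $\hat x$ with $W_\infty x=0$, a $\qq^s$-null set of parameters but a nontrivial projective subspace) requires the density of $D$ and Lemma~\ref{IneqSigma}-type nondegeneracy of $\eta^s$.
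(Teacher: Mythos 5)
Your outline identifies the right ingredients --- the bounded martingale $(P_n(\hat x))$, the rank-one collapse from \textbf{(Cont)}, and the role of \textbf{(Irr)} --- but the key mechanism is missing, and the one you sketch does not work. The paper's proof does not rest on the pointwise a.s.\ limit $P_\infty(\hat x)$ alone. It exploits $L^2$-boundedness of the martingale to sum predictable increments and deduce, $\qq^s$-a.s.\ for each fixed $p$,
\[
\ee_{\qq^s}\big[\big(P_{n+p}(\hat x)-P_n(\hat x)\big)^2 \,\big|\, \mathcal O_n\big]\underset{n\to\infty}{\longrightarrow}0.
\]
Expanding this conditional expectation as a $\mu^{\otimes p}$-integral over the future block $(v_1,\dots,v_p)$ and passing to the limit along $(n_l)$ gives the identity \eqref{eq:AsAsp}: for $\mu^{\otimes p}$-a.a.\ $(v_1,\dots,v_p)$,
\[
\|v_p\cdots v_1 W_\infty x\|^s\,e_s(v_p\cdots v_1 W_\infty\cdot\hat x)\,J_s(W_\infty) = \|W_\infty x\|^s\,e_s(W_\infty\cdot\hat x)\,J_s(v_p\cdots v_1 W_\infty).
\]
This is what makes ``appending a contracting block'' rigorous: one substitutes $\widetilde v_k w$ with $w\in T$ for $v_p\cdots v_1$ (the identity is continuous and $s$-homogeneous in $v_p\cdots v_1$, so it extends to all of $\supp\mu^{\otimes p}$ and to normalized words), picks $w$ via \textbf{(Irr)} so that $v_\infty w W_\infty\neq 0$, lets $k\to\infty$, and concludes $\ker(v_\infty w W_\infty)=\ker W_\infty$, which forces $\rank W_\infty=1$.

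Your sketch instead tries to append the block to the trajectory $\omega$ itself, comparing $P_\infty(\hat x)(\omega)$ with a limit along a modified $\omega'$ whose tail contains a contracting word. But $\omega'\neq\omega$, so no consistency between $P_\infty(\hat x)(\omega)$ and $P_\infty(\hat x)(\omega')$ is forced; a positive-$\qq^s$-measure cylinder bound cannot bridge two different realizations, and you correctly flag this as the ``main obstacle'' without resolving it. The ratio computation is also incorrect as stated: after the rank-one block $v$ acts, $P_N(\hat x)/P_N(\hat y)$ tends (up to fixed constants) to $\|v W_\infty x\|^s/\|v W_\infty y\|^s$, not to $\|W_\infty x\|^s/\|W_\infty y\|^s$; since $v$ is rank one, $\|v W_\infty x\|$ is the modulus of a fixed linear functional evaluated at $W_\infty x$, which is proportional to $\|W_\infty x\|$ only when $W_\infty$ is already rank one --- making the intended contradiction circular. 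The missing idea is the conditional-variance computation above, which converts the martingale structure into a $\mu^{\otimes p}$-a.e.\ identity over continuation blocks and is precisely the device that turns \textbf{(Cont)} into a rank constraint on $W_\infty$.
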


\begin{remark}
Let us point that in Proposition \ref{Prop:ConvergenceWn}, the existence of the subsequences $(n_l(\omega))_l$ is not mentioned. However, by compacity, for $\qq^s$- almost every $\omega \in \Omega$, there exists at least one subsequence $(n_l(\omega))_l$ such that $\frac{W_{n_l}(\omega)}{\|W_{n_l}(\omega)\|}$ converges.
\end{remark}

\begin{proof}[Proof of Proposition \ref{Prop:ConvergenceWn}]

Let $\hat{x} \in \sta$, $p \in \nn^*$ and $n \in \nn$.
Let us recall that, according to Lemma \ref{Martingale}, the process $(P_n(\hat{x}))$ is a $\qq^s$-martingale. We define $\Delta_{n,p}$ as follows:
\begin{equation}
    \Delta_{n,p} = \sum_{l=0}^p \ee_{\qq^s}[P_{l+n+1}(\hat{x})^2 - P_{l}(\hat{x})^2 ].
\end{equation}
The martingale property implies that:
\begin{align*}
    \Delta_{n,p} &= \sum_{l=0}^n \ee_{\qq^s}[P_{l+p}(\hat{x})^2 - P_{l}(\hat{x})^2 ] \\
    &= \sum_{l=0}^n \ee_{\qq^s}[(P_{l+p}(\hat{x}) - P_{l}(\hat{x}))^2 ] \\
    &= \ee_{\qq^s} \left [\sum_{l=0}^n \ee_{\qq^s}[(P_{l+p}(\hat{x}) - P_{l}(\hat{x}))^2 | \mathcal{O}_l ] \right ].
\end{align*}
Since $(P_n(\hat{x}))$ is a bounded martingale, it follows that $\Delta_{n,p}$ converges when $n$ goes to infinity. It implies that:
\begin{equation*}
    \ee_{\qq^s} \left [\sum_{l=0}^\infty \ee_{\qq^s}[(P_{l+p}(\hat{x}) - P_{l}(\hat{x}))^2 | \mathcal{O}_l ] \right ] < \infty,
\end{equation*}
and
\begin{equation*}
    \underset{n \to \infty}{\lim} \ee_{\qq^s}[(P_{n+p}(\hat{x}) - P_{n}(\hat{x}))^2 | \mathcal{O}_n ] = 0.
\end{equation*}
Jensen inequality implies that:
\begin{equation*}
    \underset{n \to \infty}{\lim} \ee_{\qq^s}[|P_{n+p}(\hat{x}) - P_{n}(\hat{x})| | \mathcal{O}_n ] = 0.
\end{equation*}
We now explicit the quantity $ \ee_{\qq^s}[|P_{n+p}(\hat{x}) - P_{n}(\hat{x})| | \mathcal{O}_n ]$. By noticing that $ \frac{q^s(v_p...v_1W_n)}{q^s(W_n)}=\frac{J_s(v_p...v_1W_n)}{J_s(W_n)}$ we obtain that:
\begin{align*}
    &\ee_{\qq^s}[|P_{n+p}(\hat{x}) - P_{n}(\hat{x})| | \mathcal{O}_n ] \\
    &=\int_{\mat^p} \left | \frac{\|v_p...v_1W_n x\|^s e_s(v_p...v_1W_n \cdot \hat{x})}{J_s(v_p...v_1W_n) } - \frac{\|W_n x\|^s e_s(W_n \cdot \hat{x})}{J_s(W_n) } \right | 
     \frac{J_s(v_p...v_1W_n)}{J_s(W_n) e_s(\hat{x})} d \mu^{\otimes p}(v_1,...,v_p) \\
     &=\int_{\mat^p} \left | \|v_p...v_1W_n x\|^s e_s(v_p...v_1W_n \cdot \hat{x}) - \frac{\|W_n x\|^s e_s(W_n \cdot \hat{x})J_s(v_p...v_1W_n)}{J_s(W_n) } \right | 
     \frac{1}{J_s(W_n) e_s(\hat{x})} d \mu^{\otimes p}(v_1,...,v_p).
\end{align*}
Let us recall that $(\widetilde{W}_n)$ is the process defined by $\widetilde{W}_n = \frac{W_n}{\|W_n\|}$. Let $\omega \in \Omega$ and let $(n_l(\omega))_l$ a subsequence such that $(\widetilde{W}_{n_l}(\omega))_l$ converges to a matrix $W_\infty(\omega) \in \mat$. Lemma \ref{lemma:ProprietesIs} implies that
\begin{align*}
    \underset{l \to \infty}{\lim} J_s(W_{n_l}(\omega)) = J_s(W_\infty(\omega)).
\end{align*}
The probability measure $\eta^s$ is not included in a hyperplane \ref{Cor:Sigmaetes}. Since $W_\infty(\omega) \neq 0$, Lemma \ref{lemma:ProprietesIs} implies that $J_s(W_\infty(\omega))>0$. Let $\hat{x} \in \sta$ such that $W_\infty(\omega) x \neq 0$. By applying Corollary \ref{Cor:Convergence} and by taking the limit along the subsequence $(n_l(\omega))_l$, we obtain:
\begin{align*}
\int_{\mat^p} \left | \|v_p...v_1W_\infty(\omega) x\|^s e_s(v_p...v_1W_\infty(\omega)\cdot \hat{x}) - \frac{\|W_\infty(\omega)x\|^s e_s(W_\infty(\omega) \cdot \hat{x})J_s(v_p...v_1W_\infty(\omega))}{J_s(W_\infty(\omega)) } \right | \\
     \frac{1}{J_s(W_\infty(\omega)) e_s(\hat{x})} d \mu^{\otimes p}(v_1,...,v_p) = 0.
\end{align*} 
Since $\frac{1}{J_s(W_\infty) e_s(\hat{x})}>0$, it follows that for $\mu^{\otimes p}$-almost all $(v_1,...,v_p)$:
\begin{equation}\label{eq:AsAsp}
\|v_p...v_1W_\infty(\omega) x\|^s e_s(v_p...v_1W_\infty(\omega) \cdot \hat{x})J_s(W_\infty(\omega)) - \|W_\infty(\omega) x\|^s e_s(W_\infty(\omega) \cdot \hat{x})J_s(v_p...v_1W_\infty(\omega)) = 0.
\end{equation}
This equality also holds for $\hat{x} \in \sta$ such that $W_\infty(\omega) x =0$.
We now prove that $W_\infty(\omega)$ is a rank-one matrix.
Let us recall that the set $T$ contains all the finite products of matrices in $\supp \mu$ and the identity. By assumption {\bf (Cont)}, there exists a sequence $(v_k) \in T$ such that $\widetilde{v}_k=\frac{v_k}{\|v_k\|}$ converges towards a rank-one matrix $v_\infty$. There exists $k_0$ such that for every $k \geq k_0$, $v_k \neq 0$. Let $w \in T$, we can concatenate $v_k$ to $w$, by taking the limit along this subsequence we obtain that $\widetilde{v}_kw W_\infty$ converges towards $v_\infty w W_\infty$. Let us define $$L_\omega = \spn \{ \widetilde{w}y \, | \, \widetilde{w} \in T, \, y \in \Im(W_\infty(\omega)) \}.$$
The set $L_\omega \subset \cc^d$ is $\supp \mu$-invariant and not trivial. Indeed  $\Im(W_\infty(\omega)) \subset L_\omega$ and since $W_\infty(\omega) \neq 0$, $\Im(W_\infty(\omega)) \neq \{0\}$. By {\bf (Irr)}, we deduce that $L_\omega = \cc^d$ and the existence of $w$ such that $v_\infty w W_\infty(\omega) \neq 0$. If it was not the case, the set $L_\omega$ would be included in the kernel of $v_\infty$ which is not the null matrix and $L_\omega$ would not be $\cc^d$. Equation \eqref{eq:AsAsp} can be rewritten for $k$ large enough:
\begin{equation}\label{eq:AsAspomega}
\|\widetilde{v}_k wW_\infty(\omega) x\|^s e_s(\widetilde{v}_k wW_\infty(\omega)\cdot \hat{x})J_s(W_\infty(\omega)) - \|W_\infty(\omega) x\|^s e_s(W_\infty(\omega) \cdot \hat{x})J_s(\widetilde{v}_k wW_\infty(\omega)) = 0.
\end{equation}
It implies that:
\begin{equation}\label{Eq:Majoration}
    \|W_\infty(\omega) x\|^s \leq C_s \frac{1}{J_s(\widetilde{v}_k w(\omega)W_\infty(\omega))}\|\widetilde{v}_k w(\omega)W_\infty(\omega) x\|^s,
\end{equation}
where $C_s$ only depends on $e_s$ and $J_s(W_\infty(\omega))$. So if $v_\infty w(\omega) W_\infty(\omega) x = 0$, by taking the limit in the left side of \eqref{Eq:Majoration} when $k$ goes to infinity we also obtain that $W_\infty(\omega) x=0$ and conversely $W_\infty(\omega) x=0$ implies that $v_\infty w(\omega) W_\infty(\omega) x = 0$. We proved that 
\begin{equation}\label{Eq:EgaliteRang}
    \ker (v_\infty w(\omega) W_\infty(\omega)) = \ker (W_\infty(\omega)).
\end{equation}
By the properties of the rank, $$\rank(W_\infty(\omega)^*w(\omega)^*v_\infty^*v_\infty w(\omega) W_\infty(\omega)) \leq \min \{ \rank(v_\infty), \rank(w(\omega)), \rank (W_\infty(\omega)) \} \leq 1$$ since $\rank(v_\infty)=1$. This inequality is actually an equality since $v_\infty w(\omega) W_\infty(\omega)\neq 0$. Equation \eqref{Eq:EgaliteRang} then implies:
$$ \rank(W_\infty)=1.$$
 In conclusion, for $\qq^s-$almost all $\omega \in \Omega$ and for any subsequence $(n_l(\omega))_l$ such that $\frac{W_{n_l}(\omega)}{\|W_{n_l}(\omega)\|}$ converges, the limit $W_\infty(\omega)$ has rank one.
\end{proof}

We can finally prove Proposition \ref{Prop:Conva1a2}.

\begin{proof}[Proof of Proposition \ref{Prop:Conva1a2}]
For $\qq^s$-almost every $\omega$, the sequence $(\widetilde{W}_n(\omega))$ is well defined since for every $n \in \nn$, $\qq^s(W_n=0)=0$.  For such an $\omega$, we consider the sequence $(a_2(\widetilde{W}_n(\omega)))$. Since $\|\widetilde{W}_n(\omega)\|=1$, the sequence $(a_2(\widetilde{W}_n(\omega)))$ is bounded, we can extract a sequence $(n_l)_l$ such that $(a_2(\widetilde{W}_{n_l}(\omega)))$ converges towards a non-negative number $r(\omega) \geq 0$. By extracting a subsquence again, we can assume that $(\widetilde{W}_{n_l}(\omega))_l$ converges towards a matrix $W_\infty(\omega)$. By Proposition \ref{Prop:ConvergenceWn}, $\rank(W_\infty(\omega))=1$ and it follows that for $\qq^s$-almost every $\omega$, $a_2(W_\infty(\omega))=0$. By continuity of the singular values, it follows that $a_2(\widetilde{W}_{n_l}(\omega))$ converges to $0$. By uniqueness of the limit, we obtain that $r(\omega)=0$. Since $(a_2(\widetilde{W}_n(\omega)))$ is bounded and has a unique accumulation point, it converges to this limit, $\underset{n \to \infty}{\lim}a_2(\widetilde{W}_n(\omega))= 0$.

\end{proof}

\subsection{The properties of $h_\alpha$}\label{sec:halpha}
The goal of this section is to study the properties of the function $h_\alpha$ introduced in Equation \eqref{def:halphan}. We recall here its definition. For every $n \in \nn^*$:
\begin{equation}
    h_\alpha(n)= \frac{1}{k(s)^n} \int_{\mat^n} \| \wedge^2 v_n...v_1 \|^\alpha \|v_n...v_1\|^{s-2\alpha} d \mu^{\otimes n}(v_1,...v_n).
\end{equation}
The main result we want to prove is the following one.
\begin{proposition}\label{Prop:halphaConv}
The sequence $(h_\alpha(n))$ converges to $0$ when $n$ goes to infinity. In particular, there exists $0<\lambda<1$ and $n_0 \in \nn$ such that for every $n \geq n_0$:
\begin{equation}
    h_\alpha(n) \leq \lambda^n.
\end{equation}
\end{proposition}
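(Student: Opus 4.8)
The plan is to deduce $h_\alpha(n)\to 0$ from the $\qq^s$-almost sure vanishing of $a_2(\widetilde W_n)$ (Proposition~\ref{Prop:Conva1a2}), and then to upgrade this to an exponential rate using submultiplicativity of $(h_\alpha(n))_n$. I fix $s\in I_\mu$ throughout.

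First I would record that $(h_\alpha(n))_n$ is submultiplicative. By the choice $\alpha=\min\{s_-/3,1\}$ one has $s-2\alpha>0$; since $\wedge^2(AB)=(\wedge^2 A)(\wedge^2 B)$ and the operator norms on $\mat$ and $\wedge^2\cc^d$ are submultiplicative, and the exponents $\alpha$ and $s-2\alpha$ are nonnegative, $\|\wedge^2(AB)\|^\alpha\|AB\|^{s-2\alpha}\le\|\wedge^2 A\|^\alpha\|A\|^{s-2\alpha}\,\|\wedge^2 B\|^\alpha\|B\|^{s-2\alpha}$. Writing $W_{m+n}=(V_{m+n}\cdots V_{m+1})\,W_m$, integrating against $\mu^{\otimes m}\otimes\mu^{\otimes n}$ by Fubini (the matrix $V_{m+n}\cdots V_{m+1}$ has the same law as $W_n$), and dividing by $k(s)^{m+n}$ then yields $h_\alpha(m+n)\le h_\alpha(m)\,h_\alpha(n)$.

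The core step is to bound $h_\alpha(n)$ by a $\qq^s$-expectation of $\|\wedge^2\widetilde W_n\|^\alpha$, where $\widetilde W_n=W_n/\|W_n\|$. Here I would use that $\|\wedge^2\widetilde W_n\|=a_1(\widetilde W_n)a_2(\widetilde W_n)=a_2(\widetilde W_n)$ because $a_1(\widetilde W_n)=\|\widetilde W_n\|=1$ (Lemma 5.3 of \cite{BL85}), so $0\le\|\wedge^2\widetilde W_n\|\le 1$. Plugging $q_n^s(\hat x,W_n)=e_s(W_n\cdot\hat x)e_s(\hat x)^{-1}\|W_n x\|^s k(s)^{-n}$ and $\wedge^2\widetilde W_n=(\wedge^2 W_n)/\|W_n\|^2$ into the definition of $\qq^s$,
\[
\ee_{\qq^s}\!\left[\|\wedge^2\widetilde W_n\|^\alpha\right]=\frac{1}{k(s)^n}\int_{\sta}\int_{\mat^n}\frac{\|\wedge^2 W_n\|^\alpha}{\|W_n\|^{2\alpha}}\,\frac{e_s(W_n\cdot\hat x)}{e_s(\hat x)}\,\|W_n x\|^s\,d\mu^{\otimes n}\,d\eta^s(\hat x).
\]
I would then bound $e_s(W_n\cdot\hat x)/e_s(\hat x)\ge 1/C_s$ with $C_s=\sup_{\hat z,\hat y}e_s(\hat z)/e_s(\hat y)$, and apply Lemma~\ref{IneqSigma} to $\eta^s$ — which by Corollary~\ref{Cor:Sigmaetes} is not carried by a hyperplane — to get $\int_{\sta}\|W_n x\|^s d\eta^s(\hat x)\ge c_s(\eta^s)\|W_n\|^s$, arriving at $h_\alpha(n)\le K_s\,\ee_{\qq^s}[\|\wedge^2\widetilde W_n\|^\alpha]$ with $K_s=C_s/c_s(\eta^s)$. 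Now Proposition~\ref{Prop:Conva1a2} gives $\|\wedge^2\widetilde W_n\|=a_2(\widetilde W_n)\to 0$ $\qq^s$-a.s., and since $\|\wedge^2\widetilde W_n\|^\alpha\le 1$, dominated convergence yields $\ee_{\qq^s}[\|\wedge^2\widetilde W_n\|^\alpha]\to 0$, hence $h_\alpha(n)\to 0$.

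For the exponential bound, I would invoke submultiplicativity: $h_\alpha(n)^{1/n}$ converges to $\inf_m h_\alpha(m)^{1/m}$, which is $<1$ because $h_\alpha(n)\to 0$ forces $h_\alpha(m)<1$ for some $m$; choosing any $\lambda$ strictly between this infimum and $1$ gives $h_\alpha(n)\le\lambda^n$ for all $n$ past some $n_0$. I expect the main obstacle to be the reduction step of the previous paragraph: one must insert the weight $\|W_n x\|^s$ produced by $q_n^s$ in exactly the right place so that Lemma~\ref{IneqSigma} applies to $\eta^s$, and exploit the normalization $a_1(\widetilde W_n)=1$ to identify $\|\wedge^2\widetilde W_n\|$ with $a_2(\widetilde W_n)$ — the quantity whose almost sure vanishing is the genuinely hard input, already secured in Proposition~\ref{Prop:Conva1a2}. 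The remaining estimates are routine.
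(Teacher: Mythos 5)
Your proof is correct and follows essentially the same route as the paper: submultiplicativity of $h_\alpha$ via the pointwise inequality $\|\wedge^2(AB)\|^\alpha\|AB\|^{s-2\alpha}\le\|\wedge^2A\|^\alpha\|A\|^{s-2\alpha}\|\wedge^2B\|^\alpha\|B\|^{s-2\alpha}$, the bound $h_\alpha(n)\le K_s\,\ee_{\qq^s}[\|\wedge^2\widetilde W_n\|^\alpha]$, then Proposition~\ref{Prop:Conva1a2}, dominated convergence, and Fekete. The only cosmetic difference is that the paper derives the key bound by invoking $\|A\|^s\le K_sJ_s(A)$ from Lemma~\ref{lemma:ProprietesIs} and the identity $q^s_n(W_n)=k(s)^{-n}J_s(W_n)$, whereas you unpack $\ee_{\qq^s}[\cdot]$ directly, bounding $e_s(W_n\cdot\hat x)/e_s(\hat x)$ below by $1/C_s$ and applying Lemma~\ref{IneqSigma} to $\eta^s$ — these two computations are identical in substance.
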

Let us start by proving the submultiplicativity of the function $h_\alpha$.
\begin{lemma}\label{lemma:halphasubmult}
The function $h_\alpha$ is submultiplicative.
\end{lemma}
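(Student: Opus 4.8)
The statement to prove is that $h_\alpha$ is submultiplicative, i.e.\ $h_\alpha(n+m) \leq h_\alpha(n)\,h_\alpha(m)$ for all $n,m\in\nn^*$. The plan is to unfold the definition of $h_\alpha(n+m)$, split the product $v_{n+m}\cdots v_1$ into the two blocks $W' = v_{n+m}\cdots v_{n+1}$ and $W = v_n\cdots v_1$, and use a submultiplicativity estimate for the integrand $\|\wedge^2 A\|^\alpha\|A\|^{s-2\alpha}$ together with Fubini's theorem and the fact that $k(s)^{n+m} = k(s)^n k(s)^m$.

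**Key steps.** First I would recall the two elementary submultiplicativity facts for norms: $\|AB\| \leq \|A\|\,\|B\|$ and $\|\wedge^2(AB)\| = \|(\wedge^2 A)(\wedge^2 B)\| \leq \|\wedge^2 A\|\,\|\wedge^2 B\|$, where the identity $\wedge^2(AB) = (\wedge^2 A)(\wedge^2 B)$ follows directly from the definition in Equation~\eqref{eq_defwedgepA}. Combining these, for $A = W'$ and $B = W$ one gets
\begin{equation*}
\|\wedge^2(W'W)\|^\alpha \|W'W\|^{s-2\alpha} \leq \|\wedge^2 W'\|^\alpha \|\wedge^2 W\|^\alpha \|W'\|^{s-2\alpha}\|W\|^{s-2\alpha},
\end{equation*}
where one must check that the exponent $s-2\alpha$ is nonnegative so that the inequality $\|W'W\|^{s-2\alpha}\leq \|W'\|^{s-2\alpha}\|W\|^{s-2\alpha}$ goes the right way; this holds because $\alpha = \min\{s_-/3,1\}$ and $s > s_- \geq 3\alpha > 2\alpha$. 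Then I would write
\begin{equation*}
h_\alpha(n+m) = \frac{1}{k(s)^{n+m}} \int_{\mat^{n+m}} \|\wedge^2(W'W)\|^\alpha\|W'W\|^{s-2\alpha}\, d\mu^{\otimes(n+m)},
\end{equation*}
apply the pointwise bound above, and use Fubini to factor the integral over the first $n$ coordinates (giving $v_n\cdots v_1$) from the integral over the last $m$ coordinates (giving $v_{n+m}\cdots v_{n+1}$, whose law is $\mu^{\otimes m}$). This factors the right-hand side as $\big(k(s)^{-m}\int \|\wedge^2 W'\|^\alpha\|W'\|^{s-2\alpha}d\mu^{\otimes m}\big)\cdot\big(k(s)^{-n}\int\|\wedge^2 W\|^\alpha\|W\|^{s-2\alpha}d\mu^{\otimes n}\big) = h_\alpha(m)\,h_\alpha(n)$, which is the claim.

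**Main obstacle.** There is no serious obstacle here; the only points requiring care are the verification that $s - 2\alpha \geq 0$ (so that all norm-submultiplicativity inequalities point in the correct direction) and the bookkeeping in the change of indices when applying Fubini, namely that $(v_{n+1},\dots,v_{n+m})$ under $\mu^{\otimes(n+m)}$ has marginal law $\mu^{\otimes m}$ and that $v_{n+m}\cdots v_{n+1}$ then plays exactly the role of $W_m$ in the definition of $h_\alpha(m)$. Finiteness of all integrals involved is guaranteed by the integrability hypothesis~\eqref{eq:HypConvergence} together with the bound $\|\wedge^2 A\| \leq \|A\|^2$, which ensures the integrand is dominated by $\|A\|^s$, already known to be $\mu^{\otimes n}$-integrable.
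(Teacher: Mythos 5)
Your proposal is correct and follows essentially the same route as the paper's proof: split $v_{n+m}\cdots v_1$ into two blocks, use submultiplicativity of $\|\cdot\|$ and of $\|\wedge^2\cdot\|$ together with the monotonicity of $t\mapsto t^\alpha$ and $t\mapsto t^{s-2\alpha}$ (justified by $s>s_-\geq 3\alpha>2\alpha$), then integrate and use the product structure of $\mu^{\otimes(n+m)}$ to factor. Your remarks about the Fubini bookkeeping and the dominating bound $\|\wedge^2 A\|\leq\|A\|^2$ are harmless extra detail that the paper leaves implicit.
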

\begin{proof}
Let $n,m \in \nn$. We use the fact that for every $A,B \in \mat$, $\|\wedge^2 AB\| \leq \|\wedge^2 A\|\|\wedge^2 B\|$ and $\|AB\| \leq \|A\|\|B\|$, it follows that for every $(v_1,...v_n,v_{n+1},...,v_{n+m}) \in \mat^{n+m}$,
\begin{align*}
    \| \wedge^2 v_{n+m}...v_{n+1}v_n...v_1 \|^\alpha &\leq \|\wedge^2 v_{n+m}...v_{n+1}\|^\alpha \|\wedge^2v_n...v_1 \|^\alpha\\
    \|v_{n+m}...v_{n+1}v_n...v_1\|^{s-2\alpha} &\leq \|v_{n+m}...v_{n+1}\|^{s-2\alpha} \|v_n...v_1 \|^{s-2\alpha} 
\end{align*}
because the functions $t \mapsto t^\alpha$ and $t \mapsto t^{s-2\alpha}$ are increasing ($0<\alpha<\frac{s_{-}}{2}<\frac{s}{2}$). The submultiplicativity of the function $h_\alpha$ follows.
\end{proof}
We now prove Proposition \ref{Prop:halphaConv}. We recall that $\eta^s$ is the unique $Q_s$-invariant probability measure.

\begin{proof}[Proof of Proposition \ref{Prop:halphaConv}.]
Lemma \ref{lemma:ProprietesIs} shows the existence of a constant $K_s>0$ only depending on $e_s$ and $\eta^s$ such that for every $A \in \mat$, $\|A\|^s \leq K_s J_s(A)$. From this remark we deduce that for every $n \in \nn$
\begin{align*}
    h_\alpha(n) &= \int_{\mat^n} \left \| \frac{\wedge^2W_n}{\|W_n\|^2} \right\|^\alpha \frac{1}{k(s)^n} \|W_n\|^s d \mu^{\otimes n} \\
    &\leq K_s \int_{\mat^n} \left \| \frac{\wedge^2W_n}{\|W_n\|^2} \right\|^\alpha \frac{1}{k(s)^n} J_s(W_n) d \mu^{\otimes n}.
\end{align*}
We recall that $\frac{1}{k(s)^n} J_s(W_n)=q^s(W_n)$. Therefore, the previous inequality can be rewritten:
\begin{equation*}
    h_\alpha(n) \leq K_s \ee_{\qq^s}\left[\left \| \frac{\wedge^2W_n}{\|W_n\|^2} \right\|^\alpha\right].
\end{equation*}
Since $\left \|\frac{\wedge^2 W_n}{\|W_n\|^2} \right \|=a_1(\widetilde{W}_n)a_2(\widetilde{W}_n)$, we can apply Proposition \ref{Prop:Conva1a2} and since $a_1(\widetilde{W}_n)a_2(\widetilde{W}_n)$ is bounded by $1$, by Lebesgue dominated convergence theorem, it follows that $h_\alpha(n)$ converges to $0$ when $n$ goes to infinity. Now by Fekete Lemma, the submultiplicativity of $h_\alpha$ implies that $\frac{\log h_\alpha(n)}{n}$ converges towards $\underset{n \in \nn^*}{\inf} \frac{\log h_\alpha(n)}{n}<0$ since $h_\alpha(n)$ converges towards $0$. The existence of $n_0$ and $\lambda$ such that $h_\alpha(n) \leq \lambda^n$ for every $n \geq n_0$ follows.
\end{proof}

\subsection{Quasi-compactness of operators $Q_s$. Proof of Theorem \ref{QuasiCompacts}}\label{sec:ProofQuasiCompact}
This section is devoted to the proof of the quasi-compactness of the operators $Q_s$.

\begin{proof}[Proof of Theorem \ref{QuasiCompacts}]
To prove the quasi-compactness of the operators $Q_s$, it suffices to verify the three items of the Ionescu-Tulcea and Marinescu theorem (Theorem II.5 \cite{HeHe01}) and to prove that $\rho_\alpha(Q_s)=1$.

\begin{enumerate}
    \item From Lemma \ref{Lemma:EstimationNormeHölder}, we deduce the equicontinuity of the set $Q_s (\{ \varphi \in C^\alpha(\sta,\cc) \, | \, \|\varphi\|_\alpha \leq 1 \})$. Indeed, since $\|\varphi\|_\infty \leq \|\varphi\|_\alpha$, for every $\varphi \in C^\alpha(\sta,\cc)$:
    \begin{equation}\label{Q_sEquiContinu}
        \|Q_s \varphi\|_\alpha \leq \left [A_s + B_s h_\alpha(1) \right] \|\varphi\|_\alpha.
    \end{equation}
    In addition, since $Q_s$ is Markov, for every $\varphi \in C^\alpha(\sta,\cc)$ such that $\|\varphi\|_\alpha \leq 1$,
    $$ \|Q_s \varphi\|_\infty \leq \|\varphi\|_\infty \leq \|\varphi\|_\alpha \leq 1.$$
    It implies that for every $\hat{x} \in \sta$, the set $\{ Q_s \varphi(\hat{x}) \, | \, \varphi \in C^\alpha(\sta,\cc) \,  , \|\varphi\|_\alpha \leq 1 \}$ is bounded and then relatively compact in $\cc$. By Ascoli's Theorem, we deduce that $Q_s (\{ \varphi \in C^\alpha(\sta,\cc) \, | \, \|\varphi\|_\alpha \leq 1 \})$ is relatively compact in $(C^0(\sta,\cc),\|\cdot\|_\infty)$. Equation \eqref{Q_sEquiContinu} implies that for every $\varphi \in \{ \varphi \in C^\alpha(\sta,\cc) \, | \, \|\varphi\|_\alpha \leq 1\}$,
    $$ \|Q_s \varphi\|_\alpha \leq A_s + B_s h_\alpha(1).$$
    It follows that all the elements of $Q_s (\{ \varphi \in C^\alpha(\sta,\cc) \, | \, \|\varphi\|_\alpha \leq 1 \})$ are $\alpha$-Hölder, hence the relative compacity of $Q_s (\{ \varphi \in C^\alpha(\sta,\cc) \, | \, \|\varphi\|_\alpha \leq 1 \})$ in $(C^\alpha(\sta,\cc),\|\cdot\|_\infty)$ and we obtain Item $(1)$.
    \item $Q_s$ is a Markov Operator. Then, for every $f \in C^\alpha(\sta,\cc)$,
    $$ \|Q_sf\|_\infty \leq \|f\|_\infty.$$
    \item Lemma \ref{Lemma:EstimationNormeHölder} implies that for every $n \in \nn$ and for every $\varphi \in C^\alpha(\sta,\cc)$:
    \begin{equation}\label{eq:Item3QuasiCompact}
        \|Q_s^n \varphi\|_\alpha \leq A_s \|\varphi\|_\infty + B_s h_\alpha(n) \|\varphi\|_\alpha.
    \end{equation}
    By Proposition \ref{Prop:halphaConv}, there exist $n_0$ and $0< \lambda <1 $ such that for every $n \geq n_0$ $B_s h_\alpha(n) \leq \lambda^{n}$, for this $n_0$, Equation \eqref{eq:Item3QuasiCompact} implies that:
    \begin{equation} \label{eq:Item3Lambda}
        \|Q_s^{n_0} \varphi\|_\alpha \leq A_s \|\varphi\|_\infty + \lambda^{n_0} \|\varphi\|_\alpha.
    \end{equation}
    It remains to prove that $\rho_\alpha(Q_s)=1$. From the equality $Q_s 1=1$, we deduce that:
    $$\rho_\alpha(Q_s) \geq 1.$$
    Proposition \ref{Prop:halphaConv} implies that $h_\alpha$ is bounded by a constant $C$. For every $\varphi \in C^\alpha(\sta,\cc)$, $\|\varphi\|_\infty \leq \|\varphi\|_\alpha$, therefore Equation \eqref{eq:Item3QuasiCompact} implies:
    $$ \|Q_s^n\|^{1/n}_\alpha \leq \left [A_s + B_s C \right]^{1/n}. $$
    By taking the limit when $n$ goes to infinity, we obtain:
    $$ \rho_\alpha(Q_s) \leq 1.$$
    We proved that $\rho_\alpha(Q_s)=1$. Equation \eqref{eq:Item3Lambda} proves Item $(3)$.
\end{enumerate}
In conclusion, $Q_s$ is a quasi-compact operator on $(C^\alpha(\sta,\cc), \|.\|_\alpha)$ by applying Theorem II.5 \cite{HeHe01}.
\end{proof}

\section{The analyticity of $s \mapsto k(s)$.}\label{Sec:Analyticks}
This section is devoted to the proof of Theorem \ref{Th:Analyticks}.

\subsection{The analyticity of $z \mapsto \Gamma_z$}
Let us first recall the definition of the operators $\Gamma_z$. If $z=s+it$ where $s \in I_\mu$, we define the operator $\Gamma_z$ as follows: for $f \in C^\alpha(\sta,\cc)$ and $\hat{x} \in \sta$
$$\Gamma_z f(\hat{x}) = \int_{\mat} f( v \cdot \hat{x}) e^{z \log \|vx\|} d \mu(v). $$
When $\|vx\|=0$, since $s>s_{-}$,we extend by continuity by setting $e^{z \log \|vx\|}=0$.
The goal of this section is to prove the following theorem.
\begin{theorem}\label{AnalyticLog}
    Assume {\bf (Cont)} and {\bf (Irr)} hold. Let $\alpha= \min\{\frac{s_{-}}{3},1\}$. Then,
    as an endomorphism of $C^\alpha(\sta,\cc)$,
    $$\begin{array}{ccccc}
              z & \longmapsto & \Gamma_z
    \end{array}$$
    is analytic on the strip $\{z\in\mathbb C:\Re(z)\in I_\mu\}$.
\end{theorem}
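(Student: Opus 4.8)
The plan is to show that $z\mapsto \Gamma_z$ is given, locally around any $z_0$ with $\Re(z_0)\in I_\mu$, by a norm-convergent power series in $C^\alpha(\sta,\cc)$. The natural candidate for the $k$-th derivative is the operator
\[
\Gamma_z^{(k)} f(\hat x)=\int_{\mat} f(v\cdot\hat x)\,(\log\|vx\|)^k\,e^{z\log\|vx\|}\,d\mu(v),
\]
with the convention that the integrand vanishes when $vx=0$ (legitimate since $\Re(z)>s_-$ so $e^{z\log\|vx\|}(\log\|vx\|)^k\to 0$). First I would fix $z_0=s_0+it_0$ with $s_0\in I_\mu$, choose $\varepsilon>0$ so that $[s_0-2\varepsilon,s_0+2\varepsilon]\subset I_\mu$, and work on the disc $|z-z_0|\le\varepsilon$. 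The formal Taylor expansion is $\Gamma_z=\sum_{k\ge 0}\frac{(z-z_0)^k}{k!}\Gamma_{z_0}^{(k)}$; the whole proof reduces to (i) showing each $\Gamma_{z_0}^{(k)}$ is a bounded endomorphism of $C^\alpha(\sta,\cc)$ with a norm bound of the form $\|\Gamma_{z_0}^{(k)}\|_\alpha\le C\,k!\,\delta^{-k}$ for some $\delta>\varepsilon$ and $C$ independent of $k$, and (ii) showing that the remainder of the series converges to $\Gamma_z$ in operator norm.

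For the $\|\cdot\|_\infty$ part of (i) this is elementary: $|(\log\|vx\|)^k e^{z_0\log\|vx\|}|\le (\log\|vx\|)^k\|v\|^{s_0}$, and using the pointwise inequality $|(\log t)^k t^{a}|\le C_a\,k!\,(2/b)^k(t^{a+b}+t^{a-b})$ (valid for $t>0$, $0<b<a$, i.e. the same kind of estimate used in Lemma \ref{lem:vxf(vx) holder} via Lemmas A.1--A.2 of \cite{BHP24}) applied with $a=s_0$ and $b=\varepsilon$ gives $\|\Gamma_{z_0}^{(k)}f\|_\infty\le C\,k!\,(2/\varepsilon)^k\|f\|_\infty\int(\|v\|^{s_0+\varepsilon}+\|v\|^{s_0-\varepsilon})d\mu(v)$, and the integral is finite by \eqref{eq:HypConvergence} since $s_0\pm\varepsilon\in I_\mu$. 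For the Hölder seminorm part I would invoke Lemma \ref{lem:vxf(vx) holder}: with $z$ there replaced by a complex number of real part $s_0\pm\varepsilon$, the function $\hat x\mapsto e^{z\log\|Ax\|}f(A\cdot\hat x)$ is $\alpha$-Hölder with an explicit bound in $\|A\|^{\Re(z)}$, $\|A\|^{\Re(z)-2\alpha}$ and $\|\wedge^2 A\|^\alpha$; differentiating $k$ times in $z$ under the integral (or, cleanly, applying the scalar log-power estimate to the two terms of that Lemma's bound) produces $m_\alpha$ of the integrand of $\Gamma_{z_0}^{(k)}$ bounded by $C\,k!\,(2/\varepsilon)^k\,(\|v\|^{s_0+\varepsilon}+\|v\|^{s_0-\varepsilon}+\|\wedge^2 v\|^\alpha\|v\|^{s_0-2\alpha+\varepsilon}+\|\wedge^2 v\|^\alpha\|v\|^{s_0-2\alpha-\varepsilon})$. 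Here the choice $\alpha=\min\{s_-/3,1\}$ is exactly what makes $s_0-2\alpha-\varepsilon>0$ (shrinking $\varepsilon$ if needed so $\varepsilon<s_-/3$), so that $t\mapsto t^{s_0-2\alpha-\varepsilon}$ is increasing and $\|\wedge^2 v\|^\alpha\|v\|^{s_0-2\alpha-\varepsilon}\le\|v\|^{s_0-\varepsilon}$, keeping every exponent in the integrable range $I_\mu$; all four integrals are then finite by \eqref{eq:HypConvergence}. This yields $\|\Gamma_{z_0}^{(k)}\|_\alpha\le C\,k!\,(2/\varepsilon)^k$ with $C$ depending only on $\mu,s_0,\varepsilon,\alpha$, so the series $\sum_k\frac{(z-z_0)^k}{k!}\Gamma_{z_0}^{(k)}$ converges in operator norm for $|z-z_0|<\varepsilon/2$.

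It remains to identify the sum of this series with $\Gamma_z$. I would do this by evaluating at a fixed $f\in C^\alpha$ and $\hat x\in\sta$, exchanging sum and integral (justified by the same dominated-convergence bound, with $\sum_k\frac{|z-z_0|^k}{k!}|\log\|vx\||^k=|vx\|^{|z-z_0|}$ giving an integrable dominating function again by \eqref{eq:HypConvergence} since $|z-z_0|<\varepsilon$), and using the scalar identity $\sum_k\frac{(z-z_0)^k}{k!}(\log\|vx\|)^k e^{z_0\log\|vx\|}=e^{z\log\|vx\|}$. This shows the norm-convergent series equals $\Gamma_z$ pointwise, hence as operators, so $z\mapsto\Gamma_z$ is analytic near every $z_0$ with $\Re(z_0)\in I_\mu$, i.e. on the whole strip. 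The main obstacle — and the only place the hypotheses and the precise value of $\alpha$ really enter — is controlling the Hölder seminorm of the integrand of $\Gamma_{z_0}^{(k)}$ so that the resulting exponents of $\|v\|$ stay inside $I_\mu$; once Lemma \ref{lem:vxf(vx) holder} is in hand this is a bookkeeping exercise, and the assumptions \textbf{(Cont)} and \textbf{(Irr)} are in fact not needed for this particular theorem (they are only used to locate $k(s)$ in the spectrum in the companion results).
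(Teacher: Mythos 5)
Your overall strategy coincides with the paper's: define the derivative operators $\Gamma_{z_0}^{(k)}$, prove $\|\Gamma_{z_0}^{(k)}\|_\alpha \leq C\, k!\, \delta^{-k}$ via a scalar log-power estimate plus the integrability \eqref{eq:HypConvergence}, and sum the Taylor series. You also correctly observe that {\bf (Cont)} and {\bf (Irr)} play no role in this theorem, and you identify the right role of the choice $\alpha=\min\{s_-/3,1\}$ in keeping the exponents inside $I_\mu$. These are all right.

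The gap is in your treatment of the H\"older seminorm $m_\alpha$. The pointwise inequality $|(\log t)^k t^a|\lesssim k!\,\delta^{-k}(t^{a+\delta}+t^{a-\delta})$ (your ``scalar log-power estimate'') controls only $\|\cdot\|_\infty$ of the integrand. For $m_\alpha$ you propose ``differentiating $k$ times in $z$ under the integral (or, cleanly, applying the scalar log-power estimate to the two terms of that Lemma's bound)''. Neither route is valid as stated: the bound in Lemma~\ref{lem:vxf(vx) holder} is an inequality and you cannot differentiate an inequality $k$ times in $z$ to get a bound on the $k$-th derivative's seminorm; and applying a pointwise estimate to the two terms $\|A\|^{\Re z}$, $\|A\|^{\Re z - 2\alpha}\|\wedge^2 A\|^\alpha$ appearing in that bound does not bound $m_\alpha$ of the new integrand $\hat x\mapsto \log^k\|Ax\|\,e^{z\log\|Ax\|}f(A\cdot\hat x)$, because of the cross term $|F_{k,z}(\|Ax\|)-F_{k,z}(\|Ay\|)|\cdot|f(A\cdot\hat x)|$ (with $F_{k,z}(t)=(\log t)^k e^{z\log t}$), which requires a genuine H\"older estimate on the scalar function $F_{k,z}$, not a pointwise one. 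This is precisely the content of the $m_{t,r}(F_{n,z})$ half of Lemma~\ref{lem:B4} (Lemma~B.4 in \cite{BHP24}), which your proposal never invokes. The paper's Lemma~\ref{lem:EstimationGnzfA} then makes the exponent bookkeeping work via the decomposition $G_{n,z,f,A}(\hat x)=\|Ax\|^\alpha L_n(\hat x)H(\hat x)$ with $L_n(\hat x)=\log^n\|Ax\|\,e^{(z-s_-)\log\|Ax\|}$ and $H(\hat x)=\|Ax\|^{s_- -\alpha}f(A\cdot\hat x)$, which you would need to reproduce (or replace with something equivalent) to make the H\"older part rigorous. Until that scalar H\"older estimate is supplied, the convergence of $\sum_k |w|^k\|\Gamma_{z_0}^{(k)}\|_\alpha/k!$ is not established.

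Two smaller points: the constant $2/b$ in your pointwise bound should more safely be $e/b$ (since $k^k\le e^k\,k!$ rather than $2^k\,k!$), though this only shifts the radius of convergence by a constant; and your identification of the sum of the series with $\Gamma_z$ by Fubini/dominated convergence is fine.
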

We first prove that for every $\{z\in\mathbb C:\Re(z)\in I_\mu\}$, and $f \in C^\alpha(\sta,\cc)$, $\Gamma_z f$ is indeed in $C^\alpha(\sta,\cc)$. It is based on Lemma 5.2 in \cite{BHP24}.
\begin{lemma}[Lemma 5.2~\cite{BHP24}]\label{lem:Inegalitef_A}
    Let $z\in \cc$ be such that $\Re(z)>0$. Fix $\alpha\in (0,\min\{1,\Re(z/2)\})$ and $f\in C^\alpha(\sta,\cc)$. Then for any matrix $A\in \mat$, the function $f_A:\hat x\mapsto e^{z\log \|Ax\|}f(A\cdot \hat x)$ is $\alpha$-Hölder continuous with
    $$\|f_A\|_\alpha\leq 2^{\alpha^*}\frac{|z/2|}{\alpha^*}\|A\|^{\Re(z)}\|f\|_\alpha$$
    with $\alpha^*=\min\{1,\Re(z/2)\}$.
\end{lemma}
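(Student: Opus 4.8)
\textbf{Plan of proof for Lemma~\ref{lem:Inegalitef_A}.} The statement decomposes the $\alpha$-Hölder norm $\|f_A\|_\alpha=\|f_A\|_\infty+m_\alpha(f_A)$, so I would bound the two pieces separately and then combine. For the sup-norm, since $\|f\|_\infty\leq\|f\|_\alpha$ and $|e^{z\log\|Ax\|}|=\|Ax\|^{\Re(z)}\leq\|A\|^{\Re(z)}$ (with the convention $f_A(\hat x)=0$ when $Ax=0$), we get immediately $\|f_A\|_\infty\leq\|A\|^{\Re(z)}\|f\|_\alpha$; note that this crude bound is already dominated by the target constant $2^{\alpha^*}\frac{|z/2|}{\alpha^*}\|A\|^{\Re(z)}\|f\|_\alpha$ whenever $2^{\alpha^*}\frac{|z/2|}{\alpha^*}\geq 1$, and I would check this inequality holds for $\Re(z)>0$ (indeed $|z/2|\geq\Re(z)/2\geq\alpha^*/2$ when $\Re(z/2)\leq 1$, and $|z/2|\geq 1/2\cdot\alpha^*$ trivially in that regime; in the case $\alpha^*=1$ one has $|z/2|\geq 1/2$, so $2^{\alpha^*}|z/2|/\alpha^*\geq 1$). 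So the sup-norm contributes nothing beyond the target constant.

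\textbf{The Hölder seminorm.} The main work is to show $m_\alpha(f_A)\leq 2^{\alpha^*}\frac{|z/2|}{\alpha^*}\|A\|^{\Re(z)}\|f\|_\alpha$, and here I would simply invoke Lemma~\ref{lem:vxf(vx) holder}, which was proved earlier in the excerpt for exactly this function $f_A:\hat x\mapsto e^{z\log\|Ax\|}f(A\cdot\hat x)$. That lemma gives
\[
m_\alpha(f_A)\leq 2^{\alpha^*}\frac{|z/2|}{\alpha^*}\left[\|A\|^{\Re(z)}\|f\|_\infty+\|A\|^{\Re(z)-2\alpha}\|\wedge^2 A\|^\alpha m_\alpha(f)\right].
\]
To turn the bracket into $\|A\|^{\Re(z)}\|f\|_\alpha$, I need $\|A\|^{\Re(z)-2\alpha}\|\wedge^2 A\|^\alpha\leq\|A\|^{\Re(z)}$, i.e.\ $\|\wedge^2 A\|^\alpha\leq\|A\|^{2\alpha}$, which follows from the submultiplicative-type bound $\|\wedge^2 A\|\leq\|A\|^2$ (the operator norm of $\wedge^2 A$ is the product of the two largest singular values of $A$, each $\leq\|A\|$). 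This uses $\Re(z)-2\alpha\geq 0$, which holds because $\alpha<\min\{1,\Re(z/2)\}\leq\Re(z)/2$. Substituting gives $m_\alpha(f_A)\leq 2^{\alpha^*}\frac{|z/2|}{\alpha^*}\|A\|^{\Re(z)}(\|f\|_\infty+m_\alpha(f))=2^{\alpha^*}\frac{|z/2|}{\alpha^*}\|A\|^{\Re(z)}\|f\|_\alpha$.

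\textbf{Combining.} Adding the two bounds gives $\|f_A\|_\alpha=\|f_A\|_\infty+m_\alpha(f_A)\leq\|A\|^{\Re(z)}\|f\|_\alpha+2^{\alpha^*}\frac{|z/2|}{\alpha^*}\|A\|^{\Re(z)}\|f\|_\alpha$, which is not quite the claimed single-constant bound. The cleanest fix — and the point I would be most careful about — is to reorganize rather than add naively: redo the split in Lemma~\ref{lem:vxf(vx) holder}'s style directly at the level of $\|f_A\|_\alpha$, absorbing the sup-norm term into the already-present $\|A\|^{\Re(z)}\|f\|_\infty$ contribution of the Hölder estimate, so that the constant $2^{\alpha^*}\frac{|z/2|}{\alpha^*}$ (which is $\geq 1$) covers both. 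Concretely, since $2^{\alpha^*}\frac{|z/2|}{\alpha^*}\geq 1$, we have $\|f_A\|_\infty\leq\|A\|^{\Re(z)}\|f\|_\infty\leq 2^{\alpha^*}\frac{|z/2|}{\alpha^*}\|A\|^{\Re(z)}\|f\|_\infty$, and this plus the $m_\alpha(f)$-part of the displayed bound would overshoot; instead I would note that the bracket $[\|A\|^{\Re(z)}\|f\|_\infty+\|A\|^{\Re(z)-2\alpha}\|\wedge^2A\|^\alpha m_\alpha(f)]\leq\|A\|^{\Re(z)}\|f\|_\alpha$ already accounts for $\|f\|_\infty$ once, and a single application of this with the observation $\|f_A\|_\infty\leq\|A\|^{\Re(z)}\|f\|_\infty\leq 2^{\alpha^*}\frac{|z/2|}{\alpha^*}\|A\|^{\Re(z)}\|f\|_\infty$ would still double-count. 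The resolution I would actually adopt: verify that in fact $2^{\alpha^*}\frac{|z/2|}{\alpha^*}\geq 2$ under the hypotheses (checking the borderline $\alpha^*=1$, $|z|$ small), or else state the lemma with the harmless constant $1+2^{\alpha^*}\frac{|z/2|}{\alpha^*}\leq 2\cdot 2^{\alpha^*}\frac{|z/2|}{\alpha^*}$ absorbed — but since the excerpt fixes the constant as written, I expect the intended proof is precisely to read off $\|f_A\|_\alpha$ from a single combined split (bounding $|f_A(\hat x)-f_A(\hat y)|$ and $|f_A(\hat x)|$ simultaneously against $d(\hat x,\hat y)^\alpha$ and $1$ respectively, with the unified constant), and the one delicate check is the elementary inequality $2^{\alpha^*}|z/2|/\alpha^*\geq 1$ for $\Re(z)>0$. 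That elementary constant-chasing is the only real obstacle; everything else is the two ingredients $\|Ax\|\leq\|A\|$ and $\|\wedge^2 A\|\leq\|A\|^2$ together with Lemma~\ref{lem:vxf(vx) holder}.
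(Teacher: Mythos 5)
Your decomposition is the natural one and, as far as this paper is concerned, essentially the only available one: Lemma \ref{lem:Inegalitef_A} is quoted from \cite{BHP24} without proof, and the in-paper ingredient is precisely the seminorm estimate of Lemma \ref{lem:vxf(vx) holder}, which you invoke correctly (the hypotheses $\alpha\leq\alpha^*$ and $\Re(z)-2\alpha\geq 0$ both follow from $\alpha<\min\{1,\Re(z/2)\}$), together with the two elementary bounds $\|Ax\|\leq\|A\|$ and $\|\wedge^2 A\|\leq\|A\|^2$. The substance of the argument is therefore sound.

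The one genuine loose end is the constant, and your discussion does not close it. Writing $C=2^{\alpha^*}\frac{|z/2|}{\alpha^*}$, your two estimates give $\|f_A\|_\alpha=\|f_A\|_\infty+m_\alpha(f_A)\leq \|A\|^{\Re(z)}\|f\|_\infty+C\|A\|^{\Re(z)}\|f\|_\alpha\leq (1+C)\|A\|^{\Re(z)}\|f\|_\alpha$. The inequality $C\geq 1$, which you verify and which is indeed true (if $\Re(z/2)\leq 1$ then $\alpha^*=\Re(z/2)\leq |z/2|$, otherwise $\alpha^*=1\leq |z/2|$), cannot absorb the extra additive $1$ into $C$: that would require $1+C\leq C$. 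Nor does $C\geq 2$ hold in general — for $z$ real with $z\to 0^+$ one has $C=2^{z/2}\to 1$ — so the fallback you contemplate also fails. What your argument actually establishes is the lemma with constant $1+C\leq 2C$ in place of $C$. Since every use of Lemma \ref{lem:Inegalitef_A} in this paper (Lemma \ref{lem:EstimationGnzfA}, Lemma \ref{lem:GpHolder}, Theorem \ref{AnalyticLog}) only requires a bound of the form $\mathrm{const}\cdot\|A\|^{\Re(z)}\|f\|_\alpha$, the discrepancy is harmless downstream; but as written your proof does not deliver the stated constant, and you should either state the weaker constant explicitly or note that the exact constant presumably rests on the convention or sharper bookkeeping of \cite{BHP24} (for instance, with $\|\cdot\|_\alpha$ defined as a maximum rather than a sum of $\|\cdot\|_\infty$ and $m_\alpha(\cdot)$, your two bounds do yield exactly $C$ because $C\geq 1$).
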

Now let us explain the method of proof of Theorem \ref{AnalyticLog}. We first show that for every $z \in \cc$ such that $\Re(z) \in I_\mu$ and $n \in \nn$ the operator $\Gamma_n^z$ defined below is well defined and bounded. For every $f \in C^\alpha(\sta,\cc)$ and $\hat{x} \in \sta$,
\begin{equation}
    \Gamma_n^z f (\hat x) = \int_{\mat} \log^n \|vx\| f(v \cdot \hat{x}) \|vx\|^z d \mu(v).
\end{equation}
Second, we prove that for every $z \in \cc$ such that $\Re(z) \in I_\mu$, there exists $\theta_z>0$ such that for every $w \in \cc$ such that $|w| \leq \theta_z$,
\begin{equation}\label{eq:developpementserie}
    \Gamma_{z+w}=\sum_{n=0}^{\infty} \frac{w^n}{n!} \Gamma_n^z.
\end{equation}
In order to prove this equality, we first prove that that for every $0<\theta<\theta_z$, the series $\sum_{n=0}^{\infty} \frac{\theta^n}{n!} \|\Gamma_n^z\|_\alpha$ is convergent and by Fubini theorem we deduce Equation \eqref{eq:developpementserie}.
Proving this convergence requires to study the functions $(F_{n,z})$ defined on $\rr_+^*$ by:
\begin{equation*}
    F_{n,z}(t)= \log^n(t) e^{z \log(t)},
\end{equation*}
for every $n \in \nn$ and $z$ such that $\Re(z)>0$. Indeed, our goal will be to study the Hölder property of the functions $(G_{n,z,f,A})_{n \geq 1, Re(z) \in I_\mu}$ for $A \in \mat$ defined by the relation:
$$G_{n,z,f,A}( \hat x)=\log^n \|Ax\| f(v \cdot \hat{x}) \|Ax\|^z.$$
These functions have been studied in Appendix A of \cite{BHP24}. We recall here a useful lemma. For a $\alpha$-Hölder continuous function $F:[0,t]\to \cc$, let
$$m_{t,\alpha}(F)=\sup_{u,s\in [0,t]:u\neq s}\frac{|F(u)-F(s)|}{|u-s|^\alpha}$$
and
$$\|F\|_t=\sup_{s\in [0,t]}|F(s)|.$$
\begin{lemma}[Lemma~B.4 \cite{BHP24}]\label{lem:B4}
    \label{lem:Holder norm F_nz}
    For any $z\in\mathbb C$ such that $\Re(z) > 0$, $r\in (0,1]$ such that $\Re(z)>r$ and $\theta>0$, there exists $C>0$ such that 
    for any $n\geq 1$, $t> 0$,
    $$\|F_{n,z}\|_t\leq e^{-n}\left(\frac{n}{\gamma_\infty}\right)^n \max\left(1,t^{\Re(z)+\theta}\right)$$
    with $\gamma_\infty=\min(\Re(z),\theta)$ and
    $$m_{t,r}(F_{n,z})\leq C e^{-n}\left(\frac{n}{\gamma_0}\right)^n \max\left(1,t^{\Re(z)-r+\theta}\right)$$
    with $\gamma_0=\min(\Re(z)-r,\theta)$.
\end{lemma}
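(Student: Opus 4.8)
The plan is to reduce the whole statement to one–variable calculus, the single analytic ingredient being the elementary maximisation
$$\max_{u\ge 0}u^{k}e^{-\gamma u}=\Big(\frac{k}{\gamma}\Big)^{k}e^{-k}\qquad(k\ge 0,\ \gamma>0),$$
attained at $u=k/\gamma$. Write $F_{n,z}(s)=(\log s)^{n}s^{z}$ with $F_{n,z}(0):=0$, which is legitimate since $\Re(z)>0$ forces $(\log s)^{n}s^{\Re(z)}\to 0$ as $s\to 0^{+}$; note $|F_{n,z}(s)|=|\log s|^{n}s^{\Re(z)}$, and the change of variable $u=|\log s|$ turns this into $u^{n}e^{-\Re(z)u}$ when $s\le 1$ and into $u^{n}e^{\Re(z)u}$ when $s\ge 1$. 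The two inequalities are handled separately; for $n=1$ one uses the convention $(\log v)^{0}=1$ throughout, and otherwise the argument is uniform in $n\ge 1$.

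For the first inequality I would establish the pointwise bound $|F_{n,z}(s)|\le e^{-n}(n/\gamma_{\infty})^{n}\max(1,s^{\Re(z)+\theta})$ for all $s>0$; since $s\mapsto\max(1,s^{\Re(z)+\theta})$ is non-decreasing, taking the supremum over $s\in[0,t]$ then yields exactly the claimed bound. When $s\le 1$, discard the surplus power using $\gamma_{\infty}\le\Re(z)$, i.e. $s^{\Re(z)}\le s^{\gamma_{\infty}}$, so that $|\log s|^{n}s^{\Re(z)}\le|\log s|^{n}s^{\gamma_{\infty}}$, and the substitution together with the maximisation bounds the right-hand side by $(n/\gamma_{\infty})^{n}e^{-n}$, which equals $(n/\gamma_{\infty})^{n}e^{-n}\max(1,s^{\Re(z)+\theta})$ in this range. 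When $s\ge 1$, use $\gamma_{\infty}\le\theta$ to write $|\log s|^{n}s^{\Re(z)}=\big(|\log s|^{n}s^{-\theta}\big)\,s^{\Re(z)+\theta}\le\big(|\log s|^{n}s^{-\gamma_{\infty}}\big)\,s^{\Re(z)+\theta}$, and the maximisation again bounds the bracket by $(n/\gamma_{\infty})^{n}e^{-n}$.

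For the second inequality the key point is not to control $F_{n,z}'$ in $L^{\infty}$ — which fails near $0$ when $\Re(z)<1$ — but to control it pointwise by a multiple of $v^{r-1}$. A direct computation gives $F_{n,z}'(v)=v^{z-1}(\log v)^{n-1}\big(n+z\log v\big)$, hence
$$|F_{n,z}'(v)|\le v^{\Re(z)-1}\big(|z|\,|\log v|^{n}+n\,|\log v|^{n-1}\big)=v^{r-1}\cdot v^{\Re(z)-r}\big(|z|\,|\log v|^{n}+n\,|\log v|^{n-1}\big).$$
Setting $M_{n}:=\sup_{0<v\le t}v^{\Re(z)-r}\big(|z|\,|\log v|^{n}+n\,|\log v|^{n-1}\big)$, so that $|F_{n,z}'(v)|\le M_{n}v^{r-1}$ on $(0,t]$, the very same split-and-maximise argument — now with positive exponent $\Re(z)-r$, so that the relevant constant is $\gamma_{0}=\min(\Re(z)-r,\theta)$ — yields $M_{n}\le C\,e^{-n}(n/\gamma_{0})^{n}\max(1,t^{\Re(z)-r+\theta})$ for a constant $C=C(z,r,\theta)$, the term $n|\log v|^{n-1}$ contributing the extra factor $n\max_{u\ge 0}u^{n-1}e^{-\gamma_{0}u}\le e\gamma_{0}\,(n/\gamma_{0})^{n}e^{-n}$ which is absorbed into $C$. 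Finally, combining $|F_{n,z}'(v)|\le M_{n}v^{r-1}$ with the subadditivity of $x\mapsto x^{r}$ (so that $u^{r}-s^{r}\le(u-s)^{r}$ for $0\le s\le u$, $r\in(0,1]$) gives, for all $0\le s\le u\le t$,
$$|F_{n,z}(u)-F_{n,z}(s)|\le\int_{s}^{u}M_{n}v^{r-1}\,\d v=\frac{M_{n}}{r}\big(u^{r}-s^{r}\big)\le\frac{M_{n}}{r}(u-s)^{r},$$
the case $s=0$ being included because $r>0$ makes $\int_{0}^{u}v^{r-1}\,\d v$ converge and $F_{n,z}(0)=0$; hence $m_{t,r}(F_{n,z})\le M_{n}/r$, which is the asserted estimate with the constant renamed $C/r$.

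I do not expect a conceptual obstacle: this is essentially a bookkeeping exercise. The two things one must get right are (i) routing the Hölder bound through the pointwise weight $v^{r-1}$ and the subadditivity $u^{r}-s^{r}\le(u-s)^{r}$, rather than through a uniform bound on $F_{n,z}'$; and (ii) always discarding the surplus power of $s$ (resp. $v$) \emph{before} invoking $\max_{u\ge 0}u^{k}e^{-\gamma u}=(k/\gamma)^{k}e^{-k}$ — this is precisely why $\gamma_{\infty}=\min(\Re(z),\theta)$ and $\gamma_{0}=\min(\Re(z)-r,\theta)$ are the right constants, and it is what produces the sharp $n^{n}e^{-n}$ growth in $n$ which in turn makes the series $\sum_{n}\tfrac{\theta^{n}}{n!}\|\Gamma_{n}^{z}\|_{\alpha}$ converge in the proof of Theorem \ref{AnalyticLog}.
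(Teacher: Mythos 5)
Your argument is correct and complete. Since the paper cites this lemma from \cite{BHP24} without reproducing its proof, I cannot compare your derivation to the source; but taken on its own your proof is sound. The two ingredients you isolate are exactly what one needs: (i) the elementary maximisation $\max_{u\ge 0}u^{k}e^{-\gamma u}=(k/\gamma)^{k}e^{-k}$ applied after first discarding the surplus power of $v$ (using $\gamma_\infty\le\Re(z)$ on $(0,1]$ and $\gamma_\infty\le\theta$ on $[1,\infty)$, and similarly with $\gamma_0$ for the derivative), which is precisely what makes the stated $\gamma_\infty$ and $\gamma_0$ appear with no extraneous constant in the first bound; and (ii) routing the Hölder estimate through the pointwise weight $|F_{n,z}'(v)|\le M_n v^{r-1}$ together with $u^r-s^r\le(u-s)^r$, which correctly handles the failure of $F_{n,z}'$ to be bounded near $0$ when $\Re(z)<1$. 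The bookkeeping for the lower-order term $n|\log v|^{n-1}$ is handled properly: $n\,((n-1)/\gamma_0)^{n-1}e^{-(n-1)}\le e\gamma_0\,(n/\gamma_0)^{n}e^{-n}$ follows from $(n-1)^{n-1}\le n^{n-1}$, so the factor is absorbed into $C$, and the $n=1$ case with the convention $(\log v)^0=1$ is covered. The only cosmetic remark is that the constant you exhibit for the second inequality is roughly $(|z|+e\gamma_0)/r$, depending on $z$, $r$ and $\theta$ as the lemma permits. No gaps.
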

We now are able to prove that the functions $G_{n,z,f,A}$ are $\alpha$-Hölder.

\begin{lemma}\label{lem:EstimationGnzfA}
Let $n \geq 1$, $z \in \cc$ such that $\Re(z) \in I_\mu$, $A \in \mat$, $\theta>0$ and $f : \sta \to \cc$ be an $\alpha$-Hölder function. Then, the function $G_{n,z,f,A}: \sta \to \cc$ is $\alpha$-Hölder and we have the following bound:
\begin{equation}
    \|G_{n,z,f,A}\|_\alpha \leq C e^{-n}\left(\frac{n}{\gamma}\right)^n \max\left(\|A\|^{s_-},\|A\|^{\Re(z)+\theta}\right)\|f\|_\alpha,
\end{equation}
with $\gamma=\min(\Re(z)-s_-,\theta)$.
\end{lemma}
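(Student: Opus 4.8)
The plan is to write $G_{n,z,f,A}(\hat x)=F_{n,z}(\|Ax\|)\,f(A\cdot\hat x)$, where $F_{n,z}(t)=\log^n(t)\,e^{z\log t}$ and, following the paper's convention, this is set to $0$ whenever $Ax=0$; note $|F_{n,z}(t)|=|\log t|^n\,t^{\Re(z)}$ for $t>0$. The sup-norm estimate is then immediate from $\|G_{n,z,f,A}\|_\infty\le\|f\|_\infty\,\sup_{0<t\le\|A\|}|\log t|^n t^{\Re(z)}$ together with Lemma~\ref{lem:B4} (or a direct evaluation of this one-variable supremum). For the Hölder seminorm I would copy the splitting used in the proof of Lemma~\ref{lem:vxf(vx) holder}: for distinct $\hat x,\hat y$, assume $\|Ax\|\ge\|Ay\|$ (the case $\|Ax\|=0$ being trivial) and bound
\begin{align*}
|G_{n,z,f,A}(\hat x)-G_{n,z,f,A}(\hat y)|&\le |F_{n,z}(\|Ax\|)-F_{n,z}(\|Ay\|)|\,|f(A\cdot\hat x)|\\
&\quad+|F_{n,z}(\|Ay\|)|\,|f(A\cdot\hat x)-f(A\cdot\hat y)|.
\end{align*}
The first (``matrix modulus'') term is treated by a Hölder estimate on $F_{n,z}$ in its single real variable composed with $|\,\|Ax\|-\|Ay\|\,|\le\sqrt2\,\|A\|\,d(\hat x,\hat y)$ (choosing unit representatives with $\langle x,y\rangle\ge0$; equivalently one uses $|\,\|Ax\|^2-\|Ay\|^2\,|\le2\|A\|^2 d(\hat x,\hat y)$ together with $F_{n,z}(t)=2^{-n}F_{n,z/2}(t^2)$, exactly as in Lemma~\ref{lem:vxf(vx) holder}). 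The second (``projective'') term is treated by $d(A\cdot\hat x,A\cdot\hat y)\le\frac{\|\wedge^2 A\|\,d(\hat x,\hat y)}{\|Ax\|\,\|Ay\|}$, $\|\wedge^2 A\|\le\|A\|^2$, and $\|Ax\|\ge\|Ay\|$, which collapses the dangerous factor to $|F_{n,z}(\|Ay\|)|\,\|Ay\|^{-2\alpha}=|F_{n,\Re(z)-2\alpha}(\|Ay\|)|$ multiplied by $\|A\|^{2\alpha}$; here $\Re(z)-2\alpha>s_--2\alpha>0$ because $\alpha\le s_-/3$, so $F_{n,\Re(z)-2\alpha}$ is again a function of the same type.

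The heart of the argument — and the point at which one must be careful rather than simply invoke Lemma~\ref{lem:B4} — is the regime $\|A\|\le1$, where one needs the sharp decay $\|A\|^{s_-}$ and not just the boundedness that the $\max(1,\cdot)$ in Lemma~\ref{lem:B4} gives. This decay is precisely what makes $\int_{\mat}\|G_{n,z,f,A}\|_\alpha\,d\mu(A)$ finite under the sole hypothesis $\int\|v\|^{s_-}\,d\mu<\infty$ (the measure $\mu$ need not be finite). The mechanism is the elementary factorization
\[F_{n,z}(t)=t^{s_-}\,F_{n,z-s_-}(t),\qquad \Re(z-s_-)=\Re(z)-s_->0,\]
together with the exact bound $\sup_{0<t\le1}|\log t|^n t^{\Re(z)-s_-}=\bigl(n/(\Re(z)-s_-)\bigr)^n e^{-n}$. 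Applying this to every occurrence of an $F_{n,\cdot}(\|A\cdot\|)$ factor pulls out a power $\|Ax\|^{s_-}\le\|A\|^{s_-}$ (respectively $\|Ay\|^{s_--2\alpha}\,\|A\|^{2\alpha}\le\|A\|^{s_-}$ in the projective term, which is legitimate since $s_--2\alpha>0$) while leaving a remainder bounded by $e^{-n}(n/\gamma)^n$, because $\Re(z)-s_-\ge\gamma$. For the matrix-modulus term one additionally uses that $F_{n,z}$ is $s_-$-Hölder on $[0,1]$: from the same factorization, $|F_{n,z}(u)-F_{n,z}(v)|\le|u^{s_-}-v^{s_-}|\,\|F_{n,z-s_-}\|_{[0,1]}+v^{s_-}\,m_{[0,1],\alpha}(F_{n,z-s_-})\,|u-v|^\alpha$, where $|u^{s_-}-v^{s_-}|$ has $\alpha$-Hölder constant $O(t^{s_--\alpha})$ on $[0,t]$ and $m_{[0,1],\alpha}(F_{n,z-s_-})$ is controlled by Lemma~\ref{lem:B4} (factoring a further small power out of $F_{n,z-s_-}$ if $\Re(z)-s_-$ is too close to $\alpha$). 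Composing with $|\,\|Ax\|-\|Ay\|\,|\le\sqrt2\|A\|d(\hat x,\hat y)$ then gives $|F_{n,z}(\|Ax\|)-F_{n,z}(\|Ay\|)|\le C e^{-n}(n/\gamma)^n\|A\|^{s_-}d(\hat x,\hat y)^{s_-}\le C e^{-n}(n/\gamma)^n\|A\|^{s_-}d(\hat x,\hat y)^{\alpha}$, using $s_-\ge3\alpha\ge\alpha$ and $d\le1$.

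For $\|A\|\ge1$ everything is routine: Lemma~\ref{lem:B4} supplies $\|F_{n,w}\|_{\|A\|}\le e^{-n}(n/\gamma_w)^n\|A\|^{\Re(w)+\theta}$ and the matching Hölder bounds, and every explicit power of $\|A\|$ that appears lies below $\Re(z)+\theta$, hence is absorbed into $\|A\|^{\Re(z)+\theta}$; all the auxiliary constants $\min(\Re(z)-\alpha,\theta)$, $\min(\Re(z)-2\alpha,\theta)$, $\Re(z)-s_-$ are $\ge\gamma=\min(\Re(z)-s_-,\theta)$, so $(n/\cdot)^n\le(n/\gamma)^n$ throughout, and the finitely many numerical factors ($2^{\alpha}$, $2^{-n}$, $s_-$, the constant of Lemma~\ref{lem:B4}, \dots) are absorbed into a single $C$. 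Summing the two regimes gives $\|G_{n,z,f,A}\|_\alpha\le C e^{-n}(n/\gamma)^n\max(\|A\|^{s_-},\|A\|^{\Re(z)+\theta})\|f\|_\alpha$. To repeat, the only real obstacle is the bookkeeping when $\|A\|\le1$: one must resist plugging Lemma~\ref{lem:B4} in verbatim (which would lose the $\|A\|^{s_-}$) and instead exploit the $t^{s_-}$-factorization of $F_{n,z}$, all the while keeping the factorial growth $(n/\gamma)^n$ honest, since it is exactly this factor that will later force a positive radius of convergence for the series $\sum_n\frac{w^n}{n!}\Gamma_n^z$.
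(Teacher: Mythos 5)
Your plan follows the paper quite closely — a modulus/projective splitting, followed by a factorization trick to recover $\|A\|^{s_-}$ when $\|A\|\le1$ — and your treatment of the projective term and of the supremum norm is correct. However, there is a genuine gap in the matrix-modulus term, precisely in the regime you yourself flag as the heart of the matter.

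The problem is the choice of factorization. You write $F_{n,z}(t)=t^{s_-}F_{n,z-s_-}(t)$ and then need to control $m_{[0,1],\alpha}(F_{n,z-s_-})$ via the second part of Lemma~\ref{lem:B4}. That lemma requires the exponent $r$ of Hölder continuity to satisfy $r<\Re(z-s_-)=\Re(z)-s_-$. But $\Re(z)-s_-$ can be arbitrarily small (any $\Re(z)>s_-$ is allowed), in particular $\le\alpha$, so Lemma~\ref{lem:B4} with $r=\alpha$ is simply inapplicable for $\Re(z)$ close to $s_-$. Even when it does apply, the constant it returns is $\gamma_0=\min(\Re(z)-s_--\alpha,\theta)$, which is strictly smaller than the target $\gamma=\min(\Re(z)-s_-,\theta)$; this spoils the $e^{-n}(n/\gamma)^n$ growth rate that you correctly identify as the entire point of the estimate (it would shrink the radius of convergence of $\sum_n\frac{w^n}{n!}\Gamma_n^z$, possibly to zero as $\Re(z)\downarrow s_-$). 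Your proposed repair — ``factoring a further small power out of $F_{n,z-s_-}$'' — goes in the wrong direction: replacing $F_{n,z-s_-}$ by $t^\delta F_{n,z-s_--\delta}$ lowers the real part further, making the hypothesis of Lemma~\ref{lem:B4} harder to satisfy, not easier.

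The correct repair, and what the paper actually does, is to factor out one fewer power of $\alpha$: write $F_{n,z}(t)=t^{s_--\alpha}F_{n,z-s_-+\alpha}(t)$ rather than $t^{s_-}F_{n,z-s_-}(t)$. Now the surviving $F$ has real part $\Re(z)-s_-+\alpha>\alpha$, so Lemma~\ref{lem:B4} applies with $r=\alpha$, and the constant is $\min(\Re(z)-s_-+\alpha-\alpha,\theta)=\gamma$ exactly. The missing $\|A\|^\alpha$ in the prefactor $\|A\|^{s_--\alpha}$ is then supplied by the $|u-v|^\alpha\le(\sqrt2\|A\|)^\alpha d(\hat x,\hat y)^\alpha$ that you already have. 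In the paper this is realized by the three-way split $G_{n,z,f,A}(\hat x)=\|Ax\|^\alpha\,L_n(\hat x)\,H(\hat x)$ with $H(\hat x)=\|Ax\|^{s_--\alpha}f(A\cdot\hat x)$: the modulus piece $\|Ax\|^\alpha L_n(\hat x)=F_{n,z-s_-+\alpha}(\|Ax\|)$ goes through Lemma~\ref{lem:B4} with exactly the right $\gamma$, and $H$ is handled by Lemma~\ref{lem:Inegalitef_A} (applicable because $s_--\alpha\ge2\alpha$ by the choice $\alpha\le s_-/3$). Once you replace your $t^{s_-}$ by $t^{s_--\alpha}$ and rerun the bookkeeping, your argument goes through and in fact coincides, up to presentation, with the paper's.
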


\begin{proof}
Let us recall that $\alpha=\min \{ 1, \frac{s_{-}}{3} \}$. Now let $n \geq 1$ and $z \in \cc$ such that $\Re(z) \in I_\mu$. Our strategy is to split the exponential $e^{z \log \|Ax\|}$ in order to use Lemma \ref{lem:Inegalitef_A} and Lemma \ref{lem:B4}. Fix $r=s_- - \alpha$ We write $G_{n,z,f,A}$ as follows, for every $\hat x \in \sta$:
$$G_{n,z,f,A}(\hat x)=\log^n{\|Ax\|}e^{(z-s_{-})\log \|Ax\|} \|Ax\|^{\alpha} e^{r \log \|Ax\|} f(A \cdot \hat x).$$
In the rest of the proof we denote by $H$ the function defined by $H(\hat x)=e^{r\log \|Ax\|} f(A \cdot \hat x).$ and $L_n$ the function defined by $L_n(\hat x)=\log^n \|Ax\| e^{(z-s_{-})\log \|Ax\|}$. It follows that
$$ G_{n,z,f,A}(\hat x)= \|Ax\|^{\alpha}L_n(\hat x)H(\hat x).$$
Let us start by estimating the supremum norm of $G_{n,z,f,A}$. We estimate the supremum norm of $H$ as follows:
$$ \|H\|_\infty \leq \|A\|^{s_- - \alpha} \|f\|_\infty.$$
By definition, $L_n(\hat x)=F_{n,z-s_-}(\|Ax\|)$. Then, Lemma~\ref{lem:B4} applied to $t=\|A\|$ and $z-s_-$ gives:
\begin{equation}\label{eq!EstiméeGnz}
    \|G_{n,z,f,A}\|_\infty \leq e^{-n}\left(\frac{n}{\gamma}\right)^n \max\left(1,\|A\|^{\Re(z)-s_-+\theta}\right) \|A\|^{s_-} \|f\|_\infty,
\end{equation}
with $\gamma= \min \{ \Re(z) -s_-, \theta \}$. We now estimate the $\alpha$-Hölder coefficient of $G_{n,z,f,A}$. Let $\hat x, \hat y \in \sta$ such that $\hat x \neq \hat y$. We obtain:
\begin{align}
    G_{n,z,f,A}(\hat x) - G_{n,z,f,A}(\hat y) &= [H(\hat x)-H(\hat y)] \|Ax\|^{\alpha} L_n(\hat x) \label{eq:PartieduH} \\
    &+ H(\hat y) [\|Ax\|^\alpha L_n(\hat x)-\|Ay\|^\alpha L_n(\hat y)] \label{eq:PartieduLN}.
\end{align}
We first estimate the right term of Equation \eqref{eq:PartieduH}. To apply Lemma \ref{lem:Inegalitef_A}, it suffices to verify that $r \geq 2 \alpha$. By definition of $r$ and $\alpha$, the inequality can be rewritten $s_- \geq 3 \alpha$. Since $\alpha \leq \frac{s_-}{3}$, the inequality is verified. Then, Lemma \ref{lem:Inegalitef_A} applied to $r$ and Lemma \ref{lem:B4} applied to $\|A\|$ and $z-s_-$ give the existence of a constant $C$ only depending on $r$ such that:
\begin{equation}\label{Eq:Estimee1erTerme}
    | [H(\hat x)-H(\hat y)] \|Ax\|^{\alpha} L_n(\hat x) | \leq C e^{-n}\left(\frac{n}{\gamma}\right)^n \max\left(1,\|A\|^{\Re(z)-s_-+\theta}\right) \|A\|^{s_-} \|f\|_\alpha \d (\hat x, \hat y)^\alpha,
\end{equation}
with $\gamma= \min \{ \Re(z) -s_-, \theta \}$. Second, we estimate the term in Equation \eqref{eq:PartieduLN}. Let us remark that $\|Ax\|^\alpha L_n(\hat x)=F_{n,z-r}(\|Ax\|)$ for every $\hat x \in \sta$. Now, let us verify that $\alpha < \Re(z-r)$ in order to apply the second part of Lemma \ref{lem:B4}.
By definition of $r$, $\Re(z)-r = \Re(z)-s_-+ \alpha$ and since $\Re(z)>s_-$ it follows that $\Re(z)-r>\alpha$. 

\begin{align*}
    |\|Ax\|^\alpha L_n(\hat x)-\|Ay\|^\alpha L_n(\hat y)| &= |F_{n,z-r}(\|Ax\|)-F_{n,z-r}(\|Ay\|)| \\
    &\leq C e^{-n}\left(\frac{n}{\gamma}\right)^n \max\left(1,\|A\|^{\Re(z)-s_-+\theta}\right) | \|Ax\|-\|Ay\| |^\alpha \\
    &\leq C e^{-n}\left(\frac{n}{\gamma}\right)^n \max\left(1,\|A\|^{\Re(z)-s_-+\theta}\right) \|A\|^\alpha d(\hat x, \hat y)^\alpha,
\end{align*}
with $\gamma=\min(\Re(z)-s_-,\theta)$. The last inequality comes from Lemma 5.4 \cite{BHP24}.
Now by applying Lemma \ref{lem:Inegalitef_A} to $H$ we bound the term in Equation \eqref{eq:PartieduLN} as follows:
\begin{equation}\label{eq:estimee2emeterme}
    |H(\hat y) [\|Ax\|^\alpha L_n(\hat x)-\|Ay\|^\alpha L_n(\hat y)]| \leq C e^{-n}\left(\frac{n}{\gamma}\right)^n \max\left(1,\|A\|^{\Re(z)-s_-+\theta}\right) \|A\|^{s_-} \|f\|_\alpha d(\hat x, \hat y)^\alpha.
\end{equation}
By combining Equation \eqref{Eq:Estimee1erTerme} and Equation \eqref{eq:estimee2emeterme}, we obtain that there exists $C>0$ such that:
\begin{equation}\label{eq:EstimeeGnzalpha}
    m_\alpha(G_{n,z,f,A}) \leq C e^{-n}\left(\frac{n}{\gamma}\right)^n \max\left(1,\|A\|^{\Re(z)-s_-+\theta}\right) \|A\|^{s_-} \|f\|_\alpha.
\end{equation}
Finally, by combining Equation \eqref{eq:EstimeeGnzalpha} and Equation \eqref{eq!EstiméeGnz}, it follows that there exists $C>0$ such that:
\begin{equation}
    \|G_{n,z,f,A}\|_\alpha \leq C e^{-n}\left(\frac{n}{\gamma}\right)^n \max\left(\|A\|^{s_-},\|A\|^{\Re(z)+\theta}\right)\|f\|_\alpha,
\end{equation}
with $\gamma=\min(\Re(z)-s_-,\theta)$.
\end{proof}

We are now in position to prove Theorem \ref{AnalyticLog}.
\begin{proof}[Proof of Theorem \ref{AnalyticLog}]
Let $z \in \cc$ such that $\Re(z) \in I_\mu$. Our goal is to prove the convergence of the series of $(\frac{|w|^n}{n!}\|\Gamma_n^z\|_\alpha)$. Let $f : \sta \to \cc$ be a $\alpha$-Hölder continuous function. 
$$ \Gamma_0^z f( \hat x) = \int_{\mat} e^{z \log \|vx\|} f( v \cdot x) d \mu(v).$$
Lemma \ref{lem:Inegalitef_A} applied to $\hat x \mapsto e^{z \log \|vx\|} f( v \cdot x)$ and the triangular inequality give the existence of $C>0$ such that:
\begin{equation}
    \|\Gamma_0^z f\|_\alpha \leq C \int_{\mat} \|v\|^{\Re(z)} d \mu(v) \|f\|_\alpha.
\end{equation}
It follows that:
\begin{equation}
    \|\Gamma_0^z\|_\alpha \leq C \int_{\mat} \|v\|^{\Re(z)} d \mu(v).
\end{equation}
Now, for every $n \geq 1$ and $\hat x \in \sta$
$$\Gamma_n^z f ( \hat x ) = \int_{\mat} G_{n,z,f,v}(\hat x) d \mu(v).$$
Lemma \ref{lem:EstimationGnzfA} and the triangular inequality imply that:
\begin{equation}
    \|\Gamma_n^z f\|_\alpha  \leq C e^{-n}\left(\frac{n}{\gamma}\right)^n \int_{\mat} \max\left(\|v\|^{s_-},\|v\|^{\Re(z)+\theta}\right) d\mu(v)\|f\|_\alpha,
\end{equation}
with $\theta >0$ and $\gamma= \min \{ \Re(z) -s_-, \theta \}$. And then,
\begin{equation}
    \|\Gamma_n^z \|_\alpha  \leq C e^{-n}\left(\frac{n}{\gamma}\right)^n \int_{\mat} \max\left(\|v\|^{s_-},\|v\|^{\Re(z)+\theta}\right) d\mu(v).
\end{equation}
We fix now $\theta=s_+-\Re(z)>0$ since $\Re(z)<s_+$. For this specific $\theta$,
\begin{align*}
    \sum_{n=0}^\infty \frac{|w|^n}{n!} \|\Gamma_n^z\|_\alpha &\leq C \int_{\mat} \|v\|^{\Re(z)} d \mu(v) 
    + C \sum_{n=1}^\infty \frac{|w|^n}{n!} e^{-n}\left(\frac{n}{\gamma}\right)^n \int_{\mat} \max\left(\|v\|^{s_-},\|v\|^{s_+}\right) d\mu(v).
\end{align*}
By developing the exponential, one can prove that $\underset{n \in \nn}{\sup} \frac{e^{-n}n^n}{n!} \leq 1$. It follows that:
\begin{align*}
    \sum_{n=0}^\infty \frac{|w|^n}{n!} \|\Gamma_n^z\|_\alpha &\leq C \int_{\mat} \|v\|^{\Re(z)} d \mu(v) 
    + C \sum_{n=1}^\infty \left (\frac{|w|}{\gamma}\right)^n \int_{\mat} \max\left(\|v\|^{s_-},\|v\|^{s_+}\right) d\mu(v).
\end{align*}
The integrals converge by assumption, and for $0 \leq |w| < \gamma$, the series converges. The analyticity of $z \mapsto \Gamma_z$ follows.
\end{proof}

\subsection{Proof of Theorem \ref{Th:Analyticks}}
We can finally prove the main theorem.
\begin{proof}[Proof of Theorem \ref{Th:Analyticks}]
Let $s \in I_\mu$ and $\alpha= \min \{ 1, \frac{s_{-}}{3} \}$. According to Theorem \ref{Constru}, $k(s)$ is a simple eigenvalue of $\Gamma_s$. 
By Theorem \ref{AnalyticLog}, $z \to \Gamma_z$ is analytic on $\{ z \in \cc \, | \, \Re(z) \in I_\mu \}$. By applying Theorem VII.8 of \cite{kato2013perturbation} there exists an analytic continuation of the function $k$ on $\{ z \in \cc \, | \, \Re(z) \in I_\mu \}$. We then deduce the real analyticity the map $s \mapsto k(s)$ on $I_\mu$. Then, as mentioned in Remark \ref{Remark:Positivityk}, for every $s \in I_\mu$, $k(s)>0$. It follows that $\mathbf{P}$ is analytic on $I_\mu$.
\end{proof}

\section{Dynamical Systems Perspective}

The goal of this section is to link the previous results with the thermodynamics formalism (see for example \cite{bowen2006ergodic}).
In particular, the probability measure $\qq^s$ can be seen as a measure satisfying a variational principle. We link the previous sections with the results of Morris \cite{morris2018ergodic} or the results of Feng and Käenmäki \cite{Fe11}. We start by introducing new objects for this section.
Let $\mathcal{A}$ be a Polish space endowed with a metric $d_{\mathcal{A}}$.
We consider the space of infinite sequences $\out:=\mathcal{A}^{\nn}$ and we note $a=(a_1,...,a_n,...) \in \mathcal{A}^\nn$. Let $\mathcal M$ be the Borel $\sigma$-algebra on $\mathcal{A}$. For $n\in\nn$, let $\outalg_n$ be the  $\sigma$-algebra on $\Omega$ generated by the $n$-cylinder sets, i.e.\ $\outalg_n = \pi_n^{-1}(\mathcal M^{\otimes n})$. We equip the space $\Omega$ with the smallest $\sigma$-algebra $\outalg$ containing $\outalg_n$ for all $n\in \nn$. Let $\mu$ be a measure on $\mathcal A$. We identify $O_n\in \mathcal M^{\otimes n}$ with $\pi_n^{-1}(O_n)$, a function $f$ on $\mathcal M^{\otimes n}$  with $f \circ \pi_n$ and a measure $\mu^{\otimes n}$ with the measure $\mu^{\otimes n} \circ \pi_n$.
On $\Omega$, we define the distance $d_{\Omega}$: for $a,b \in \Omega$, 
\begin{equation}
    d_{\Omega}(a,b)=\sum_{k=1}^{+ \infty}2^{-k} \min \{d_A(a_k,b_k),1\}.
\end{equation}
On $\mathcal{A}^\nn$, we define the left-shift $\theta$:
$$\begin{array}{lrcl}
\theta : & \Omega & \longrightarrow & \Omega \\
    & (a_1,a_2,...) & \longmapsto & (a_2,a_3,...) \end{array}.$$
We denote by $\mathcal P(\Omega)$ the set of probability measures on $\Omega$. For $\pp \in \mathcal P(\Omega)$, we denote by $\pp_n$ its marginal on $\mathcal A^n$.
The set of probability measures on $\Omega$ invariant by $\theta$ will be denoted $\mathcal P_\theta$. The set of real-valued bounded Lipchitz functions on $\Omega$ will be denoted $\mathrm{Lip_b}(\Omega,\rr)$. If $f \in \mathrm{Lip_b}(\Omega,\rr)$, its Lipchitz norm is denoted $\|f\|_1$ which is the $\alpha$-Hölder for $\alpha=1$
Let $d \in \nn$, we define a measurable application $v: \mathcal{A} \mapsto \mat$: for every $a \in \mathcal{A}$,
\begin{equation}
    v(a)=v_a.
\end{equation}
We can then define the pushforward measure of $\mu$: $\bar{\mu}=\mu \circ v^{-1}$. Abusing the notation, we still denote this pushforward measure by $\mu$ in the following. 
We now assume that $\mu$ verifies the assumptions {\bf (Irr)} and {\bf (Cont)}. We still denote by $I_\mu$ the set of $s \in \rr_+$ such that $\int_{\mathcal A} \|v_a\|^s d \mu(a) < \infty$.
We can then define again the operators $\Gamma_s$: for every $f \in C^\alpha(\sta,\cc)$ and $\hat x \in \sta$:
$$ \Gamma_s f (\hat x)=\int_{\sta} f(v_a \cdot \hat x) \|v_ax\|^s d \mu(a).$$
In this setting, for every $s \in I_\mu$:
$$ k(s)=\underset{n \to \infty}{\lim} \left(\int_{\mathcal A^n} \|v_{a_n}...v_{a_1}\|^s d \mu^{\otimes n}(a_1,...,a_n)\right)^{1/n}.$$
By repeating the proofs for the pushforward measure, all the results in the previous sections also hold for the measure $\mu$. We state here a summary of the results.
\begin{proposition}
For every $s \in I_\mu$, there exist a unique strictly positive function $e_s: \sta \to \rr_+^*$ and a unique probability measure $\sigma$ over $\sta$ such that:
\begin{itemize}
    \item $\Gamma_s=k(s)e_s$.
    \item $\sigma \Gamma_s=k(s) \sigma$.
    \item $\sigma(e_s)=1.$
\end{itemize}
For every $\hat x \in \sta$, there exists a probability measure $\qq_x^s$ on $\mathcal A^{\nn}$ such that for any $n \in \nn$ and cylinder $O_n \in \mathcal O_n$,
$$ \qq_x^s(O_n)=\int_{O_n} q^s_n(\hat x, a_1,...,a_n) d \mu^{\otimes n}(a_1,...,a_n),$$
where $q^s_n(\hat x, a_1,...,a_n)= \frac{1}{k(s)^n} \frac{e_s(v_{a_n}...v_{a_1} \cdot \hat x)}{e_s(\hat x)}\|v_{a_n}...v_{a_1}x\|^s $.
Moreover, the operators $Q_s$ defined in Equation \eqref{Def:Qs} have a unique invariant probability measure $\eta^s$ over $\sta$.
\end{proposition}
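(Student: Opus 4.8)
The plan is to avoid literally re-running the arguments of Sections~\ref{Sec:Construction}--\ref{SecQuasiCompact} and instead to reduce the whole statement to the already-proved Theorem~\ref{Constru}, Proposition~\ref{Prop:UniciteMesureInvariante} and Corollary~\ref{Cor:Sigmaetes}, via the single observation that every object in the statement depends on the measure $\mu$ on $\mathcal A$ only through its pushforward $\bar\mu=\mu\circ v^{-1}$ on $\mat$. First I would record the change-of-variables identities: for any nonnegative Borel function $g$ on $\mat$ one has $\int_{\mathcal A} g(v_a)\,d\mu(a)=\int_{\mat} g(w)\,d\bar\mu(w)$, and, applied coordinatewise, $\bar\mu^{\otimes n}=(\mu^{\otimes n})\circ (v^{\times n})^{-1}$. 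Taking $g(w)=f(w\cdot\hat x)\|wx\|^s$ shows that the operator $\Gamma_s$ built from $\mu$ coincides, as an endomorphism of $C^\alpha(\sta,\cc)$, with the operator $\Gamma_s$ built from $\bar\mu$; taking $g(w_1,\dots,w_n)=\|w_n\cdots w_1\|^s$ shows that the scalar $k(s)$ is the same for $\mu$ and for $\bar\mu$; and taking $g(w)=f(w\cdot\hat x)q_1^s(\hat x,w)$ gives the same identification for the Markov operators $Q_s$. Consequently the law of $W_n=v_{a_n}\cdots v_{a_1}$ under $\mu^{\otimes n}$ agrees with the law of the corresponding product under $\bar\mu^{\otimes n}$, so every almost-sure statement of Section~\ref{SecQuasiCompact} transfers unchanged.

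Next I would check that $\bar\mu$ satisfies the standing hypotheses. The integrability $\int_{\mat}\|w\|^s\,d\bar\mu(w)<\infty$ for $s\in[s_-,s_+]$ is exactly $\int_{\mathcal A}\|v_a\|^s\,d\mu(a)<\infty$, the hypothesis defining $I_\mu$ in this section; and the assumptions {\bf (Irr)} and {\bf (Cont)} assumed for $\mu$ are, by the very way they are phrased, the assumptions {\bf (Irr)} and {\bf (Cont)} for $\supp\bar\mu$ and the closed semigroup $T$ it generates --- here the only thing to note is that $\supp\bar\mu$ is a closed subset of $\mat$, so that $T$, the notion of a $\supp\mu$-invariant subspace, and the notion of a rank-one limit all make sense verbatim. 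With $\Gamma_s^{(\mu)}=\Gamma_s^{(\bar\mu)}$ and $k^{(\mu)}(s)=k^{(\bar\mu)}(s)$ in hand, Theorem~\ref{Constru} applied to $\bar\mu$ yields the unique probability measure $\sigma$ and the unique strictly positive $\bar s$-Hölder function $e_s$ with $\sigma\Gamma_s=k(s)\sigma$, $\Gamma_s e_s=k(s)e_s$ and $\sigma(e_s)=1$, and Proposition~\ref{Prop:UniciteMesureInvariante} applied to $\bar\mu$ yields the unique $Q_s$-invariant probability measure $\eta^s$ on $\sta$, with $\eta^s=e_s\sigma$ (so $\supp\eta^s$ is not contained in a hyperplane) by Corollary~\ref{Cor:Sigmaetes}.

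Finally I would construct $\qq^s_x$. With $e_s$ and $k(s)$ as above, the kernels $q_n^s(\hat x,a_1,\dots,a_n)=k(s)^{-n}\frac{e_s(v_{a_n}\cdots v_{a_1}\cdot\hat x)}{e_s(\hat x)}\|v_{a_n}\cdots v_{a_1}x\|^s$ satisfy, for each fixed $\hat x$, the same cocycle identity $q_{n+1}^s(\hat x,a_1,\dots,a_{n+1})=q_n^s(\hat x,a_1,\dots,a_n)\,q_1^s(v_{a_n}\cdots v_{a_1}\cdot\hat x,a_{n+1})$ and normalization $\int q_n^s(\hat x,\cdot)\,d\mu^{\otimes n}=1$ as in Section~\ref{Sec:Construction}: these are purely algebraic identities among the matrices $v_{a_i}$ together with the eigenfunction equation for $e_s$, and are indifferent to whether indices run over $\mathcal A$ or over $\mat$. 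Hence $\bigl(q_n^s(\hat x,\cdot)\,\mu^{\otimes n}\bigr)_n$ is a consistent family of Borel probability measures on the Polish spaces $\mathcal A^n$, and since $\mathcal A$ is Polish the space $\mathcal A^{\nn}$ is Polish, so the Kolmogorov extension theorem produces the desired probability measure $\qq^s_x$ on $\mathcal A^{\nn}$ with those marginals. The main (indeed essentially the only) point requiring care is the bookkeeping of the second paragraph --- making explicit that ``{\bf (Irr)} and {\bf (Cont)} for $\mu$'' means precisely these assumptions for $\bar\mu$, and that pushing forward preserves exactly the data those assumptions quantify over --- together with the observation that the Polish hypothesis on $\mathcal A$ is used exactly once, to invoke Kolmogorov extension; everything on the $\sta$ side is untouched because $\sta$ is already a compact metric space and the Hölder estimates there never referred to the alphabet.
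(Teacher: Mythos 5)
Your proof is correct and takes essentially the same approach as the paper, which simply notes that the results ``hold by repeating the proofs for the pushforward measure'' without further argument. Your version makes that reduction explicit and more rigorous: you observe that $\Gamma_s$, $k(s)$, $Q_s$, $\sigma$, $e_s$ and $\eta^s$ depend on $\mu$ only through $\bar\mu=\mu\circ v^{-1}$, so Theorem~\ref{Constru}, Proposition~\ref{Prop:UniciteMesureInvariante} and Corollary~\ref{Cor:Sigmaetes} can be invoked directly for $\bar\mu$; and you correctly isolate the one step that genuinely must be re-run in the alphabet space rather than in $\mat^{\nn}$, namely the Kolmogorov extension producing $\qq^s_{\hat x}$ on $\mathcal A^{\nn}$, which is exactly where the Polish hypothesis on $\mathcal A$ enters. (Incidentally, your cocycle identity $q_{n+1}^s(\hat x,a_1,\dots,a_{n+1})=q_n^s(\hat x,a_1,\dots,a_n)\,q_1^s(v_{a_n}\cdots v_{a_1}\cdot\hat x,a_{n+1})$ is the correct one; the displayed identity in Section~\ref{Sec:Construction} of the paper has a typo in its second factor.)
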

As in the previous sections, we denote by $\qq^s$ the probability measure over $\mathcal{A}$ defined by:
\begin{equation}
    \qq^s= \int_{\sta} \qq_x^s d \eta^s(\hat{x}).
\end{equation}
In order to prove a variational principle, we prove that the probability measure $\qq^s$ is ergodic for the shift operator. For this purpose, we show an exponential decay of correlations. The proof is detailed in Section \ref{sec:Decay}. By using this result, we are able to prove the variational principle in Section \ref{Sec:VariationalPrinciple}.

\subsection{Exponential Decay of Correlations}\label{sec:Decay}

The goal of this subsection is to prove an exponential decay of correlation for the probability measure $\qq^s$. The spectral gap property of the operators $Q_s$ is crucial. Before stating the main result of this section, let us give a nice decomposition of the quasi-compact operators $Q_s$.
For every $s \in I_\mu$, there exists $m \in \nn^*$ such that $Q_s$ can be written as,
\begin{equation}\label{eq:decompoquasicompact}Q_s=\quad\sum_{\ell=0}^{m-1} e^{i\frac{\ell}{m}2\pi}f^s_\ell\nu^s_\ell \quad +\quad T_s,
\end{equation}
in the sense that for all bounded and measurable functions $f$, we have
$$Q_s f=\quad\sum_{\ell=0}^{m-1} e^{i\frac{\ell}{m}2\pi}f^s_\ell \nu^s_\ell(f) \quad +\quad T_s(f).$$
Here, we have that 
$$f^s_\ell\in C^\alpha(\sta,\cc),\quad \nu^s_\ell\in C^\alpha(\sta,\cc)^*\quad \mbox{and}\quad \nu_\ell(f^s_j)=\delta_{\ell,j}$$
and 
$$T_s:C^\alpha(\sta,\cc)\to C^{\alpha}(\sta,\cc),\quad \rho_\alpha(T_s)<1\quad \mbox{and} \quad T_sf_\ell=0,\quad \nu^s_\ell T_s=0$$
for any $\ell,j\in \{0,\dotsc,m-1\}$. Moreover, Lemma \ref{lem:Constant} implies that $f_0^s=\mathbb{1}$, where $\mathbb{1}$ is the constant function equal to $1$. In the following, we denote by $z_m$ the number $e^{\frac{i}{m} 2 \pi}$.
We now state the main proposition of this section.

\begin{proposition}\label{prop:decay}
Let $s \in I_\mu$. There exists $C>0$, $0<\lambda<1$ such that for every $n \in \nn^*$ and $f,g \in \mathrm{Lip}(\Omega,\rr)$,
\begin{equation}
    \left | \frac{1}{m} \sum_{j=0}^{m-1} \ee_{\qq^s}[f g \circ \theta^{nm+j}]- \ee_{\qq^s}[f] \ee_{\qq^s}[g] \right | \leq C \|f\|_1 \|g\|_1 \lambda^n.
\end{equation}
\end{proposition}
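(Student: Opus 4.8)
The plan is to rewrite the decorrelation through the Markov operator $Q_s$ and then exploit its spectral gap. First I would set down the probabilistic dictionary attached to $Q_s$: for each $\hat x$, under $\qq^s_{\hat x}$ the sequence $\hat X_n := W_n\cdot\hat x$ is a Markov chain with one-step transition $Q_s$, so $\ee_{\qq^s_{\hat x}}[\varphi(\hat X_n)]=Q_s^n\varphi(\hat x)$ for continuous $\varphi$; and the cocycle identity $q^s_{n+1}(\hat x,AB)=q^s_n(B\cdot\hat x,A)\,q^s_1(\hat x,B)$ shows that, conditionally on $\mathcal{O}_p$, the shifted sequence $(a_{p+1},a_{p+2},\dots)$ has law $\qq^s_{\hat X_p}$. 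Integrating over $\hat x\sim\eta^s$ and using $\eta^s Q_s=\eta^s$, under $\qq^s$ the chain $(\hat X_n)_{n\ge0}$ is stationary with marginal $\eta^s$; in particular $\ee_{\qq^s}[\varphi(\hat X_n)]=\eta^s(\varphi)$.

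Second, for Lipschitz $f,g$ on $\Omega$ I would introduce $G(\hat y):=\ee_{\qq^s_{\hat y}}[g]$ and prove the auxiliary fact that $G\in C^\alpha(\sta,\cc)$, with $\|G\|_\alpha\le C_s\|g\|_1$ and $\eta^s(G)=\ee_{\qq^s}[g]$ (the last equality being the definition of $\qq^s$). For the Hölder estimate, approximate $g$ by the cylinder functions $g_q:=\ee_{\qq^s}[g\mid\mathcal{O}_q]$, so that $\|g-g_q\|_\infty\le\|g\|_1 2^{-q}$ and $G_q(\hat y):=\ee_{\qq^s_{\hat y}}[g_q]\to G$ uniformly. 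Writing $G_q(\hat y)=e_s(\hat y)^{-1}\int g_q(a)\,k(s)^{-q}e_s(W_q\cdot\hat y)\|W_q y\|^s\,d\mu^{\otimes q}(a)$ and applying Lemma \ref{lem:vxf(vx) holder} with $z=s$, $A=W_q$, $f=e_s$ inside the integral, and using that $e_s$ and $1/e_s$ are $\alpha$-Hölder, bounded and bounded below, the estimate reduces to the bounds $k(s)^{-q}\int\|W_q\|^s\,d\mu^{\otimes q}\le c_s(\sigma)^{-1}$ and $h_\alpha(q)\le c_s(\sigma)^{-1}$, both uniform in $q$; hence $\|G_q\|_\alpha\le C_s\|g\|_1$, and this passes to the uniform limit $G$.

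Third, the main computation. Fix $n$, set $p:=\lfloor nm/2\rfloor$ and $f_p:=\ee_{\qq^s}[f\mid\mathcal{O}_p]$, so $f_p$ is $\mathcal{O}_p$-measurable and $\|f-f_p\|_\infty\le\|f\|_1 2^{-p}$. For $N\ge p$, conditioning on $\mathcal{O}_N$ and then on $\mathcal{O}_p$ gives $\ee_{\qq^s}[f_p\,(g\circ\theta^N)]=\ee_{\qq^s}[f_p\,G(\hat X_N)]=\ee_{\qq^s}\!\big[f_p\,(Q_s^{N-p}G)(\hat X_p)\big]$. Taking $N=nm+j$, averaging over $j\in\{0,\dots,m-1\}$, and using $\tfrac1m\sum_{j}z_m^{\ell(nm+j-p)}=\delta_{\ell,0}$ together with the decomposition \eqref{eq:decompoquasicompact}, one obtains $\tfrac1m\sum_{j}Q_s^{nm+j-p}G=\mathbb{1}\cdot\nu^s_0(G)+R_n$ with $\|R_n\|_\alpha\le C_0\lambda_0^{\,nm-p}\|G\|_\alpha$, where $\lambda_0\in(\rho_\alpha(T_s),1)$ is fixed and $C_0:=\sup_k\lambda_0^{-k}\|T_s^k\|_\alpha<\infty$. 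Since $\nu^s_0$ is $Q_s$-invariant and $\tfrac1N\sum_{k<N}Q_s^k\varphi\to\eta^s(\varphi)\mathbb{1}$ uniformly (unique ergodicity of $Q_s$ plus the equicontinuity of Lemma \ref{Lemma:EstimationNormeHölder}), one has $\nu^s_0(G)=\eta^s(G)=\ee_{\qq^s}[g]$. Therefore $\tfrac1m\sum_{j}\ee_{\qq^s}[f_p\,(g\circ\theta^{nm+j})]=\ee_{\qq^s}[f_p]\,\ee_{\qq^s}[g]+\ee_{\qq^s}[f_p\,R_n(\hat X_p)]$, and the last term is bounded by $\|f\|_\infty\|R_n\|_\infty\le C_0C_s\|f\|_1\|g\|_1\lambda_0^{\,nm-p}$.

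Finally, replacing $f_p$ by $f$ costs at most $\|f-f_p\|_\infty\|g\|_\infty\le\|f\|_1\|g\|_1 2^{-p}$ in each of the two remaining occurrences, so $\big|\tfrac1m\sum_{j}\ee_{\qq^s}[f\,(g\circ\theta^{nm+j})]-\ee_{\qq^s}[f]\ee_{\qq^s}[g]\big|\le\|f\|_1\|g\|_1\big(2^{1-p}+C_0C_s\lambda_0^{\,nm-p}\big)$; with $p=\lfloor nm/2\rfloor$ we get $2^{1-p}\le 2\,(2^{-m/2})^n$ and $\lambda_0^{\,nm-p}\le(\lambda_0^{m/2})^n$, and both $2^{-m/2}$ and $\lambda_0^{m/2}$ lie in $(0,1)$ since $m\ge1$, so the proposition follows with $\lambda:=\max\{2^{-m/2},\lambda_0^{m/2}\}$ and a suitable $C$ (finitely many small $n$ being absorbed into $C$). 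I expect the main obstacle to be the uniform-in-$q$ Hölder estimate for $G$: it is exactly there that the normalisation by $k(s)^q$ and the \emph{boundedness} (not decay) of $h_\alpha$ are used, and one has to handle that $g$ is only Lipschitz on $\Omega$ rather than a cylinder function. A secondary technical point is the balancing choice $p=\lfloor nm/2\rfloor$, which makes the argument work uniformly in the period $m$ — in particular for $m=1$, where no averaging over a rotation is available.
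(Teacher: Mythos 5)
Your plan follows the same route as the paper: collapse the peripheral spectrum of $Q_s$ by averaging over a period $m$ (using the quasi-compact decomposition \eqref{eq:decompoquasicompact}), transfer the correlation to an expression of the form $\ee\big[f_p\,(Q_s^{N-p}G)(\hat X_p)\big]$, approximate by $\mathcal O_p$-measurable functions, and balance $p\sim nm/2$ against the spectral decay of $T_s$. The two differences from the paper are cosmetic: the paper truncates \emph{both} $f$ and $g$ to deterministic cylinder functions $g_p(\omega)=g(a_1,\dots,a_p,b_1,b_2,\dots)$ and only ever needs the cylinder-level $G_p$, while you establish $\alpha$-H\"older regularity of the ``full'' $G(\hat y)=\ee_{\qq^s_{\hat y}}[g]$ directly through a uniform-in-$q$ bound on $G_q$ and lower semicontinuity of $\|\cdot\|_\alpha$ under uniform limits; and you take the approximants $g_q=\ee_{\qq^s}[g\mid\mathcal O_q]$ (a $\qq^s$-a.e.\ defined conditional expectation) rather than a deterministic tail-freeze, which works but requires the remark that $\qq^s_{\hat y}\ll\qq^s$ so that $G_q(\hat y)$ is defined for every $\hat y$. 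The key estimates you invoke (boundedness of $k(s)^{-q}\int\|W_q\|^s$ and of $h_\alpha(q)$, both via Lemma~\ref{Lemma:ExistenceSigma}) and the use of Lemma~\ref{lem:SommePeriode}/unique ergodicity to identify $\nu^s_0=\eta^s$ are exactly the paper's. Your balancing $p=\lfloor nm/2\rfloor$ and the paper's $p=\lfloor n/2\rfloor$, $r=pm$ are the same up to rounding.
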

The proof of this proposition is decomposed in several lemmas. In Lemma \ref{lem:SommePeriode} we prove an identity on the sum over $m$. Lemma \ref{lem:Inegalitefpmgpm} gives the result for an approximation of $f$ and $g$. Finally we prove Proposition \ref{prop:decay} by choosing wisely the approximations of $f$ and $g$.

\begin{lemma}\label{lem:SommePeriode}
Let $s \in I_\mu$. For every $f \in C^\alpha(\sta,\cc)$ and $n \in \nn^*$,
\begin{equation}
    \frac{1}{m} \sum_{j=0}^{m-1} Q_s^{nm+j} f = \eta^s(f)+ \frac{1}{m} \sum_{j=0}^{m-1} T_s^{nm+j} (f).
\end{equation}
\end{lemma}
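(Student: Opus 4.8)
The plan is to exploit the spectral decomposition \eqref{eq:decompoquasicompact} of $Q_s$ together with the relations $\nu^s_\ell(f^s_j)=\delta_{\ell,j}$, $T_sf^s_\ell=0$ and $\nu^s_\ell T_s=0$. The first step is to observe that these relations make the operators $f^s_\ell\nu^s_\ell$ a family of mutually orthogonal idempotents, each annihilated on both sides by $T_s$; hence the unital subalgebra they generate together with $T_s$ is commutative, and expanding $Q_s^k=\big(\sum_{\ell=0}^{m-1}z_m^\ell f^s_\ell\nu^s_\ell+T_s\big)^k$ --- in which every mixed monomial vanishes --- gives the closed form
\begin{equation*}
Q_s^k=\sum_{\ell=0}^{m-1}z_m^{k\ell}\,f^s_\ell\nu^s_\ell+T_s^k,\qquad k\geq 1 .
\end{equation*}

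Second, I would substitute $k=nm+j$ and use $z_m^m=1$, so that $z_m^{(nm+j)\ell}=z_m^{j\ell}$ and the ``spectral part'' becomes independent of $n$. Averaging over $j\in\{0,\dots,m-1\}$, the coefficient of $f^s_\ell\nu^s_\ell(f)$ is $\frac1m\sum_{j=0}^{m-1}z_m^{j\ell}$, which equals $1$ for $\ell=0$ and $0$ otherwise by the geometric-sum identity for roots of unity. Since $f^s_0=\mathbb{1}$ this yields
\begin{equation*}
\frac1m\sum_{j=0}^{m-1}Q_s^{nm+j}f=\nu^s_0(f)\,\mathbb{1}+\frac1m\sum_{j=0}^{m-1}T_s^{nm+j}f .
\end{equation*}

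Finally it remains to identify the functional $\nu^s_0$ with integration against $\eta^s$. Applying $\eta^s$ to the last display and using its $Q_s$-invariance ($\eta^s(Q_s^{nm+j}f)=\eta^s(f)$) together with $\eta^s(\mathbb{1})=1$ gives $\eta^s(f)=\nu^s_0(f)+\frac1m\sum_{j}\eta^s(T_s^{nm+j}f)$ for every $n\geq 1$; since $\rho_\alpha(T_s)<1$, the remainder tends to $0$ as $n\to\infty$, while the other two terms do not depend on $n$, so $\nu^s_0(f)=\eta^s(f)$ and the stated identity follows. The only slightly delicate point is this last identification --- one must pass to the limit rather than equate the $n$-dependent remainder directly; everything else is bookkeeping with the projection algebra. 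Alternatively, Lemma~\ref{lem:Constant} shows that the only $Q_s$-invariant continuous functions are constant, which forces the range of the eigenprojection onto the eigenvalue $1$ to be $\cc\mathbb{1}$ and $\nu^s_0$ to be the associated invariant probability measure, i.e.\ $\eta^s$.
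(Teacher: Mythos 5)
Your proof is correct and follows essentially the same approach as the paper: iterate the spectral decomposition of $Q_s$ to get $Q_s^k=\sum_\ell z_m^{k\ell}f_\ell^s\nu_\ell^s+T_s^k$, average over $j$ using the roots-of-unity filter and $f_0^s=\mathbb{1}$, then identify $\nu_0^s$ with $\eta^s$ by applying $\eta^s$, using its $Q_s$-invariance, and letting $n\to\infty$ so that the $T_s$-remainder vanishes. The paper carries out the same steps (albeit without explicitly isolating the closed form for $Q_s^k$), so there is nothing to reconcile.
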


\begin{proof}
Let $s \in I_\mu$, $f \in C^\alpha(\sta,\cc)$ and $n \in \nn^*$.
Equation \eqref{eq:decompoquasicompact} yields:
\begin{align*}
     \frac{1}{m} \sum_{j=0}^{m-1} Q_s^{nm+j} f &=  \frac{1}{m} \sum_{j=0}^{m-1} \sum_{l=0}^{m-1} z_m^{l(nm+j)}f^s_l\nu^s_l(f) + T_s^{nm+j}(f) \\
     &= \frac{1}{m} \sum_{l=0}^{m-1} \sum_{j=0}^{m-1} z_m^{lj}f^s_l\nu^s_l(f) + T_s^{nm+j}(f).
\end{align*}
Now, for every $j \in \{1,...,m-1\}$, since all the $z_m^j$ are $m$-th unit roots different from $1$,
$$ \sum_{j=0}^{m-1} z_m^{lj} = 0.$$
For $j=0$, $$ \sum_{j=0}^{m-1} z_m^{0} = m.$$
Now, let us recall that $f_0^s=\mathbb{1}$. We finally obtain that:
\begin{equation}\label{eq:EgaliteSommePeriodenu0s}
    \frac{1}{m} \sum_{j=0}^{m-1} Q_s^{nm+j} f = \nu_0^s(f)+ \frac{1}{m} \sum_{j=0}^{m-1} T_s^{nm+j} (f).
\end{equation}
It remains just to prove that $\nu_0^s(f)=\eta^s(f)$. For this purpose, we recall that $\eta^s$ is invariant by $Q_s$. So by applying $\eta^s$ in Equation \eqref{eq:EgaliteSommePeriodenu0s} we obtain:
\begin{equation}\label{eq:EgaliteSommePeriodeetas}
    \eta^s(f)=\nu_0^s(f)+\frac{1}{m} \sum_{j=0}^{m-1} \eta^s(T_s^{nm+j} (f)).
\end{equation}
However for every $j \in \{0,...,m-1\}$, $\underset{n \to \infty}{\lim} \eta^s(T_s^{nm+j} (f)) =0$ since $\rho_\alpha(T_s)<1$. By taking the limit in Equation \eqref{eq:EgaliteSommePeriodeetas} we obtain that $\eta^s(f)=\nu_0^s(f)$ and the lemma is proved.
\end{proof}
For every $f,g \in \mathrm{Lip}_b(\Omega,\rr)$, we approximate them by a truncation procedure.
Let $b \in \Omega$. For every $g \in \mathrm{Lip}_b(\Omega,\rr)$ and $p \in \nn^*$, we define the function $g_p : \Omega \to \rr$ as follows, for every $\omega=(a_1,...,a_p,...) \in \Omega$,
$$g_p(\omega)=g(a_1,...,a_p,b_1,b_2,...).$$
Since $g_p(\omega)$ only depends on $(a_1,...,a_p)$, by abusing the notation, we will write $g_p(a_1,...,a_p)$ in the following. Let us now define the function $G_p : \sta \to \rr$ as follows: for every $\hat x \in \sta$,
$$ G_p(\hat x) = \int_{\mathcal A^p} g_p(a_{1},...,a_{p}) q_p^s(W_p, \hat x) d\mu^{\otimes p}(a_1,...,a_p).$$

\begin{lemma}\label{lem:GpHolder}
For every $p \in \nn^*$, $g \in \mathrm{Lip}_b(\Omega,\rr)$, the function $G_p : \sta \to \rr$ is an $\alpha$-Hölder function. Moreover there exists $C$ depending only on $s,e_s,\alpha$ and $\eta^s$ such that:
\begin{equation}
    m_\alpha(G_p) \leq C \|g\|_\infty.
\end{equation}
\end{lemma}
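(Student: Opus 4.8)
The goal is to show that $G_p$ is $\alpha$-Hölder with Hölder coefficient controlled by $\|g\|_\infty$ alone (uniformly in $p$). The key observation is that $G_p$ is nothing other than $Q_s^p$ applied to a bounded function that depends only on the first $p$ coordinates; more precisely, writing $\tilde g_p:\sta\to\rr$ informally, $G_p(\hat x)=\int_{\mathcal A^p} g_p(a_1,\dots,a_p)\,q_p^s(\hat x,W_p)\,d\mu^{\otimes p}$, and the integrand is of the form $\varphi(W_p\cdot\hat x)q_p^s(\hat x,W_p)$ except that here $g_p$ does not factor through $W_p\cdot\hat x$. So the plan is to imitate, term by term, the proof of Lemma \ref{lemma:phiqsnbound} and Lemma \ref{Lemma:EstimationNormeHölder}, but keeping $g_p$ as a function of the \emph{word} $(a_1,\dots,a_p)$ rather than of the image point.

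**Main steps.** First I would fix $\hat x,\hat y\in\sta$ distinct and write
\[
k(s)^p\big[G_p(\hat x)-G_p(\hat y)\big]=\int_{\mathcal A^p} g_p(a_1,\dots,a_p)\Big[\tfrac{e_s(W_p\cdot\hat x)}{e_s(\hat x)}\|W_p x\|^s-\tfrac{e_s(W_p\cdot\hat y)}{e_s(\hat y)}\|W_p y\|^s\Big]d\mu^{\otimes p},
\]
so that, pulling $|g_p|\le\|g\|_\infty$ out of the integral,
\[
k(s)^p|G_p(\hat x)-G_p(\hat y)|\le \|g\|_\infty\int_{\mathcal A^p}\Big|\tfrac{e_s(W_p\cdot\hat x)}{e_s(\hat x)}\|W_p x\|^s-\tfrac{e_s(W_p\cdot\hat y)}{e_s(\hat y)}\|W_p y\|^s\Big|d\mu^{\otimes p}.
\]
The inner quantity is exactly the one bounded in the proof of Lemma \ref{lemma:phiqsnbound} (take $\varphi\equiv 1$ there, $A=W_p$): splitting off the $\tfrac1{e_s(\hat x)}-\tfrac1{e_s(\hat y)}$ term and applying Lemma \ref{lem:vxf(vx) holder} to $f=e_s$, one gets a bound of the form $\big[a_s\|W_p\|^s+b_s\|\wedge^2 W_p\|^\alpha\|W_p\|^{s-2\alpha}\big]d(\hat x,\hat y)^\alpha$ for constants $a_s,b_s$ depending only on $e_s$ and $\alpha$. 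Integrating over $\mathcal A^p$ and dividing by $k(s)^p$, I would then invoke Lemma \ref{Lemma:ExistenceSigma} (which gives $\tfrac1{k(s)^p}\int\|W_p\|^s\,d\mu^{\otimes p}\le 1/c_s(\sigma)$) and the uniform bound $h_\alpha(p)\le 1/c_s(\sigma)$ from the line following \eqref{def:halphan}, to conclude
\[
m_\alpha(G_p)\le \Big(\tfrac{a_s}{c_s(\sigma)}+\tfrac{b_s}{c_s(\sigma)}\Big)\|g\|_\infty=:C\|g\|_\infty,
\]
with $C$ depending only on $s$, $e_s$, $\alpha$ and $\eta^s$ (through $c_s(\sigma)$). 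That $G_p$ is indeed finite and the integrals are legitimate follows as usual from $e_s$ being bounded, strictly positive and $\bar s$-Hölder (Corollary \ref{Cor:Sigmaetes}) and from the integrability assumption \eqref{eq:HypConvergence}.

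**Expected obstacle.** There is essentially no deep difficulty here; the statement is a direct corollary of the machinery already assembled. The only point requiring a little care is bookkeeping: Lemma \ref{lemma:phiqsnbound} is stated for the integrand $\varphi(A\cdot\hat x)q_n^s(\hat x,A)$ where $\varphi$ is a \emph{fixed} Hölder function on $\sta$, whereas here the "function" $g_p$ genuinely depends on the word and cannot be written as $\varphi\circ(W_p\cdot)$. One must therefore not apply Lemma \ref{lemma:phiqsnbound} as a black box with $\varphi$ replaced by $g_p$, but rather re-run its proof with $\varphi\equiv 1$ (so that only $e_s$ enters the Hölder estimate) and reinsert the factor $g_p(a_1,\dots,a_p)$, bounded crudely by $\|g\|_\infty$, only \emph{after} taking absolute values inside the integral. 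Once this is set up correctly the rest is the same estimate as in the proof of Lemma \ref{Lemma:EstimationNormeHölder}. Note in particular that the Lipschitz (rather than merely Hölder) regularity of $g$ is not used for this lemma — only $\|g\|_\infty$ appears — which is consistent with the statement.
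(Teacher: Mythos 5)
Your proposal is correct and follows the same essential strategy as the paper: bound the Hölder modulus of the kernel $\hat x\mapsto q_p^s(\hat x,W_p)$ uniformly in the matrix $W_p$, pull $\|g\|_\infty$ out crudely, and integrate using the moment bound from Lemma~\ref{Lemma:ExistenceSigma}. The one small difference is in how the kernel's Hölder seminorm is controlled. The paper applies Lemma~\ref{lem:Inegalitef_A} directly to $f=e_s$ to get $\|f_{W_p}\|_\alpha\le C_\alpha\|W_p\|^s\|e_s\|_\alpha$, and then combines with $\|1/e_s\|_\alpha$ via submultiplicativity of $\|\cdot\|_\alpha$; this produces a bound involving only $\|W_p\|^s$, so only $\frac{1}{k(s)^p}\int\|W_p\|^s\,d\mu^{\otimes p}\le 1/c_s(\sigma)$ is needed. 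You instead re-run the proof of Lemma~\ref{lemma:phiqsnbound} with $\varphi\equiv 1$ using the finer Lemma~\ref{lem:vxf(vx) holder}, which produces the two-term bound $a_s\|W_p\|^s+b_s\|\wedge^2 W_p\|^\alpha\|W_p\|^{s-2\alpha}$ and hence also requires the uniform bound on $h_\alpha(p)$. Both converge to the same estimate with the same dependence of $C$ on $s,e_s,\alpha,\eta^s$; the paper's is marginally leaner, but your version is logically sound, and your observation that only $\|g\|_\infty$ (not the Lipschitz constant of $g$) enters is exactly right.
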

\begin{proof}
Let $p \in \nn^*$, $g \in \mathrm{Lip}_b(\Omega,\rr)$.  Let us first recall that for every $\hat x \in \sta$,
$$q_p^s(W_p, \hat x) = \frac{1}{k(s)^p} \frac{1}{e_s(\hat x)} e_s(W_p \cdot \hat x) \|W_p x\|^s. $$
Since $e_s$ is a strictly positive $\alpha$-Hölder function, $1/e_s$ is also a $\alpha$-Hölder function. Moreover, Lemma \ref{lem:Inegalitef_A} ensures that $f_{W_p} :\hat x \mapsto e_s(W_p \cdot \hat x) \|W_p x\|^s$ is a $\alpha$-Hölder function. Moreover there exists $C_\alpha$ such that:
\begin{equation}
    \|f_{W_p}\|_\alpha \leq C_\alpha \|W_p\|^s \|e_s\|_\alpha.
\end{equation}
And then, by submultiplicativy of the norm:
\begin{equation}
    \|q_p^s(W_p, \cdot)\|_\alpha \leq \frac{1}{k(s)^p} C_\alpha \|e_s\|_\alpha \|\tfrac{1}{e_s}\|_\alpha \|W_p\|^s.
\end{equation}
We let $C= C_\alpha \|e_s\|_\alpha \|\tfrac{1}{e_s}\|_\alpha$ depending only on $s,\alpha$ and $e_s$.
It follows that for every $\hat x, \hat y \in \sta$,
\begin{align}
    |G_p(\hat x)-G_p(\hat y)| \leq C \|g\|_\infty \frac{1}{k(s)^p} \int_{\mathcal A^p} \|W_p\|^s d \mu^{\otimes p} d(\hat x, \hat y)^\alpha .
\end{align}
Now Lemma \ref{Lemma:ExistenceSigma} implies that $\frac{1}{k(s)^p} \int_{\mathcal A^p} \|W_p\|^s d \mu^{\otimes p}$ is bounded by a constant only depending on $\eta^s$. And we finally obtain that:
$$ m_\alpha(G_p) \leq C \|g\|_\infty,$$
where $C$ depends on $s,e_s,\alpha$ and $\eta^s$.

\end{proof}

Since $G_p$ is a bounded $\alpha$-Hölder function, our goal is to use Lemma \ref{lem:SommePeriode} to prove the following lemma.
\begin{lemma}\label{lem:Inegalitefpmgpm}
Let $s \in I_\mu$ and $p \in \nn^*$. There exists $0<\lambda<1$ such that for every $n \geq p+1$ and $f,g \in \mathrm{Lip}_b(\Omega,\rr)$,
\begin{equation}
    \left | \frac{1}{m} \sum_{j=0}^{m-1} \ee_{\qq^s}[f_{pm} g_{pm} \circ \theta^{nm+j}]- \ee_{\qq^s}[f_{pm}] \ee_{\qq^s}[g_{pm}] \right | \leq \|f\|_\infty \|g\|_\infty \lambda^{nm-pm}.
\end{equation}
\end{lemma}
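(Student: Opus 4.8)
The plan is to collapse the two‑point functions onto iterates of the Markov operator $Q_s$ acting on $G_{pm}$, and then to invoke the ergodic decomposition of Lemma~\ref{lem:SommePeriode}. The key structural input is the Markov property of the kernels $\qq^s_{\hat x}$: directly from the definition of the densities $q^s_n$ one checks the cocycle identity $q^s_{n+p}(\hat x, CD)=q^s_n(D\cdot\hat x, C)\,q^s_p(\hat x, D)$, where $D$, resp.\ $C$, denotes the product of the first $p$, resp.\ next $n$, matrices; equivalently, under $\qq^s_{\hat x}$ the sequence $(W_\ell\cdot\hat x)_{\ell\ge 0}$ is a Markov chain with transition operator $Q_s$. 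We will also use that $\qq^s_{\hat y}$ has marginal $q^s_{pm}(\hat y,\cdot)\,\mu^{\otimes pm}$ on the first $pm$ coordinates, so $\ee_{\qq^s_{\hat y}}[g_{pm}]=G_{pm}(\hat y)$, and that $\ee_{\qq^s_{\hat y}}[\varphi(W_\ell\cdot\hat y)]=Q_s^{\ell}\varphi(\hat y)$ for bounded $\varphi$.

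The first step is the identity, valid for every $k\ge pm$ and $\hat x\in\sta$,
$$\ee_{\qq^s_{\hat x}}\!\left[f_{pm}\,g_{pm}\circ\theta^{k}\right]=\ee_{\qq^s_{\hat x}}\!\left[f_{pm}\cdot\big(Q_s^{\,k-pm}G_{pm}\big)(W_{pm}\cdot\hat x)\right].$$
Here $f_{pm}$ is $\outalg_{pm}$‑measurable, while $g_{pm}\circ\theta^{k}$ depends only on coordinates $k+1,\dots,k+pm$; conditioning on $\outalg_{pm}$ and using the Markov property at time $pm$ replaces $g_{pm}\circ\theta^{k}$ by $\ee_{\qq^s_{W_{pm}\cdot\hat x}}[g_{pm}\circ\theta^{\,k-pm}]$, and one further application of the Markov property, together with $\ee_{\qq^s_{\hat y}}[g_{pm}]=G_{pm}(\hat y)$ and $\ee_{\qq^s_{\hat y}}[\varphi(W_\ell\cdot\hat y)]=Q_s^\ell\varphi(\hat y)$, rewrites that conditional expectation as $(Q_s^{\,k-pm}G_{pm})(W_{pm}\cdot\hat x)$.

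Now fix $n\ge p+1$, so that $nm+j\ge pm+m> pm$ for every $j\in\{0,\dots,m-1\}$. Applying the identity above with $k=nm+j$ (so $k-pm=(n-p)m+j$), averaging over $j$, and integrating over $\hat x\sim\eta^s$ via $\ee_{\qq^s}[\,\cdot\,]=\int_\sta\ee_{\qq^s_{\hat x}}[\,\cdot\,]\,d\eta^s(\hat x)$, we obtain
$$\frac1m\sum_{j=0}^{m-1}\ee_{\qq^s}\!\left[f_{pm}\,g_{pm}\circ\theta^{nm+j}\right]=\int_\sta\ee_{\qq^s_{\hat x}}\!\left[f_{pm}\cdot\Big(\tfrac1m\sum_{j=0}^{m-1}Q_s^{(n-p)m+j}G_{pm}\Big)(W_{pm}\cdot\hat x)\right]d\eta^s(\hat x).$$
Since $n-p\ge 1$, Lemma~\ref{lem:SommePeriode} applies with $n-p$ in place of $n$ (this is the sole use of the hypothesis $n\ge p+1$) and gives $\tfrac1m\sum_{j}Q_s^{(n-p)m+j}G_{pm}=\eta^s(G_{pm})\,\mathbb{1}+R_n$ with $R_n:=\tfrac1m\sum_{j}T_s^{(n-p)m+j}G_{pm}$.

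As $\eta^s(G_{pm})=\int_\sta G_{pm}\,d\eta^s=\ee_{\qq^s}[g_{pm}]$ and $\int_\sta\ee_{\qq^s_{\hat x}}[f_{pm}]\,d\eta^s(\hat x)=\ee_{\qq^s}[f_{pm}]$, the constant part of the bracket contributes exactly $\ee_{\qq^s}[f_{pm}]\,\ee_{\qq^s}[g_{pm}]$, and the difference to be estimated equals $\int_\sta\ee_{\qq^s_{\hat x}}\big[f_{pm}\cdot R_n(W_{pm}\cdot\hat x)\big]\,d\eta^s(\hat x)$, which is bounded in absolute value by $\|f_{pm}\|_\infty\,\|R_n\|_\infty\le\|f\|_\infty\,\|R_n\|_\alpha\le\|f\|_\infty\,\tfrac1m\sum_{j}\|T_s^{(n-p)m+j}\|_\alpha\,\|G_{pm}\|_\alpha$. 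Inserting $\|G_{pm}\|_\alpha\le(1+C)\|g\|_\infty$ (from the Hölder bound of Lemma~\ref{lem:GpHolder} together with $\|G_{pm}\|_\infty\le\|g\|_\infty$) and $\|T_s^\ell\|_\alpha\le D_s\lambda_0^\ell$ for some $D_s\ge 1$ and $\lambda_0\in(\rho_\alpha(T_s),1)$ (possible since $\rho_\alpha(T_s)<1$) yields a bound of the form $C_s\,\|f\|_\infty\|g\|_\infty\,\lambda_0^{\,nm-pm}$; replacing $\lambda_0$ by a slightly larger $\lambda\in(\lambda_0,1)$ absorbs the constant $C_s$ and gives the stated inequality. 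The delicate part is the first step — setting up the conditioning and tracking the coordinate ranges so that the correlation genuinely collapses to $Q_s^{\,k-pm}$ acting on $G_{pm}$; once that identity is in place, the remainder is a routine combination of Lemmas~\ref{lem:SommePeriode} and~\ref{lem:GpHolder} with the spectral gap of $Q_s$.
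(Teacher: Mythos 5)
Your proof is correct and follows essentially the same route as the paper: both rest on the cocycle factorization of the densities $q^s_n$ (which you package as the Markov property of $(W_\ell\cdot\hat x)$ under $\qq^s_{\hat x}$) to rewrite the correlation as an average of $Q_s^{(n-p)m+j}G_{pm}$ against $f_{pm}$, then invoke Lemma~\ref{lem:SommePeriode} and the spectral gap of $T_s$. One small caveat, shared with the paper's own write-up: the final step of absorbing the multiplicative constant $C_s$ into a larger $\lambda<1$ so that the bound holds with no prefactor for \emph{every} $n\ge p+1$ requires that $(\lambda/\lambda_0)^{m}\ge C_s$ be compatible with $\lambda<1$, which is not automatic if $C_s$ is large; the paper glosses over this by saying ``for $n$ large enough,'' so you are in good company, but it is worth being aware that the uniform, constant-free form of the inequality needs a slightly more careful choice of $\lambda$ (e.g.\ choosing $\lambda_0$ close enough to $\rho_\alpha(T_s)$, or allowing a constant).
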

\begin{proof}
Let $s \in I_\mu$ and $p \in \nn^*$. Let $n \geq p+1$ and $f,g \in \mathrm{Lip}_b(\Omega,\rr)$. For every $j \in \{0,...,m-1\}$,
$f_{pm}$ depends only on $(a_1,...,a_{pm})$ and $g_{pm} \circ \theta^{nm+j}$ on $(a_{nm+j+1},...,a_{nm+j+pm})$. For clarity, we do not specify the arguments of the functions in the sequel. We also recall that for every $n \in \nn^*$, we denote by $W_n$ the random variable defined by the relation:
$$ W_n(\omega)=v_{a_n}...v_{a_1}.$$
\begin{align*}
\ee_{\qq^s}[f_{pm} g_{pm} \circ \theta^{nm+j}] &=\int_{\sta} \int_{\mathcal A^{nm+j}}f_{pm}g_{pm}q_{nm+j}^s(W_{nm+j}, \hat{x}) d \mu^{\otimes nm+j+pm} d\eta^s(\hat x) 
\end{align*}
By definition of the functions $q_n^s$, we have:
$$q_{nm+j}^s(W_{nm+j}, \hat{x})=q_{pm}^s(W_{pm}, \hat{x}) \times q_{nm-pm+j}^s(W_{pm+1}^{nm+j}, W_{pm} \cdot \hat x) \times q_{pm}(W_{nm+j+1}^{nm+j+pm}, W_{nm+j} \cdot \hat x),$$
where:
$$W_{pm+1}^{nm+j}=v_{nm +j}...v_{pm+1} \quad \mbox{and } W_{nm+j+1}^{nm+j+pm}=v_{nm+j+pm}...v_{nm+j+1}.$$
It follows that:
\begin{align*}
\ee_{\qq^s}[f_{pm} g_{pm} \circ \theta^{nm+j}] &=\int_{\sta} \int_{\mathcal A^{pm}}\int_{\mathcal A^{nm-pm+j}}\int_{\mathcal A^{pm}}f_{pm}g_{pm}q_{pm}^s(W_{pm}, \hat{x}) \\ &q_{nm-pm+j}^s(W_{pm+1}^{nm+j}, W_{pm} \cdot \hat x)  q_{pm}(W_{nm+j+1}^{nm+j+pm}, W_{nm+j} \cdot \hat x) d \mu^{\otimes nm+j+pm} d\eta^s(\hat x).
\end{align*}
Now let us notice that:
\begin{equation}
\int_{\mathcal A^{pm}} g_{pm} q_{pm}(W_{nm+j+1}^{nm+j+pm}, W_{nm+j} \cdot \hat x) d \mu^{\otimes pm} = G_{pm}(W_{nm+j}\cdot \hat{x}).
\end{equation}
Moreover, by definition of the operator $Q_s$
$$\int_{\mathcal A^{nm-pm+j}} G_{pm}(W_{nm+j}\cdot \hat{x}) q_{nm-pm+j}^s(W_{pm+1}^{nm+j}, W_{pm} \cdot \hat x) d\mu ^{\otimes nm-pm+j}= Q_s^{(n-p)m+j}G_{pm}(W_{pm} \cdot \hat x).  $$
We finally obtain that:
\begin{equation}\label{eq:Efpgpshift}
\ee_{\qq^s}[f_{pm} g_{pm} \circ \theta^{nm+j}] = \int_{\sta} \int_{\mathcal A^{pm}} f_{pm} q_{pm}^s(W_{pm}, \hat x) Q_s^{(n-p)m+j}G_{pm}(W_{pm} \cdot \hat x) d \mu ^{\otimes pm} d \eta^s (\hat x).
\end{equation}
In the other hand,
\begin{equation}\label{eq:EfpEqp}
\ee_{\qq^s}[f_{pm}] \ee_{\qq^s}[g_{pm}]=\int_{\mathcal A^{pm}} f_{pm} q_{pm}^s(W_{pm}, \hat x) \eta^s(G_{pm}) d \mu ^{\otimes pm} d \eta^s (\hat x).
\end{equation}
Now, Lemma \ref{lem:GpHolder} ensures that $G_{pm}$ is an $\alpha$-Hölder function, we can then apply Lemma \ref{lem:SommePeriode} to $G_{pm}$ and $n-p$ and we obtain that:
\begin{equation*}
    \frac{1}{m} \sum_{j=0}^{m-1} Q_s^{nm-pm+j} G_{pm} = \eta^s(G_{pm})+ \frac{1}{m} \sum_{j=0}^{m-1} T_s^{nm+j-pm} (G_{pm}).
\end{equation*}
Let us recall that $\rho_\alpha(T_s)<1$. There exists $n_0 \in \nn^*$ annd $0<\lambda <1$ such that for every $n \geq n_0$, $\|T_s^{nm-pm}\|_\infty \leq \lambda^{nm-pm}$. For $n$ large enough, we then obtain:
$$ \left \| \frac{1}{m} \sum_{j=0}^{m-1} T_s^{nm+j-pm} (G_{pm}) \right \|_\alpha \leq \|G_{pm}\|_\alpha \lambda^{nm-pm}.$$
Now, by combining Equation \eqref{eq:Efpgpshift} and Equation \eqref{eq:EfpEqp}, we obtain that,
\begin{align*}
\Bigg| \frac{1}{m} \sum_{j=0}^{m-1} &\ee_{\qq^s}[f_{pm} g_{pm} \circ \theta^{nm+j}]- \ee_{\qq^s}[f_{pm}] \ee_{\qq^s}[g_{pm}] \Bigg | \\
&\leq \int_{\sta \times \mathcal A^{pm}} f_{pm} q_{pm}^s(W_{pm}, \hat x) \left \| \frac{1}{m} \sum_{j=0}^{m-1} T_s^{nm+j-pm} (G_{pm}) \right \|_\alpha d \mu ^{\otimes pm} d \eta^s (\hat x) \\
&\leq \|f\|_\infty \|g\|_\infty \lambda^{nm-pm}.
\end{align*}
For the last inequality we used the fact that $\|G_{pm}\|_\infty \leq \|g\|_\infty$ and $\|f_{pm}\|_\infty \leq \|f\|\infty$. Indeed,
for every $r \in \nn^*$ and $\hat{x} \in \sta$:
$$ |G_r(\hat x)| \leq \int_{\mathcal A^r} \|g_r\|_\infty q_r^s(W_r, \hat x) d \mu^{\otimes r} \leq \|g_r\|_\infty \leq \|g\|_\infty.$$
\end{proof}

We now have all the tools to prove Proposition \ref{prop:decay}.

\begin{proof}[Proof of Proposition \ref{prop:decay}]
Let $f,g \in \mathrm{Lip}_b(\Omega,\rr)$. We first approximate $f$ and $g$ by $f_r$ and $g_r$ for a $r$ we specify later. For every $\omega \in \Omega$, $f \in \mathrm{Lip}_b(\Omega,\rr)$ and $r \in \nn^*$,
$$ |f(\omega)-f_r(\omega)| \leq m(f) d(\omega, (\omega_1,...,\omega_r,b)) \leq m(f) 2^{-r},$$
where $m(f)$ is the Lipschitz constant of $f$. It implies the existence of $C>0$ such that:    
\begin{align}\label{eq:Approfrgr}
    \Bigg | \frac{1}{m} \sum_{j=0}^{m-1} \ee_{\qq^s}[f g \circ \theta^{nm+j}]- \ee_{\qq^s}[f] \ee_{\qq^s}[g] &- \left (  \frac{1}{m} \sum_{j=0}^{m-1} \ee_{\qq^s}[f_r g_r \circ \theta^{nm+j}]- \ee_{\qq^s}[f_r] \ee_{\qq^s}[g_r]\right ) \Bigg| \\
    &\leq C \|f\|_1 \|g\|_1 2^{-r}.
\end{align}
Now let $n \geq 2$, $p= \lfloor \frac{n}{2} \rfloor$ and $r=pm$. Lemma \ref{lem:Inegalitefpmgpm} implies the existence of $0<\lambda<1$ such that for $n$ large enough:
\begin{equation}\label{eq:UtilisationLemfrgr}
    \left |  \frac{1}{m} \sum_{j=0}^{m-1} \ee_{\qq^s}[f_r g_r \circ \theta^{nm+j}]- \ee_{\qq^s}[f_r] \ee_{\qq^s}[g_r]\right | \leq \|f\|_\alpha \|g\|_\alpha (\lambda^m)^{n-p}.
\end{equation}
We set $\Tilde{\lambda}=\max \{ 2^{-m}, \lambda^{m} \}$. We recall that $n-p=n-\lfloor \frac{n}{2} \rfloor= \lceil \frac{n}{2} \rceil$ and that for every $0 \leq t \leq 1$, the function $y \mapsto t^{y}$ is decreasing. By combining Equation \eqref{eq:Approfrgr} and Equation \eqref{eq:UtilisationLemfrgr} we finally obtain the existence of $C>0$ and $0<\Tilde{\lambda}<1$  such that:

\begin{equation}
     \left | \frac{1}{m} \sum_{j=0}^{m-1} \ee_{\qq^s}[f g \circ \theta^{nm+j}]- \ee_{\qq^s}[f] \ee_{\qq^s}[g] \right | \leq C \|f\|_1 \|g\|_1 (\Tilde{\lambda})^{\lfloor \frac{n}{2} \rfloor}.
\end{equation}
It proves the proposition.
\end{proof}

A straightforward corollary of Proposition \ref{prop:decay} is the ergodicity of the probability measure $\qq^s$.

\begin{corollary}\label{Cor:ErgodicShift}
Assume that {\bf (Cont)} and {\bf (Irr)} hold. Then, for every $s \in I_\mu$, the probability measure $\qq^s$ is ergodic for the shift $\theta$.
\end{corollary}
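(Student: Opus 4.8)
The plan is to deduce ergodicity of $\qq^s$ directly from the mixing-type estimate of Proposition \ref{prop:decay}. Recall that a $\theta$-invariant probability measure on $\Omega$ is ergodic if and only if every $\theta$-invariant measurable set has measure $0$ or $1$; equivalently, by a standard density argument, it suffices to check that for all $f,g$ in a dense subalgebra of $L^2(\qq^s)$ one has the Cesàro convergence $\frac1N\sum_{k=1}^N \ee_{\qq^s}[f\, g\circ\theta^k] \to \ee_{\qq^s}[f]\ee_{\qq^s}[g]$. The space $\mathrm{Lip}_b(\Omega,\rr)$ is dense in $L^2(\qq^s)$ since $\Omega$ is a compact metric space (when $\mathcal A$ is compact; in general one truncates, but bounded Lipschitz functions still separate points and generate the Borel $\sigma$-algebra, so they are dense), hence it is enough to establish this convergence for $f,g\in\mathrm{Lip}_b(\Omega,\rr)$.

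First I would fix $f,g\in\mathrm{Lip}_b(\Omega,\rr)$ and write an arbitrary $N$ as $N=Qm+R$ with $0\le R<m$, splitting the sum $\sum_{k=1}^{N}\ee_{\qq^s}[f\,g\circ\theta^k]$ into blocks of length $m$. On each complete block of indices $\{nm+1,\dots,nm+m\}$ the inner sum $\sum_{j=0}^{m-1}\ee_{\qq^s}[f\,g\circ\theta^{nm+j}]$ is handled by Proposition \ref{prop:decay}, which gives
\begin{equation*}
    \left|\frac1m\sum_{j=0}^{m-1}\ee_{\qq^s}[f\,g\circ\theta^{nm+j}]-\ee_{\qq^s}[f]\ee_{\qq^s}[g]\right|\le C\|f\|_1\|g\|_1\lambda^{\lfloor n/2\rfloor}.
\end{equation*}
Summing over $n$ from $0$ (or $1$) up to roughly $Q$ and dividing by $N\approx Qm$, the geometric series $\sum_n \lambda^{\lfloor n/2\rfloor}$ converges, so the block contribution to the Cesàro average converges to $\ee_{\qq^s}[f]\ee_{\qq^s}[g]$; the $O(1)$ leftover terms (the incomplete final block of at most $m$ terms, and the shift in indexing between $\{1,\dots,N\}$ and the block decomposition) contribute at most $O(m\|f\|_\infty\|g\|_\infty/N)\to 0$. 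This yields
\begin{equation*}
    \frac1N\sum_{k=1}^N\ee_{\qq^s}[f\,g\circ\theta^k]\xrightarrow[N\to\infty]{}\ee_{\qq^s}[f]\ee_{\qq^s}[g].
\end{equation*}

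To conclude, let $B\in\outalg$ be $\theta$-invariant and apply the displayed Cesàro identity with $f=g=\ind_B$, approximated in $L^2(\qq^s)$ by Lipschitz functions: by $\theta$-invariance of $B$ and of $\qq^s$, the left-hand side equals $\qq^s(B)$ for every $N$, while the right-hand side equals $\qq^s(B)^2$, forcing $\qq^s(B)\in\{0,1\}$. Hence $\qq^s$ is ergodic. The only genuinely delicate point is the density of $\mathrm{Lip}_b(\Omega,\rr)$ in $L^2(\qq^s)$ when $\mathcal A$ is merely Polish rather than compact: there one invokes that the cylinder functions generate $\outalg$ and that bounded Lipschitz functions on $\Omega$ are dense in $L^2$ of any Borel probability measure on a metric space (a consequence of Lusin's theorem together with the Tietze/McShane extension of Lipschitz functions), so the argument goes through unchanged. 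All the analytic work has already been done in Proposition \ref{prop:decay}; the present corollary is just the passage from the periodic-average decay of correlations to the full Cesàro average and then to ergodicity.
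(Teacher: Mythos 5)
Your proof is correct and supplies exactly the standard argument the paper has in mind: the paper itself states the corollary without proof, calling it ``a straightforward corollary of Proposition~\ref{prop:decay},'' and your Cesàro-average deduction of ergodicity from the periodic decay-of-correlations estimate is the canonical way to make that remark precise. Two small points worth flagging: first, both you and the paper silently use that $\qq^s$ is $\theta$-invariant (needed to write $\ee_{\qq^s}[\ind_B\,\ind_B\circ\theta^k]=\qq^s(B)$ and to get bounds uniform in $k$ when approximating in $L^2$); this does hold, as a short computation with the consistency relation for $q^s_n$ and the $Q_s$-invariance of $\eta^s$ shows, but it is not verified anywhere in the paper before Theorem~\ref{Th:VariationalPrinciple} asserts $\qq^s\in\mathcal P_\theta$. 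Second, your caveat about density of $\mathrm{Lip}_b(\Omega,\rr)$ in $L^2(\qq^s)$ when $\mathcal A$ is Polish but not compact is the right thing to worry about and is correctly resolved: bounded Lipschitz functions on the Polish space $(\Omega,d_\Omega)$ separate points, generate the Borel $\sigma$-algebra, and are therefore $L^2$-dense for any Borel probability measure, so the approximation step goes through.
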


\subsection{Variational Principle}\label{Sec:VariationalPrinciple}
In this section we are keeping the notations of Section \ref{sec:Decay}. The goal of this section is to link the previous results with the thermodynamic formalism, more specifically with the results of Feng, Käenmanki and Morris \cite{morris2018ergodic,Fe11}.
In the following, we assume that $\mu$ is a probability measure on $\mathcal A$.
We first define the \textit{pressure} function $P$ of the map $v:\mathcal A \to \mat$. For every $s \in \rr_+$:
$$P(s)=\underset{n \to \infty}{\lim} \frac{1}{n} \log \left( \int_{\mathcal A^n} \|v_{a_n}...v_{a_1}\|^s d\mu^{\otimes n}(a_1,...,a_n) \right)=\log k(s).$$
For every $\pp, \qq \in \mathcal P_\theta$ and $n \in \nn$ we denote by $S(\pp_n|\mu^{\otimes n})$ the  relative entropy of $\pp_n$ with respect to $\mu^{\otimes n}$. By the variational principle \cite{donsker1983asymptotic} we have:
$$ S(\pp_n|\mu^{\otimes n}) = \underset{f : \mathcal A^n \to \rr, f \; \text{bounded}}{\sup}(\ee_{\pp}[f]- \log \ee_{\mu^{\otimes n}}[e^f]).$$

\begin{lemma}\label{lemma:ExistenceLimiteEntropie}
For every $\pp \in \mathcal P_\theta$, the sequence $(-\frac{1}{n}S(\pp_n|\mu^{\otimes n}))$ converges. Moreover, its limit $h_\mu(\pp)$ satisfies the relation:
$$ h_\mu(\pp) = \underset{n \in \nn}{\inf} -\frac{1}{n}S(\pp_n|\mu^{\otimes n}) \in [-\infty,0].$$
\end{lemma}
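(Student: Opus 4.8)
The plan is to show that $n\mapsto S(\pp_n\mid\mu^{\otimes n})$ is superadditive, so that $u_n:=-S(\pp_n\mid\mu^{\otimes n})$ is subadditive, and then to conclude by Fekete's lemma. Everything rests on the Donsker--Varadhan variational formula for the relative entropy recalled just above, together with the fact that $\mu$, hence each $\mu^{\otimes n}$, is a probability measure.

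First I would record two elementary consequences of the variational formula. Taking the test function $f\equiv 0$ gives $\ee_\pp[0]-\log\ee_{\mu^{\otimes n}}[e^0]=0-\log 1=0$, so $S(\pp_n\mid\mu^{\otimes n})\ge 0$ and thus every term $-\tfrac1n S(\pp_n\mid\mu^{\otimes n})$ lies in $[-\infty,0]$. Moreover, for a bounded $f\colon\mathcal A^n\to\rr$ the quantity $\ee_\pp[f]-\log\ee_{\mu^{\otimes n}}[e^f]$ is well defined and finite, since $\ee_\pp[f]$ is finite and $\ee_{\mu^{\otimes n}}[e^f]\in[e^{-\|f\|_\infty},e^{\|f\|_\infty}]$; this will let me split sums freely without running into $\infty-\infty$.

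Second, for superadditivity, I would fix $n,m\in\nn$ and bounded functions $f\colon\mathcal A^n\to\rr$, $g\colon\mathcal A^m\to\rr$, and set $h(a_1,\dots,a_{n+m})=f(a_1,\dots,a_n)+g(a_{n+1},\dots,a_{n+m})$ on $\mathcal A^{n+m}$. Since $\mu^{\otimes(n+m)}$ is the product of $\mu^{\otimes n}$ and $\mu^{\otimes m}$, Fubini yields $\ee_{\mu^{\otimes(n+m)}}[e^h]=\ee_{\mu^{\otimes n}}[e^f]\,\ee_{\mu^{\otimes m}}[e^g]$; and since $\pp$ is $\theta$-invariant, the marginal of $\pp$ on the coordinates $n+1,\dots,n+m$ is again $\pp_m$, whence $\ee_\pp[h]=\ee_{\pp_n}[f]+\ee_{\pp_m}[g]$. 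Therefore the $h$-term in the variational formula equals the sum of the $f$-term and the $g$-term, and taking the supremum first over $g$ and then over $f$ gives
\begin{equation*}
    S(\pp_{n+m}\mid\mu^{\otimes(n+m)})\ \ge\ S(\pp_n\mid\mu^{\otimes n})+S(\pp_m\mid\mu^{\otimes m}).
\end{equation*}

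Finally, $u_n=-S(\pp_n\mid\mu^{\otimes n})$ is a subadditive sequence with values in $[-\infty,0]$, so by Fekete's lemma (in its $[-\infty,\infty)$-valued form) $\tfrac1n u_n$ converges to $\inf_{n\in\nn}\tfrac1n u_n\in[-\infty,0]$. This is exactly the claim, with $h_\mu(\pp)=\lim_n -\tfrac1n S(\pp_n\mid\mu^{\otimes n})=\inf_{n\in\nn}-\tfrac1n S(\pp_n\mid\mu^{\otimes n})$. The only point requiring mild care is the case $S(\pp_n\mid\mu^{\otimes n})=+\infty$ (forced by superadditivity to hold for all large $n$ once it holds for one $n$), which simply gives $h_\mu(\pp)=-\infty$; the $[-\infty,\infty)$-valued version of Fekete's lemma covers this without any case distinction, so I do not expect a genuine obstacle here.
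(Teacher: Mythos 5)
Your proof is correct and follows essentially the same route as the paper: superadditivity of $S(\pp_n\mid\mu^{\otimes n})$ via the Donsker--Varadhan variational formula applied to sum-type test functions $h=f\oplus g$, using the product structure of $\mu^{\otimes(n+m)}$ and the $\theta$-invariance of $\pp$, then Fekete's lemma. Your version is slightly more careful than the paper's in restricting to bounded test functions and in flagging the possible value $+\infty$, but the key ideas are identical.
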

\begin{proof}
Let $\pp \in \mathcal P_\theta$, $n,m \in \nn$ and $f_n : \mathcal A^n \to \rr$ and $f_m : \mathcal A^m \to \rr$ be two measurable functions. Let us define the function $f_{n+m} : \mathcal A^{n+m}$ as follows: for every $(a_1,...,a_n,a_{n+1},...,a_{n+m}) \in \mathcal A^{n+m}$,
$$ f_{n+m}(a_1,...,a_n,a_{n+1},...,a_{n+m})=f_n(a_1,...,a_n)+f_m(a_{n+1},...,a_{n+m}).$$
By linearity of the expectation and since $\mu^{\otimes \nn}$ is a product measure, by independence of $f_n$ and $f_m \circ \theta^n$ we have:
\begin{align*}
\ee_{\pp}[f_{n+m}]- \log \ee_{\mu^{\otimes \nn}}[e^{f_{n+m}}] &= \ee_{\pp}[f_n]+\ee_{\pp}[f_m] - \log \ee_{\mu^\otimes \nn}[e^{f_n+f_m}] \\
&= \ee_{\pp}[f_n]-\log \ee_{\mu^{\otimes \nn}}[e^{f_{n}}])+\ee_{\pp}[f_m] - \log \ee_{\mu^{\otimes \nn}}[e^{f_{m}}]).
\end{align*}
By definition of $S(\pp_n|\mu^{\otimes n})$ and $S(\pp_m|\mu^{\otimes m})$, it follows that:
\begin{align*}
     S(\pp_n|\mu^{\otimes n}) + S(\pp_m|\mu^{\otimes m}) &\leq \underset{f_n : \mathcal A^n \to \rr}{\sup} \underset{f_m : \mathcal A^m \to \rr}{\sup} \ee_{\pp}[f_{n+m}]- \log \ee_{\mu^{\otimes \nn}}[e^{f_{n+m}}] \\
    &\leq \underset{f_{n+m}: \mathcal A^{n+m} \to \rr}{\sup} \ee_{\pp}[f_{n+m}]- \log \ee_{\mu^{\otimes \nn}}[e^{f_{n+m}}] \\
    &\leq S(\pp_{n+m}|\mu^{\otimes n+m}).
\end{align*}
It follows that the sequence $(-S(\pp_n|\mu^{\otimes n}))$ is subadditive. By Fekete lemma, the sequence $(-\frac{1}{n}S(\pp_n|\mu^{\otimes n}))$ converges to $\underset{n \in \nn^*}{\inf} -\frac{1}{n}S(\pp_n|\mu^{\otimes n})$. By positivity of the relative entropy, its limit $h_\mu(\pp)$ is non-positive.
\end{proof}

To define the next object, we add a last assumption: there exists $c>0$ such that for every $a \in \mathcal A$, 
$$ \|v_a\| \leq c.$$
It implies that for every $(a_1,...,a_n) \in \mathcal A^n$,
\begin{equation}\label{eq:HypEnergy}
-\frac{1}{n} \log \|v_{a_n}...v_{a_1}\| \geq -c.
\end{equation}
For every $\pp \in \mathcal P_\theta$, we define the \textit{specific energy} of $\pp$:
\begin{equation}
    \zeta(\pp)=\underset{n \to \infty}{\lim} \ee_{\pp}\left[-\frac{1}{n} \log \|W_n\| \right],
\end{equation}
where $W_n: \Omega \to \mat$ is the random variable defined by $W_n(a_1,...,)=v_{a_n}...v_{a_1}$. Equation \eqref{eq:HypEnergy} and Fekete Lemma ensure that this quantity is well defined for every $\pp \in \mathcal P_\theta$. The limit may be $+ \infty$.
We now are able to state the main result of this section.

\begin{theorem}[Variational Principle]\label{Th:VariationalPrinciple}
For every $s \in I_\mu$,
\begin{enumerate}
    \item The pressure function verifies 
    \begin{equation}\label{eq:variationalprinciple}
        P(s)=\underset{\pp \in \mathcal P_\theta}{\sup} (h_\mu(\pp)-s \zeta(\pp)).
    \end{equation}
    We define the set of \textit{equilibrium states} $\mathcal P_{eq}(s)=\{ \pp \in \mathcal P_\theta\; | \; P(s)=(h_\mu(\pp)-s \zeta(\pp)) \} $.
    \item The probability measure $\qq^s=\int_{\sta} \qq_x^s d \eta(\hat{x})$ is ergodic for the left shift $\theta:\mathcal{A}^\nn \mapsto \mathcal{A}^\nn$ and saturates Equation \eqref{eq:variationalprinciple}, i.e. $\qq^s \in \mathcal P_{eq}(s)$. Moreover, it is the only element of $\mathcal P_{eq}(s)$, $\mathcal P_{eq}(s) =\{ \qq^s \}$ and then for every $s \in I_\mu$,
    \begin{equation}
        P(s)=h_\mu(\qq^s)-s\zeta(\qq^s).
    \end{equation}
\end{enumerate}
\end{theorem}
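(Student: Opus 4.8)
The plan is to prove both items together: establish the inequality $P(s)\ge h_\mu(\pp)-s\zeta(\pp)$ for every $\pp\in\mathcal P_\theta$, then show that $\qq^s$ attains equality and is the only measure that does. For the inequality, fix $\pp\in\mathcal P_\theta$; I may assume $\ee_\pp[\log\|W_n\|]>-\infty$ for all $n$, since otherwise $\zeta(\pp)=+\infty$ and $h_\mu(\pp)-s\zeta(\pp)=-\infty$. Feeding the bounded test function $f_N=s\max(\log\|W_n\|,-N)$ (a function of the first $n$ coordinates) into the Donsker--Varadhan identity recalled before Lemma \ref{lemma:ExistenceLimiteEntropie} gives $S(\pp_n|\mu^{\otimes n})\ge s\,\ee_\pp[\max(\log\|W_n\|,-N)]-\log\int_{\mathcal A^n}\max(\|W_n\|^s,e^{-sN})\,d\mu^{\otimes n}$; letting $N\to\infty$ (monotone convergence on the left, dominated convergence on the right using $\int\|W_n\|^s\,d\mu^{\otimes n}<\infty$) and then $n\to\infty$ yields $h_\mu(\pp)\le s\zeta(\pp)+P(s)$ after inserting the definitions of $h_\mu$, $\zeta$ and $P(s)=\log k(s)$.

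Next I would check that $\qq^s$ realises the supremum. The key observation is that, since $\eta^s Q_s=\eta^s$ and by the cocycle property of the kernels $q_n^s$, one has $\theta_*\qq^s=\qq^s$ and $\qq^s_n\ll\mu^{\otimes n}$ with $\frac{d\qq^s_n}{d\mu^{\otimes n}}=\frac{J_s(W_n)}{k(s)^n}$ (apply $\eta^s(Q_sf)=\eta^s(f)$ to $f=q_n^s(\cdot,M)$). Hence $S(\qq^s_n|\mu^{\otimes n})=-n\log k(s)+\ee_{\qq^s}[\log J_s(W_n)]$, and the bound $K_s\|A\|^s\le J_s(A)\le C_s\|A\|^s$ of Lemma \ref{lemma:ProprietesIs} gives $\log J_s(W_n)=s\log\|W_n\|+O(1)$ uniformly in $n$. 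Dividing by $n$, letting $n\to\infty$, and recognising $\lim_n\frac1n\ee_{\qq^s}[\log\|W_n\|]=-\zeta(\qq^s)$ then gives $h_\mu(\qq^s)=P(s)+s\zeta(\qq^s)$, i.e.\ $P(s)=h_\mu(\qq^s)-s\zeta(\qq^s)$; together with the first step this proves \eqref{eq:variationalprinciple} and $\qq^s\in\mathcal P_{eq}(s)$. Ergodicity of $\qq^s$ is exactly Corollary \ref{Cor:ErgodicShift}.

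For uniqueness, let $\pp\in\mathcal P_{eq}(s)$; then $h_\mu(\pp)>-\infty$ and $\zeta(\pp)<\infty$, so $\pp(W_n=0)=0$ and, since $J_s$ is positive off the zero matrix, $\pp_n\ll\qq^s_n$ with $S(\pp_n|\qq^s_n)=S(\pp_n|\mu^{\otimes n})+n\log k(s)-\ee_\pp[\log J_s(W_n)]$. I would then bound each piece uniformly in $n$: $S(\pp_n|\mu^{\otimes n})\le -n\,h_\mu(\pp)$ by subadditivity of $(-S(\pp_n|\mu^{\otimes n}))$ (Lemma \ref{lemma:ExistenceLimiteEntropie}); $-\ee_\pp[\log J_s(W_n)]\le -\log K_s-s\,\ee_\pp[\log\|W_n\|]$ by Lemma \ref{lemma:ProprietesIs}; and $-\ee_\pp[\log\|W_n\|]\le n\,\zeta(\pp)$, because $n\mapsto\ee_\pp[-\log\|W_n\|]$ is superadditive (submultiplicativity of the operator norm plus $\theta$-invariance of $\pp$). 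Summing these and using the equilibrium identity $\log k(s)=h_\mu(\pp)-s\zeta(\pp)$, the terms linear in $n$ cancel and $S(\pp_n|\qq^s_n)\le-\log K_s$ for all $n$. Consequently the density martingale $(\frac{d\pp_n}{d\qq^s_n})_n$ is uniformly integrable under $\qq^s$ (de la Vallée-Poussin), converges in $L^1(\qq^s)$, and $\pp\ll\qq^s$ on $\outalg$ with $Z:=\frac{d\pp}{d\qq^s}$ satisfying $\ee_{\qq^s}[Z\log Z]<\infty$. Since both measures are $\theta$-invariant, evaluating $\int f\,d\pp$ through the invariance of $\pp$ and through that of $\qq^s$ gives $Z\circ\theta=\ee_{\qq^s}[Z\mid\theta^{-1}\outalg]$, and strict convexity of $x\mapsto x\log x$ (together with $\ee_{\qq^s}[Z\log Z]<\infty$) forces $Z=Z\circ\theta$ $\qq^s$-a.s.; ergodicity of $\qq^s$ then yields $Z\equiv1$, so $\pp=\qq^s$ and $\mathcal P_{eq}(s)=\{\qq^s\}$.

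The hard part will be the uniform bound $S(\pp_n|\qq^s_n)=O(1)$ in the uniqueness argument: it relies on the three linear-in-$n$ estimates --- the entropy bound, the $J_s$-versus-norm comparison, and the Lyapunov-type bound --- matching exactly, each coming from a different monotonicity (subadditivity for the entropy, superadditivity for $-\log\|W_n\|$), with the equilibrium identity being precisely what cancels the leading order. Some care is also needed for the integrability bookkeeping (all the relative entropies in play are finite, thanks to $\pp(W_n=0)=0$ and $\|v_a\|\le c$) and for the standard but slightly subtle fact that an absolutely continuous invariant measure lying over an ergodic invariant measure must equal it.
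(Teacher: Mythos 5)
Your proposal is correct and reaches the same conclusion, but at several points you route the argument differently from the paper. For the upper bound $h_\mu(\pp)-s\zeta(\pp)\le P(s)$ you feed truncated test functions into the Donsker--Varadhan dual representation of $S(\pp_n|\mu^{\otimes n})$, whereas the paper first observes that $h_\mu(\pp)>-\infty$ forces $\pp_n\ll\mu^{\otimes n}$ and then applies Jensen to $\ee_\pp[\log(\|W_n\|^s/f_n)]$; the two are dual formulations of the same estimate, your truncation handling the integrability bookkeeping while the paper's route dispenses with the $h_\mu=-\infty$ case up front. For $\qq^s\in\mathcal P_{eq}(s)$ both proofs rest on the identity $d\qq^s_n/d\mu^{\otimes n}=J_s(W_n)/k(s)^n$ and the two-sided comparison of $J_s$ with $\|\cdot\|^s$ from Lemma~\ref{lemma:ProprietesIs} --- the paper packages this through $\xi_s$ and Proposition~\ref{Prop:EgalitePs}, you unpack it directly. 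The most substantive divergence is in uniqueness: the paper shows $-\frac1nS(\pp_n|\qq^s_n)\to 0$ and separately that $(-S(\pp_n|\qq^s_n)+\log C_s)$ is subadditive via Lemma~\ref{lemma:submultipliqqs}, then applies Fekete to conclude $S(\pp_n|\qq^s_n)\le\log C_s$; you instead collect three linear-in-$n$ estimates (subadditivity of $-S(\pp_n|\mu^{\otimes n})$, the lower bound $J_s\ge K_s\|\cdot\|^s$, superadditivity of $\ee_\pp[-\log\|W_n\|]$) whose $O(n)$ terms cancel precisely because $\pp$ is an equilibrium state, giving the uniform bound in a single shot --- arguably cleaner, since it exposes which inequality is tight where. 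Finally, from the uniform entropy bound you deduce $\pp\ll\qq^s$ via uniform integrability of the density martingale and then make explicit (via conditional Jensen and strict convexity of $x\log x$) the standard fact that an absolutely continuous invariant measure over an ergodic one must coincide with it; the paper instead uses lower semicontinuity of relative entropy and invokes that ergodicity fact without proof. Both versions are complete; yours is more self-contained at the cost of some extra martingale and convexity machinery.
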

In order to prove Theorem \ref{Th:VariationalPrinciple}, we state two useful properties of the probability measure $\qq^s$. In the sequel, for every $n \in \nn$, we identify $q^s_n(a_1,...,a_n)=\int_{\sta} q^s(\hat{x},a_1,...,a_n) d\eta^s(\hat{x})$ with $q^s_n(W_n)$ the function defined in Equation \eqref{eq:Defqs}.
\begin{lemma}\label{lemma:propertiesqs}
For every $s \in I_\mu$, there exists $A_s>0$ and $B_s>0$ such that for $n \in \nn$, and $W \in \mat$
\begin{equation}
A_s \frac{1}{k(s)^n}\|W\|^s \leq q_n^s(W) \leq B_s \frac{1}{k(s)^n}\|W\|^s.
\end{equation}
\end{lemma}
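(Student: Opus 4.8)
The plan is to observe that $q_n^s(W)$ is nothing but a rescaling of the function $J_s$ introduced in Equation~\eqref{eq:DefIsA}. Indeed, unwinding the definition of $q_n^s(\hat x, W)$ from Equation~\eqref{eq:Defqs} and integrating against $\eta^s$,
\begin{equation*}
    q_n^s(W) = \int_{\sta} q_n^s(\hat x, W)\, d\eta^s(\hat x) = \frac{1}{k(s)^n} \int_{\sta} \frac{e_s(W \cdot \hat x)}{e_s(\hat x)} \|Wx\|^s\, d\eta^s(\hat x) = \frac{1}{k(s)^n} J_s(W).
\end{equation*}
So the statement reduces to establishing a two-sided bound $A_s \|W\|^s \leq J_s(W) \leq B_s \|W\|^s$ with constants $A_s, B_s > 0$ depending only on $s$, $e_s$ and $\eta^s$.

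First I would recall the upper bound: as already noted right after the definition of $J_s$ in Equation~\eqref{eq:DefIsA}, since $e_s$ is continuous, strictly positive and $\sta$ is compact, the ratio $e_s(W\cdot\hat x)/e_s(\hat x)$ is uniformly bounded by $C_s:=\sup_{\hat x,\hat y}e_s(\hat x)/e_s(\hat y)<\infty$, whence $J_s(W)\le C_s\|W\|^s$ for every $W$. For the lower bound, I would invoke Lemma~\ref{lemma:ProprietesIs}(2), which provides $K_s>0$ (depending only on $e_s$ and $\eta^s$) with $J_s(W) \geq K_s \|W\|^s$ for every $W \neq 0$; the key input there is that $\supp \eta^s$ is not contained in a hyperplane, by Corollary~\ref{Cor:Sigmaetes}, so Lemma~\ref{IneqSigma} applies.

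Setting $A_s := K_s$ and $B_s := C_s$ then yields the claim for $W \neq 0$; both constants are strictly positive ($B_s$ obviously, and $A_s>0$ precisely because $\eta^s$ is not supported on a hyperplane). The degenerate case $W = 0$ is handled by the conventions fixed in Section~\ref{sec:Notations}: one sets $f(v\cdot\hat x)\|vx\|=0$ whenever $vx=0$, so $q_n^s(0)=0=\|0\|^s$ and the inequality holds trivially. I do not expect any genuine obstacle here; the only point to keep in mind is that the constants must be uniform in both $n$ and $W$, which is exactly what the identification $q_n^s(W)=k(s)^{-n}J_s(W)$ together with the homogeneity of $J_s$ (Lemma~\ref{lemma:ProprietesIs}(3)) guarantees, and this uniformity is what makes the estimate usable in the proof of the variational principle.
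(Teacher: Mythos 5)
Your proposal is correct and follows essentially the same route as the paper: recognize $q_n^s(W)=k(s)^{-n}J_s(W)$, invoke Lemma~\ref{lemma:ProprietesIs}(2) (itself resting on Lemma~\ref{IneqSigma} and Corollary~\ref{Cor:Sigmaetes}) for the lower bound, and bound $e_s(W\cdot\hat x)/e_s(\hat x)$ by $\sup_{\hat x,\hat y}e_s(\hat x)/e_s(\hat y)$ for the upper bound. Your extra remark about the degenerate case $W=0$ is a harmless addition the paper leaves implicit.
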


\begin{proof}
Let $s \in I_\mu$. We recall that $q_n^s=\frac{1}{k(s)^n} J_s$. By applying Lemma \ref{lemma:ProprietesIs}, we obtain the existence of $A_s>0$ such that for every $W\in \mat$,
$$A_s \frac{1}{k(s)^n}\|W\|^s \leq q_n^s(W).$$
For the second inequality, we set $B_s=\underset{\hat x,\hat y \in \sta}{\sup} \frac{e_s(\hat x)}{e_s(\hat y)}>0$. It follows that for every $W \in \mat$, and $\hat x \in \sta$
$$\frac{e_s(W \cdot \hat x)}{e_s(\hat x)}\|Wx\|^s \leq B_s \|W\|^s.$$
Since $\eta^s$ is a probability measure on $\sta$, we obtain the second inequality.
\end{proof}

\begin{lemma}\label{lemma:submultipliqqs}
Let $s \in I_\mu$. There exists $C_s >0$ such that for every $n,m \in \nn$, $a \in \mathcal{A}^n$, $b \in \mathcal A^m$
$$q^s_{n+m}(W_mW_n) \leq C_s q^s_n(W_n) q^s_m(W_m).$$
\end{lemma}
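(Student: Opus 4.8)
The plan is to reduce this sub-multiplicativity statement to the two-sided comparison between $J_s$ and a power of the operator norm that has already been established. First I would recall that, by construction, $q^s_n(W) = k(s)^{-n} J_s(W)$ for every $W \in \mat$ and every $n \in \nn$, with $J_s$ the function defined in \eqref{eq:DefIsA}. Cancelling the factors $k(s)^{-n}$, $k(s)^{-m}$ and $k(s)^{-(n+m)}$, the inequality to prove becomes the purely matricial one $J_s(W_m W_n) \le C_s\, J_s(W_n)\, J_s(W_m)$.

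Next I would dispose of the degenerate case: if $W_m W_n = 0$ then $J_s(W_m W_n) = 0$, while the right-hand side is non-negative, so the inequality holds for any $C_s \ge 0$. Hence I may assume $W_m W_n \ne 0$, which forces $W_n \ne 0$ and $W_m \ne 0$. Then the estimate follows at once from bounds already at hand: since $e_s$ is bounded there is a constant $\widetilde{C}_s \ge 0$ (the one introduced just after \eqref{eq:DefIsA}) with $J_s(A) \le \widetilde{C}_s \|A\|^s$ for all $A \in \mat$, and Lemma \ref{lemma:ProprietesIs}(2) provides $K_s > 0$ with $\|A\|^s \le K_s^{-1} J_s(A)$ for all $A \ne 0$. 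Using submultiplicativity of the operator norm,
\[
J_s(W_m W_n) \le \widetilde{C}_s \|W_m W_n\|^s \le \widetilde{C}_s \|W_m\|^s \|W_n\|^s \le \frac{\widetilde{C}_s}{K_s^2}\, J_s(W_n)\, J_s(W_m),
\]
so the lemma holds with $C_s = \widetilde{C}_s / K_s^2$, a constant depending only on $s$, $e_s$ and $\eta^s$.

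Since each step is a one-line consequence of results proved above, there is essentially no obstacle here; the only point deserving a moment of care is the degenerate case $W_m W_n = 0$, together with the related fact that the lower bound of Lemma \ref{lemma:ProprietesIs} is only valid away from the zero matrix, which is exactly why it must be separated out.
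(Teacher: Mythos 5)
Your proof is correct and follows essentially the same route as the paper: both arguments reduce the claim, after clearing the powers of $k(s)$, to the two-sided comparison between $J_s(A)$ and $\|A\|^s$ (Lemma~\ref{lemma:ProprietesIs}, equivalently Lemma~\ref{lemma:propertiesqs}) together with submultiplicativity of the operator norm. The only cosmetic difference is that the paper keeps $q^s_n(W_n)$ intact by factoring the $e_s$-ratio multiplicatively and applies the norm comparison only to the $W_m$ factor, whereas you pass both factors through the comparison, giving a marginally larger constant but the same proof in substance; your explicit treatment of the degenerate case $W_mW_n=0$ is a reasonable extra precaution.
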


\begin{proof}
Let $s \in I_\mu$, $n,m \in \nn$ and $a \in \mathcal{A}^n$, $b \in \mathcal A^m$.
By definition, $q^s_{n+m}(W_mW_n)=q^s_{n+m}(v_{b_m}...v_{b_1}v_{a_n}...v_{a_1})$. Then,
$$q^s_{n+m}(W_mW_n)=\frac{1}{k(s)^{n+m}}\int_{\sta} \frac{e_s(W_mW_n \cdot \hat x)}{e_s(\hat x)}\|W_mW_nx\|^s d \eta^s(\hat x).$$
On one hand, since $e_s$ is a strictly positive bounded function, we have:
$$\frac{e_s(W_mW_n \cdot \hat x)}{e_s(\hat x)}=\frac{e_s(W_n \cdot \hat x)}{e_s(\hat x)} \frac{e_s(W_mW_n \cdot \hat x)}{e_s(W_n\cdot \hat x)} \leq c_s \frac{e_s(W_n \cdot \hat x)}{e_s(\hat x)},$$
where $c_s=\underset{\hat x, \hat y \in \sta}{\sup} \frac{e_s(\hat x)}{e_s(\hat y)} >0$. On the other hand, for every $\hat x \in \sta$,
$$ \|W_m W_n x\|^s \leq \|W_m\|^s \|W_n x\|^s.$$
By definition of the functions $q^s_n$,
$$ q^s_{n+m}(W_mW_n) \leq c_s q^s_n(W_n) \times \frac{1}{k(s)^m} \|W_m\|^s.$$
Now, by Lemma \ref{lemma:propertiesqs}, there exists $K_s>0$ such that:
$$ \frac{1}{k(s)^m} \|W_m\|^s \leq K_s q^s_m(W_m).$$
By setting $C_s=c_s K_s$, we obtain the desired inequality.
\end{proof}
In order to prove Theorem \ref{Th:VariationalPrinciple}, we need a final definition.
\begin{lemma}
For every $\pp \in \mathcal P_\theta$, the following quantity is well defined:
$$\xi_s(\pp)=\underset{n \to \infty}{\lim} \ee_{\pp}\left[-\frac{1}{n} \log q_n^s(W_n) \right].$$
\end{lemma}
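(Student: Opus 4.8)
The plan is to deduce from the submultiplicativity estimate of Lemma~\ref{lemma:submultipliqqs} that the sequence $n\mapsto\ee_\pp[-\log q^s_n(W_n)]$ is superadditive up to an additive constant, and then to apply Fekete's lemma.

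First I would record the cocycle identity $W_{n+m}(\omega)=W_m(\theta^n\omega)\,W_n(\omega)$, valid for every $\omega=(a_1,a_2,\dots)\in\Omega$ and $n,m\in\nn$, since $W_m(\theta^n\omega)=v_{a_{n+m}}\cdots v_{a_{n+1}}$ and $W_n(\omega)=v_{a_n}\cdots v_{a_1}$. Applying Lemma~\ref{lemma:submultipliqqs} with $b=(a_{n+1},\dots,a_{n+m})$ then gives, for every $\omega\in\Omega$,
$$q^s_{n+m}(W_{n+m}(\omega))\le C_s\,q^s_n(W_n(\omega))\,q^s_m(W_m(\theta^n\omega)).$$
Each $q^s_j(W_j(\cdot))$ is finite and nonnegative, vanishing only where $W_j=0$ (using that $e_s>0$ and, by Corollary~\ref{Cor:Sigmaetes}, $\supp\eta^s$ is not contained in a hyperplane), and it is a measurable function of $\omega$ by the continuity statement in Lemma~\ref{lemma:ProprietesIs}. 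Taking logarithms with the convention $\log 0=-\infty$, and setting $\psi_n:=-\log q^s_n(W_n)$, the displayed inequality becomes $\psi_{n+m}\ge \psi_n+\psi_m\circ\theta^n-\log C_s$, i.e.\ $\tilde\psi_n:=\psi_n-\log C_s$ is a superadditive cocycle over $\theta$ with values in $(-\infty,+\infty]$.

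Next I would check integrability. By Lemma~\ref{lemma:propertiesqs}, $q^s_n(W_n)\le B_s k(s)^{-n}\|W_n\|^s$, and by the standing assumption $\|v_a\|\le c$ for all $a\in\mathcal A$ we have $\|W_n\|\le c^n$; hence $\psi_n\ge n(\log k(s)-s\log c)-\log B_s$ is bounded below by an affine function of $n$. In particular $\ee_\pp[\tilde\psi_n^-]<\infty$, so $a_n:=\ee_\pp[\tilde\psi_n]$ is well defined in $(-\infty,+\infty]$ (it equals $+\infty$ exactly when $\pp(W_n=0)>0$). Since $\pp$ is $\theta$-invariant, $\ee_\pp[\psi_m\circ\theta^n]=\ee_\pp[\psi_m]$, and integrating the cocycle inequality yields $a_{n+m}\ge a_n+a_m$.

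Finally, Fekete's lemma for superadditive sequences — which remains valid in $(-\infty,+\infty]$, since if some $a_N=+\infty$ then the affine lower bound forces $a_m=+\infty$ for all $m\ge N$, and otherwise the sequence is real-valued — gives that $a_n/n$ converges to $\sup_n a_n/n\in(-\infty,+\infty]$. As $\xi_s(\pp)=\lim_n\ee_\pp\big[\tfrac1n\psi_n\big]=\lim_n\tfrac1n(a_n+\log C_s)=\lim_n a_n/n$, the limit defining $\xi_s(\pp)$ exists (and lies in $[\log k(s)-s\log c,\,+\infty]$), which is the assertion. The only mildly delicate point is the bookkeeping required to make the expectations and Fekete's lemma legitimate when $q^s_n(W_n)$ vanishes on a set of positive $\pp$-measure; matching the matrix product in Lemma~\ref{lemma:submultipliqqs} with the dynamics via the cocycle identity is the other thing to get right.
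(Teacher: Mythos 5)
Your proof is correct and follows essentially the same route as the paper's: apply Lemma~\ref{lemma:submultipliqqs} via the cocycle identity $W_{n+m}=W_m\circ\theta^n\cdot W_n$, take logarithms, use $\theta$-invariance to replace $\ee_\pp[\log q_m^s(W_m\circ\theta^n)]$ by $\ee_\pp[\log q_m^s(W_m)]$, and conclude by Fekete's lemma applied to the (sub/super)additive sequence $\ee_\pp[\log q_n^s(W_n)]\pm\log C_s$. The one thing you do more carefully than the paper is the bookkeeping in $(-\infty,+\infty]$ for the case where $W_n=0$ with positive $\pp$-probability (so $q_n^s(W_n)=0$ and $\psi_n=+\infty$), together with the lower bound $\psi_n\ge n(\log k(s)-s\log c)-\log B_s$ coming from Lemma~\ref{lemma:propertiesqs} and the hypothesis $\|v_a\|\le c$; the paper invokes Equation~\eqref{eq:HypEnergy} and Lemma~\ref{lemma:propertiesqs} for well-definedness somewhat more tersely, but the content is the same.
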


\begin{proof}
Let $\pp \in \mathcal P_\theta$. Assumption \eqref{eq:HypEnergy} and Lemma \ref{lemma:propertiesqs} ensures that for every $n \in \nn^*$, $\ee_{\pp}\left[-\frac{1}{n} \log q_n^s(W_n) \right]$ is well defined. Now, Lemma \ref{lemma:submultipliqqs} implies that there exists $C>0$ such that for every $n,m \in \nn^*$, $q^s_{n+m}(W_{m+n}) \leq C q^s_n(W_n) q^s_m(W_m \circ \theta^n).$ It follows that $\log q^s_{n+m}(W_{m+n} \leq \log q^s_n(W_n) + \log q^s_m(W_m \circ \theta^n) +\log C$. Then, since $\pp$ is $\theta$-invariant,
\begin{equation}
    \log C + \ee_{\pp}[\log q_{n+m}^s(W_{n+m})] \leq (\log C + \ee_{\pp}[\log q_{n}^s(W_{n})]) + (\log C +\ee_{\pp}[\log q_{m}^s(W_{m})]).
\end{equation}
The sequence $(\log C + \ee_{\pp}[\log q_{n}^s(W_{n})])$ is subadditive, by Fekete Lemma, it follows that $\underset{n \to \infty}{\lim} \ee_{\pp}\left[-\frac{1}{n} \log q_n^s(W_n) \right]$ exists.
\end{proof}

This lemma allows tu prove the following proposition.
\begin{proposition}\label{Prop:EgalitePs}
For every $\pp \in \mathcal{P}_\theta$ and $s \in I_\mu$,
\begin{equation}\label{eq:DefinitionXis}
    \xi_s(\pp)-s \zeta(\pp) = P(s).
\end{equation}
\end{proposition}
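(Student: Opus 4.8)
The plan is to reduce the statement to the pointwise two--sided comparison between $q_n^s(W_n)$ and $k(s)^{-n}\|W_n\|^s$ furnished by Lemma~\ref{lemma:propertiesqs}, then integrate against $\pp$ and pass to the limit. First I would recall that $q_n^s(W_n)=k(s)^{-n}J_s(W_n)$ and that Lemma~\ref{lemma:propertiesqs} gives constants $A_s,B_s>0$ with $A_s\,k(s)^{-n}\|W_n\|^s\le q_n^s(W_n)\le B_s\,k(s)^{-n}\|W_n\|^s$ for every $\omega\in\Omega$ (when $W_n=0$ all three members vanish, so the bound is trivial there). Applying the decreasing map $t\mapsto-\tfrac1n\log t$ yields, pointwise on $\Omega$,
\[
\log k(s)+s\Big(-\tfrac1n\log\|W_n\|\Big)-\tfrac{\log B_s}{n}\;\le\;-\tfrac1n\log q_n^s(W_n)\;\le\;\log k(s)+s\Big(-\tfrac1n\log\|W_n\|\Big)-\tfrac{\log A_s}{n}.
\]

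Next I would integrate this chain against $\pp$. By assumption~\eqref{eq:HypEnergy} one has $-\tfrac1n\log\|W_n\|\ge -c$, so each member is bounded below by an integrable constant and the integral is monotone (with values in $\rr\cup\{+\infty\}$); this produces the same chain with every term replaced by its $\pp$--expectation. Then I would let $n\to\infty$: the two outer terms both converge to $\log k(s)+s\,\zeta(\pp)$ (the corrections $\tfrac{\log A_s}{n},\tfrac{\log B_s}{n}$ vanish and $\ee_\pp[-\tfrac1n\log\|W_n\|]\to\zeta(\pp)$ by definition of $\zeta$), while the middle term converges to $\xi_s(\pp)$ by the very definition of $\xi_s$ recalled just above. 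A squeeze then gives $\xi_s(\pp)=\log k(s)+s\,\zeta(\pp)=P(s)+s\,\zeta(\pp)$, since $P(s)=\log k(s)$; this is exactly \eqref{eq:DefinitionXis}.

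I do not expect a serious obstacle: the proposition is essentially a bookkeeping consequence of Lemma~\ref{lemma:propertiesqs}. The two points deserving a line of care are (i) the degenerate case where $\pp(W_n=0)>0$, in which $-\tfrac1n\log\|W_n\|$ and $-\tfrac1n\log q_n^s(W_n)$ are simultaneously $+\infty$, so that both $\zeta(\pp)$ and $\xi_s(\pp)$ equal $+\infty$ and \eqref{eq:DefinitionXis} must be read as an identity in $\rr\cup\{+\infty\}$; and (ii) ensuring that I only invoke the already--established existence of the limits defining $\zeta(\pp)$ and $\xi_s(\pp)$, rather than some unjustified interchange of limit and expectation, when taking $n\to\infty$ in the integrated chain.
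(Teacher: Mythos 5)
Your proposal is correct and takes essentially the same approach as the paper: both rest entirely on the two-sided bound $A_s k(s)^{-n}\|W_n\|^s\le q_n^s(W_n)\le B_s k(s)^{-n}\|W_n\|^s$ from Lemma~\ref{lemma:propertiesqs}, followed by taking logarithms, integrating against $\pp$, and letting $n\to\infty$. The only cosmetic difference is that the paper first combines the two $n$-normalized expectations into a single expectation of $\tfrac1n\log\bigl(\|W_n\|^s/q_n^s(W_n)\bigr)$ before applying the bound, whereas you apply the bound pointwise first; your explicit remarks about the degenerate $W_n=0$ case and about using only the pre-established existence of $\zeta(\pp)$ and $\xi_s(\pp)$ are sound points of care that the paper leaves implicit.
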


\begin{proof}
Let $s \in I_\mu$ and $\pp \in \mathcal P(\Omega)$. Let $n \in \nn$,
\begin{align*}
     -s\ee_{\pp}\left[-\frac{1}{n} \log \|W_n\| \right] + \ee_{\pp}\left[-\frac{1}{n} \log q_n^s(W_n) \right] = \ee_{\pp}\left[\frac{1}{n} \log \frac{\|W_n\|^s}{q_n^s(W_n)}\right].
\end{align*}
Lemma \ref{lemma:propertiesqs} implies the existence of $A_s>0$ and $B_s>0$ such that:
$$ \frac{k(s)^n}{A_s} \leq \frac{\|W_n\|^s}{q_n^s(W_n)} \leq \frac{k(s)^n}{B_s}.$$
It implies that:
\begin{equation}
    \ee_{\pp}\left[ \log k(s) - \frac{1}{n}\log A_s \right] \leq \ee_{\pp}\left[\frac{1}{n} \log \frac{\|W_n\|^s}{q_n^s(W_n)}\right] \leq \ee_{\pp}\left[ \log k(s) - \frac{1}{n}\log B_s \right].
\end{equation}
By taking the limit when $n$ goes to the infinity:
$$ \xi_s(\pp)-s \zeta(\pp) = P(s).$$
\end{proof}
Let us now prove Theorem \ref{Th:VariationalPrinciple}.
\begin{proof}[Proof of Theorem \ref{Th:VariationalPrinciple}]
Let $s \in I_\mu$. We first prove that for every $\pp \in \mathcal P_\theta$,
$$ h_\mu(\pp) - s \zeta(\pp) \leq P(s). $$
Let $\pp \in \mathcal P_\theta$. If $h_\mu(\pp)=-\infty$, the inequality is trivial. We then suppose that $h_\mu(\pp)=-M$, with $M \geq 0$. It implies that for every $n \in \nn$,
$$ S(\pp_n | \mu^{\otimes n}) \leq Mn.$$
It follows that $\pp_n \ll \mu^{\otimes n}$ for every $n \in \nn$. In the sequel, for every $n \in \nn$, we denote by $f_n$ the Radon-Nykodim derivative $f_n= \frac{d \pp_n}{d \mu^{\otimes n}}$. we can then write:
\begin{equation}\label{eq:Sppnfn}
    S(\pp_n | \mu^{\otimes n})=\int_{\mathcal A^n} \log f_n(a_1,...,a_n) \d \pp_n(a_1,...,a_n)=\ee_\pp[\log f_n].
\end{equation}
It follows that for every $n \in \nn$,
\begin{equation}\label{eq:EgaliteSppn}
-\frac{1}{n}S(\pp_n|\mu^{\otimes n})-s \ee_\pp \left[ -\frac{1}{n} \log \|W_n\| \right ]= \frac{1}{n} \ee_\pp\left[\log \frac{\|W_n\|^s}{f_n}\right].   \end{equation}
By Jensen Lemma, the concavity of the logarithm yields:
\begin{align*}
    \ee_\pp\left[\log \frac{\|W_n\|^s}{f_n}\right] &\leq \log \ee_\pp \left[\frac{\|W_n\|^s}{f_n}\right] \\
    &= \log \left(\int_{\mathcal A^n} \frac{\|W_n\|^s}{f_n}f_n d\mu^{\otimes n} \right)\\
    &= \log \int_{\mathcal A^n} \|W_n\|^s d \mu^{\otimes n}.
\end{align*}
Combining this inequality with Equation \eqref{eq:EgaliteSppn} gives:
$$-\frac{1}{n}S(\pp_n|\mu^{\otimes n})-s \ee_\pp \left[ -\frac{1}{n} \log \|W_n\| \right ] \leq \frac{1}{n} \log \int_{\mathcal A^n} \|W_n\|^s d \mu^{\otimes n}.$$
By taking the limit when $n$ goes to infinity, we obtain:
$$ h_\mu(\pp) -s \zeta(\pp) \leq P(s).$$
Let us now prove Item $(2)$. The ergodicity of $\qq^s$ has been proved in Corollary \ref{Cor:ErgodicShift}.
Now, let us prove that $h_\mu(\qq^s) -s \zeta(\qq^s) = P(s)$. We first recall that for every $n \in \nn$, $\qq^s_n \ll \mu^{\otimes n}$ and $q_n^s(W_n)= \frac{d \qq^s_n}{d \mu^{\otimes n}}$. Equation \eqref{eq:Sppnfn} applied to $\qq^s$ gives:
$$S(\qq^s_n|\mu^{\otimes^n})= \ee_{\qq^s}[\log q_n^s(W_n)].$$
By definition of $\xi_s$ \eqref{eq:DefinitionXis},
$$ \underset{n \to \infty}{\lim} -\frac{1}{n}S(\qq^s_n|\mu^{\otimes^n}) -s \ee_{\qq^s} \left[ -\frac{1}{n} \log \|W_n\| \right ]= \xi_s(\qq^s)-s \zeta(\qq^s)=P(s).$$
The last equality is given by Proposition \ref{Prop:EgalitePs}. It remains to prove that $\mathcal P_{eq}(s)=\{\qq^s\}$. Let $\pp \in \mathcal P_{eq}(s)$.
Since $\pp \in \mathcal P_{eq}(s)$, $h_\mu(\pp) > - \infty$ and then for every $n \in \nn$, $\pp_n \ll \mu^{\otimes n}$. Moreover, we already know that for every $n \in \nn$, $\qq^s_n \ll \mu^{\otimes n}$. It implies that for every $n \in \nn$:
\begin{equation}\label{eq:Sppnqqsn}
    S(\pp_n|\qq^s_n)=\ee_\pp\left[ \log \frac{f_n}{q^s_n(W_n)} \right ]=\ee_\pp[\log f_n ]-\ee_\pp[\log q^s_n(W_n)],
\end{equation}
where $f_n = \frac{d \pp_n}{d \mu^{\otimes n}}$. Now let us recall that $\ee_\pp[\log f_n ]=S(\pp_n|\mu^{\otimes n})$ \eqref{eq:Sppnfn}.
It follows that for every $n \in \nn$:
\begin{align*}
    -\frac{1}{n}S(\pp_n|\qq^s_n) = &-\frac{1}{n} S(\pp_n|\mu^{\otimes n})-s \ee_\pp \left[ -\frac{1}{n} \log \|W_n\| \right] \\
    &-  \left(\ee_\pp[\log q^s_n(W_n)]-s\ee_\pp \left[ -\frac{1}{n} \log \|W_n\| \right]\right).
\end{align*}
Now by taking the limit when $n$ goes to infinity we obtain:
\begin{align*}
    \underset{n \to \infty}{\lim}-\frac{1}{n}S(\pp_n|\qq^s_n) = h_\mu(\pp)-s \zeta(\pp) - ( \xi_s(\pp)-s\zeta(\pp)).
\end{align*}
By assumption, $h_\mu(\pp)-s \zeta(\pp)=P(s)$ and Proposition \ref{Prop:EgalitePs} implies that $\xi_s(\pp)-s\zeta(\pp)=P(s)$.
We finally obtain that:
\begin{align*}
    \underset{n \to \infty}{\lim}-\frac{1}{n}S(\pp_n|\qq^s_n) = 0.
\end{align*}
Equation \eqref{eq:Sppnfn} and Equation \eqref{eq:Sppnqqsn} imply that for every $n \in \nn$.
$$ S(\pp_n|\qq^s_n)=S(\pp_n|\mu^{\otimes n})-\ee_\pp[\log q^s_n(W_n)].$$
Now, let $n,m \in \nn$, Lemma \ref{lemma:submultipliqqs} gives the existence of $C_s >0$ such that:
$$\ee_\pp[\log q^s_{n+m}(W_{n+m})] \leq \log C_s + \ee_\pp[\log q^s_{m}(W_{m})] + \ee_\pp[\log q^s_{n}(W_{n})].$$
Moreoever the subadditivity of the sequence $(-S(\pp_n|\mu^{\otimes n}))$ has been proven in the proof of Lemma \ref{lemma:ExistenceLimiteEntropie}.
It follows that for every $n_m \in \nn$,
$$ -S(\pp_{n+m}|\qq^s_{n+m}) \leq -S(\pp_{n}|\qq^s_{n}) -S(\pp_{m}|\qq^s_{m})+\log C_s. $$
The sequence $(-S(\pp_{n}|\qq^s_{n})+\log C_s)$ is subadditive, by Fekete Lemma,
$$ \underset{n \to \infty}{\lim} \frac{-S(\pp_{n}|\qq^s_{n})+\log C_s}{n}=\underset{n \in \nn}{\inf} \frac{-S(\pp_{n}|\qq^s_{n})+\log C_s}{n}.$$
However,
$$ \underset{n \to \infty}{\lim} \frac{-S(\pp_{n}|\qq^s_{n})+\log C_s}{n}= \underset{n \to \infty}{\lim} \frac{-S(\pp_{n}|\qq^s_{n})}{n}=0.$$
It implies that for every $n \in \nn$,
$$ S(\pp_n|\qq^s_n) \leq \log C_s.$$
The lower semicontinuity of the relative entropy implies that:
$$ S(\pp|\qq^s) \leq 
\underset{n \to \infty}{\liminf} S(\pp_n|\qq^s_n) \leq \log C_s. $$
We then obtain that $\pp \ll \qq^s$. Since $\qq^s$ is $\theta$-ergodic, $\pp=\qq^s$ and the theorem is proved.
\end{proof}

\section{Examples} \label{sec:Examples}
Note that our assumptions require that $s_->0$ and one can wonder if we can extend it to $s=0$. We provide an example in the first subsection that shows $s_-$ is necessary strictly positive. The second example \cite[Example 3.5]{Fe02} shows that the irreducibility assumption is crucial for the analyticity of the pressure.
\subsection{Counter example in $s=0$}
In this section, we are interested in the Keep-Switch \textit{Positive Matrix Product} instrument that has been studied in Section 2.1 of \cite{BCJP21}. We use the formalism of Section 2.1.2 in \cite{BCJP21}. Let $q_1,q_2 \in ]0,1[$ such that $q_1 \neq q_2$ and $r_1=1-q_1, r_2=1-q_2$. Let $\mathcal{A}=\{K,S\}$ an alphabet with two elements and $\Omega=\mathcal{A}^\nn$. We keep the notations of Section \ref{Sec:VariationalPrinciple} and $\mathcal O$ denotes the $\sigma$-algebra defined in this section. We define the matrices $M_K$ and $M_S$ as follows:
\begin{equation}
    M_K=\left(
\begin{array}{ll}
        q_1 & 0\\
        0 & q_2
    \end{array}
\right) \; , \;
M_S=\left(
\begin{array}{ll}
        0 & r_1\\
        r_2 & 0
    \end{array}
\right).
\end{equation}
Let $\textbf{p}:=(r_1+r_2)^{-1}[r_2 \; r_1]$. For every, $n \in \nn$, we define the probability measure $\pp_n$ on $(\mathcal A^n,\mathcal O_n)$ such that for every $\omega \in \mathcal A^\nn$,
$$ \pp_n(\omega)= \textbf{p}M_{\omega_1}...M_{\omega_n} \textbf{1},$$
where $\textbf{1}=[1 \; 1]^T$ and $T$ denotes the transposition. 
As explained in Section 2.1 \cite{BCJP21}, there exists a unique probability measure $\pp \in \mathcal{P}_\theta$ such that for every $n \in \nn^*$ and $\omega \in \mathcal A^n$:
$$ \pp[\{ \Bar{\omega} \in \Omega \; | \; (\Bar{\omega}_1,...,\Bar{\omega}_n) = \omega \}]=\pp_n(\omega).$$
It is called the positive matrix product measure generated by $((M_K,M_S),\textbf{p})$. Let us prove that the set $\{M_K,M_S\}$ verifies the assumptions {\bf (Irr)} and {\bf (Cont)}. We consider a particular sequence $(W_n)$ defined by the relation 
$$W_n = M_K^n =\left(
\begin{array}{ll}
        q_1^n & 0\\
        0 & q_2^n
    \end{array}
\right) .$$
Without loss of generality, we can assume that $q_1>q_2$. It follows that $\|W_n\|_2=q_1^n$, and then $$\frac{W_n}{\|W_n\|}=\left(
\begin{array}{ll}
        1& 0\\
        0 & \left (\frac{q_2}{q_1} \right)^n
    \end{array}
\right).$$
Since $q_1>q_2$, the sequence $\left (\frac{W_n}{\|W_n\|} \right)$ converges towards $\left(
\begin{array}{ll}
        1& 0\\
        0 & 0
    \end{array}
\right)$ a rank-one matrix. That proves {\bf (Cont)}. The matrices $M_K$ and $M_S$ have a non-trivial common invariant subspace if and only if they share a common eigenvector. However, the eigenvectors of $M_K$ are multiples of $\left(\begin{array}{l}
        1\\
        0 
    \end{array}\right)$ and 
$\left(\begin{array}{l}
        0\\
        1
    \end{array}\right)$.
These two vectors are not eigenvalues of $M_K$, it implies that Assumption {\bf (Irr)} is verified. In Remark 2.30 of \cite{BCJP21}, this specific example has been studied. Indeed, for this example, for every $\omega \in \mathcal A^n$, $\pp_n(\omega) \sim \|M_{\omega_n}...M_{\omega_1}\|$. In the context of \cite{BCJP21}, the variable is not $s$ but $\alpha$ and the analog of $\log k(s)$ is $r(\alpha)$ representing the Rényi entropy.
When $q_1 \neq q_2$, the pressure function is only analytic on $\rr \backslash \{0\}$ and not twice differentiable at $0$. This example shows that under the assumptions {\bf (Irr)} and {\bf (Cont)}, the function $s \mapsto k(s)$ may not be analytic in $0$.

\subsection{Counter example beyond irreducibility}
The {\bf (Irr)} assumption is crucial for the analyticity of the function $s \mapsto \log k(s)$. The following example illustrates this fact. We endow the space $\mat$ with the norm $\|.\|_1$. Let $a,b,c,d \in \rr_+^*$,
$\mathcal{A}= \left \{ \begin{pmatrix}
a & 0\\
0 & b
\end{pmatrix}, \begin{pmatrix}
c & 0\\
0 & d
\end{pmatrix}\right \}=\{A,B\},$
and let $\mu$ be the measure on $\mat$ defined as $\mu=\delta_A + \delta_B$. The set $\mathcal{A}$ is the support of $\mu$ and does not verify the assumption {\bf (Irr)} because the matrices $A$ and $B$ share a common eigenvector $\left(\begin{array}{l}
        1\\
        0 
    \end{array}\right)$ . 
For every $s \in \rr_+^*$ and $n \in \nn$*, we defined the sequence $(Z_n(s))$ as follows:
\begin{equation}
    Z_n(s)=\sum_{v_i,...,v_n \in \mathcal{A}^{n}} \|v_n...v_1\|_1^s < \infty.
\end{equation}
A product of $n$ matrices of $\mathcal{A}$ can be written
$\begin{pmatrix}
a^lc^{n-l} & 0\\
0 & b^ld^{n-l}
\end{pmatrix} $ where $l$ is the number of occurences of $A$ in the product. It follows that:
$$ Z_n(s)=\sum_{l=0}^n \begin{pmatrix}
n\\
l 
\end{pmatrix}(a^lc^{n-l} + b^ld^{n-l})^s. $$
If $s \geq 1$, Jensen Inequality implies that for every $l=0,...,n$ $$(a^lc^{n-l} + b^ld^{n-l})^s \leq 2^s (a^{sl}c^{s(n-l)}+b^{sl}d^{s(n-l)}),$$ and the positivity of $a,b,c,d$ implies that:
\begin{equation}
    \max \left \{\sum_{l=0}^n \begin{pmatrix}
n\\
l 
\end{pmatrix}a^{sl}c^{s(n-l)}, \sum_{l=0}^n \begin{pmatrix}
n\\
l 
\end{pmatrix}b^{sl}d^{s(n-l)} \right \} \leq Z_n(s) \leq 2^{s+1} \max \left \{\sum_{l=0}^n \begin{pmatrix}
n\\
l 
\end{pmatrix}a^{sl}c^{s(n-l)}, \sum_{l=0}^n \begin{pmatrix}
n\\
l 
\end{pmatrix}b^{sl}d^{s(n-l)} \right \}.
\end{equation}
We can rewrite this equation:
\begin{equation}
    \max \{ (a^s+c^s)^n, (b^s+d^s)^n \} \leq Z_n(s) \leq 2^{s+1} \max \{ (a^s+c^s)^n, (b^s+d^s)^n \}.
\end{equation}
It follows that for every $s \geq 1$,
$$
    \log k(s) = \underset{n \to \infty}{\lim} \frac{1}{n} \log Z_n(s) = \max \{ \log(a^s+c^s), \log(b^s+d^s) \}.
$$
By doing a similar proof, we obtain the same formula for the case $0<s<1$.
We are interested in the analyticity of $s \mapsto \log k(s)$.
\begin{itemize}
    \item If the equation (in $s$) $a^s+c^s=b^s+d^s$ does not have any solutions in $\rr_+^*$, then by continuity of the function $s \mapsto a^s+c^s-b^s-d^s$ either $a^s+c^s > b^s + d^s$ or $a^s+c^s < b^s + d^s$ for every $s \in \rr_+^*$. We can assume for example that $a^s+c^s \geq b^s + d^s$ and it follows that $\log k(s)=\log(a^s+c^s)$ for every $s \in \rr_+^*$. The analyticity of $\log k$ follows. For $a=3, b=2, c=1/3, d=1/2$ it is the case. Indeed, a study of the function $s \mapsto 3^s+(1/3)^s-2^s-(1/2)^s$ shows that $3^s+(1/3)^s \geq 2^s+(1/2)^s$ for every $s \in \rr$. It follows that $\log k(s)=\log(3^s+(1/3)^s)$.
    \begin{center}
  \begin{tikzpicture}
    \begin{axis}[
      xlabel={$s$},
      ylabel={$\log k(s)$},
      domain=0:4,
      samples=100,
      grid=major,
      title={$a=3, b=2, c=1/3, d=1/2$},
      legend entries={$\ln(3^s + (1/3)^s)$},
      legend style={at={(0.5,-0.2)}, anchor=north},
    ]
      \addplot[blue] {ln(3^x + (1/3)^x)};
    \end{axis}
  \end{tikzpicture}
\end{center}
    \item If there exists a solution $s_c$ to the equation $a^s+c^s=b^s+d^s$, then if $\ln{(a)}a^s+\ln{(c)}c^s \neq \ln{(b)}b^s + \ln{(d)}d^s$, the function $\log k$ is not differentiable at this point and therefore not analytic. It is the case for $a=2,b=1,c=2,d=3$. A study of the function shows that the function $s \to 3^s+1^s-2^s-2^s$ only vanishes when $s=1$ on $\rr_+^*$. It means that the only critical point $s_c$ is equal to $1$. At this point $\log k$ is not differentiable.
    Indeed:
    $$ 2^1+2^1=3^1+1^1,$$
    and:
    $$ 2 \ln{2} + 2 \ln{2} \neq 3 \ln{3}.$$
    \begin{center}
  \begin{tikzpicture}
    \begin{axis}[
      xlabel={$s$},
      ylabel={$\log k(s)$},
      domain=0.95:1.05,
      samples=100,
      grid=major,
      title={$a=2,b=1,c=2,d=3$},
      legend style={at={(0.5,-0.2)}, anchor=north},
    ]
      \addplot[blue, domain=0.95:1] {ln(2^x + 2^x)}; % Fonction bleue sur [0,1]
      \addplot[red, domain=1:1.05] {ln(1 + 3^x)};    % Fonction rouge sur [1,4]
      
      \legend{$\ln(2^s+2^s)$, $\ln(1+3^s)$}
    \end{axis}
  \end{tikzpicture}
\end{center}
\end{itemize}

\paragraph{\bf Acknowledgements} 
The author expresses his gratitude to his PhD advisors, Tristan Benoist and Clément Pellegrini, for their guidance and the many fruitful discussions.
The author was supported by the ANR project ``ESQuisses", grant number ANR-20-CE47-0014-01 and by the ANR project ``Quantum Trajectories'' grant number ANR-20-CE40-0024-01. This work received support from the University Research School EUR-MINT
(State support managed by the National Research Agency for Future Investments
 program bearing the reference ANR-18-EURE-0023)

\bibliographystyle{amsalpha}
\providecommand{\bysame}{\leavevmode\hbox to3em{\hrulefill}\thinspace}
\providecommand{\MR}{\relax\ifhmode\unskip\space\fi MR }
% \MRhref is called by the amsart/book/proc definition of \MR.
\providecommand{\MRhref}[2]{%
  \href{http://www.ams.org/mathscinet-getitem?mr=#1}{#2}
}
\providecommand{\href}[2]{#2}

\end{document}